\documentclass[a4paper,12pt]{article}
\usepackage[a4paper,hmargin=2.5cm,vmargin=2.5cm]{geometry}
\usepackage[english]{babel}

\usepackage{
  amsmath,
  amsfonts,
  amssymb,
  amsthm,
  amscd,
  braket,
  bm,
  mathtools,
  centernot,
  enumitem,
  ragged2e,
  tikz-cd
}
\usepackage[all]{xy}
\usepackage{verbatim}
\usepackage{lmodern}
\usepackage{booktabs}
\usepackage{tabularx}
\usepackage{makecell}
\usepackage{caption}
\usepackage{placeins}
\usepackage{aliascnt}

\usepackage{hyperref}
\usepackage[capitalise]{cleveref}

\crefname{theorem}{Theorem}{Theorems}
\crefname{lemma}{Lemma}{Lemmas}
\crefname{proposition}{Proposition}{Propositions}
\crefname{corollary}{Corollary}{Corollaries}
\crefname{definition}{Definition}{Definitions}
\crefname{example}{Example}{Examples}
\crefname{exercise}{Exercise}{Exercises}
\crefname{conjecture}{Conjecture}{Conjectures}
\crefname{remark}{Remark}{Remarks}
\crefname{equation}{Equation}{Equations}
\crefname{assumption}{Assumption}{Assumptions}

\hypersetup{
  colorlinks=true,
  linkcolor=black,
  filecolor=magenta,
  urlcolor=cyan,
  citecolor=black,
  pdftitle={Moduli spaces of semistable coherent sheaves on bielliptic surfaces},
  pdfpagemode=FullScreen
}

\usepackage{fancyhdr}
\DeclareMathOperator{\rank}{rank}

\DeclareMathOperator{\Ext}{Ext}
\DeclareMathOperator{\ext}{ext}
\DeclareMathOperator{\Hom}{Hom}
\DeclareMathOperator{\Hilb}{Hilb}

\DeclareMathOperator{\supp}{Supp}
\DeclareMathOperator{\fsupp}{supp}
\DeclareMathOperator{\pic}{Pic}

\DeclareMathOperator{\num}{Num}
\DeclareMathOperator{\ch}{ch}
\DeclareMathOperator{\td}{td}

\DeclareMathOperator{\ord}{ord}
\DeclareMathOperator{\im}{Im}
\DeclareMathOperator{\nt}{nt}
\renewcommand{\det}{\operatorname{det}}

\newcommand{\mv}{\mathbf{v}}
\newcommand{\mw}{\mathbf{w}}

\newcommand\la{\lambda}
\newcommand{\restrict}[2]{\left.#1\right|_{#2}}
\newcommand\nn{\mathbb{N}}
\newcommand\cc{\mathbb{C}}
\newcommand\zz{\mathbb{Z}}
\newcommand\qq{\mathbb{Q}}

\newcommand\ps{\mathbb{P}}

\newcommand\uc{\mathcal{C}}

\newcommand{\pcoor}[1]{%
  \begingroup\lccode`~=`: \lowercase{\endgroup
  \edef~}{\mathbin{\mathchar\the\mathcode`:}\nobreak}%
  [%
  \begingroup
  \mathcode`:=\string"8000
  #1%
  \endgroup
  ]%
}

\newcommand{\Pic}{\pic}
\newcommand{\Nm}{\operatorname{Nm}}
\newcommand{\Alb}{\operatorname{Alb}}

\makeatletter
\renewcommand{\citation}[1]{%
  \@for\citation@key:=#1\do{%
    \global\@namedef{citation@\citation@key}{}%
  }%
}

\newif\ifdiscarding@bibitem
\newcommand{\stop@discarding@bibitem}{%
  \ifdiscarding@bibitem
    \egroup
    \endgroup
    \global\discarding@bibitemfalse
  \fi
}

\let\original@thebibliography\thebibliography
\let\original@endthebibliography\endthebibliography
\renewenvironment{thebibliography}[1]{%
  \original@thebibliography{#1}%
  \let\original@bibitem\bibitem
  \let\bibitem\filtered@bibitem
}{%
  \stop@discarding@bibitem
  \original@endthebibliography
}

\def\filtered@bibitem{%
  \stop@discarding@bibitem
  \@ifnextchar[{\filtered@lbibitem}{\filtered@bibitem@noopt}%
}

\def\filtered@lbibitem[#1]#2{%
  \@ifundefined{citation@#2}{%
    \global\discarding@bibitemtrue
    \begingroup
    \setbox\z@\vbox\bgroup
  }{%
    \original@bibitem[#1]{#2}%
  }%
}

\def\filtered@bibitem@noopt#1{%
  \@ifundefined{citation@#1}{%
    \global\discarding@bibitemtrue
    \begingroup
    \setbox\z@\vbox\bgroup
  }{%
    \original@bibitem{#1}%
  }%
}
\makeatother

\theoremstyle{plain}
\newtheorem{theorem}{Theorem}[section]
\newtheorem*{theorem*}{Theorem}
\newtheorem{maintheorem}{Theorem}

\crefname{maintheorem}{Theorem}{Theorems}
\Crefname{maintheorem}{Theorem}{Theorems}
\newaliascnt{proposition}{theorem}
\newtheorem{proposition}[proposition]{Proposition}
\aliascntresetthe{proposition}
\newaliascnt{corollary}{theorem}
\newtheorem{corollary}[corollary]{Corollary}
\aliascntresetthe{corollary}
\newaliascnt{lemma}{theorem}
\newtheorem{lemma}[lemma]{Lemma}
\aliascntresetthe{lemma}
\newaliascnt{conjecture}{theorem}

\aliascntresetthe{conjecture}

\theoremstyle{definition}
\newaliascnt{definition}{theorem}
\newtheorem{definition}[definition]{Definition}
\aliascntresetthe{definition}
\newtheorem*{definition*}{Definition}
\newaliascnt{example}{theorem}

\aliascntresetthe{example}
\newaliascnt{exercise}{theorem}

\aliascntresetthe{exercise}

\theoremstyle{remark}
\newaliascnt{remark}{theorem}
\newtheorem{remark}[remark]{Remark}
\aliascntresetthe{remark}
\newtheorem*{remark*}{Remark}
\newaliascnt{assumption}{theorem}

\aliascntresetthe{assumption}
\newtheorem*{assumption*}{Assumption}

\begin{document}

\fontfamily{lmr}\selectfont

\title{Topology and geometry of moduli spaces of semistable sheaves on bielliptic surfaces}
\author{Aleksei Piskunov}
\date{}
\maketitle

\begin{abstract}
We study the topology and low-degree Hodge theory of moduli spaces of
semistable sheaves on bielliptic surfaces. For primitive rank-zero
Mukai vectors $\mv=(0,c_1(L),\chi)$ satisfying the positivity condition
$\nt(L)\geq3$, the distinguished fixed-determinant
component $M_{H,S}(\mv,L)$ admits a support morphism to $|L|$ and is
interpreted as a relative compactified Jacobian. Using this fibration,
Lefschetz-type properties of positive linear systems, monodromy, and
mixed Hodge structures, we construct a surjective homomorphism
$
\pi_1(S)\twoheadrightarrow\pi_1\bigl(M_{H,S}(\mv,L)\bigr)
$
and compute the first two Betti numbers of an Albanese fiber $F$,
$M_{H,S}(\mv,L)$, and the distinguished component
$M^\circ_{H,S}(\mv)$. Fourier--Mukai transforms and
Bridgeland wall crossing extend these computations to primitive
admissible Mukai vectors of positive rank. We further prove that
$H^2(F,\cc)$ is of pure Hodge type $(1,1)$. If
$\la_S=\ell(\mv)=1$, then $F$ is a strict irreducible Calabi--Yau variety up to a finite quasi-\'etale cover. Under the additional genericity assumption on the pullback
polarization, $M^\circ_{H,S}(\mv)$ is smooth and is noncanonically
birational to
$\operatorname{Pic}^0(S)\times\operatorname{Hilb}^{\mv^2/2}(S)$.
\end{abstract}

\newpage
\tableofcontents

\newpage

\section*{Introduction}
\addcontentsline{toc}{section}{Introduction}

Moduli spaces of sheaves on smooth projective surfaces form a central class of
varieties in algebraic geometry. It turns out these moduli spaces often retain strong traces of the geometry
of the surface. For K3 surfaces, smooth projective moduli spaces of stable
sheaves provide basic examples of irreducible holomorphic symplectic manifolds
\cite{Muk84, O'G97, Huy97}. For abelian surfaces, the moduli spaces themselves
have positive irregularity, while the fibers of their Albanese morphisms give
the corresponding irreducible holomorphic symplectic varieties \cite{Yos01}.

The remaining minimal surfaces of Kodaira dimension zero, namely Enriques and
bielliptic surfaces, behave differently because their canonical bundles are
nontrivial torsion line bundles. In the Enriques case, Sacc\`a studied relative
compactified Jacobians of linear systems using the universal K3 cover
\cite{Sac19}. Bielliptic surfaces provide a natural counterpart to this
setting. They have irregularity one, their canonical cover is an abelian
surface, and they carry two elliptic fibrations. Their sheaf moduli spaces
therefore share features with the abelian-surface case, but also exhibit
additional arithmetic and fundamental-group phenomena arising from the finite
canonical cover.

On a bielliptic surface $S$, recent work \cite{Nue25} shows that for any
Mukai vector $\mv$ with $\mv^2>0$ and any $\mv$-generic polarization $H$
(i.e. one lying in the complement of a locally finite set of walls in the nef
cone of $S$), the moduli space of Gieseker $H$-semistable coherent sheaves
$M_{H,S}(\mv)$ is a nonempty projective variety. H. Nuer also shows that if
$\mv^2\geqslant 3\ord(K_S)$, then it is normal and Gorenstein, with at worst
terminal l.c.i. singularities and torsion canonical bundle.

This leads naturally to topological and geometrical questions:
irreducibility, fundamental groups, low-degree Hodge theory, and the
geometry of the Albanese fibers.
This paper studies these properties for sheaves of pure dimension one and then
reduces the study of positive-rank sheaves to the rank-zero case. For pure
one-dimensional sheaves, we use techniques developed by G. Sacc\`a in
\cite{Sac19} for sheaves on Enriques surfaces, though these techniques require
substantial adjustments, mostly because the fundamental group of an Enriques
surface is finite abelian, while that of a bielliptic surface is infinite
nonabelian. We also use ideas and techniques developed by K. Yoshioka in
\cite{Yos01} to study fibers of the Albanese morphism of these moduli spaces.

The rank-zero case is the geometric core of the present paper. For $\mv=(0,c_1(L),\chi),$
the distinguished fixed-determinant component $M_{H,S}(\mv,L)$
admits a support morphism
$$
h:M_{H,S}(\mv,L)\longrightarrow |L|
$$
and can therefore be viewed as a relative compactified Jacobian. We use
this fibration to study its fundamental group and low-degree cohomology,
as well as those of the distinguished component
$M^\circ_{H,S}(\mv)$ and their common Albanese fibers. Fourier--Mukai
transforms and Bridgeland wall crossing then extend the Betti-number
computations from rank zero to primitive admissible Mukai vectors of positive
rank.

\subsection*{Main results}
For rank-zero Mukai vectors, $M^\circ_{H,S}(\mv)$ denotes the
distinguished irreducible component containing sheaves that are locally free
on their integral support, and $M_{H,S}(\mv,L)$ denotes the corresponding
component with fixed determinant
$L\in\operatorname{Pic}^{c_1(\mv)}(S)$; see \cref{subsec:moduli-spaces}.
Admissible Mukai vectors and the distinguished component
$M^\circ_{H,S}(\mv)$ in positive rank are defined in \cref{subsec:admissible}.
In positive rank, by abuse of notation, $M_{H,S}(\mv,L)$ denotes any
irreducible component of
$
(\det^\circ)^{-1}(L);
$
see \cref{sec:ext-to-pos-rank}. Let $F$ be an Albanese fiber of
$M_{H,S}(\mv,L)$. The invariants $\la_S$ and $\ell(\mv)$ and the canonical
cover $\pi:X\to S$ are introduced in
\cref{sec:num-eq-of-div,sec:mukai-vector,sec:canonica-cover}, respectively.
Throughout, we use singular cohomology and write
$
b_i(X):=\dim_{\mathbb Q}H^i(X,\mathbb Q).
$
\begin{maintheorem}\label{thm:intro-rank-zero}
Let $(S,H)$ be a polarized bielliptic surface and let $\mv$ be a primitive
admissible Mukai vector. Assume that $H$ is $\mv$-generic.
Then the first two Betti numbers of $M^\circ_{H,S}(\mv)$, $M_{H,S}(\mv,L)$ and $F$ are presented in the following table:
$$
\begin{array}{c|cc}
 & b_1 & b_2\\
\hline
F & 0 & 3\\
M_{H,S}(\mv,L) & 2 & 4\\
M^\circ_{H,S}(\mv) & 4 & 9
\end{array}
$$
If $\mv=(0,c_1(L),\chi)$ has rank zero, then there is a surjective homomorphism
$$
\pi_1(S)\twoheadrightarrow\pi_1(M_{H,S}(\mv,L)).
$$
Moreover, if $\ell(\mv)=\la_S=1$ and $\pi^*H$ is $\pi^*\mv$-generic, then
$M^\circ_{H,S}(\mv)$ is a smooth irreducible projective variety of dimension
$\mv^2+1$ and is noncanonically birational to
$$
\Pic^0(S)\times\Hilb^{\mv^2/2}(S).
$$
\end{maintheorem}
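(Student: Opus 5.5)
The proof reduces everything to rank zero and then splits into three independent parts: the Betti numbers, the fundamental group, and the birational model.

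\textbf{Rank-zero setup.} Write $\mv=(0,c_1(L),\chi)$ with $\nt(L)\geq 3$. The component $M_{H,S}(\mv,L)$ carries the support morphism $h\colon M_{H,S}(\mv,L)\to|L|$, whose general fiber over an integral curve $C$ is the compactified Jacobian $\overline{J}^{\,d}(C)$. The Albanese morphism of $M^\circ_{H,S}(\mv)$ should be (up to an isogeny) $\det\times\mathrm{alb}$, landing in $\Pic^{c_1}(S)\times\Alb(S)$, an abelian surface.

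\textbf{Fundamental group.} Let $U\subset|L|$ be the locus of smooth curves. The morphism $h$ is smooth over $U$ with fibers $\Pic^d(C)$, and the complement of $h^{-1}(U)$ has codimension at least one. By normality, $\pi_1(h^{-1}(U))\to\pi_1(M_{H,S}(\mv,L))$ is surjective. Using the Lefschetz-type property of positive linear systems, $\pi_1(C)\to\pi_1(S)$ is surjective for smooth $C\in|L|$. Monodromy then acts on $H_1(C)=\pi_1(\Pic^d C)$ through vanishing cycles, and coinvariants kill the kernel of $H_1(C)\to H_1(S)$. The image of $\pi_1(\Pic^dC)$ in $\pi_1(M)$ is therefore a quotient of $H_1(S)$. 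Moreover, $\pi_1(U)$ maps to $\pi_1(M)$ through this image: one uses sections of $h$ over lines in $|L|$ together with the fact that the singular fibers are irreducible in codimension one and carry multiplicity one. Combining these two facts gives $\pi_1(S)\twoheadrightarrow H_1(S)\twoheadrightarrow\pi_1(M_{H,S}(\mv,L))$.

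\textbf{Betti numbers.} Given this surjection, $b_1(M_{H,S}(\mv,L))\le b_1(S)=2$. The reverse inequality comes from the map to $\Alb(S)$ (or $\Pic^0(S)$ via $F\mapsto\det$ twisted), which is nonconstant, so $b_1=2$. The Albanese fiber $F$ is a finite étale quotient situation, giving $b_1(F)=0$. Then $b_1(M^\circ)=4$ follows because $M^\circ\to\Pic^{c_1}(S)$ is an isotrivial fibration with fiber $M_{H,S}(\mv,L)$.

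For $b_2$, I would use the Leray spectral sequence of $h$ over $|L|$ in low degrees, with mixed Hodge structures controlling the invariant cycles. The global invariant cycle theorem identifies $H^1$ of the fibers modulo monodromy with $H^1(S)$. The result $H^2(M_{H,S}(\mv,L))$ is spanned by the pullback $h^*\mathcal O(1)$, a relative theta class, and $\wedge^2 H^1(S)$, giving $b_2=4$. For $F$ one gets $b_2=3$: $\wedge^2$ of the Albanese directions is lost, while one algebraic class of type $(1,1)$ survives, consistent with purity. For $M^\circ$, Künneth up to finite quotient for $F\times(\text{abelian surface})$ gives $b_1=0+4=4$ and $b_2=3+6=9$.

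\textbf{Positive rank.} For admissible $\mv$ of positive rank, I would apply a Fourier--Mukai transform (relative to one of the elliptic fibrations), or chamber-by-chamber Bridgeland wall crossing. This yields a birational map to a rank-zero moduli space, and birational moduli spaces with terminal singularities have the same $b_1$ and $b_2$; wall crossings are isomorphisms in codimension one.

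\textbf{Birationality.} Under $\ell(\mv)=\la_S=1$ with $\pi^*H$ generic, I would pull back to the abelian cover $X$ and use Yoshioka's result that $M_X(\pi^*\mv)$ is smooth and deformation equivalent to $\Hilb\times$ an abelian surface. Smoothness of $M^\circ$ follows from $\Ext^2_0=0$, using that $\Hom(E,E\otimes K_S)=0$ for stable $E$ when $\ell=1$. Birationality comes from a wall crossing to a Gieseker chamber where $\mv=(1,0,1-n)$, with moduli space $\Pic^0(S)\times\Hilb^n(S)$.

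\textbf{Main obstacle.} I expect the hardest step to be the $b_2$ computation, specifically controlling the contribution of singular fibers in the Leray spectral sequence.
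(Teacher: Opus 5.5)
Your fundamental-group argument is sound and is essentially the paper's Leibman--Sacc\`a argument: sections of $h$ over a general pencil come from its base points, $\pi_1(U_\Pi)\to\pi_1(U)$ is surjective, and Picard--Lefschetz kills the vanishing cycles in the image of the fiber. Reading off $b_1(M(L))\le 2$ from the resulting surjection is a legitimate shortcut for the paper's Leray/invariant-cycle bound. For the lower bound, the map to $\Alb(S)$ is the right idea: it is the paper's norm map $\alpha$ to $\Pic^0(\Alb S)$. You still have to extend it across singular supports and show it is surjective, e.g.\ via $\alpha(F\otimes a^*N)=\alpha(F)+[m]N$. The $\det$ variant is useless, since $\det$ is constant on $M(L)$.

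The genuine gaps are in the rest of the table.

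(i) $b_1(F)=0$ does not follow from a ``finite \'etale quotient'' picture. An isotrivial fibration $M(L)\to C$ with $b_1(M(L))=2$ only constrains the invariants $H^1(F,\cc)^{\pi_1(C)}$. The Albanese fibration $S\to A/G$ of the bielliptic surface itself, whose fibers are elliptic curves, shows this is not enough. The paper proves $b_1(F)=0$ directly. Over $U$ the general Albanese fiber is a torsor under the \emph{connected} norm kernels $P_D$, so $R^1=\mathbb V^\vee$ has no invariants. Together with $H^1(U)=0$ and purity across the boundary, this gives $b_1(F)=0$.

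(ii) Your spanning set for $H^2(M(L))$ is $h^*\mathcal O(1)$, a theta class, and $\bigwedge^2H^1(S,\cc)$. That is only three classes, because $b_1(S)=2$ makes $\bigwedge^2H^1(S,\cc)$ a line. Moreover, $h^*\mathcal O(1)$ is rationally just a multiple of the boundary divisor. The missing class sits in the Leray term $H^1(U,R^1h_{U,*}\cc)=H^1(U,\mathbb V^\vee)$, whose weight-two part is one-dimensional. For the universal curve, it is the class of $H^2(S)$ of degree zero on the curves of $|L|$.

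Controlling this term and the singular fibers is the heart of the proof. The paper compares the Leray spectral sequences of $F_{\mathrm{gen},U}\to U$ and $\uc_U\to U$ term by term, using irreducible monodromy on $\mathbb V$. It gets $\dim W_2H^2(\uc_U)=3-1$ because $\uc$ is a projective bundle over $S$. It then shows the boundary of $F_{\mathrm{gen}}$ is one irreducible $\qq$-Cartier divisor plus a set of codimension $\ge2$, using irreducible compactified norm fibers over one-nodal curves and Yuan's dimension bound. The purity lemma then gives $b_2(F)=1+2$.

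(iii) ``K\"unneth up to finite quotient'' for $M^\circ$ presupposes that the finite group acts trivially on $H^\bullet(F)$, and that $\det^\circ$ has connected fibers. The paper instead computes $b_1(M)=2+2$ and $b_2(M)=1+8$ independently over the relative linear system $\mathbb B\to\Pic^{c_1(\mv)}(S)$, whose universal curve is a projective bundle over $S\times P$. It then \emph{deduces} trivial monodromy on $H^2(F)$ from $9=6+3$, and this is what yields $b_2(M(L))=1+3$.

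The positive-rank and Hilbert-model steps also need repair.

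\emph{Birational invariance of $b_2$.} Birational terminal varieties need not have the same $b_2$. You need the map to be an isomorphism in codimension one. The paper gets $\qq$-factoriality from parafactoriality of l.c.i.\ local rings, using Nuer's estimate $\operatorname{codim}\operatorname{Sing}\ge4$, and then applies Kawamata's flop decomposition for minimal models with torsion $K$. You also need $H^2(X)\cong H^2(X_{\mathrm{reg}})$, which again uses codimension $\ge4$.

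\emph{Determinant fibers.} More seriously, the Fourier--Mukai/wall-crossing map does not respect determinant fibers, so nothing yet transfers to $M(L)$ and $F$ in positive rank. The paper transfers the result along general Albanese fibers, which are isomorphic in codimension one. It then recovers $M(L)=a_M^{-1}(C_L)$ via the Albanese-slice lemmas.

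\emph{Hilbert model.} Wall crossing never changes the Mukai vector, so a rank-zero $\mv$ cannot be wall-crossed to $(1,0,1-n)$. You first need an autoequivalence $\Phi$ with $\rank\Phi_*\mv=1$, and this exists if and only if $\la_S=1$ and $\ell(\mv)=1$. If $\la_S>1$, every rank in the orbit is divisible by $\la_S$. Conversely, one twists so that $\gcd(a,k_S(b-\chi'))=1$, applies $\Phi_A^{-1}$, and then a relative transform along $p_B$. Your sketch never uses $\la_S=1$, which is the symptom of this missing step.

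\emph{Smoothness.} Yoshioka's deformation result is beside the point. What is needed is that $\pi^*E$ is stable, which follows from primitivity of $\pi^*\mv$ (from $\ell(\mv)=1$) plus genericity of $\pi^*H$. Then $\Hom_X(\pi^*E,\pi^*E)\cong\bigoplus_i\Hom_S(E,E\otimes K_S^{-i})$ is one-dimensional, which forces $\Ext^2(E,E)\cong\Hom(E,E\otimes K_S)^\vee=0$.
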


\begin{maintheorem}\label{thm:intro-albanese-fibers}
    Let $(S,H)$ be a polarized bielliptic surface, let $\mv$ be a primitive
admissible Mukai vector, and assume that $H$ is $\mv$-generic. Then $H^2(F,\cc)$ is a
pure Hodge structure of weight two and type $(1,1)$. Equivalently,
$
H^2(F,\cc)=H^{1,1}(F).
$
In particular,
$$
H^0\bigl(F,\Omega_F^{[2]}\bigr)=0.
$$
If, moreover,
$
\la_S=\ell(\mv)=1,
$
then there exists a finite quasi-\'etale cover
$$
\gamma:\widetilde F\longrightarrow F
$$
such that $\widetilde F$ is a Calabi--Yau variety in the sense of
\cite[Definition~1.3]{GGK19}. In particular, the reflexive tangent
sheaf $\mathcal T_F$ is strongly stable.
\end{maintheorem}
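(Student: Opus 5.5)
We reduce everything to the rank-zero case and then argue with Hodge theory on the relative compactified Jacobian. Fourier--Mukai transforms and Bridgeland wall crossing (as in \cref{sec:ext-to-pos-rank}) identify, up to birational modifications that are isomorphisms in codimension one, an Albanese fiber for a positive-rank primitive admissible $\mv$ with an Albanese fiber for some $\mv'=(0,c_1(L),\chi)$ with $\nt(L)\geq 3$. For normal varieties with rational (here terminal) singularities, $H^2$ together with its Hodge structure, and the space of reflexive $2$-forms $H^0(F,\Omega_F^{[2]})$, are invariant under such modifications. So it suffices to treat $\mv$ of rank zero. By \cref{thm:intro-rank-zero} we then know $b_1(F)=0$ and $b_2(F)=3$.

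\textbf{Purity of $H^2$.} $F$ is a normal projective variety with rational singularities, so $H^2(F,\cc)$ carries a pure Hodge structure of weight two. It is the $\mathrm{Gr}^W_2$ part, which equals $H^2$ because $F$ is projective with rational singularities. Moreover $H^{2,0}(F)\cong H^0(\widetilde F_{\mathrm{res}},\Omega^2)=H^0(F,\Omega_F^{[2]})$ for a resolution $\widetilde F_{\mathrm{res}}$, by extension of forms across klt singularities. It therefore suffices to show $h^{2,0}(F)=0$. Equivalently, we exhibit three independent algebraic classes in $H^2(F,\qq)$, since $b_2=3$. The natural candidates are the following.
\begin{enumerate}[label=(\roman*)]
\item The pullback $h^*\mathcal O_{|L|}(1)$ of the hyperplane class along the support map.
\item A relative theta/determinant class $\lambda(\mathbf w)$ coming from the Donaldson--Mukai homomorphism, with $\mathbf w\perp\mv$ of positive rank.
\item A third class, obtained as a determinant line bundle of a second Mukai vector orthogonal to $\mv$; the cover $X\to S$ and the Néron--Severi group of $S$ supply the extra class of Hodge type $(1,1)$.
\end{enumerate}
Linear independence is checked by intersecting with curves in $F$: a line in a generic pencil inside a fiber of $h$ restricted to $F$, a curve in a smooth fiber (an abelian-variety quotient, on which $h^*$ is trivial and theta is ample), and so on. An alternative route to $h^{2,0}=0$ avoids exhibiting classes. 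A reflexive $2$-form on $F$ would restrict to the generic fiber of $h$, a torsor under a Prym-type abelian variety. The monodromy computation used for $b_2$ already shows that the invariant part of $H^2$ of the generic fiber is spanned by the polarization. Combined with the Leray spectral sequence this forces $H^2(F)$ to be generated by classes of type $(1,1)$. I expect this monodromy/Leray argument to be the cleanest, and it is the main obstacle: one must control the contribution of singular fibers, which requires the decomposition theorem or the support-theorem style arguments of \cite{Sac19}.

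\textbf{The Calabi--Yau statement.} Assume $\la_S=\ell(\mv)=1$. Then $F$ is normal and Gorenstein with terminal singularities and torsion canonical class. Take the index-one cover $\widetilde F\to F$, which is quasi-étale with $K_{\widetilde F}\sim 0$, and then pass to a further finite quasi-étale cover so that the augmented irregularity is controlled. The Beauville--Bogomolov decomposition for klt varieties (Druel, Greb--Guenancia--Kebekus, Höring--Peternell) writes a quasi-étale cover of $F$ as a product $A\times\prod CY_i\times\prod IHS_j$.
\begin{itemize}
\item An abelian factor $A$ cannot occur, because $\tilde q(F)=0$. This follows from the surjection $\pi_1(S)\twoheadrightarrow\pi_1(M_{H,S}(\mv,L))$ of \cref{thm:intro-rank-zero}, applied on all quasi-étale covers: the covers of $M_{H,S}(\mv,L)$ are dominated by pullbacks from $S$, whose effect on $F$ is controlled by $\la_S=\ell(\mv)=1$.
\item Any IHS factor, or any product decomposition, would give a nonzero reflexive $2$-form on the cover. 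We must therefore also show $H^0(\widetilde F,\Omega^{[2]})=0$ on every quasi-étale cover. Running the purity argument for the covers is delicate, so instead we use the smallness of $b_2$: a product of two or more factors, each with $b_2\geq 1$, together with invariance arguments under the deck group, contradicts the finite structure forced by the above.
\end{itemize}
The cleanest path is to show that $\widehat\pi_1$ of the regular locus is finite and then apply the decomposition theorem once to the universal quasi-étale cover. Strong stability of $\mathcal T_F$ then follows from \cite{GGK19}.
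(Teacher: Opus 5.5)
Your reduction to rank zero (reflexive $2$-forms are unchanged under isomorphisms in codimension one) and the identification $H^{2,0}(F)\cong H^0(F,\Omega_F^{[2]})$ are fine. The gap is that neither of your routes actually proves $h^{2,0}(F)=0$.

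The three-classes route leaves all the content undone. The third class is not pinned down, and no independence argument is given. The delicate class is the one coming from $c\in\num(S)$ with $c\cdot c_1(L)=0$. It restricts trivially to the fibers of $h$, so whether it survives on $F$ is the same question your second route leaves open.

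In the second route, knowing that the monodromy invariants of $H^2$ of a fiber are spanned by the polarization controls only the $E_2^{0,2}$ term of the Leray spectral sequence of $f_U\colon F_{\mathrm{gen},U}\to U$. The term $E_2^{1,1}=H^1(U,\mathbb V^\vee)$ has nonzero weight-two part. Indeed $W_2H^2(U)=0$, since the hyperplane class of $|L|$ dies on the complement of $\Delta$, so this term accounts for one of the three dimensions of $H^2(F,\cc)$. A priori it can carry classes of type $(2,0)+(0,2)$.

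Your argument never uses $p_g(S)=0$. Word for word it would apply to the Beauville--Mukai system on a K3 surface, where $h^{2,0}=1$ and the symplectic form lives precisely in this $E^{1,1}$ term. The missing step, which is what the paper does, is a comparison with the universal curve. The Leray graded pieces of $W_2H^2(F_{\mathrm{gen},U},\cc)$ and of $W_2H^2(\uc_U,\cc)$ are isomorphic as Hodge structures:
\begin{itemize}
\item $W_2H^2(U)$ appears in both;
\item $W_2H^1(U,\mathbb V^\vee)$ appears in both, because $H^1(U)=0$ kills the constant summand $H^1(S,\cc)_U$ of $R^1q_{U,*}\cc$;
\item $\cc(-1)$ appears in both.
\end{itemize}
Meanwhile $H^2(\uc)\cong H^2(S)\oplus\cc(-1)$ is of type $(1,1)$ precisely because $p_g(S)=0$. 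Singular fibers do not require the decomposition theorem. The complement $F_{\mathrm{gen}}\setminus F_{\mathrm{gen},U}$ is one irreducible $\qq$-Cartier divisor plus a set of codimension at least two, so \cref{lem:low-degree-purity-one-boundary} yields $0\to\cc(-1)\to H^2(F_{\mathrm{gen}},\cc)\to W_2H^2(F_{\mathrm{gen},U},\cc)\to 0$.

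The Calabi--Yau part does not go through either. The surjection $\pi_1(S)\twoheadrightarrow\pi_1(M_{H,S}(\mv,L))$ concerns \'etale covers of $M_{H,S}(\mv,L)$. It says nothing about quasi-\'etale covers of $F$, which correspond to finite \'etale covers of the regular locus of $F$. Moreover $\pi_1(S)$ contains $\zz^4$, so the surjection gives no finiteness at all. The phrase ``controlled by $\la_S=\ell(\mv)=1$'' is exactly where the hypothesis should do its work, and it does none.

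The $b_2$ argument also fails. $b_2(F)=3$ bounds only the deck-invariant part of $H^2$ of a cover. So it cannot rule out an irreducible symplectic cover with non-symplectic deck group, which is the Enriques-manifold situation and is perfectly compatible with $h^{2,0}(F)=0$. Also, a product of Calabi--Yau factors of dimension at least three carries no reflexive $2$-forms, so killing $2$-forms would not exclude products anyway.

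The idea you are missing is the Hilbert-scheme model.
\begin{itemize}
\item By \cref{thm:rank-zero-hilbert-model}, which is where $\la_S=\ell(\mv)=1$ enters, $M^\circ_{H',S}(\mw)$ is birational to $\Pic^0(S)\times\Hilb^n(S)$ with $n=\mv^2/2$.
\item Kawamata's theorem makes this an isomorphism in codimension one. Hence $F$ is isomorphic in codimension one to an Albanese fiber $K_n(S)$ of $\Hilb^n(S)$.
\item Oguiso--Schr\"oer show that $\pi_1(K_n(S))$ is finite and that its universal cover $Z_n$ is a simply connected Calabi--Yau manifold.
\item Normalizing $F$ in $\cc(Z_n)$ gives a quasi-\'etale $\widetilde F$ sharing a big open subset with $Z_n$.
\end{itemize}
Hence $\omega_{\widetilde F}\simeq\mathcal O_{\widetilde F}$, and $\widetilde F$ has no reflexive $p$-forms for $0<p<\dim F$. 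By purity of the branch locus on the smooth simply connected $Z_n$, it has no nontrivial quasi-\'etale covers. No decomposition theorem is needed, and strong stability of $\mathcal T_F$ then follows from Greb--Guenancia--Kebekus.
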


The rank-zero computations underlying the table are
\cref{thm:first-betti-fixed-det,thm:b2-albanese-fiber,thm:b1-b2-moduli},
while their extension to arbitrary primitive admissible Mukai vectors is
\cref{thm:betti-numbers-admissible}. The fundamental-group statement is
\cref{thm:pi1}. Smoothness is established in \cref{thm:smooth_dim}, and the
Hilbert-scheme birational model is obtained in
\cref{thm:rank-zero-hilbert-model}. The two assertions of \cref{thm:intro-albanese-fibers} are proved in
\cref{prop:albanese-fiber-hodge-type} and \cref{thm:albanese-fiber-cy-cover}, respectively.

The rank-zero proofs combine the support morphism with Lefschetz-type
properties of sufficiently positive linear systems on bielliptic surfaces,
mixed Hodge structures, and an adaptation of the fundamental-group method of
Leibman and Sacc\`a. The Albanese morphisms are locally constant analytic
fibrations, which relates the cohomology of the determinant-fixed and
determinant-unfixed spaces to that of their common fibers. For general
admissible vectors, Fourier--Mukai reduction and Bridgeland wall crossing
produce birational rank-zero models. Minimal-model theory and invariance of
low-degree cohomology under the resulting isomorphisms in codimension one then
transfer the rank-zero calculations.
\cref{thm:intro-albanese-fibers} follows from a refinement of the mixed-Hodge calculation
for the Albanese fibers and, when $\la_S=\ell(\mv)=1$, from their
comparison in codimension one with Albanese fibers of Hilbert schemes,
together with the Calabi--Yau construction of Oguiso and
Schr\"oer~\cite{OS11}.

\subsection*{Open questions}

The results of this paper suggest several directions for further
investigation.

\begin{enumerate}
\item \textbf{Moduli spaces outside the admissible range.}
Can the topological and geometric conclusions of Theorems~A and~B be
extended to rank-zero vectors with
$
\nt(c_1(\mv))<3
$
and to positive-rank vectors which cannot be transformed to an
admissible rank-zero vector by Fourier--Mukai autoequivalences? In many
of the latter cases, the standard reduction procedures still produce
vectors of rank two or three. It would be interesting to determine
whether these lower-rank moduli spaces can be studied directly.

\item \textbf{Irreducibility.}
Under what conditions are the full moduli space $M_{H,S}(\mv)$ and its
determinant fibers irreducible? In particular, when does
$$
M^\circ_{H,S}(\mv)=M_{H,S}(\mv),
$$
and when is $M_{H,S}(\mv,L)$ the entire determinant fiber rather than
only one of its irreducible components? This question is the subject
of work in preparation.

\item \textbf{Order of the canonical bundle.}
The moduli spaces and Albanese fibers considered above have torsion
canonical bundle. What is its exact order? Can it be expressed in
terms of the type of the bielliptic surface, $\ord(K_S)$, and the numerical invariants of $\mv$? It would also be useful to
understand how this order changes upon fixing the determinant or
passing to an Albanese fiber.

\item \textbf{Fundamental groups.}
In the rank-zero setting, Theorem~A gives a surjective homomorphism
$
\pi_1(S)\twoheadrightarrow\pi_1\bigl(M_{H,S}(\mv,L)\bigr).
$
What is the kernel of this homomorphism, and what is the resulting
fundamental group? Is it independent of the choice of $L$ and of the
$\mv$-generic polarization $H$?

\item \textbf{Beauville--Bogomolov type of the Albanese fibers.}
For a general primitive admissible Mukai vector, what is the singular
Beauville--Bogomolov decomposition type of an Albanese fiber $F$?
Theorem~B shows that, when
$
\la_S=\ell(\mv)=1,
$
the fiber admits a finite quasi-\'etale Calabi--Yau cover and
$\mathcal T_F$ is strongly stable. Does the same conclusion hold for
all admissible vectors? If not, which abelian, Calabi--Yau, or
irreducible holomorphic symplectic factors can occur after passing to
a finite quasi-\'etale cover?
\end{enumerate}

\subsection*{Organization of the text}

\Cref{sec:preliminaries} recalls the required facts about bielliptic surfaces, their divisor
theory, canonical covers, linear systems, and moduli spaces of semistable
sheaves. \Cref{sec:orbits} defines admissible Mukai vectors, studies their
reduction to rank zero, and characterizes the rank-zero vectors whose
Fourier--Mukai orbits contain a rank-one vector.
\Cref{subsec:first-betti-fixed-det,subsec:fundamental-group-fixed-det,subsec:albanese-fibers-rank-zero,subsec:betti-rank-zero}
establish the rank-zero fundamental-group and Betti-number results, while
\cref{sec:ext-to-pos-rank} extends the Betti-number calculations to arbitrary
primitive admissible vectors. \Cref{subsec:hodge-cy-albanese-fibers} proves
\cref{thm:intro-albanese-fibers} by studying the Hodge structure of the
Albanese fibers and constructing quasi-\'etale Calabi--Yau covers in the
rank-one-orbit case.
\Cref{app:cohomological-tools} collects the cohomological tools used in these
arguments, and \cref{app:betti-hilbert} computes the first two Betti numbers of
$\Pic^0(S)\times\Hilb^n(S)$.

\subsection*{Conventions and Notations}
Throughout this paper, the ground field is assumed to be $\cc$ unless otherwise stated explicitly. The Mukai vector $\mv(F)$ of a sheaf $F$ keeps track of the actual class $c_1(F)\in H^2(S,\zz)$. By abuse of notation, we use the same symbol for its class in the algebraic Mukai lattice whenever numerical constructions are involved; see \cref{rem:mukai_abuse}.

\subsection*{Acknowledgments}
The author would like to thank his scientific advisor Howard Nuer for constant attention to his work and Gleb Terentiuk for fruitful discussions.

\section{Preliminaries}\label{sec:preliminaries}
\subsection{Bielliptic surfaces}
\label{sec:bielliptic}

\subsubsection{Classification}
\begin{definition}\label{def:bielliptic}
A minimal smooth projective surface $S$ is called \textit{bielliptic}\footnote{also hyperelliptic in some literature} if
$$
\kappa(S)=0,
\qquad
q(S)=1,
\qquad
p_g(S)=0.
$$
Throughout the paper, $S$ denotes a bielliptic surface unless otherwise stated.
\end{definition}

We will use the following equivalent description. It is known that any bielliptic surface is isomorphic to a quotient $S\cong(A\times B)/G$, where $A$ and $B$ are two elliptic curves and $G$ is a finite abelian group of translations of $A$ (so $A/G$ is still elliptic) which acts on $B$ such that $B/G\cong\ps^1$. By the latter description we have \cite[Section~2.1]{Nue25} two projections $A\times B\to A$ and $A\times B\to B$. The name comes from the fact that such surfaces admit a ``double'' elliptic fibration in a sense that both the base and generic fiber of the fibration are elliptic curves. The projections are $G$-equivariant and so there are two fibrations $p_A:S\to A/G$ and $p_B:S\to B/G\cong\ps^1$.

Throughout the paper, we write $k_S\coloneqq\ord(K_S)$ for the order of the canonical bundle.

Since $A/G$ is an elliptic curve and the projection to $A$ is étale, all
fibres of $p_A$ are smooth. A fibre of $p_B$ over a point $[x]\in B/G$ is a
multiple of a smooth elliptic curve, with multiplicity equal to that of $[x]$
for the quotient map $B\to B/G$. By abuse of notation, we denote the classes
of the smooth fibers of $p_A$ and $p_B$ in $\num(S)$ by $B$ and $A$,
respectively (all smooth fibers are isomorphic for each of these maps).

\begin{remark}\label{rem:albanese_morphism}
$p_A$ is actually the Albanese morphism of $S$; see \cite[Chapter V]{BPVdV84}.
\end{remark}

The seven types of bielliptic surfaces and their topological invariants are summarized in \cref{tab:types} and \cref{tab:invariants}. We let $B \simeq \cc /(\zz \oplus \omega_G \zz)$ where $\omega_G$ is uniquely determined in the fundamental domain and denote by $(m_1,\dots,m_s)$  the multiplicities of the singular fibers of $p_B \colon S \to B/G \simeq \ps^1$.
\begin{proposition}\label{prop:euler}
For a bielliptic surface $S$ both topological and holomorphic Euler characteristic vanish $\chi(\mathcal{O}_S) = \chi_{top} = 0$ and the Hodge diamond looks like:
$$
\begin{matrix}
 & & 1 & & \\
 & 1 & & 1 & \\
0 & & 2 & & 0 \\
 & 1 & & 1 & \\
 & & 1 & &
\end{matrix}
$$
\end{proposition}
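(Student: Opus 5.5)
The definition gives $\kappa(S)=0$, $q(S)=h^{0,1}=1$ and $p_g(S)=h^{2,0}=0$, and the plan is to fill in the whole Hodge diamond from these. First, $\chi(\mathcal O_S)=1-q+p_g=1-1+0=0$. For $\chi_{top}$ we use the description $S\cong(A\times B)/G$ with $G$ acting freely, since $G$ acts on $A$ by translations. The quotient map $A\times B\to S$ is therefore finite étale of degree $|G|$, so
$$\chi_{top}(S)=\chi_{top}(A\times B)/|G|=0.$$
Alternatively, Noether's formula $12\chi(\mathcal O_S)=K_S^2+\chi_{top}$ gives the same result, because $K_S$ is torsion (as $\kappa=0$ and $S$ is minimal) and hence $K_S^2=0$.

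Next we determine the Hodge numbers. The corners satisfy $h^{0,0}=h^{2,2}=1$ by connectedness and compactness. Hodge symmetry and Serre duality give
$$h^{1,0}=h^{0,1}=h^{2,1}=h^{1,2}=q=1,\qquad h^{2,0}=h^{0,2}=p_g=0.$$
The only remaining entry is $h^{1,1}$. With $b_0=b_4=1$, $b_1=b_3=2q=2$ and $b_2=2p_g+h^{1,1}=h^{1,1}$, the vanishing of $\chi_{top}=2-2\cdot2+b_2$ forces $b_2=2$, hence $h^{1,1}=2$.

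The main subtle point is justifying the freeness of the $G$-action, or equivalently that $K_S$ is torsion so that $K_S^2=0$. Both are standard for minimal surfaces of Kodaira dimension zero (see \cite[Chapter V]{BPVdV84}), and the translation action on the $A$-factor makes freeness immediate. All other steps are formal consequences of Hodge theory.
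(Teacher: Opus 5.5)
Your argument is correct, but it takes a different route from the paper. The paper gives no derivation: it simply quotes the Enriques--Kodaira classification table for surfaces of Kodaira dimension zero.

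You instead show that the diamond is forced by the defining invariants $q=1$, $p_g=0$ plus one geometric input, $\chi_{top}(S)=0$. Hodge symmetry, Serre duality and $\chi(\mathcal O_S)=1-q+p_g$ fix every entry except $h^{1,1}=b_2$, and $\chi_{top}=b_2-2$ then determines it.

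Both of your ways of obtaining $\chi_{top}=0$ work:
\begin{enumerate}
\item The diagonal $G$-action on $A\times B$ is free because it acts by translations on the first factor, and $\chi_{top}$ is multiplicative under the resulting \'etale cover.
\item Noether's formula with $K_S^2=0$ gives the same conclusion.
\end{enumerate}
In the second route you need less than torsion of $K_S$. On a minimal surface with $\kappa\geqslant 0$ the canonical class is nef, and $K_S^2>0$ would make it big and force $\kappa=2$. So $K_S^2=0$ follows directly from \cref{def:bielliptic}. Also, freeness of the action and torsion of $K_S$ are not ``equivalent''; they are two independent ways to reach $\chi_{top}=0$.

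What your approach buys is transparency. It isolates $\chi_{top}=0$ (equivalently $K_S^2=0$) as the only non-formal fact. Via Noether's formula it derives the proposition from the definition and general surface theory alone, without the structure theorem $S\cong(A\times B)/G$. That theorem is itself part of the classification, so your \'etale-cover route still depends on it. The paper's citation is shorter but treats the table as a black box.
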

\begin{proof}
This is part of the standard Enriques--Kodaira classification of surfaces of Kodaira dimension zero; see \cite[Chapter VI, Table 10]{BHPV}.
\end{proof}

\begin{table}[h!]
\caption{Seven types of bielliptic surfaces}
\label{tab:types}
\centering
\begin{tabularx}{\textwidth}{XXXX}
\toprule
Type & $\omega_G$ & $G$ & Action of $G$ on $B$ \\
\midrule
1 & any & $\zz/2\zz$ & $x \mapsto -x$ \\[4pt]
2 & any & $\zz/2\zz \times \zz/2\zz$ & $x \mapsto -x$ \\
  &     &                                & $x \mapsto x+\varepsilon$ for $2\varepsilon=0$ \\[4pt]
3 & $i$ & $\zz/4\zz$ & $x \mapsto ix$ \\[4pt]
4 & $i$ & $\zz/4\zz \times \zz/2\zz$ & $x \mapsto ix$ \\
  &     &                                & $x \mapsto x+\frac{1+i}{2}$ \\[4pt]
5 & $e^{2\pi i/3}$ & $\zz/3\zz$ & $x \mapsto e^{2\pi i/3}x$ \\[4pt]
6 & $e^{2\pi i/3}$ & $\zz/3\zz \times \zz/3\zz$ & $x \mapsto e^{2\pi i/3}x$ \\
  &                 &                                  & $x \mapsto x+\frac{1-e^{2\pi i/3}}{3}$ \\[4pt]
7 & $e^{2\pi i/3}$ & $\zz/6\zz$ & $x \mapsto -e^{2\pi i/3}x$ \\
\bottomrule
\end{tabularx}
\end{table}

\begin{table}[h!]
\caption{Topological invariants of bielliptic surfaces}
\label{tab:invariants}
\centering
\begin{tabularx}{\textwidth}{XXll}
\toprule
Type & $(m_1,\dots,m_s)$ & $H^2(S,\zz)$ & $k_S$ \\
\midrule
1 & $(2,2,2,2)$ & $\zz[\frac{1}{2}A]\oplus\zz[B]\oplus\zz/2\zz\oplus\zz/2\zz$ & 2 \\[4pt]
2 & $(2,2,2,2)$ & $\zz[\frac{1}{2}A]\oplus\zz[\frac{1}{2}B]\oplus\zz/2\zz$ & 2 \\[4pt]
3 & $(2,4,4)$ & $\zz[\frac{1}{4}A]\oplus\zz[B]\oplus\zz/2\zz$ & 4 \\[4pt]
4 & $(2,4,4)$ & $\zz[\frac{1}{4}A]\oplus\zz[\frac{1}{2}B]$ & 4 \\[4pt]
5 & $(3,3,3)$ & $\zz[\frac{1}{3}A]\oplus\zz[B]\oplus\zz/3\zz$ & 3 \\[4pt]
6 & $(3,3,3)$ & $\zz[\frac{1}{3}A]\oplus\zz[\frac{1}{3}B]$ & 3 \\[4pt]
7 & $(2,3,6)$ & $\zz[\frac{1}{6}A]\oplus\zz[B]$ & 6 \\
\bottomrule
\end{tabularx}
\end{table}
\FloatBarrier

By the classification, the canonical bundle of a bielliptic surface is non-trivial, but torsion. Since in general both $\Pic^0(S)$ and $H^2(S,\zz)_{\mathrm{tor}}$ are non-zero, it is interesting to ask whether $K_S \in \Pic^0(S)$ or not. The answer is given by the following:

\begin{lemma}\label{lem:canonical}
For a bielliptic surface $S$ we always have $K_S \in \Pic^0(S)$.
\end{lemma}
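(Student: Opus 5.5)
Recall that $K_S\in\Pic^0(S)$ means that $c_1(K_S)=0$ in $H^2(S,\zz)$, including its torsion part, and not merely that $K_S$ is numerically trivial. The plan is to compute $c_1(K_S)$ from the structure of $p_A\colon S\to A/G$, which is a smooth elliptic fibration with all fibres isomorphic to $B$ (see \cref{rem:albanese_morphism}). First, since $p_A$ is smooth, the relative cotangent sequence
$$
0\to p_A^*\Omega_{A/G}\to\Omega_S\to\Omega_{S/(A/G)}\to 0
$$
is exact with locally free terms. Taking determinants gives $K_S\cong p_A^*K_{A/G}\otimes\omega_{S/(A/G)}=\omega_{S/(A/G)}$, because $K_{A/G}$ is trivial.

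Next we identify the relative dualizing sheaf. For a smooth elliptic fibration there is a canonical isomorphism $\omega_{S/(A/G)}\cong p_A^*\bigl((p_A)_*\omega_{S/(A/G)}\bigr)$. The line bundle $\mathcal L:=(p_A)_*\omega_{S/(A/G)}$ on the elliptic curve $A/G$ is dual to $R^1(p_A)_*\mathcal O_S$. It can be described explicitly from the presentation $S=(A\times B)/G$: it is the flat line bundle on $A/G$ associated to the character of $G$ by which $G$ acts on $H^0(B,\omega_B)=\cc\,dz$. This character is $x\mapsto\zeta x$, with $\zeta$ the rotation part of the action recorded in \cref{tab:types}. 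Consequently $\mathcal L$ is the torsion line bundle on $A/G$ attached to the étale cover $A\to A/G$ and that character. In particular $\deg\mathcal L=0$, so $\mathcal L\in\Pic^0(A/G)$.

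Finally, $p_A^*\colon\Pic(A/G)\to\Pic(S)$ is compatible with first Chern classes, and $c_1(\mathcal L)=0$ in $H^2(A/G,\zz)\cong\zz$ because $\deg\mathcal L=0$. Hence $c_1(K_S)=p_A^*c_1(\mathcal L)=0$ in $H^2(S,\zz)$, i.e.\ $K_S\cong p_A^*\mathcal L\in p_A^*\Pic^0(A/G)\subset\Pic^0(S)$. The key point is that $K_S$ is pulled back from a curve, where degree zero already forces the integral class, torsion included, to vanish.

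I expect the main obstacle to be the middle step, namely the identification $\omega_{S/(A/G)}\cong p_A^*\mathcal L$ with $\mathcal L$ flat. I would handle it most concretely by descent. On $A\times B$ the canonical bundle is trivialized by $da\wedge dz$. The group $G$ acts by translations on $A$, so it fixes $da$, and it multiplies $dz$ by $\zeta(g)$. Therefore $K_S$ is the descent of the trivial bundle twisted by the character $\zeta^{-1}$. Since this twist depends only on the $A$-coordinate, it is the pullback along $p_A$ of the corresponding torsion bundle on $A/G=A/G$. This gives the claim directly, without any appeal to the theory of elliptic fibrations.
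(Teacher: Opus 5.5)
Your proof is correct. It has the same skeleton as the paper's: both show $K_S\cong p_A^*\mathcal L$ for a degree-zero line bundle $\mathcal L$ on the elliptic curve $A/G$. The difference is how each step is justified.

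The paper quotes Kodaira's canonical bundle formula for $p_A$, which has no multiple fibres, and reads off $\deg\mathcal L=\chi(\mathcal O_S)-2\chi(\mathcal O_{A/G})=0$ from $\chi(\mathcal O_S)=0$. You instead derive the pullback structure directly, either from smoothness of $p_A$ (relative cotangent sequence plus triviality of $\omega$ on genus-one fibres) or by descent of $da\wedge dz$ from $A\times B$. You then get $\deg\mathcal L=0$ because $\mathcal L$ is the torsion bundle attached to the finite-order rotation character of $G$, not from an Euler-characteristic computation.

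The paper's route is a one-line citation. Yours is self-contained and avoids the general theory of elliptic fibrations. It also gives more: $K_S$ is the pullback of an explicit torsion bundle. Since $p_A^*$ is injective on $\Pic$ (because $p_{A*}\mathcal O_S=\mathcal O_{A/G}$), this recovers $k_S$ as the order of the rotation part in the table. You also make explicit a point the paper leaves implicit: a class pulled back from a curve has no torsion part.

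One small tightening in the descent version. Instead of saying the twist ``depends only on the $A$-coordinate'', say that the $G$-torsor $A\times B\to S$ is the pullback of $A\to A/G$ along $p_A$. Equivalently, the equivariant trivial bundle with that character is pulled back from $A$ along the equivariant projection. This makes it clear that descent commutes with $p_A^*$.
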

\begin{proof}
Recall that the canonical bundle formula for an elliptic fibration \cite[Corollary 12.3]{BPVdV84} shows that, for the elliptic fibration $p_A:S\to A/G$ with no multiple fibers, we have $K_S=p_A^*(L)$ for some line bundle $L$ on $A/G$ of degree $\chi(\mathcal{O}_S)-2\chi(\mathcal{O}_{A/G})=0$.
In other words, $K_S$ is the pullback of a degree 0 line bundle on the elliptic curve $A/G$, hence $K_S$ itself lies in $\Pic^0(S)$.
\end{proof}

A more detailed treatment of the classification of bielliptic surfaces can be
found in \cite[Section~1]{Ser90} and \cite[Section~2.1]{Nue25}. The
multiplicities of the singular fibers are listed in
\cite[Theorem~2.2]{Nue25}.

\subsubsection{Numerical equivalence of divisors}\label{sec:num-eq-of-div}
In this section we recall the numerical divisor theory of bielliptic surfaces.

\begin{proposition}\label{prop:H2Q_spanned}
\cite[Section 1]{Ser90} $H^2(S,\qq)$ is spanned by $A$ and $B$ such that $A^2 = B^2 = 0$ and $AB = |G|$, where $S \cong (A \times B)/G$ is a bielliptic surface.
\end{proposition}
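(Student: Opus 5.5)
The plan is to work on the canonical cover $X = A\times B \to S$, a Galois étale cover with group $G$, and push the computation down to $S$.

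First, $b_2(S)=2$ by the Hodge diamond in \cref{prop:euler}, since $h^{2,0}=h^{0,2}=0$ and $h^{1,1}=2$. So it suffices to find two classes with nondegenerate intersection matrix. The natural candidates are the fiber classes: $A$ is the class of a smooth fiber of $p_B$, and $B$ is the class of a smooth fiber of $p_A$.

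Next, $A^2=B^2=0$ holds because two distinct fibers of the same fibration are disjoint. A fiber class is numerically equivalent to any other smooth fiber of the same map, so its self-intersection vanishes.

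For $AB$, write $q: A\times B\to S$ for the quotient map, of degree $|G|$, and choose points $a\in A$, $b\in B$ not fixed by any nontrivial element of $G$; this is possible because only finitely many points of $B$ have nontrivial stabilizer, and $G$ acts freely on $A$.

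1. The fiber of $p_B$ over $[b]$ is the image of $\bigsqcup_{g} A\times\{gb\}$. Its preimage $q^{-1}$ is the union of the $|G|$ curves $A\times\{gb\}$, which are disjoint, so $q^*A=\sum_{g\in G}[A\times\{gb\}]$.
2. Likewise, since $G$ acts on $A$ by translations, $q^*B=\sum_{g\in G}[\{ga\}\times B]$.
3. On $A\times B$ each curve $A\times\{pt\}$ meets each $\{pt\}\times B$ transversally in one point, so $q^*A\cdot q^*B=|G|^2$.
4. The projection formula $q^*A\cdot q^*B=\deg(q)\,(A\cdot B)$ then gives $AB=|G|^2/|G|=|G|$.

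The intersection matrix of $A$ and $B$ is $\begin{pmatrix}0&|G|\\|G|&0\end{pmatrix}$, with determinant $-|G|^2\neq 0$. So $A$ and $B$ are linearly independent in $H^2(S,\qq)$, and because $b_2(S)=2$ they span it.

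The one delicate point is step 2: one must check that $q^{-1}$ of a smooth fiber of $p_A$ is exactly the $|G|$ disjoint copies of $B$, i.e. that a fiber of $p_A$ is $\{a\}\times B$ mapped isomorphically into $S$, rather than a quotient of it. This holds because $G$ acts freely on $A$, so the orbit $Ga$ has exactly $|G|$ points. Note also that with this convention the class $B$ is the full reduced fiber of $p_A$ and $A$ is a general fiber of $p_B$, consistent with the fractional generators $\tfrac1{m}A$ in \cref{tab:invariants}, which come from multiple fibers.
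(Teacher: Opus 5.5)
Your proof is correct. The paper gives no argument of its own here and only cites Serrano, and your computation is the standard verification of the cited facts: $b_2(S)=2$ from the Hodge diamond, vanishing self-intersection of fibre classes, and $A\cdot B=|G|^2/|G|=|G|$ by pulling back along the $G$-Galois \'etale cover $A\times B\to S$.

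One terminology slip: in the paper's notation the canonical cover $\pi\colon X\to S$ has degree $k_S$, and it equals $A\times B$ only when $\la_S=1$. Your argument only uses that $A\times B\to S$ is \'etale of degree $|G|$, so nothing is affected.
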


We will also use the following numerical criterion.

\begin{lemma}\label{lem:num_div}
\cite[Lemma 1.3]{Ser90} Suppose $S \cong (A \times B)/G$ is a bielliptic surface. Then for a divisor $D \equiv aA + bB$ on $S$ the following holds:
\begin{itemize}
    \item $\chi(\mathcal{O}_S(D)) = \frac{1}{2}D^2 = ab|G|$.
    \item $D$ is ample iff $a > 0$ and $b > 0$.
    \item If $D$ is ample then $h^0(\mathcal{O}_S(D)) = \chi(\mathcal{O}_S(D))$.
    \item If $H^0(S, \mathcal{O}_S(D)) \neq 0$ (i.e. $D$ is effective) then $a \geqslant 0$ and $b \geqslant 0$.
\end{itemize}
\end{lemma}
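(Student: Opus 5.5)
The plan is to prove the four items of \cref{lem:num_div} in the order stated, working on the canonical cover $A\times B\to S$ or directly with Riemann--Roch on $S$.

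\textbf{First item.} Riemann--Roch on a surface gives
$$
\chi(\mathcal O_S(D))=\chi(\mathcal O_S)+\tfrac12 D(D-K_S).
$$
By \cref{prop:euler} we have $\chi(\mathcal O_S)=0$. Since $K_S$ is torsion (indeed $K_S\in\Pic^0(S)$ by \cref{lem:canonical}), $D\cdot K_S=0$. Hence $\chi(\mathcal O_S(D))=\tfrac12D^2$. By \cref{prop:H2Q_spanned}, $(aA+bB)^2=2ab\,AB=2ab|G|$, which gives the first item.

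\textbf{Second item.} This is a Nakai--Moishezon argument.
\begin{itemize}
\item If $D$ is ample, then $D\cdot A=b|G|>0$ and $D\cdot B=a|G|>0$, because $A$ and $B$ are classes of effective curves (fibres of $p_B$ and $p_A$).
\item Conversely, suppose $a,b>0$. Then $D^2=2ab|G|>0$. For an irreducible curve $C\equiv xA+yB$, I claim $x,y\ge0$ and $(x,y)\ne(0,0)$. Indeed, $C\cdot A=y|G|$ and $C\cdot B=x|G|$, and both are $\ge0$ since $A$ and $B$ are nef: they are fibre classes, so they move in a base-point-free family. Then $D\cdot C=(ay+bx)|G|>0$ unless $x=y=0$, which would make $C$ numerically trivial. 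That is impossible for a curve on a projective surface, since an ample class meets it positively.
\end{itemize}
By Nakai--Moishezon, $D$ is ample.

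\textbf{Fourth item.} The same nefness argument applies: if $D$ is effective, intersect it with the nef classes $A$ and $B$ to get $b|G|\ge0$ and $a|G|\ge0$.

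\textbf{Third item.} This is the main obstacle. I need $h^1=h^2=0$ for ample $D$.
\begin{itemize}
\item By Serre duality, $h^2(D)=h^0(K_S-D)$. This vanishes because $K_S-D\equiv -aA-bB$ has negative coefficients, which the fourth item forbids for an effective divisor.
\item For $h^1$, write $D=K_S+(D-K_S)$. Here $D-K_S\equiv D$ is ample, since ampleness is numerical, so Kodaira vanishing gives $H^1(S,\mathcal O_S(D))=0$.
\end{itemize}
Thus $h^0=\chi$. The only care needed is that Kodaira vanishing applies to $K_S+N$ with $N$ ample, and that ampleness is invariant under numerical equivalence. Both are standard, so I expect no real difficulty.
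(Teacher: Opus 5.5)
Your proof is correct and complete. Riemann--Roch with $\chi(\mathcal{O}_S)=0$ and $K_S\equiv 0$ gives the first item. Nefness of the fibre classes $A$ and $B$ gives the fourth item and, together with Nakai--Moishezon, the second; there is no circularity, since the fourth item uses only nefness. Serre duality together with Kodaira vanishing for $D=K_S+(D-K_S)$ gives the third; in fact the $h^2$ bullet is redundant, since Kodaira vanishing already kills $H^2$ as well.

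The paper gives no argument of its own here and simply cites Serrano's Lemma~1.3. What you have written is the standard argument behind that reference, so it can serve as a self-contained replacement for the citation.
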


The lattice of divisors modulo numerical equivalence (which we denote by $\equiv$) is also known and was described in \cite[Theorem 2.2]{Nue25}:
$$
\num(S)
=
\zz\left[\frac{1}{k_S}A\right]
\oplus
\zz\left[\frac{k_S}{|G|}B\right].
$$
To simplify notations from now and later on, set $\la_S\coloneqq\frac{|G|}{k_S}$, $A_0\coloneqq\frac{1}{k_S}A$, and $B_0\coloneqq\frac{1}{\la_S}B$.
Then
\begin{equation}\label{eq:num_S}
\num(S) = \zz[A_0] \oplus \zz[B_0]
\end{equation}
and $\la_S$ is the smallest possible degree of a multisection of the elliptic fibration $p_A \colon S \longrightarrow A/G$, which is provided by $A_0$. Also note that the intersection pairing in this basis is given by
$$
A_0 \cdot B_0 = 1,
\qquad
A_0^2 = B_0^2 = 0.
$$

\subsubsection{Canonical cover}\label{sec:canonica-cover}
Let $S=(A\times B)/G$ and $G=(\zz/k_S\zz)\times(\zz/\la_S\zz)$ be a bielliptic surface. We follow notations of \cite[Section 2.2]{Nue25} here and, for simplicity, set $G'=\zz/k_S\zz$, $H=\zz/\la_S\zz$, and $G=G'\times H$.

Since the canonical bundle of $S$ is torsion of order $k_S$, there exists the uniquely defined étale cover $\pi:X\to S$ of order $k_S$. By construction $\mathcal{O}_X(K_X)\cong\mathcal{O}_X$ and $\pi$ is the quotient map by the free action of the group $G'=\zz/k_S\zz$. It turns out that $X$ is an abelian surface which is a covering of $A\times B$ modulo the action of $H=\zz/\la_S\zz$ (so for families 3,5,7 we have $X=A\times B$ because $\la_S=1$).

Also $X$ admits two smooth elliptic fibrations $X\to A/H$ and $X\to B/H$, with fibers isomorphic to $B$ and $A$ respectively, and we denote the corresponding classes in $\num(X)$ by $B_X$ and $A_X$, so $A_X\cdot B_X=\la_S$. Putting everything together we have $A\times B\xrightarrow{\pi'}X\xrightarrow{\pi}S$, where $\pi'$ is the degree $\la_S$ isogeny of the abelian surfaces $A \times B$ and $X$, and $\pi$ is the degree $k_S$ étale (canonical) cover. Note that for families 3,5 and 7 $\pi'$ is the identity map and $X = A \times B$.

\subsubsection{Linear systems of divisors}\label{subsec:linear-systems}
We now recall the numerical threshold used to formulate very-ampleness conditions for divisors on bielliptic surfaces.

\begin{definition}\label{def:num_threshold}
Let $D\equiv aA_0+bB_0$ be a numerical divisor class on a bielliptic surface $S$. We define its numerical threshold by
$$
\nt(D)\coloneqq\min\{a,b\}.
$$
For a line bundle $L$, or for an actual algebraic class $c\in H^2(S,\zz)$, the notation $\nt(L)$ or $\nt(c)$ refers to the numerical class of $c_1(L)$ or $c$, respectively.
\end{definition}

Recall the definition of $k$-very ample line bundle for later use.

\begin{definition}\label{def:k_very_ample}
A line bundle $L$ on a smooth projective variety $X$ is called $k$-very ample if, for any 0-dimensional subscheme $Z\subset X$ of length $k+1$, the restriction map
$$
H^0(X,L)\to H^0(X,L\otimes\mathcal{O}_Z)
$$
is surjective.
\end{definition}

By construction 0-very ampleness means being globally generated and 1-very ampleness is very ampleness in the usual sense. We will be mainly interested in these two cases, but state theorems related to this notion in general when possible.

\begin{proposition}\label{prop:k_very_ample_condition}
\cite[Proposition 2.7]{Far16} Let $L$ be a line bundle of type $(a, b)$ on a bielliptic surface $S$. If $a \geqslant k + 2$ and $b \geqslant k + 2$ where $k \in \nn$ then $L$ is $k$-very ample.
\end{proposition}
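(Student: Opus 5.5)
Since this proposition is quoted from \cite[Proposition 2.7]{Far16}, the plan is to reproduce the standard argument: Reider's method for $k$-very ampleness in the form of Beltrametti--Sommese. Write $L=K_S+M$ with $M\coloneqq L-K_S$. Because $K_S$ is numerically trivial (\cref{lem:canonical}), $M\equiv aA_0+bB_0$ as well. The Beltrametti--Sommese criterion says the following. Suppose $M$ is nef, $M^2\geqslant 4k+5$, and $L$ is not $k$-very ample. Then there is an effective divisor $D$ containing a length-$\le k+1$ subscheme that fails to impose independent conditions, with
$$
M-2D \ \text{ $\qq$-effective},\qquad
M\cdot D-k-1\leqslant D^2<\tfrac12 M\cdot D<k+1 .
$$
The proof therefore has two parts: check the hypotheses on $M$, then rule out every such $D$ by a computation in $\num(S)=\zz[A_0]\oplus\zz[B_0]$, using $A_0\cdot B_0=1$ and $A_0^2=B_0^2=0$.

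\textbf{Hypotheses on $M$.} By \cref{lem:num_div} (whose criterion transfers to the basis $A_0,B_0$ up to positive rescaling), $a,b>0$ makes $M$ ample, so in particular nef. Its self-intersection is $M^2=2ab\geqslant 2(k+2)^2$. This is at least $4k+5$ for every $k\geqslant 0$, since $2k^2+8k+8-4k-5=2k^2+4k+3>0$.

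\textbf{Excluding $D$.} Write $D\equiv xA_0+yB_0$. By \cref{lem:num_div}, effectivity gives $x,y\geqslant 0$ and $D\neq 0$. Then $D^2=2xy$ and $M\cdot D=ay+bx$. First use $2D^2<M\cdot D<2(k+1)$. If both $x,y\geqslant 1$, then $M\cdot D\geqslant a+b\geqslant 2k+4$, a contradiction. So one coordinate is zero, say $y=0$ (the case $x=0$ is symmetric), and $D^2=0$. The lower inequality then reads $M\cdot D\leqslant k+1$, that is, $bx\leqslant k+1$. But $bx\geqslant b\geqslant k+2$, a contradiction. Hence no such $D$ exists, and $L$ is $k$-very ample.

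\textbf{Main obstacle.} The delicate point is invoking the Reider/Beltrametti--Sommese theorem in exactly the right form: which strict versus non-strict inequalities appear, and whether $M$ must be nef and big or ample. The contradiction above has some slack: the condition $a,b\geqslant k+2$ is stronger than the roughly $k+1$ needed. So even a slightly weaker version of the inequalities should still close the argument. I would first check the criterion as stated in Beltrametti--Sommese's paper on $k$-very ampleness of adjoint bundles on surfaces. The symmetric case $x=0$ is handled identically, using $a\geqslant k+2$.
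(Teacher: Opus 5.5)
Your proof is correct. $M=L-K_S\equiv aA_0+bB_0$ is ample with $M^2=2ab\geqslant 4k+5$, and your case analysis on $D\equiv xA_0+yB_0$ rules out every Beltrametti--Sommese obstruction. The paper gives no argument of its own here; it only cites \cite[Proposition~2.7]{Far16}, and your Reider-type computation is the standard proof of that statement.

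One correction to your closing remark: the bound $k+2$ is sharp, not slack. The reduced curve of a fibre of $p_B$ of multiplicity $k_S$ is an elliptic curve of class $A_0$ with $L\cdot A_0=b$, so $b=k+1$ already prevents $k$-very ampleness. The only slack is in the strict versus non-strict inequalities of the criterion.
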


If one takes into account differences between the seven families in \cref{tab:types} then on families 2, 4 and 6 there are more precise statements regarding $k$-very ampleness, but \cref{prop:k_very_ample_condition} as stated works in general and is enough for our purposes. For more details one can see \cite[Theorems 3.2-3.4]{MP93}.

\begin{lemma}\label{lem:bertini_results}
Let $D$ be an effective divisor of numerical type $(a, b)$ on a bielliptic surface $S$. Assume that $\nt(D) \geqslant 3$ (in particular $D^2 \geqslant 18$). Then the linear system $|D|$ is base point free, the general member of $|D|$ is a smooth connected curve, the discriminant $\Delta_D$ is an irreducible hypersurface and there is an open dense subset of the discriminant $\Delta_D$ parameterizing irreducible curves with one single node.
\end{lemma}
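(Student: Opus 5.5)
We deduce base point freeness and smoothness of the general member from the positivity results already recalled, and reduce the statements about the discriminant to the standard Bertini--Lefschetz argument for a $1$-very ample (in fact $2$-very ample) line bundle.

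\emph{Base point freeness and very ampleness.} Write $D\equiv aA_0+bB_0$ with $a,b\geqslant 3$. By \cref{prop:k_very_ample_condition} with $k=1$, $\mathcal{O}_S(D)$ is very ample, and in particular globally generated, so $|D|$ is base point free. The numerical identity $D^2=2ab\geqslant 18$ follows from $A_0\cdot B_0=1$ and $A_0^2=B_0^2=0$. By Bertini's theorem applied to the embedding $\varphi_D\colon S\hookrightarrow\ps^N=|D|^\vee$, a general hyperplane section is smooth. It is connected because $D$ is ample by \cref{lem:num_div} (since $a,b>0$), hence $D$ is nef and big, and $h^1(\mathcal{O}_S(-D))=0$ by Kawamata--Viehweg. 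Therefore $h^0(\mathcal{O}_D)=1$ from the sequence $0\to\mathcal{O}_S(-D)\to\mathcal{O}_S\to\mathcal{O}_D\to0$.

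\emph{Discriminant.} Consider the incidence variety
$$
I=\{(x,C)\in S\times|D| : C \text{ is singular at } x\}.
$$
For each $x$, the condition that $C$ is singular at $x$ says that a section vanishes on the first-order neighbourhood $\mathrm{Spec}\,\mathcal{O}_{S,x}/\mathfrak m_x^2$, which has length $3$. Since $\mathcal{O}_S(D)$ is $2$-very ample ($a,b\geqslant 4$ would be needed for $k=2$, so instead we use very ampleness: the embedding separates tangent vectors, hence the map to the $3$-dimensional jet space at $x$ is surjective), the fibre of $I\to S$ over $x$ is a linear subspace of codimension $3$ in $|D|$. Hence $I$ is a projective bundle over $S$, irreducible of dimension $\dim|D|-1$. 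Its image $\Delta_D$ is therefore irreducible. It remains to show that $I\to\Delta_D$ is generically finite, which gives that $\Delta_D$ is a hypersurface, and that the general point of $\Delta_D$ is an irreducible curve with exactly one node.

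\emph{Main obstacle: one node and irreducibility.} For the node condition, we would use the classical fact (Lefschetz pencils / the dual variety of a smooth projective variety that is not covered by lines in a degenerate way) that for an embedding by a very ample bundle the general tangent hyperplane is tangent at a single point with a nondegenerate quadratic singularity, unless the dual variety is degenerate. Degeneracy of the dual for surfaces happens only for scrolls, and $S$ is not ruled. To make this self-contained, we would instead show that the locus in $I$ of curves whose singularity at $x$ is worse than a node, or which have a second singular point $y$, has dimension $<\dim I$. Worse than a node means vanishing of the Hessian, which is one further condition on the $2$-jet. A second singular point imposes $3$ conditions on the length-$6$ scheme $Z=x^{(2)}\cup y^{(2)}$. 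Proving these conditions are independent requires the restriction to be surjective on the relevant jets, and this is exactly where we must exploit $a,b\geqslant3$ more carefully, perhaps via Reider-type arguments or the finer results of \cite{MP93}. This is the step we expect to be delicate.

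\emph{Reducible singular members.} Finally, we exclude reducible nodal members: a curve $C=C_1+C_2$ with $C_i$ effective has $C_1\cdot C_2\geqslant 1$ nodes, and the family of such curves has dimension $\dim|C_1|+\dim|C_2|$. Using $h^0=ab$ for ample classes from \cref{lem:num_div} (and counting the non-ample splittings $C_1\equiv a'A_0$ or $C_1\equiv b'B_0$ separately via the elliptic fibrations), we would check that
$$
\dim|C_1|+\dim|C_2|<ab-2=\dim\Delta_D,
$$
so reducible members do not fill the discriminant.
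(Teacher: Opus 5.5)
Your proposal is correct. For base point freeness, smoothness of the general member, irreducibility of $\Delta_D$ and the one-node statement it agrees with the paper:

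\begin{itemize}
\item Your incidence variety $I$ is exactly the conormal variety the paper uses to get irreducibility of $\Delta_D$.
\item The ``classical fact'' you invoke first for the node is precisely the Lefschetz pencil theorem the paper cites. So the jet-independence computation you flag as delicate is unnecessary.
\item Two deviations are harmless. You get connectedness from Kawamata--Viehweg, where Kodaira vanishing already suffices since $D$ is ample; the paper cites Hartshorne III.7.9, which rests on the same vanishing. You get the hypersurface property from the fact that a degenerate dual would make $S$ covered by lines, impossible since $\kappa(S)=0$; the paper uses Landman's parity theorem.
\end{itemize}

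The genuinely different step is irreducibility of the nodal members.

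\emph{The paper's route.} It argues pointwise. If a member with a single ordinary node were $C_1+C_2$, connectedness of the ample curve forces $C_1\cap C_2$ to be the node, with one branch on each side. Hence $C_1\cdot C_2=1$. On the other hand $C_1\cdot C_2=a_1b_2+a_2b_1\geq\min\{a,b\}\geq 3$. This shows that \emph{every} one-nodal member is irreducible, with no case analysis.

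\emph{Your route.} Your dimension count shows only that reducible members form a proper closed subset of the irreducible $\Delta_D$. That suffices for the lemma, but it costs more:
\begin{itemize}
\item It needs casework over all numerical splittings, including the non-ample ones supported on fibres of $p_A$ and $p_B$.
\item Your bound must be corrected. $\mathcal O_S(C_1)$ varies in a one-dimensional $\Pic^0(S)$-torsor, so in the ample case the reducible locus has dimension $1+\dim|C_1|+\dim|C_2|=a_1b_1+a_2b_2-1$, not $\dim|C_1|+\dim|C_2|$.
\item The inequality still holds, because $ab-2-(a_1b_1+a_2b_2-1)=a_1b_2+a_2b_1-1\geq 1$.
\item The vertical cases are similar. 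For example, for $C_1\equiv a_1A_0$ with $a_1<a$, the locus has dimension at most $a_1/k_S+(a-a_1)b-1<ab-2$.
\end{itemize}

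You already remark that points of $C_1\cap C_2$ are singular on $C$. Combined with $C_1\cdot C_2\geq 3$ and the fact that the unique singular point is an ordinary node, this yields the paper's much shorter and stronger argument directly.
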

\begin{proof}
The condition $\nt(D)\geqslant 3$ means that $a\geqslant 3$ and $b\geqslant 3$. By \cref{prop:k_very_ample_condition}, applied with $k=1$, the line bundle $\mathcal O_S(D)$ is $1$-very ample. Hence it is very ample and defines a closed embedding
$$
\varphi_{|D|}\colon S\hookrightarrow \ps^N.
$$
Since $\mathcal O_S(D)$ is very ample, it is in particular base point free. By Bertini's theorem \cite[Chapter III, Corollary 10.9]{Har77}, the general member of $|D|$ is smooth. Moreover, every effective ample divisor on the smooth connected
projective surface $S$ is connected
\cite[Chapter III, Corollary 7.9]{Har77}.
Consequently, every smooth member of $|D|$ is a smooth connected curve.

Let
$$
\Delta_D\subset |D|\simeq(\ps^N)^\vee
$$
be the discriminant parametrizing singular divisors. Under the embedding above, $\Delta_D$ is the projective dual variety $S^\vee$. Since $S$ is smooth and positive-dimensional, the conormal variety
$$
\ps(T_S^*\ps^N)\subset S\times(\ps^N)^\vee
$$
is irreducible, and its image is $S^\vee=\Delta_D$. Hence $\Delta_D$ is irreducible (one can see \cite[Chapter 1, Sections 1.1--1.2]{Tev05} for more details).

Now we prove that $\Delta_D=S^\vee$ is a hypersurface. The image $\varphi_{|D|}(S)\subset\mathbb P^N$ is a smooth nonlinear projective surface. By Landman's parity theorem and its surface consequence \cite[Theorem~6.15 and Example~6.16]{Tev05}, its dual variety is a hypersurface. Therefore
$
\operatorname{codim}_{(\mathbb P^N)^\vee}S^\vee=1.
$
Thus $\Delta_D$ is an irreducible hypersurface.

By the Lefschetz-pencil theorem, a general line
$\Lambda\subset |D|$ meets $\Delta_D$ transversely in its smooth
locus, and every singular member of the corresponding pencil has
exactly one singular point, which is an ordinary quadratic
singularity; see
\cite[Exposé XVII, Sections 2--3]{SGA7II}
or \cite[Chapter 2, Section 2.1]{Voi02}.
It follows that there is a nonempty Zariski-open subset $\Delta_D^\circ\subset \Delta_D$
whose members have exactly one ordinary double point and are otherwise smooth.

It remains to show that these members are irreducible. Let $C\in\Delta_D^\circ$, and suppose that $C$ is reducible. Since $C$ has only an ordinary double point, it is reduced. We may therefore write $C=C_1+C_2$, where $C_1$ and $C_2$ are nonzero effective divisors with no common irreducible components. Since $C$ is an effective ample divisor, it is connected. Hence $C_1\cap C_2\neq\varnothing$. Every point of $C_1\cap C_2$ is singular on $C$, and $C$ has only one singular point, which is an ordinary double point. Thus $C_1$ and $C_2$ meet transversely at precisely that point, and consequently $C_1\cdot C_2=1$.
Write $C_i\equiv a_iA_0+b_iB_0$. By effectivity, \cref{lem:num_div} gives $a_i,b_i\ge 0$, and $a_1+a_2=a, \, b_1+b_2=b$.
Furthermore, neither $(a_i,b_i)$ is equal to $(0,0)$, since a nonzero effective divisor cannot be numerically trivial. Using $A_0^2=B_0^2=0,\, A_0\cdot B_0=1$, we obtain $C_1\cdot C_2=a_1b_2+a_2b_1$.
If $a_1,a_2>0$, then $a_1b_2+a_2b_1\ge b_1+b_2=b$.
If one of $a_1,a_2$ is zero, say $a_1=0$, then $b_1>0$ and $a_1b_2+a_2b_1=ab_1\ge a$.
The analogous argument applies after interchanging the two indices or the two coordinates. Therefore $C_1\cdot C_2\ge \min{a,b}
=\nt(D)\ge 3$,
contradicting $C_1\cdot C_2=1$. Hence every member of $\Delta_D^\circ$ is irreducible.
\end{proof}

\subsubsection{Homotopy groups}
Fix a bielliptic surface $S$ with
$$
G=\zz/k_S\zz\times\zz/\la_S\zz
$$
and canonical cover $X$. Since $X$ is topologically a 4-dimensional torus, we have that $\pi_1(X)=\zz^4$ and all higher homotopy groups vanish. By construction $G'=\zz/k_S\zz$ acts freely on $X$, and this gives rise to a short exact sequence of fundamental groups:
$$
1\to\pi_1(X)=\zz^4\to\pi_1(S)\to\zz/k_S\zz\to1.
$$
The fundamental group $\pi_1(S)$ is non-abelian and can be realized as a non-trivial extension of $\zz^4$ by the finite group $G' = \zz/k_S\zz$ (explicit presentation of $\pi_1(S)$ in terms of generators and relations can be found in \cite[Lemmas 2.1.5-11]{Ume75}). Groups with this structure are called Bieberbach and have been studied extensively; for more details one can see \cite[Chapter~I]{Cha12}.

Note that since the universal covering of $S$ is $\cc^2$ which is contractible it follows that $S$ is aspherical, i.e. it is the $K(\pi_1(S), 1)$-space (all higher homotopy groups of $S$ vanish). Since $S$ is a finite-dimensional $K(\pi_1(S),1)$, its fundamental group is torsion-free by \cite[Proposition~2.45]{Hat02}.

\subsection{Moduli spaces of semistable sheaves}
\label{sec:moduli_spaces}

\subsubsection{Two notions of support}
\label{sec:support}
There are two main notions of support of a sheaf on a surface both of which are needed for our purposes.

\begin{definition}\label{def:scheme_support}
Let $S$ be a smooth projective surface and let $F$ be a coherent sheaf on $S$. The scheme-theoretic support of $F$ is the closed subscheme
$$
\supp(F)\coloneqq V\bigl(\operatorname{Ann}(F)\bigr).
$$
Its underlying closed set is
$$
\{x\in S\mid F_x\neq 0\}.
$$
The dimension of $F$ is the dimension of this support.
\end{definition}

\begin{definition}\label{def:pure_sheaf}
A coherent sheaf $F$ is called pure of dimension $d$ if $\dim(G) = d$ for any non-trivial coherent subsheaf $0 \neq G \subset F$.
\end{definition}

\begin{definition}\label{def:fitting_support}
Let $F$ be a coherent sheaf on a scheme $X$. Its Fitting support
$\fsupp(F)$ is the closed subscheme defined by the zeroth Fitting ideal
$\operatorname{Fitt}_0(F)$.
\end{definition}

If $F$ is pure of dimension one on a smooth projective surface $S$, then it
has a length-one locally free resolution
$$
0\longrightarrow L_1\xrightarrow{a}L_0\longrightarrow F\longrightarrow 0,
$$
where $L_0$ and $L_1$ have the same rank. Locally,
$\operatorname{Fitt}_0(F)$ is generated by $\det a$, so $\fsupp(F)$ is an
effective Cartier divisor.

\begin{lemma}\label{lem:fitting_base_change}
\cite[Lemma C.10]{BBHR09} Let $p : X \to B$ be a morphism of algebraic varieties and $F$ a coherent sheaf on $X$. For every point $s \in B$, the restriction to the fiber $X_s$ satisfies
$$
\fsupp(F)_s = \fsupp(F) \cap X_s \cong \fsupp(F_s).
$$
Thus it is the closed subscheme defined by the same equation as the Fitting support of the restriction sheaf $F_s$.
\end{lemma}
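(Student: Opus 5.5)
The statement is \cref{lem:fitting_base_change}: the Fitting support is compatible with restriction to fibres. The plan is to reduce it to the base-change property of Fitting ideals, which is a purely algebraic fact about determinantal ideals of presentations.

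The first step is local. Since $\operatorname{Fitt}_0$ is defined locally and the statement is local on $X$, I may work on an affine open $\operatorname{Spec}R\subset X$ mapping into an affine open $\operatorname{Spec}A\subset B$. On this open I choose a finite presentation
$$
R^{m}\xrightarrow{\ \varphi\ }R^{n}\longrightarrow M\longrightarrow 0
$$
of $F|_{\operatorname{Spec}R}=\widetilde M$. By definition, $\operatorname{Fitt}_0(M)$ is the ideal generated by the $n\times n$ minors of $\varphi$. The standard fact, from Fitting's lemma, that this ideal does not depend on the chosen presentation shows that these local ideals glue to $\operatorname{Fitt}_0(F)$.

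The second step is the base change. For $s\in B$ with residue field $k(s)$, the fibre is locally $\operatorname{Spec}(R\otimes_A k(s))$. Right exactness of $-\otimes_A k(s)$ gives a presentation
$$
(R\otimes_A k(s))^m\xrightarrow{\ \varphi\otimes 1\ }(R\otimes_A k(s))^n\longrightarrow M\otimes_A k(s)\longrightarrow 0
$$
of $F_s$. The $n\times n$ minors of $\varphi\otimes 1$ are exactly the images of the minors of $\varphi$. Hence
$$
\operatorname{Fitt}_0(M\otimes_A k(s))=\operatorname{Fitt}_0(M)\cdot (R\otimes_A k(s)).
$$

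The third step turns this identity of ideals into the scheme-theoretic statement. The subscheme of $X_s$ cut out by the extended ideal is, by definition, the scheme-theoretic intersection $V(\operatorname{Fitt}_0(F))\times_X X_s=\fsupp(F)\cap X_s$. Since $X_s=X\times_B\operatorname{Spec}k(s)$, this intersection is also the fibre $\fsupp(F)_s$ of $\fsupp(F)\to B$. Gluing over affine opens then gives $\fsupp(F)_s=\fsupp(F)\cap X_s\cong\fsupp(F_s)$, and locally the defining equations are the same minors.

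There is no real obstacle in this argument. The only points needing care are the independence of $\operatorname{Fitt}_0$ from the presentation and the right exactness of the tensor product. The key contrast is with the annihilator support of \cref{def:scheme_support}: annihilators do not commute with base change, whereas Fitting ideals do, and this is exactly why the Fitting support is the right notion here. If a citation is preferred over this argument, the result is \cite[Lemma C.10]{BBHR09}.
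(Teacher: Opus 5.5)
Your proof is correct. The paper gives no argument of its own here and simply cites \cite[Lemma C.10]{BBHR09}; your reasoning is the standard proof of that cited base-change property of Fitting ideals. You pull back a local presentation by right exactness of the tensor product, note that the minors of $\varphi\otimes 1$ are the images of those of $\varphi$, and identify $\fsupp(F)\times_X X_s$ with $\fsupp(F)\times_B\operatorname{Spec}k(s)$. The same argument works verbatim for any base change $T\to B$ and any $\operatorname{Fitt}_j$, not only for restriction to fibres.
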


In other words the Fitting support $\fsupp(F)$ is compatible with base changes. It is worth noting that the scheme-theoretic support $\supp(F)$ is not compatible with base change in general.

\begin{proposition}\label{prop:det_fitting}
\cite[Proposition C.11]{BBHR09} Let $F$ be a pure one-dimensional coherent
sheaf on a smooth projective surface $S$. Then
$$
\det F\cong\mathcal O_S(\fsupp(F)),
\qquad
[\fsupp(F)]=c_1(F).
$$
\end{proposition}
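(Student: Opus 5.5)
The proof is local on $S$ for the first isomorphism, and then a degree computation for the second. Since $F$ is pure of dimension one on the smooth surface $S$, we have, as recalled above, a length-one locally free resolution
$$
0\longrightarrow L_1\xrightarrow{a}L_0\longrightarrow F\longrightarrow 0,
$$
with $\rank L_0=\rank L_1=r$. By multiplicativity of determinants in short exact sequences (Knudsen--Mumford), $\det F\cong\det L_0\otimes(\det L_1)^{-1}$. The map $\det a\colon\det L_1\to\det L_0$ is a global section of $\det L_0\otimes(\det L_1)^{-1}$. It is not identically zero, because $a$ is injective and hence generically an isomorphism: $F$ is torsion, so $a$ is an isomorphism at the generic point of $S$.

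The first key step is to identify the zero scheme of this section with $\fsupp(F)$. Locally on an open set $U$ where $L_0,L_1$ are trivial, $a$ is an $r\times r$ matrix of functions. By definition $\operatorname{Fitt}_0(F)$ is generated by the $r\times r$ minors of a presentation, that is, by $\det a$. This ideal is independent of the chosen presentation (Fitting's lemma), so the local descriptions glue. Hence the zero divisor of the section $\det a$ is exactly the effective Cartier divisor $\fsupp(F)$, and
$$
\mathcal O_S(\fsupp(F))\cong\det L_0\otimes(\det L_1)^{-1}\cong\det F.
$$

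The second statement then follows by taking first Chern classes: $c_1(F)=c_1(\det F)=c_1(\mathcal O_S(\fsupp(F)))=[\fsupp(F)]$ in $H^2(S,\zz)$. Here we use that $c_1$ is additive in exact sequences and that $c_1$ of a locally free sheaf equals $c_1$ of its determinant.

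The only point requiring care is well-definedness: that $\det F$ defined through the resolution does not depend on the resolution, and that the Fitting ideal is intrinsic. Both are standard. The first holds because any two locally free resolutions are dominated by a third, and the second is Fitting's lemma. The remaining input is the existence of the length-one resolution, which uses purity. Since $S$ is a smooth surface, $F$ has homological dimension $2-\operatorname{depth}F_x=1$ at every point, so any locally free surjection $L_0\to F$ has locally free kernel.
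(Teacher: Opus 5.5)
Your proof is correct and takes essentially the same route as the paper. The paper gives no proof of its own: it cites \cite[Proposition C.11]{BBHR09}, having just noted your key point that $\operatorname{Fitt}_0(F)$ is locally generated by $\det a$ for the length-one resolution $0\to L_1\xrightarrow{a}L_0\to F\to 0$. Identifying the zero scheme of the section $\det a$ of $\det L_0\otimes(\det L_1)^{-1}\cong\det F$ with $\fsupp(F)$, and then taking $c_1$, is exactly the standard argument behind that citation.
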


\begin{remark}\label{rem:det_formula}
We use brackets when viewing the effective Cartier divisor $\fsupp(F)$ as
its cohomology class.
\end{remark}

\subsubsection{Slope stability condition for coherent sheaves}
Recall that $(S, H)$ denotes a polarized bielliptic surface and $F$ is a coherent sheaf of pure dimension one on $S$. For our purposes it is enough to consider stability with respect to the slope function
$$
\mu_H(F) = \frac{\chi(F)}{c_1(F) \cdot H},
$$
where $\chi(F)$ denotes the Euler characteristic of $F$ as usual.

\begin{definition}\label{def:slope_stability}
A coherent sheaf $F$ on a polarized surface $(S,H)$ is called slope
$H$-semistable if
$$
\mu_H(G)\leqslant\mu_H(F)
$$
for every nonzero proper coherent subsheaf $0\neq G\subsetneq F$, and slope
$H$-stable if the corresponding strict inequality holds.
\end{definition}

\begin{remark}\label{rem:gieseker_slope}
For pure one-dimensional sheaves, Gieseker stability is equivalent to slope
stability because
$$
p_F(m)=m+\frac{\chi(F)}{c_1(F)\cdot H}.
$$
\end{remark}

\subsubsection{Mukai vector}\label{sec:mukai-vector}
For a coherent sheaf $F$ on a bielliptic surface $S$, we define its Mukai
vector by
$$
\mv(F)
\coloneqq
\left(\rank(F),c_1(F),\ch_2(F)\right)
\in H^0(S,\zz)\oplus H^2(S,\zz)\oplus H^4(S,\zz),
$$
where
$$
\ch_2(F)=\frac{1}{2}c_1^2(F)-c_2(F).
$$
The degree-four component is integral: by \cref{lem:num_div}, the
intersection form on a bielliptic surface is even, and hence
$c_1^2(F)/2\in\zz$.
For a pure dimension one sheaf this simplifies further to
$$
\mv(F)
=
\left(0,c_1(F),\frac{1}{2}c_1^2(F)-c_2(F)\right).
$$
For numerical purposes, we pass from $c_1(F)$ to its class in $\num(S)$. This
gives a class in the algebraic Mukai lattice
\begin{equation}\label{eq:mukai_lattice}
H^*_{\mathrm{alg}}(S,\zz) = H^0(S,\zz) \oplus \num(S) \oplus H^4(S,\zz)
\end{equation}
which, by abuse of notation, we also denote by $\mv(F)$. In this lattice it
is the usual Mukai vector
$$
\mv(F)=\ch(F)\sqrt{\td(S)}.
$$
Indeed, $\td(S)=1$ for a bielliptic surface, so numerically
$
\mv(F)=\ch(F).
$
The Mukai pairing on $H^*_{\mathrm{alg}}(S,\zz)$ is defined by
$$
\langle \mv(F),\mv(E)\rangle
\coloneqq
-\chi(F,E)
\coloneqq
-\sum_i(-1)^i\ext^i(F,E).
$$
By the Hirzebruch--Riemann--Roch theorem,
$$
\langle (r,c,s),(r',c',s')\rangle
=
c\cdot c'-rs'-r's.
$$
We write $\mv^2\coloneqq\langle\mv,\mv\rangle$. A Mukai vector $\mv$ is
called \textit{primitive} if it is primitive numerically, that
is, if it is not divisible as an element of $H^*_{\mathrm{alg}}(S,\zz)$.
If, numerically,
$
\mv=(r,aA_0+bB_0,s),
$
we define
$$
\ell(\mv)
\coloneqq
\gcd\left(r,a,\frac{k_S}{\la_S}b,k_Ss\right).
$$

\begin{remark}\label{rem:mukai_abuse}
The moduli space $M_{H,S}(\mv)$, the determinant morphism, and the condition
of having fixed first Chern class use the actual Mukai vector in
$$
H^0(S,\zz)\oplus H^2(S,\zz)\oplus H^4(S,\zz).
$$
The Mukai pairing, the square $\mv^2$, primitivity, $\ell(\mv)$, coordinates
in the basis $A_0,B_0$, and numerical Fourier--Mukai calculations use its
class in $H^*_{\mathrm{alg}}(S,\zz)$. We use the same notation $\mv$ for both
throughout.
\end{remark}

\subsubsection{Moduli spaces}\label{subsec:moduli-spaces}
Fix a Mukai vector $\mv$ with $\mv^2>0$ on a polarized bielliptic surface $(S,H)$. We denote by
$
M_{H,S}(\mv)
$
the moduli space of Gieseker $H$-semistable coherent sheaves on $S$ with Mukai vector $\mv$. There is a determinant morphism
$$
\det:M_{H,S}(\mv)\longrightarrow \Pic^{c_1(\mv)}(S),
$$
where $\Pic^{c_1(\mv)}(S)$ is a $\Pic^0(S)$-torsor of line bundles on $S$ with the first Chern class equal to $c_1(\mv)$.

Suppose now that $\mv=(0,[C],\chi)$ has rank zero. Since the Fitting support of a coherent sheaf behaves well in families, there is a support morphism
$$
h_{\mathrm{full}}:M_{H,S}(\mv)\longrightarrow \Hilb^{c_1(\mv)}(S)
$$
sending a pure one-dimensional sheaf $F$ to its Fitting support $\fsupp(F)$.
For a curve $C\subset S$ defining a point $[C]\in\Hilb^{c_1(\mv)}(S)$,
the fiber $h_{\mathrm{full}}^{-1}([C])$ is the Simpson moduli space of
$H_C$-semistable sheaves on $C$ with the prescribed invariants.

Assume that the locus of integral curves in the relative linear system
associated with $c_1(\mv)$ is nonempty and irreducible. Over this locus, the
sheaves which are locally free on their support form a relative Picard torsor
and hence an irreducible locus. If $M_{H,S}(\mv)$ is normal, its irreducible
components are pairwise disjoint, so this locus is contained in a unique
irreducible component. We denote this component by $M^\circ_{H,S}(\mv)$ and
call it the distinguished component.

We impose these hypotheses whenever the notation $M^\circ_{H,S}(\mv)$ is used
for a rank-zero vector. They hold in the admissible rank-zero cases considered
below. Indeed, if $\nt(c_1(\mv))\geqslant 3$, the relative complete linear
system is an irreducible projective bundle and its integral locus is a nonempty
open subset. Moreover,
$$
\mv^2=c_1(\mv)^2\geqslant 18\geqslant 3k_S,
$$
so $M_{H,S}(\mv)$ is normal by \cite[Theorem~1.2]{Nue25}.

Set
$
\det^\circ:=\det|_{M^\circ_{H,S}(\mv)}.
$
For $L\in\Pic^{c_1(\mv)}(S)$, denote by $M_{H,S}(\mv,L)$ the
irreducible component of
$
(\det^\circ)^{-1}(L)
$
containing the locus of sheaves with integral Fitting support which are locally
free as sheaves on their support. This component is unique in the cases
considered below. Indeed, \cref{lem:translation-base-change} shows that the
full determinant fiber is normal, so its irreducible components are pairwise
disjoint, while the locally free integral-support locus is irreducible.
When $H,S,\mv$ of rank $0$, and $L$ are fixed and no ambiguity can arise, we denote
$$
M\coloneqq M^\circ_{H,S}(\mv),
\qquad
M(L)\coloneqq M_{H,S}(\mv,L).
$$
Restricting the full support morphism to the distinguished component gives the
Le Potier map
$$
h:=h_{\mathrm{full}}|_{M^\circ_{H,S}(\mv)}:
M^\circ_{H,S}(\mv)\longrightarrow \Hilb^{c_1(\mv)}(S).
$$
Here $\Hilb^{c_1(\mv)}(S)$ is the Hilbert scheme parametrizing subschemes with cohomology class
$
[\fsupp(F)]=c_1(\mv).
$
Its fiber over $[C]$ is the intersection of $M^\circ_{H,S}(\mv)$ with the
corresponding Simpson fiber of $h_{\mathrm{full}}$.

On a bielliptic surface the long exact sequence in cohomology corresponding to the exponential short exact sequence reads as
$$
0 \to \Pic^0(S) \to \Pic(S) \xrightarrow{c_1} H^2(S,\zz) \to 0.
$$
Thus, when the determinant is fixed, $M_{H,S}(\mv,L)$ is the distinguished
irreducible component of the degree-$d$ relative compactified Jacobian of
$|C|$, and its support morphism is
$$
h:M_{H,S}(\mv,L)\longrightarrow |C|,
$$
where
$
d = \chi-\chi(\mathcal{O}_C).
$
Over the locus of integral curves, its fibers are the full compactified
Jacobians.

\begin{remark}\label{rem:grr}
By Grothendieck--Riemann--Roch for $i:C\hookrightarrow S$, if $E$ has rank $r$
and degree $d$, then
$$
\mv(i_*E)=(0,r[C],d+r(1-g)).
$$
Thus the support fiber over an integral curve $\Gamma\in |C|$ is the degree
$\chi-\chi(\mathcal O_\Gamma)$ compactified Jacobian of $\Gamma$.
\end{remark}

The following consequence of \cite[Theorem 1.2]{Yuan2023} will become useful for us later.
\begin{lemma}\label{ass:comp_jacobian}
Let \(S\) be a bielliptic surface, let
$
\mv=(0,c_1(L),\chi)
$
be a primitive Mukai vector, and let \(H\) be an \(\mv\)-generic
polarization. Assume that \(|L|\) contains integral curves, and let
$
h:M_{H,S}(\mv,L)\longrightarrow |L|
$
be the fixed-determinant support morphism. Then every non-empty fiber of \(h\) has dimension at most \(g(L)\). Moreover, over the locus of integral curves the fibers have dimension exactly \(g(L)\).

In particular, if \(Z\subset |L|\) is a closed subset of codimension \(c\), then
$$
\operatorname{codim}_{M_{H,S}(\mv,L)}h^{-1}(Z)\geqslant c
$$
whenever \(h^{-1}(Z)\neq\varnothing\).
\end{lemma}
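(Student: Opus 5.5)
The plan is to deduce the statement from \cite[Theorem~1.2]{Yuan2023}, a bound on the dimension of fibers of the support map for compactified Jacobians and moduli of one-dimensional sheaves. Concretely, for a point $[C]\in|L|$ the fiber $h^{-1}([C])$ lies in the Simpson moduli space of $H$-semistable pure one-dimensional sheaves on $S$ with Fitting support $C$ and Mukai vector $\mv$. Since $H$ is $\mv$-generic and $\mv$ is primitive, every such sheaf is stable. The cited result bounds the dimension of the locus of stable sheaves with fixed Fitting support $C$ by the arithmetic genus $p_a(C)=g(L)$. I would apply that bound directly, noting that it is a local statement about sheaves on the (possibly nonreduced, reducible) curve $C$ embedded in a smooth surface. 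That setting is exactly the hypothesis of \cite{Yuan2023}, so the bielliptic structure plays no role. Intersecting with the component $M_{H,S}(\mv,L)$ can only decrease dimension, which gives $\dim h^{-1}([C])\leqslant g(L)$.

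For integral $C$, \cref{rem:grr} identifies the fiber with the degree $\chi-\chi(\mathcal O_C)$ compactified Jacobian $\overline{J}(C)$. This is irreducible of dimension $g(L)$, because curves on a smooth surface are locally planar (Altman--Iarrobino--Kleiman). It contains the open dense locus of line bundles, which lies in $M_{H,S}(\mv,L)$ by the definition of the distinguished component. Hence the fiber has dimension exactly $g(L)$.

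For the codimension statement, the plan is as follows.
\begin{enumerate}
\item Integral curves exist and form an open subset of the irreducible space $|L|$, so $M_{H,S}(\mv,L)$ dominates $|L|$.
\item Over this open set, the previous paragraph shows that $h$ has equidimensional fibers of dimension $g(L)$.
\item It follows that $\dim M_{H,S}(\mv,L)=\dim|L|+g(L)$, using irreducibility of the component.
\item For closed $Z$ of codimension $c$, take any irreducible component $W$ of $h^{-1}(Z)$. Its image lies in $Z$, and its fibers have dimension at most $g(L)$ by the first part.
\item Therefore $\dim W\leqslant \dim Z+g(L)=\dim M_{H,S}(\mv,L)-c$, which is the claimed codimension bound.
\end{enumerate}

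The main obstacle is making sure the cited theorem applies to every point of $|L|$, including non-reduced and reducible curves. It must also apply to the whole Simpson fiber, not only its stable locus. Primitivity together with $\mv$-genericity should ensure that semistable equals stable. If needed, one can instead bound the dimension of each S-equivalence stratum.
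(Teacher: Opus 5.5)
\textbf{Gap in the first step.} Your outline matches the paper's: Yuan's bound for every support fiber, exactness over integral curves via the irreducible $g(L)$-dimensional compactified Jacobian of a locally planar curve, then $\dim M_{H,S}(\mv,L)=\dim|L|+g(L)$ and the fiber-dimension inequality. The second and third parts are fine as written.

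The problem is the first step. You assert that Yuan's estimate applies to any curve on any smooth surface, so that ``the bielliptic structure plays no role.'' It does not. The estimate is proved under the hypothesis $D_j\cdot K_S\leqslant 0$ for every irreducible component $D_j$ of $D=\sum n_jD_j\in|L|$. Without it, the inequality $\dim h^{-1}(D)\leqslant g(L)$ fails on non-reduced members. For example, let $C$ be a smooth curve of genus $g\geqslant 2$ on a surface with $K\cdot C>0$:
\begin{itemize}
\item pushforwards of stable rank-two bundles on $C$ are stable sheaves with Fitting support $2C$;
\item they form a $(4g-3)$-dimensional family;
\item but $p_a(2C)=1+2C^2+K\cdot C=4g-3-K\cdot C$.
\end{itemize}
So the step you flagged as the main obstacle (non-reduced and reducible members) is not settled by citing Yuan alone. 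What settles it is the canonical-class hypothesis, and this is exactly where the surface enters. Since $K_S$ is torsion, it is numerically trivial, so $D_j\cdot K_S=0$ for all $j$ and the theorem applies.

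\textbf{Stack versus coarse fiber.} The version used in the paper (Yuan's Corollary~1.3) bounds the dimension of the moduli \emph{stack} of sheaves with Fitting support $D$ by $g(L)-1$. The coarse bound $g(L)$ follows because primitivity and $\mv$-genericity make every semistable sheaf stable, with automorphism group $\mathbb{C}^*$. Your remark that semistable equals stable is therefore the right ingredient, but its role is this stack-to-coarse conversion.

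With the $K_S$ check and this adjustment added, your argument coincides with the paper's proof.
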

\begin{proof}
Let \(D\in |L|\). Write
$
D=\sum_{j=1}^s n_jD_j
$
with \(D_j\) pairwise distinct integral curves. Since \(K_S\) is torsion on a bielliptic surface, it is numerically trivial. Hence
$
D_j\cdot K_S=0
$
for every \(j\).

The dimension estimate of \cite[Theorem 1.2]{Yuan2023} for
one-dimensional sheaves on possibly non-reduced curves applies under the
hypothesis
$
D_j\cdot K_S\leqslant 0
$
for all \(j\). More precisely, \cite[Corollary 1.3]{Yuan2023} bounds the
dimension of the moduli-stack fiber by \(g(L)-1\). Since \(\mv\) is
primitive and \(H\) is \(\mv\)-generic, every
\(H\)-semistable sheaf of Mukai vector \(\mv\) is stable and has
automorphism group \(\cc^*\). Hence the dimension of the corresponding
coarse support fiber is one greater than the stack dimension. Moreover,
if \(\fsupp(F)=D\), then \cref{prop:det_fitting} gives
$$
\det F\cong\mathcal O_S(D)\cong L,
$$
so this coarse support fiber is precisely the fixed-determinant fiber
\(h^{-1}(D)\). Therefore \cite[Corollary 1.3]{Yuan2023} gives
$$
\dim h^{-1}(D)\leqslant g(L).
$$

If \(D\) is integral, then the fiber is the degree
$
d=\chi-\chi(\mathcal O_D)
$
compactified Jacobian of \(D\). Since \(D\) is an integral locally planar curve, its compactified Jacobian has dimension \(g(L)\). Thus the bound is sharp over the integral locus.

Finally, if \(Z\subset |L|\) has codimension \(c\), then every irreducible component of \(h^{-1}(Z)\) has dimension at most
$
\dim Z+g(L).
$
Since \(|L|\) contains integral curves, the open locus over the integral part of \(|L|\) is non-empty and has fibers compactified Jacobians of dimension \(g(L)\). Therefore the component \(M_{H,S}(\mv,L)\) has dimension \(\dim |L|+g(L)\).
We get
$
\operatorname{codim}h^{-1}(Z)\geqslant c.
$
\end{proof}

\subsubsection{Smoothness and dimension}
First we prove a technical lemma which will be very useful later.
\begin{lemma}\label{lem:translation-base-change}
Let $(S,H)$ be a polarized bielliptic surface where $S = (A \times B)/G$, and let
$
\mv=(0,c_1(L),\chi)
$
be a rank-zero Mukai vector with \(\mv^2>0\), and assume that \(c_1(L)\) is represented by an effective divisor.
Then the morphism
$$
\rho_L:A\longrightarrow \Pic^{c_1(\mv)}(S),
\qquad
a\longmapsto t_a^*L,
$$
is finite étale and surjective. Moreover, translation induces an isomorphism
$$
A\times_{\Pic^{c_1(\mv)}(S)} M
\cong
A\times(\det^\circ)^{-1}(L),
\qquad
(a,F)\longmapsto(a,t_{-a}^*F),
$$
whose inverse is
$
(a,E)\longmapsto(a,t_a^*E).
$
Consequently, if \(N\) is an irreducible component of \((\det^\circ)^{-1}(L)\), then
$
\dim N=\dim M-1.
$
Furthermore:
\begin{enumerate}
\item if \(M\) is smooth, then \(N\) is smooth;
\item if \(M\) is normal, then the full determinant fiber
$(\det^\circ)^{-1}(L)$ is normal. In particular, its irreducible components
are pairwise disjoint and normal;
\item if \(M\) has terminal l.c.i. singularities and torsion canonical bundle, then \(N\) has terminal l.c.i. singularities and torsion canonical bundle.
\end{enumerate}
\end{lemma}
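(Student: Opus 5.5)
The plan is to use the fact that $A$ acts on $S=(A\times B)/G$ by translation in the first factor. Since $G$ acts on $A$ by translations, $A$ is commutative and the action $a\cdot[x,y]=[x+a,y]$ is well defined. Each $t_a$ is an automorphism of $S$ that is isotopic to the identity, so $t_a^*$ preserves $c_1$. Moreover $t_a$ preserves the numerical class of $H$, and hence preserves $H$-semistability and Mukai vectors. So $a\mapsto t_a^*L$ lands in $\Pic^{c_1(\mv)}(S)$, and $F\mapsto t_a^*F$ defines an action of $A$ on $M_{H,S}(\mv)$ with $\det(t_a^*F)=t_a^*\det F$.

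\textbf{Step 1: the map $\rho_L$.} The map $\rho_L$ is $a\mapsto t_a^*L\otimes L^{-1}$ followed by translation by $L$; the first part is a morphism of abelian varieties $\phi_L\colon A\to\Pic^0(S)$. Both have dimension one, since $q(S)=1$. So it suffices to show that $\phi_L$ is nonconstant. A surjective homomorphism of elliptic curves is an isogeny, hence finite étale in characteristic zero. To see nonconstancy, pull back to $A\times B$. There $t_a^*L\otimes L^{-1}$ becomes the Mumford map $\phi_{\tilde L}(a,0)$, and $\tilde L$ has class $\alpha A+\beta B$ with $\beta>0$ because $c_1(L)$ is effective and $\mv^2=c_1(L)^2>0$ forces $a,b>0$. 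Restricting to a fiber $A\times\{y\}$, $\tilde L$ has positive degree, so $x\mapsto t_x^*\tilde L|_{A}\otimes\tilde L|_A^{-1}$ is an isogeny of $A$ and in particular nonconstant. Since pullback $\Pic^0(S)\to\Pic^0(A\times B)$ has finite kernel, $\phi_L$ is nonconstant too.

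\textbf{Step 2: the isomorphism.} The map $(a,F)\mapsto(a,t_{-a}^*F)$ sends pairs with $\det F=t_a^*L$ to pairs with $\det=L$. It is given on families by pulling back along the automorphism $(a,s)\mapsto(a,t_{-a}s)$ of $A\times S$, so it is a morphism of moduli functors and hence of coarse spaces. The same holds for its evident inverse. One also checks that $t_a^*$ preserves the distinguished component $M$: the locus $M$ is connected under the action of the connected group $A$ and is a component, so it is $A$-stable. Hence the correspondence restricts to $\det^\circ$.

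\textbf{Step 3: consequences.} Since $\rho_L$ is finite étale surjective, the projection $A\times_{\Pic}M\to M$ is finite étale surjective. Therefore $\dim M=\dim(A\times(\det^\circ)^{-1}(L))=1+\dim N$, and smoothness, normality, being l.c.i. and terminality all descend and ascend along étale maps and products with the smooth curve $A$. Normality of $A\times Y$ is equivalent to normality of $Y$, and normal schemes have disjoint irreducible components. For the canonical bundle, $\omega_{A\times Y}=\mathrm{pr}^*\omega_Y$, and its restriction to $\{a\}\times Y$ recovers $\omega_Y$. Torsion of $\omega_M$ pulls back to torsion on the étale cover, which gives torsion of $\omega_N$. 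Terminality of $Y$ follows from that of $A\times Y$ by taking a product resolution.

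\textbf{Main obstacle.} The delicate point is Step 1. One has to make the descent of the $A$-action to $S$ and its compatibility with $c_1$ precise. One also has to check carefully that the effectivity and positivity hypotheses really force $\phi_L$ to be nonconstant, through the pullback to $A\times B$.
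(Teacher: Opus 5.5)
Your proposal is correct and follows essentially the same route as the paper. The map $a\mapsto t_a^*L\otimes L^{-1}$ is shown to be a nonconstant map of elliptic curves by pulling back to $A\times B$ and restricting to $A\times\{y\}$. Translation then gives the isomorphism $A\times_{\Pic^{c_1(\mv)}(S)}M\cong A\times(\det^\circ)^{-1}(L)$. Finally, smoothness, normality, terminal l.c.i.\ singularities and torsion canonical bundle are transferred through the finite \'etale base change to $M$ and the smooth projection $A\times N\to N$.

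There is one minor difference. You get $A$-stability of $M$ from the connectedness of $A$, since a connected group preserves each irreducible component. The paper instead observes that translation preserves integral Fitting support and local freeness on the support.

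For the claim $\dim N=\dim M-1$ for every component $N$, you should add that $A\times_{\Pic^{c_1(\mv)}(S)}M$ is equidimensional, because it is \'etale over the irreducible $M$.
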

\begin{proof}
Write
$
c_1(L)\equiv aA_0+bB_0.
$
Since \(c_1(L)\) is represented by an effective divisor, \cref{lem:num_div} gives \(a,b\geqslant 0\). Since
$
\mv^2=c_1(L)^2=2ab>0,
$
we have \(a,b>0\). Hence \(L\) is ample by \cref{lem:num_div}.

Translations of the first factor of \(A\times B\) induce automorphisms
$$
t_a:S\to S,
\qquad
a\in A.
$$
Since \(A\) is connected, these automorphisms act trivially on cohomology and preserve the numerical class of \(H\). Thus pullback preserves the Mukai vector \(\mv\) and \(H\)-semistability. It also preserves integral Fitting support and local freeness on the support, and therefore preserves the distinguished component $M^\circ_{H,S}(\mv)$.

Consider
$$
\varphi_L:A\longrightarrow\Pic^0(S),
\qquad
a\longmapsto t_a^*L\otimes L^{-1}.
$$
The pullback of \(L\) to the finite cover \(A\times B\to S\) is ample, so its restriction to \(A\times\{b\}\) has positive degree. Hence the induced polarization homomorphism on \(A\) is nonzero, and therefore \(\varphi_L\) is nonzero. Since both \(A\) and \(\Pic^0(S)\) are elliptic curves, \(\varphi_L\) is an isogeny. Consequently
$$
\rho_L:A\to P,
\qquad
a\mapsto t_a^*L,
$$
is finite étale and surjective.

The restricted determinant morphism $\det^\circ:M\to P$ is surjective. Indeed, choose $F_0$ in the distinguished locus and set $L_0:=\det(F_0)$. The same argument shows that
$$
A\longrightarrow P,
\qquad
a\longmapsto t_a^*L_0,
$$
is surjective. Since translation preserves $M$, the determinants of the translates $t_a^*F_0$ run through all of $P$.

Moreover, translation induces an isomorphism
$$
A\times_P M \xrightarrow{\ \sim\ } A\times (\det^\circ)^{-1}(L),
\qquad
(a,F)\longmapsto (a,t_{-a}^*F),
$$
whose inverse is
$
(a,E)\longmapsto (a,t_a^*E).
$
Indeed, if $(a,F)\in A\times_P M$, then $\det(F)\simeq t_a^*L$,
and hence 
$
\det(t_{-a}^*F)
\simeq t_{-a}^*\det(F)
\simeq t_{-a}^*t_a^*L
\simeq L.
$

Conversely, if $\det(E)\simeq L$, then
$\det(t_a^*E)\simeq t_a^*L=\rho_L(a)$,
so that $(a,t_a^*E)\in A\times_P M$. These two maps are mutually inverse.

Assume that $M$ is normal. Since $A\times_P M\to M$ is finite étale,
$A\times_P M$ is normal. Under the displayed isomorphism, this says that
$
A\times(\det^\circ)^{-1}(L)
$
is normal. The projection
$
A\times(\det^\circ)^{-1}(L)
\longrightarrow(\det^\circ)^{-1}(L)
$
is smooth and surjective, and normality descends under smooth surjective
morphisms. Hence the full determinant fiber $(\det^\circ)^{-1}(L)$ is normal.
The irreducible components of a normal scheme are disjoint, so each of them is
both open and closed and is itself normal.

Let \(N\) be an irreducible component of \((\det^\circ)^{-1}(L)\). Then
\(A\times N\) is an irreducible component of
\(A\times(\det^\circ)^{-1}(L)\). Since \(A\times_P M\to M\) is finite
étale, smoothness and terminal l.c.i. singularities pass from \(M\) to
\(A\times_P M\), and hence to its irreducible component \(A\times N\).
These properties then descend from \(A\times N\) to \(N\) because
$
A\times N\to N
$
is smooth and surjective. Assume in addition that the canonical bundle of
$M$ is torsion. Since
$A\times_P M\to M$ is finite étale, the canonical bundle of $A\times_P M$
is torsion. Under the isomorphism
$
A\times_P M\simeq A\times(\det^\circ)^{-1}(L),
$
the component corresponding to $N$ is $A\times N$, so $K_{A\times N}$ is
torsion. Since $A$ is an elliptic curve, $K_A\simeq\mathcal O_A$ and
$
K_{A\times N}\simeq \operatorname{pr}_N^*K_N.
$
Restricting a torsion trivialization to a slice $\{a\}\times N$ shows that
$K_N$ is torsion.

The dimension statement follows from
$
\dim(A\times N)=\dim(A\times_P M)=\dim M
$
and \(\dim A=1\).
\end{proof}

\begin{theorem}\label{thm:smooth_dim}
Let $(S,H)$ be a polarized bielliptic surface, and let $\pi:X\to S$ be the canonical cover. Let $\mv$ be a primitive Mukai vector with $\mv^2>0$, and let $N$ be an irreducible component of $M_{H,S}(\mv)$. Assume that $\ell(\mv)=1$, that $H$ is $\mv$-generic, and that $\pi^*H$ is $\pi^*\mv$-generic. Then $N$ is a smooth irreducible projective variety of dimension $\mv^2+1$.

Moreover, assume that $\mv=(0,c_1(L),\chi)$ has rank zero, and $L \in \pic^{c_1(\mv)}(S)$ is
ample. Then
$M_{H,S}(\mv,L)$ is smooth, irreducible, and projective of dimension $\mv^2$.
\end{theorem}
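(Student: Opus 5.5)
We will prove smoothness by deformation theory on $S$, passing to the canonical cover $\pi\colon X\to S$ to control the obstruction space.

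For a stable sheaf $F$ with $\mv(F)=\mv$, the tangent space to $M_{H,S}(\mv)$ at $[F]$ is $\Ext^1(F,F)$. Obstructions lie in the traceless part of $\Ext^2(F,F)$. By Serre duality,
$$
\Ext^2(F,F)\cong\Hom(F,F\otimes K_S)^\vee .
$$
Since $\mv$ is primitive and $H$ is $\mv$-generic, every semistable sheaf is stable. Hence $\Hom(F,F)=\cc$, and $\Hom(F,F\otimes K_S)$ is either $0$ or $\cc$. In the latter case the nonzero map is an isomorphism $F\cong F\otimes K_S$, because both sheaves are stable with the same reduced Hilbert polynomial.

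The key step is to rule out $F\cong F\otimes K_S$ using $\ell(\mv)=1$. If $F\cong F\otimes K_S$, then $F$ is invariant under twisting by the character line bundle of the cyclic cover $\pi$. By descent for the $\zz/k_S\zz$-cover, $F\cong\pi_*E$ for some sheaf $E$ on $X$; if the stabilizer of the twist is a proper subgroup, we use the intermediate cover instead. Then
$$
\mv(F)=\pi_*\mv(E).
$$
We will show that pushforwards from $X$, or from an intermediate cover, have numerical Mukai vectors divisible by some prime $p\mid k_S$ in the sense of the invariant $\ell$. Concretely, we compute $\pi_*$ on $\num(X)$ using the classes $A_X,B_X$ and the basis $A_0,B_0$: the rank and $\ch_2$ components get multiplied by $k_S$, and the $c_1$ coordinates are constrained. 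This should yield
$$
\gcd\Bigl(r,a,\tfrac{k_S}{\la_S}b,k_Ss\Bigr)>1,
$$
contradicting $\ell(\mv)=1$. This lattice computation, together with the careful treatment of intermediate subcovers when $k_S\in\{4,6\}$, is the step I expect to be the main obstacle. An alternative route avoids it: $\pi^*F$ is a direct sum of Galois conjugates, and $\pi^*H$ being $\pi^*\mv$-generic together with the primitivity of $\pi^*\mv$ (which should follow from $\ell(\mv)=1$) forces $\pi^*F$ to be stable, hence simple. But if $F\cong F\otimes K_S$, then
$$
\Hom(\pi^*F,\pi^*F)=\bigoplus_{i}\Hom(F,F\otimes K_S^{i})
$$
has dimension at least $2$, a contradiction. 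I would pursue this second route first.

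Granting $\Hom(F,F\otimes K_S)=0$, we get $\Ext^2(F,F)=0$, so $M_{H,S}(\mv)$ is smooth at $[F]$. Its dimension is
$$
\ext^1(F,F)=\mv^2+\hom(F,F)+\ext^2(F,F)=\mv^2+1,
$$
using $\chi(F,F)=-\mv^2$. Projectivity holds because every semistable sheaf is stable, so the Gieseker moduli space is projective. Since $M_{H,S}(\mv)$ is smooth, its connected components are irreducible, so any irreducible component $N$ is a smooth projective variety of dimension $\mv^2+1$.

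For the rank-zero statement, we apply the above to $M=M^\circ_{H,S}(\mv)$. By \cref{lem:translation-base-change}, every irreducible component of $(\det^\circ)^{-1}(L)$ is smooth of dimension $\dim M-1=\mv^2$, and these components are disjoint. This applies in particular to $M_{H,S}(\mv,L)$, which is irreducible by definition and is projective as a closed subscheme of a projective variety. Ampleness of $L$ ensures that $c_1(L)$ is effective, as required by \cref{lem:translation-base-change}.
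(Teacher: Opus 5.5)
Your second route is the paper's proof. Stability of $\pi^*F$, which comes from primitivity of $\pi^*\mv$ via $\ell(\mv)=1$ together with genericity of $\pi^*H$, combines with $\Hom_X(\pi^*F,\pi^*F)\cong\bigoplus_{i}\Hom_S(F,F\otimes K_S^{-i})$ to force $\Hom_S(F,F\otimes K_S)=0$, hence $\Ext^2_S(F,F)=0$ by Serre duality; the Riemann--Roch dimension count and the reduction of the fixed-determinant case to \cref{lem:translation-base-change} also match the paper, so the descent and lattice computation you flagged as the main obstacle is not needed.
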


\begin{remark}\label{rem:yoshioka_walls}
Since $\pi$ is finite, $\pi^*H$ is ample. The assumption that $\pi^*H$ is
$\pi^*\mv$-generic ensures that $\pi^*H$-semistability coincides with
$\pi^*H$-stability. The assumption $\ell(\mv)=1$ implies that both
$\pi^*\mv$ and $\mv$ are primitive \cite[Lemma~2.3]{Nue25}.
\end{remark}

\begin{proof}
Since \(H\) is \(\mv\)-generic, every \(H\)-semistable sheaf of Mukai vector \(\mv\) is stable. Let $F\in M_{H,S}(\mv)$. We first prove smoothness of \(M_{H,S}(\mv)\). The pullback of an \(H\)-semistable sheaf under the finite étale cover \(\pi\) is \(\pi^*H\)-semistable. By Nuer's Lemma 2.3, the condition \(\ell(\mv)=1\) implies that \(\pi^*\mv\) is primitive. Since \(\pi^*H\) is \(\pi^*\mv\)-generic, \(\pi^*H\)-semistability coincides with \(\pi^*H\)-stability for sheaves of Mukai vector \(\pi^*\mv\). Hence \(\pi^*F\) is stable. In particular,
$
\Hom_X(\pi^*F,\pi^*F)=\cc.
$
The canonical cover is the cyclic cover associated with the torsion canonical bundle. Thus
$$
\pi_*\mathcal O_X\simeq \bigoplus_{i=0}^{k_S-1}K_S^{-i}.
$$
By the projection formula,
$$
\pi_*\pi^*F
\simeq
F\otimes\pi_*\mathcal O_X
\simeq
\bigoplus_{i=0}^{k_S-1}F\otimes K_S^{-i}.
$$
Therefore
$$
\Hom_X(\pi^*F,\pi^*F)
\simeq
\Hom_S(F,\pi_*\pi^*F)
\simeq
\bigoplus_{i=0}^{k_S-1}
\Hom_S(F,F\otimes K_S^{-i}).
$$
The summand with \(i=0\) is
$
\Hom_S(F,F)=\cc,
$
because \(F\) is stable. Since the whole direct sum is already one-dimensional, all the other summands vanish. In particular,
$
\Hom_S(F,F\otimes K_S)=0.
$
By Serre duality,
$$
\Ext^2_S(F,F)
\simeq
\Hom_S(F,F\otimes K_S)^\vee
=0.
$$
Thus the obstruction space vanishes at every point of \(M_{H,S}(\mv)\), and \(M_{H,S}(\mv)\) is smooth.

Since \(F\) is stable, we have $\ext^0(F,F)=1$. Moreover, Riemann--Roch gives $\chi(F,F)=-\langle\mv,\mv\rangle=-\mv^2$.
Using \(\Ext^2(F,F)=0\), we obtain
$$
\dim \Ext^1(F,F)
=
\ext^0(F,F)-\chi(F,F)
=
1+\mv^2.
$$
Therefore every irreducible component of $M_{H,S}(\mv)$ has dimension $\mv^2+1$.
Projectivity follows from Simpson's construction of the Gieseker moduli space
\cite[Theorem~1.21(2)]{Simpson94}.
In particular, $N$ is a smooth irreducible projective variety of dimension $\mv^2+1$.

Now assume that $\mv=(0,c_1(L),\chi)$ has rank zero and that \(L\) is ample. We prove the fixed-determinant statement.
Since \(L\) is ample, \cref{lem:num_div} gives \(h^0(S,L)>0\), so
\(c_1(L)\) is represented by an effective divisor. Hence
\cref{lem:translation-base-change} applies and shows that
$M_{H,S}(\mv,L)$ is smooth and
$$
\dim M_{H,S}(\mv,L)=\dim M^\circ_{H,S}(\mv)-1=\mv^2.
$$
Its irreducibility and projectivity follow from its definition as an
irreducible component of the projective moduli space.
\end{proof}

\begin{corollary}\label{cor:rank-zero-terminal-lci}
Let $(S,H)$ be a polarized bielliptic surface, let
$
\mv=(0,c_1(L),\chi)
$
be a primitive Mukai vector, and assume that $H$ is $\mv$-generic and
$\nt(L)\geqslant 3$. Then $M^\circ_{H,S}(\mv)$ and
$M_{H,S}(\mv,L)$ are normal projective varieties with terminal l.c.i.
singularities and torsion canonical bundle.
\end{corollary}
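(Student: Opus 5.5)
The corollary will be deduced from Nuer's structure theorem together with \cref{lem:translation-base-change}. First I check the numerical hypothesis of \cite[Theorem~1.2]{Nue25}. Write $c_1(L)\equiv aA_0+bB_0$. Then $\nt(L)\geqslant 3$ gives $a,b\geqslant 3$, and therefore
$$
\mv^2=c_1(L)^2=2ab\geqslant 18\geqslant 3k_S,
$$
because $k_S\leqslant 6$ by \cref{tab:invariants}. Since $H$ is $\mv$-generic and $\mv^2>0$, \cite[Theorem~1.2]{Nue25} applies. It says that $M_{H,S}(\mv)$ is normal and Gorenstein with at worst terminal l.c.i. singularities, and that its canonical bundle is torsion. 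Projectivity comes from Simpson's construction, as in \cref{thm:smooth_dim}.

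Next I pass to the distinguished component. By \cref{lem:bertini_results}, the locus of integral curves is nonempty and irreducible, so the hypotheses in \cref{subsec:moduli-spaces} hold and $M^\circ_{H,S}(\mv)$ is defined. Because $M_{H,S}(\mv)$ is normal, its irreducible components are pairwise disjoint. Hence $M^\circ_{H,S}(\mv)$ is a connected component, and so it is open and closed. Being a normal projective variety, terminal, and l.c.i. are all local properties, so $M^\circ_{H,S}(\mv)$ inherits them. The canonical bundle of $M^\circ_{H,S}(\mv)$ is the restriction of that of $M_{H,S}(\mv)$, so it is again torsion.

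For the fixed-determinant space, I note that $\nt(L)\geqslant 3$ forces $a,b>0$. So $L$ is ample by \cref{lem:num_div}, and $c_1(L)$ is represented by an effective divisor. Thus \cref{lem:translation-base-change} applies with $M=M^\circ_{H,S}(\mv)$. Part~(2) of that lemma shows that $(\det^\circ)^{-1}(L)$ is normal, with disjoint normal components, and $M_{H,S}(\mv,L)$ is one of these components. Part~(3) transfers the terminal l.c.i. property and the torsion canonical bundle to $M_{H,S}(\mv,L)$. Projectivity holds because $M_{H,S}(\mv,L)$ is a closed subvariety of a projective variety.

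The only point needing care is that Nuer's torsion-canonical-bundle statement concerns the whole moduli space. The restriction to a connected component handles this, as explained above. I expect no real obstacle: the corollary is a bookkeeping combination of the cited results.
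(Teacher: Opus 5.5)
Your proposal is correct and follows essentially the same route as the paper. You check $\mv^2=2ab\geqslant 18\geqslant 3k_S$, apply \cite[Theorem~1.2]{Nue25} to the full moduli space, and pass to the distinguished component; then you use ampleness of $L$ to invoke \cref{lem:translation-base-change} for $M_{H,S}(\mv,L)$, and your remark that normality makes $M^\circ_{H,S}(\mv)$ open and closed merely spells out a step the paper leaves implicit.
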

\begin{proof}
Write $c_1(L)\equiv aA_0+bB_0$. The numerical-threshold assumption gives
$a,b\geqslant 3$, and hence
$$
\mv^2=L^2=2ab\geqslant 18\geqslant 3k_S.
$$
By \cite[Theorem~1.2]{Nue25}, the full moduli space $M_{H,S}(\mv)$ is
normal and has terminal l.c.i. singularities and a torsion Cartier canonical
divisor. Therefore its distinguished irreducible component
$M^\circ_{H,S}(\mv)$ has the same properties. Moreover, $L$ is ample by
\cref{lem:num_div}, so $c_1(L)$ is represented by an effective divisor.
Thus \cref{lem:translation-base-change} gives the corresponding properties
for $M_{H,S}(\mv,L)$. Projectivity follows because both spaces are
irreducible components of projective moduli spaces.
\end{proof}

\section{Orbits of Mukai vectors}\label{sec:orbits}
\subsection{Admissible Mukai vectors and reduction from positive rank to rank zero}\label{subsec:admissible}

This subsection establishes a large class of positive-rank Mukai vectors whose moduli spaces are birationally reducible to rank-zero cases.

Every Fourier--Mukai autoequivalence acts both on the full Grothendieck group, and hence on the actual Chern data, and on the algebraic Mukai lattice. All coordinate formulas in this section describe the numerical action. Thus, when we write $\Phi_*(\mv)=(r,D,s)$ numerically, $D$ is the numerical class of the actual first Chern class of the transformed vector. Its torsion component is retained in the actual Mukai vector but does not affect the rank, Mukai square, numerical threshold, or the invariant $\ell$.

Before proceeding, we introduce the following definition which significantly simplifies notations in later considerations.

\begin{definition}\label{def:admissible-vector}
A Mukai vector $\mv=(R,c_1(L),s)$ on a bielliptic surface \(S\) is called \textit{admissible} if either $R=0$ and $\nt(c_1(L))\geqslant 3$, or \(R>0\) and there exists a derived autoequivalence $\Phi\in\operatorname{Auteq}D^b(S)$ such that \(\Phi_*(\mv)\) has rank zero and $\nt(c_1(\Phi_*(\mv)))\geqslant 3$.
\end{definition}

\begin{remark}
The condition \(\nt(c_1)\geqslant 3\) is stronger than the very-ampleness condition needed for some intermediate arguments. We use it in the definition of admissibility because this is precisely the hypothesis under which the rank-zero Betti-number computations in \cref{thm:b1-b2-moduli} are proved. By \cref{lem:bertini_results}, \(\nt(c_1)\geqslant 3\) implies the required very-ampleness and Lefschetz properties of the corresponding linear system.
\end{remark}

An immediate and the most important use of the definition is the following lemma.

\begin{lemma}\label{lem:admissible-positive-rank-to-rank-zero}
Let $S$ be a bielliptic surface, and let $\mv=(R,aA_0+bB_0,s)$ be a primitive admissible Mukai vector with $R>0$. Let $H$ be a
$\mv$-generic polarization. Then there exist a primitive rank-zero Mukai vector
$
\mw=(0,D,\chi),\, \nt(D)\geqslant 3,
$
a $\mw$-generic polarization $H'$, and an irreducible component
$
N\subset M_{H,S}(\mv)
$
such that
$$
N\dashrightarrow M^\circ_{H',S}(\mw)
$$
is birational.
\end{lemma}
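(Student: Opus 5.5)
By admissibility there is an autoequivalence $\Phi\in\operatorname{Auteq}D^b(S)$ with $\Phi_*(\mv)=\mw=(0,D,\chi)$ and $\nt(D)\geqslant 3$. The plan is to push the moduli problem for $\mv$ through $\Phi$ into a Bridgeland stability manifold, and then use wall crossing to reach the Gieseker chamber for $\mw$.

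First, the numerical conditions. Since $\Phi_*$ is an isometry of $H^*_{\mathrm{alg}}(S,\zz)$, the vector $\mw$ is primitive and $\mw^2=\mv^2=2ab'>0$ if $D\equiv a'A_0+b'B_0$. Because the set of $\mw$-walls in the ample cone is locally finite, we may choose a polarization $H'$ off every $\mw$-wall, which makes $H'$ $\mw$-generic.

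Next, the Bridgeland input. Results of Nuer, extending Bridgeland and Yoshioka from the abelian/K3 setting to bielliptic surfaces, provide a stability manifold $\mathrm{Stab}(S)$ with a distinguished component on which $\Phi$ acts. Two facts about this component will be used:
\begin{itemize}
\item Gieseker $H$-stability for $\mv$ coincides with $\sigma$-stability for $\sigma=\sigma_{\beta,\omega}$ in the large volume limit chamber, with $\omega=tH$ and $t\gg0$.
\item For primitive vectors, the moduli spaces $M_\sigma(\mv)$ with $\sigma$ $\mv$-generic are projective and $K$-trivial up to torsion, and crossing a wall induces birational maps between them.
\end{itemize}

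Apply $\Phi$ to get $M_{\sigma}(\mv)\cong M_{\Phi\sigma}(\mw)$. Then join $\Phi\sigma$ by a generic path to the Gieseker chamber $\sigma'$ of $\mw$ with respect to $H'$. Along this path only finitely many walls are crossed. Each crossing induces a birational map on the component containing a stable object that stays stable, and the composite gives a birational map $N\dashrightarrow M_{H',S}(\mw)$ for a suitable irreducible component $N\subset M_{H,S}(\mv)$.

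The image component must then be identified with $M^\circ_{H',S}(\mw)$. The generic point of the target component is what matters here. Normality of $M_{H',S}(\mw)$ follows from \cite[Theorem~1.2]{Nue25}, since $\mw^2\geqslant 18\geqslant 3k_S$, so its irreducible components are disjoint. One then chooses $N$ to be the component corresponding, under the chain of birational maps, to $M^\circ_{H',S}(\mw)$. Concretely, start from a sheaf locally free on an integral curve in $|D|$, which is stable for every $\sigma$ near the Gieseker chamber. Follow it back along the path, which is possible because the stable locus is open and walls affect only proper closed subsets of each component. Finally apply $\Phi^{-1}$ to obtain a point of $M_{H,S}(\mv)$.

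The main obstacle is the wall-crossing step: we must ensure that no wall is totally semistable for the component in question, so that each wall induces a genuine birational map rather than contracting the whole component. I would handle this via the numerical description of walls by isotropic and negative classes in the rank-$2$ hyperbolic lattice $H^*_{\mathrm{alg}}$, in the style of Bayer--Macr\`i and Nuer. Since $\mv^2>0$ and $\mathrm{Num}(S)$ has no $(-2)$-classes (the form is even with no effective negative curves), the only divisorial or totally semistable walls come from isotropic classes. For those, the general object of the component remains stable because its destabilizing loci have positive codimension, by dimension counts like \cref{ass:comp_jacobian}. If this fails, I would fall back on composing $\Phi$ with shifts and tensoring by line bundles so that $\Phi\sigma$ already lies in the Gieseker chamber.
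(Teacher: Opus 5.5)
Your proposal follows the paper's proof. You transport $M_{H,S}(\mv)$ through $\Phi$ to $M_{\Phi_*\sigma,S}(\mw)$, use Nuer's wall-crossing theorem \cite[Theorem~1.3]{Nue25} to reach a Gieseker chamber for a $\mw$-generic $H'$, and trace the distinguished component $M^\circ_{H',S}(\mw)$ back to a component $N\subset M_{H,S}(\mv)$.

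The paper lets that citation carry the wall-by-wall birationality, so your final paragraph on totally semistable walls is unnecessary. It should also not be relied on as written: $H^*_{\mathrm{alg}}(S,\zz)$ has rank four, and $\num(S)$ does contain $(-2)$-classes such as $A_0-B_0$. Moreover, \cref{ass:comp_jacobian} bounds support fibers rather than Bridgeland destabilizing loci. Finally, twists and shifts carry Gieseker chambers to Gieseker chambers, so your proposed fallback does not help.
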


\begin{proof}
By admissibility, there exists a Fourier--Mukai autoequivalence $\Phi\in\operatorname{Auteq}D^b(S)$ such that $\mw\coloneqq\Phi_*(\mv)$ has rank zero and \(c_1(\mw)=D\) satisfies $\nt(D)\geqslant 3$. Since $\Phi_*$ is an isometry of the Mukai lattice, $\mw$ is primitive and $\mw^2=\mv^2$.

Let $\sigma$ be a generic Bridgeland stability condition in the Gieseker
chamber corresponding to the $\mv$-generic polarization $H$. Then
$$
M_{\sigma,S}(\mv)\cong M_{H,S}(\mv).
$$
The autoequivalence $\Phi$ acts on the stability manifold and identifies
moduli spaces:
$$
M_{\sigma,S}(\mv)\cong M_{\Phi_*\sigma,S}(\mw).
$$
Indeed, an object $E\in D^b(S)$ is $\sigma$-stable of class $\mv$ if and
only if $\Phi(E)$ is $\Phi_*\sigma$-stable of class $\mw$.

The stability condition $\Phi_*\sigma$ need not lie in a Gieseker chamber
for $\mw$. However, Nuer's wall-crossing theorem
\cite[Theorem 1.3]{Nue25} implies that moduli spaces corresponding to
generic Bridgeland stability conditions for the same primitive Mukai
vector are related by wall crossing. Choose a generic stability condition
$\sigma'$ in a Gieseker chamber corresponding to a $\mw$-generic
polarization $H'$. At each wall, restrict the wall-crossing correspondence
to the common dense stable locus of the chosen component and define the
corresponding component on the adjacent side as the closure of its image.
Tracing the distinguished component
$$
M^\circ_{H',S}(\mw)\cong M^\circ_{\sigma',S}(\mw)
$$
backwards through these wall crossings and through the equivalence induced by
$\Phi$ determines an irreducible component
$$
N\subset M_{H,S}(\mv)
$$
birational to $M^\circ_{H',S}(\mw)$.
\end{proof}

For every positive-rank admissible Mukai vector $\mv$ and every
$\mv$-generic polarization $H$, fix the reduction data and
the irreducible component $N$ supplied by
\cref{lem:admissible-positive-rank-to-rank-zero}. By abuse of notation, we set
$$
M^\circ_{H,S}(\mv)\coloneqq N
$$
and also call it the distinguished component. We also set
$$
\det^\circ\coloneqq\det|_{M^\circ_{H,S}(\mv)}.
$$
All statements involving the positive-rank distinguished component are
understood with respect to these fixed choices.

Now we present a large class of positive-rank Mukai vectors whose
Fourier--Mukai orbits contain rank-zero vectors. To conclude admissibility
in the sense of \cref{def:admissible-vector}, one still has to ensure that
the resulting rank-zero first Chern class has numerical threshold at least
\(3\); this will be addressed after the rank-zero reduction results. We start with the
following lemma.
\begin{lemma}\label{lem:starting-vector-exceptional-congruences}
Let $S$ be a bielliptic surface, and let
$
\mv=(R,aA_0+bB_0,s)
$
be a primitive positive-rank Mukai vector with $\mv^2>0$.
Assume the following:
$$
\begin{array}{c|l}
\text{type of }S&\text{condition on }(R,a,b)\\
\hline
1,3,5,7 & \text{no extra condition},\\
2&R\equiv0\pmod 4,\ \text{or }R\equiv2\pmod 4\text{ and }
(a,b)\not\equiv(0,0)\pmod 2,\\
4&R\equiv0\pmod 8,\\
&\text{or }R\equiv4\pmod 8\text{ and }
(a\not\equiv0\pmod 4\text{ or }b\not\equiv0\pmod 2),\\
&\text{or }R\equiv2,6\pmod 8\text{ and }a\not\equiv0\pmod 2,\\
6&R\equiv0\pmod 9,\ \text{or }R\equiv3,6\pmod 9\text{ and }
(a,b)\not\equiv(0,0)\pmod 3.
\end{array}
$$
Then there exists $\Phi\in\operatorname{Auteq}D^b(S)$ such that
$\rank(\Phi_*\mv)=0$.
\end{lemma}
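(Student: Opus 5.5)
The strategy is to construct $\Phi$ as a composition of autoequivalences whose action on the algebraic Mukai lattice $H^*_{\mathrm{alg}}(S,\zz)=\zz\oplus\num(S)\oplus\zz$ we can write explicitly. Then we run a Euclidean-type descent on the rank.

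Two families of autoequivalences are available on a bielliptic surface.

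\begin{itemize}
\item \emph{Line bundle twists.} Tensoring by $\mathcal O_S(xA_0+yB_0)$ acts by
$$
(R,aA_0+bB_0,s)\longmapsto\bigl(R,(a+Rx)A_0+(b+Ry)B_0,\ s+ay+bx+Rxy\bigr).
$$
\item \emph{Relative Fourier--Mukai transforms.} These come from the two elliptic fibrations, $p_A\colon S\to A/G$ and $p_B\colon S\to\ps^1$. Their kernels are universal sheaves on relative moduli spaces of sheaves on fibres; by Bridgeland's theory of relative Fourier--Mukai transforms (and Nuer's treatment of bielliptic surfaces), these moduli spaces are again bielliptic surfaces isomorphic to $S$.
\end{itemize}

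On the sublattice spanned by rank, fibre degree and $\chi$, a relative transform acts by an $\mathrm{SL}_2(\zz)$-type matrix. For the fibration with fibre class $B$, the fibre degree of $(R,aA_0+bB_0,s)$ is $(aA_0+bB_0)\cdot B=\la_S a$. For the fibration with fibre class $A$, it is $k_S b$ (up to the normalisations in \cref{eq:num_S}).

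First step: record the admissible matrices. Bridgeland's theorem says a relative transform with matrix $\begin{pmatrix}c&\alpha\\ d&\beta\end{pmatrix}$ on the pair $(\text{rank},\text{fibre degree})$ exists precisely when $d$ is divisible by the minimal multisection degree of the fibration. This is $\la_S$ for $p_A$ (since $A_0$ gives the multisection of degree $\la_S$), and the analogous constant $\mu_S$ for $p_B$. Once these divisibility constraints are written down, the rank of $\Phi_*\mv$ can be changed to $\gcd$-type combinations of $R$ with $\la_S a$, respectively $\mu_S b$, exactly as in Yoshioka's argument for abelian surfaces.

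Second step: descent.
\begin{itemize}
\item Apply a twist to change $(a,b)$ modulo $R$.
\item Apply a relative transform to replace $R$ by a rank of the form $cR+d'\cdot(\text{fibre degree})$.
\item Alternate the two fibrations.
\end{itemize}
Choosing the twist so that the fibre degree is congruent to something small modulo $R$, the new rank can be made strictly smaller in absolute value, or zero. The descent stops at rank zero exactly when the gcd of $R$ and the reachable fibre degrees, subject to the divisibility constraint, is as small as the constraint allows.

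For types $1,3,5,7$ we have $\la_S=1$ and the relevant constant for the second fibration divides everything. Here the descent reaches rank $0$ unconditionally, using primitivity of $\mv$ and $\mv^2>0$ (the latter to avoid isotropic obstructions).

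For types $2,4,6$ we have $\la_S=2,2,3$. The set of ranks reachable is then the subgroup generated by $R$ and $\la_S\cdot(\text{twisted fibre degrees})$. The tabulated congruences are exactly the conditions under which this subgroup, together with the twists, eventually contains $0$ after finitely many steps. One has to check each residue of $R$ modulo $\la_S^2$ (resp.\ $8$ for type $4$, since there $k_S=4$ and both fibrations impose a factor $2$).

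The main obstacle is this type-by-type analysis for types $2,4,6$. It requires:
\begin{itemize}
\item pinning down the correct divisibility constraints for the second fibration $p_B$, whose multiple fibres have multiplicities $(2,2,2,2)$, $(2,4,4)$, $(3,3,3)$;
\item tracking how twists by $A_0,B_0$ change $a,b$ modulo $\la_S$ and $2\la_S$.
\end{itemize}
I would first treat the case where $R$ is divisible by the maximal power ($4$, $8$, $9$), where a single relative transform with $\begin{pmatrix}0&1\\-1&0\end{pmatrix}$-type matrix composed with a twist suffices. I would then handle the intermediate residues by first twisting to make $a$ (or $b$) not divisible by $\la_S$, using the non-divisibility hypothesis, and then applying the transform. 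If the fibre degree after twisting is $\la_S a'$ with $\gcd(R,\la_S a')$ computable, a final relative transform kills the rank.
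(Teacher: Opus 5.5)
\paragraph{Verdict.} Your set-up is right: twists, and relative Fourier--Mukai transforms acting on (rank, fibre degree) by $\mathrm{SL}_2(\zz)$-matrices whose lower-left entry is divisible by the minimal multisection degree $n$, with fibre degrees $\la_S a$ and $k_S b$. Types $1,3,5,7$ are also fine, and in fact immediate: there $n_A=\la_S=1$, so by \cref{lem:admissible-rank-zero-transform} a single transform along $p_A$ kills the rank. But types $2,4,6$ carry all the content of the lemma, and there your proposal states the conclusion rather than proving it. The one concrete step you give is also wrong.

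\paragraph{The proposed step for the divisible case fails.} The matrix $\begin{pmatrix}0&1\\-1&0\end{pmatrix}$ has lower-left entry $-1$, so it is admissible only when $n=1$. In types $2,4,6$ both $n_A=\la_S$ and $n_B=k_S$ are at least $2$. In fact no twist followed by a single relative transform works in the maximal-divisibility case. Take $\mv=(4,0,-1)$ on a type $2$ surface. Any twist gives $(4,4xA_0+4yB_0,\ast)$, so $d_A/g=2x$ and $d_B/g=2y$ are both even while $n_A=n_B=2$. The criterion $\gcd(d/g,n)=1$ therefore fails for both fibrations. One checks more generally that in types $2,4,6$ twists never change whether this criterion holds. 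What is needed instead is a first transform that does not kill the rank but moves $s$ into the transverse divisor coefficient: $\Phi_A^{-1}(q,0,s)=(q,-\la_S sB_0,s)$. After this, the $p_B$-criterion becomes $\gcd(-s,k_S)=1$, which holds because $\gcd(q,s)=1$ and $k_S\mid q$; see \cref{lem:two-step-exceptional-rank-zero}.

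\paragraph{The descent itself is unjustified.} Your claim that the new rank can be made strictly smaller in absolute value, or zero, cannot hold in general. The reachable ranks also do not form a subgroup: one step reaches only $\alpha R+\beta d$ with $\gcd(\alpha,n\beta)=1$. For example, take type $2$ with $R\equiv 2\pmod 4$ and $a,b$ even. Every twist, every elementary transform $\Phi_A^{\pm1},\Psi_B^{\pm1}$, and every dualization or shift keeps $a,b$ even and $R\bmod 4$ fixed. So no descent built from these operations ever reaches rank zero; it must stall at positive rank. You give no argument that it does not stall under the stated hypotheses. For the same reason, your claim that the tabulated congruences are ``exactly'' the right conditions is an unproved strengthening.

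\paragraph{How the paper closes the gap.} The paper runs no descent. It takes termination from Nuer's reduction theorem, which brings $\mv$ to one of the finitely many reduced forms in \cref{tab:reduced-positive-rank-vectors}. It then checks two things:
\begin{itemize}
\item each non-exceptional form is killed by a single transform;
\item the exceptional forms $(q,0,s)$ (types $2,4,6$) and $(2q,qB_0,s)$ (type $4$) are killed by the two-step procedure when $|G|\mid q$, respectively $4\mid q$.
\end{itemize}
Finally, it uses congruence invariants of the elementary operations to show that the hypotheses force the endpoint to be non-exceptional or to satisfy the required divisibility. For instance, in type $2$, when all ranks are even the parities of $a,b$ are preserved, and if $a,b$ stay even then every rank change is divisible by $4$. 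A termination statement together with such invariants is the missing idea in your plan. You would also need to fix your unspecified constant $\mu_S$; it is $k_S$.
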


In order to prove this lemma, we use Nuer's reduction for positive-rank Mukai
vectors. Let
$
\mv=(r,aA_0+bB_0,s),
$
be a primitive Mukai vector of positive rank with $\mv^2\geqslant 0$. Nuer's
reduction transforms $\mv$, by a finite sequence of standard
autoequivalences, into a primitive Mukai vector $\mv_0$ of one of the forms
listed in \cref{tab:reduced-positive-rank-vectors}, with
$$
1\leqslant\rank(\mv_0)\leqslant r;
$$
see \cite[Theorem 1.4 and Table 1]{Nue25}. Thus the numerical problem of
finding a rank-zero vector in the Fourier--Mukai orbit of $\mv$ reduces to
the following list, where \(q>0\), \(s\in\zz\), and in the last row
\(b\in\zz\). In each case,
\(\mv_0\) is assumed to be primitive.

\begin{table}[h!]
\caption{Reduced forms of primitive positive-rank Mukai vectors on bielliptic
surfaces, see \cite[Table 1]{Nue25}}
\label{tab:reduced-positive-rank-vectors}
\centering
\begin{tabularx}{\textwidth}{ccX}
\toprule
Type of \(S\) & \(k_S=\ord(K_S)\) & Reduced forms of \(\mv_0\) \\
\midrule
1 & \(2\) &
\((q,0,s)\), \((2q,qB_0,s)\) \\[4pt]
2 & \(2\) &
\((q,0,s)\), \((2q,qA_0,s)\), \((2q,qB_0,s)\),
\((2q,qA_0+qB_0,s)\) \\[4pt]
3 & \(4\) &
\((q,0,s)\), \((4q,qB_0,s)\), \((2q,qB_0,s)\) \\[4pt]
4 & \(4\) &
\((q,0,s)\), \((2q,qA_0,s)\), \((4q,qB_0,s)\),
\((2q,qB_0,s)\), \((4q,2qA_0+qB_0,s)\),
\((2q,qA_0+qB_0,s)\) \\[4pt]
5 & \(3\) &
\((q,0,s)\), \((3q,qB_0,s)\) \\[4pt]
6 & \(3\) &
\((q,0,s)\), \((3q,qA_0,s)\), \((3q,qB_0,s)\),
\((3q,qA_0+qB_0,s)\) \\[4pt]
7 & \(6\) &
\((q,0,s)\), \((6q,qB_0,s)\), \((3q,qB_0,s)\),
\((2q,qB_0,s)\), \((q,bB_0,s)\), where
\(\frac{1}{3}<\frac{b}{q}<\frac{1}{2}\) \\
\bottomrule
\end{tabularx}
\end{table}
\FloatBarrier

At the level of Mukai vectors, Nuer's reduction is realized by a finite
sequence of standard autoequivalences, including tensoring by line bundles,
dualizing and shifting, and relative Fourier--Mukai transforms along the two
elliptic fibrations \(p_A\) and \(p_B\); see
\cite[Section~10, Proposition~10.5, and Theorem~1.4]{Nue25}.
Consequently, it suffices below to study the corresponding action on Mukai
vectors.

\begin{lemma}
\label{lem:reduction-positive-rank-to-zero}
Let \(S\) be a bielliptic surface and let
$
\mv_0=(r,aA_0+bB_0,s)
$
be one of the reduced positive-rank Mukai vectors listed in
\cref{tab:reduced-positive-rank-vectors}.
Assume \(\mv_0^2>0\). Suppose either that \(\mv_0\) is not one of the
four exceptional reduced forms
$$
\begin{array}{c|c}
\text{type of }S&\text{reduced vector}\\
\hline
2,4,6&(q,0,s)\\
4&(2q,qB_0,s),
\end{array}
$$
or that it is one of these reduced forms and satisfies the corresponding
additional condition
$$
\begin{array}{c|c|c}
\text{type of }S&\text{reduced vector}&\text{condition}\\
\hline
2&(q,0,s)&4\mid q\\
4&(q,0,s)&8\mid q\\
6&(q,0,s)&9\mid q\\
4&(2q,qB_0,s)&4\mid q.
\end{array}
$$
Then a finite composition of relative Fourier--Mukai autoequivalences
sends \(\mv_0\) to a rank-zero Mukai vector
$
\mw=(0,D,\chi)
$
with
$
\mw^2=\mv_0^2.
$
\end{lemma}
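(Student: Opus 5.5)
The plan is to make the numerical action of the two relative Fourier--Mukai transforms completely explicit, and then treat the rows of \cref{tab:reduced-positive-rank-vectors} case by case. For the fibration $p_A$ with fibre class $B=\la_S B_0$, and for $p_B$ with reduced fibre class $A_0$ (the multiple fibres of $p_B$ have classes proportional to $A_0$), a relative Fourier--Mukai transform $\Phi$ with kernel a universal sheaf on a relative moduli space of sheaves on fibres acts on the pair
\[
\bigl(\rank,\ \text{fibre degree}\bigr)=(r,\ c_1\cdot f)
\]
by a matrix $\begin{pmatrix}\alpha&\beta\\ \gamma&\delta\end{pmatrix}\in SL_2(\zz)$. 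Its lower-left entry $\gamma$ is constrained to be divisible by the minimal degree of a multisection. This minimal degree is $\la_S$ for $p_A$. For $p_B$ it is governed by $k_S$ and the multiplicities $(m_1,\dots,m_s)$. I would extract these constraints, together with the induced affine action on the remaining coordinate and on $s$, from \cite[Section~10, Proposition~10.5]{Nue25}. Tensoring by line bundles adds the unipotent matrices $\begin{pmatrix}1&0\\ n\,(D\cdot f)&1\end{pmatrix}$. Since every transform is a Mukai isometry, $\mw^2=\mv_0^2$ is automatic, and only $\rank(\mw)=0$ needs to be arranged.

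Given this, killing the rank of $(r,d)$, where $d=c_1\cdot f$, means finding an admissible matrix whose first row $(\alpha,\beta)$ is proportional to $(d,-r)/\gcd(r,d)$. The main step is therefore a number-theoretic check for each reduced form.

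\begin{enumerate}
\item For vectors of the shape $(mq,qB_0,s)$ or $(mq,qA_0+qB_0,s)$, the fibre degree along $p_A$ is essentially $q$ times a unit. The pair $(r,d)$ then has $\gcd$ equal to $q$, and the quotient $r/\gcd(r,d)=m$ divides the relevant index. One transform along $p_A$, possibly preceded by a twist, should then give rank zero directly.
\item For the type~7 row $(q,bB_0,s)$ with $\tfrac13<b/q<\tfrac12$, I expect $\la_S=1$ to make every matrix in $SL_2(\zz)$ available, so the reduction is immediate.
\item For $(q,0,s)$ the fibre degree on both fibrations is $0$. I would first twist by a line bundle to make the degree $q\,(D\cdot f)$ nonzero, and then apply a transform.
\end{enumerate}

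In the third case the divisibility constraint on $\gamma$, equivalently on which pairs are achievable, is exactly where conditions such as $4\mid q$, $8\mid q$ and $9\mid q$ in types 2, 4 and 6 (where $\la_S>1$) should enter. For types 1, 3, 5 and 7 we have $\la_S=1$, and no condition should be needed.

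The main obstacle is precisely determining the image subgroup $\Gamma\subset SL_2(\zz)$ for each fibration and type. This is particularly delicate for $p_B$ with its multiple fibres, and for the exceptional form $(2q,qB_0,s)$ in type~4, where I expect a composition of a $p_B$-transform followed by a $p_A$-transform to be required. I would first try a single $p_A$-transform after an optimal twist, check when the resulting first-row constraint is solvable modulo $\la_S$, and only fall back on alternating $p_A$- and $p_B$-transforms (a Euclidean-algorithm style descent on $(r,d)$) when that fails. The divisibility hypotheses are exactly what should guarantee that the descent terminates at rank $0$.
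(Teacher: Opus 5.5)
Your framework matches the paper's key lemma. Completing the row $(-d/g,\,r/g)$, with $g=\gcd(r,d)$, to a matrix whose lower-left entry is divisible by $n$ shows that one relative transform along a fibration with minimal multisection degree $n$ kills the rank if and only if $\gcd(d/g,n)=1$. Here $n_A=\la_S$, $d_A=\la_S a$, $n_B=k_S$, $d_B=k_S b$.

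You never isolate this criterion, though. Your remark that $r/g=m$ ``divides the relevant index'' tests the wrong quantity: what matters is $d/g$. Your case analysis also goes wrong. For $(mq,qB_0,s)$ the $p_A$-fibre degree is $qB_0\cdot\la_S B_0=0$, not $q$ times a unit, so a $p_A$-transform works only when $\la_S=1$ (then $d_A/g=0$ and $n_A=1$). Consequently the rows $(2q,qB_0,s)$, $(4q,qB_0,s)$, $(3q,qB_0,s)$ in types $2,4,6$ must be killed along $p_B$, where $d_B/g=1$. Conversely, the rows $(2q,qB_0,s)$ in type $3$ and $(3q,qB_0,s)$, $(2q,qB_0,s)$ in type $7$ must go along $p_A$. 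Along $p_B$ they give $d_B/g\in\{2,3\}$, which is not coprime to $k_S$.

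More seriously, your treatment of the exceptional forms cannot work. By your own observation, a twist acts by $\begin{pmatrix}1&0\\ \ast&1\end{pmatrix}$ with lower-left entry divisible by $n$, so it already lies in the admissible group and $\gcd(d/g,n)$ is twist-invariant. For $(q,0,s)$ in types $2,4,6$, every twist gives $d_A/g\in\la_S\zz$ and $d_B/g\in k_S\zz$. Hence no twist followed by a single transform reaches rank zero, for any $q$. The hypotheses $4\mid q$, $8\mid q$, $9\mid q$ do not enter through ``which pairs are achievable'' for one transform.

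The missing idea is to use the $\ch_2$-coordinate to pass between the two fibrations. The transform $\Phi_A^{-1}(R,a,b,s)=(R-\la_S a,a,b-\la_S s,s)$ sends $(q,0,s)$ to $(q,-\la_S sB_0,s)$, whose $p_B$-fibre degree is $-k_S\la_S s$. Then $g=\gcd(q,k_S\la_S s)=k_S\la_S$, precisely because $|G|=k_S\la_S\mid q$ and $\gcd(q,s)=1$. So $d_B/g=-s$, which is coprime to $k_S$ since $k_S\mid q$, and one $p_B$-transform finishes.

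The same works for $(2q,qB_0,s)$ in type $4$ with $q=4Q$. Here $\Phi_A^{-1}$ gives $(2q,(q-2s)B_0,s)$ with $g=8$ and $d_B/g=2Q-s$ odd. Your suggested order also works: $p_B$ first gives $(-2q,-4sA_0+qB_0,s)$ with $d_A/g=-s$ odd.

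Your proposed ``Euclidean descent'' tracks only $(r,d)$ for one fibration, never uses $s$, and is not shown to terminate. As written it does not prove the exceptional cases, and those are exactly where the divisibility hypotheses are needed.
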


The next two lemmas are technical results that will be used in the proof of this fact.

\begin{lemma}\label{lem:admissible-rank-zero-transform}
Let \(p\colon Y\to C\) be a projective relatively minimal elliptic surface
with minimal multisection degree \(n\), and let $\mv$ be a Mukai vector of rank \(R>0\)
and fiber degree \(d\). Set \(g=\gcd(R,d)\). A pair consisting of a relative
moduli surface $J\to C$ and a relative Fourier--Mukai equivalence
$$
\Phi\colon D^b(Y)\xrightarrow{\sim}D^b(J)
$$
for which $\Phi_*(\mv)$ has rank zero exists if and only if
$$
\gcd\left(\frac{d}{g},n\right)=1.
$$
If such a pair has $J\simeq Y$, then choosing an isomorphism turns $\Phi$ into an
autoequivalence of $D^b(Y)$.
\end{lemma}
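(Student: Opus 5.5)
We rely on Bridgeland's description of relative Fourier--Mukai partners of an elliptic surface. For coprime integers $\alpha>0$ and $\beta$ with $\alpha n\mid\beta$ (or, after the standard normalisation, $n\mid\beta$), let $J_Y(\alpha,\beta)\to C$ be the relative moduli space of stable sheaves of rank $\alpha$ and fiber degree $\beta$ on the fibers of $p$. Bridgeland's theorem provides a relative Fourier--Mukai equivalence
$$
\Phi_{\alpha,\beta}\colon D^b(Y)\xrightarrow{\sim}D^b(J_Y(\alpha,\beta)),
$$
whose kernel is the universal sheaf. Conversely, every relative Fourier--Mukai partner $J\to C$ arises in this way, and any relative equivalence differs from some $\Phi_{\alpha,\beta}$ only by composition with line bundle twists, shifts and relative autoequivalences of the target. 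These extra factors act on the pair (rank, fiber degree) through $\mathrm{SL}_2(\zz)$-type matrices of a restricted form. The plan is therefore to reduce the statement to an arithmetic question about the action of the resulting matrices on the pair $(R,d)$.

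\textbf{Step 1.} Compute the action on (rank, fiber degree). By Bridgeland, $\Phi_{\alpha,\beta}$ acts on $(R,d)$ through the matrix
$$
\begin{pmatrix} c & a\\ \beta & \alpha\end{pmatrix}\in\mathrm{SL}_2(\zz),
\qquad\text{equivalently}\qquad
\begin{pmatrix} c & a\\ \beta & \alpha\end{pmatrix}^{-1},
$$
up to sign and convention. Here the lower-left entry $\beta$ is constrained to be divisible by $n$; this is the integrality condition that a multisection of degree $n$ imposes on fiber degrees. The target rank is then, up to sign, the linear form $\alpha R-\beta' d$ with $\beta'$ ranging over multiples of $n$, or $\alpha d-\beta R$ depending on the convention. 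The exact bookkeeping of which entry carries the factor $n$ is the delicate part, so I would fix it by checking against the known case $Y=J$, i.e. $n=1$ with a section, where every pair $(R,d)$ can be sent to rank zero.

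\textbf{Step 2.} Turn the existence question into divisibility. The transformed vector has rank zero exactly when its rank component vanishes. Writing $R=gR'$ and $d=gd'$ with $\gcd(R',d')=1$, this forces $(\alpha,\beta)=\pm(d',R')$ up to the normalisation of Step 1. The constraint that the relevant entry be a multiple of $n$, together with the requirement that the remaining entries of the matrix exist, then becomes the condition $\gcd(d',n)=1$.
\begin{itemize}
\item[(i)] For sufficiency: if $\gcd(d',n)=1$, choose an integer solution of the unimodular equation with the $n$-divisibility built in, so with the $n$-divisible entry free to equal $d'$ after twisting. Bézout gives this, since $\gcd(nR',d')=1$ follows from $\gcd(R',d')=1$ and $\gcd(n,d')=1$. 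This produces $(\alpha,\beta)$ and hence $J=J_Y(\alpha,\beta)$.
\item[(ii)] For necessity: a relative equivalence making the rank zero forces $d'$ to occupy a matrix entry coprime to an $n$-divisible entry, so $\gcd(d',n)=1$.
\end{itemize}

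\textbf{Step 3.} The final sentence of the statement is formal. If $J\simeq Y$ over $C$, composing $\Phi$ with the pushforward along a chosen isomorphism gives an element of $\operatorname{Auteq}D^b(Y)$ with the same numerical effect.

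\textbf{Main obstacle.} The main difficulty is Step 1: pinning down precisely which entry of Bridgeland's matrix must be divisible by $n$, and confirming that line bundle twists and shifts cannot remove this constraint. Twists change fiber degree only by multiples of $n\cdot\mathrm{rank}$, because fiber degrees of line bundles are multiples of $n$. I would verify this using the fact that the image of $\Pic(Y)$ in fiber degrees is exactly $n\zz$; this holds by definition of the minimal multisection degree.
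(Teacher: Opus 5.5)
Your plan is the paper's plan: use Bridgeland's description of relative transforms as unimodular matrices acting on $(R,d)$ with one entry constrained by $n$, then finish with Bézout. The arithmetic you end with, $\gcd(nR',d')=1$, is the right one. The gap is the only non-formal input: \emph{which} entry carries the factor $n$, and which row computes the new rank. You never fix this, and the versions you write down contradict each other; taken literally, they give the wrong criterion.

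\begin{itemize}
\item Bridgeland's partners $J_Y(\alpha,\beta)$ require $\gcd(\alpha n,\beta)=1$, not $\alpha n\mid\beta$. Combined with $\gcd(\alpha,\beta)=1$, your condition forces $\alpha=1$.
\item In particular, the fiber degree $\beta$ of the moduli problem is prime to $n$. It is not the $n$-divisible entry, although you label the lower-left entry by it.
\item Both of your candidate formulas for the target rank put an $n$-divisible coefficient into the rank row. The formula $\alpha R-\beta'd$ with $n\mid\beta'$ yields the criterion $n\mid R'$, and $\alpha d-\beta R$ with $n\mid\beta$ yields $n\mid d'$.
\item Likewise, $(\alpha,\beta)=\pm(d',R')$ in Step~2, together with $n\mid\beta$, forces $n\mid R'$.
\item Your proposed calibration at $n=1$ cannot detect any of this, since for $n=1$ every entry is divisible by $n$.
\end{itemize}

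The missing input is the precise form of \cite[Theorem~5.3]{Bri98} used in the paper. The transform $D^b(Y)\to D^b(J)$, composed with a shift if necessary, acts on $(R,d)$ by
$$
\begin{pmatrix}x&y\\ z&w\end{pmatrix}\in\mathrm{SL}_2(\mathbb{Z}),
\qquad y>0,\qquad n\mid z .
$$
The constraint lives in the degree row, directly below the coefficient of $R$ in the rank row. In Bridgeland's direction $D^b(J)\to D^b(Y)$, the lower-left entry is the fiber degree of the image of $\mathcal{O}_J$, which is an object of $D^b(Y)$; hence it is a multiple of $n$. This is the correct form of your integrality remark. The property survives inversion and shifts because these matrices form the group $\Gamma_0(n)$, which contains $-1$.

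The new rank is $xR+yd$, so rank zero forces $(x,y)=(-d/g,R/g)$, i.e.\ $J=J_Y(R/g,d/g)$. Completing this row means solving $-(d/g)w-(R/g)z=1$ with $n\mid z$. This is possible if and only if $\gcd(d/g,nR/g)=1$, i.e.\ $\gcd(d/g,n)=1$, which is also Bridgeland's partner condition for $J_Y(R/g,d/g)$.

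With the matrix fixed this way, your Bézout step gives sufficiency. Your coprimality argument gives necessity once it is applied to $-d/g$ and $z$, which lie in the same column.
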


\begin{proof}
By \cite[Theorem~5.3]{Bri98}, the inverse of a relative Fourier--Mukai
equivalence associated with $p$, composed with a shift if necessary, acts on
the rank and fiber degree by a matrix
$$
\begin{pmatrix}
\alpha&\beta\\
\gamma&\delta
\end{pmatrix}
\in\operatorname{SL}_2(\zz),
\qquad
\beta>0,
\qquad
n\mid\gamma.
$$
More precisely, its target is the relative moduli surface determined by the
admissible matrix, and
$$
\binom{R'}{d'}
=
\begin{pmatrix}
\alpha&\beta\\
\gamma&\delta
\end{pmatrix}
\binom{R}{d}.
$$
To kill the rank, its first row must be
$$
\left(-\frac{d}{g},\frac{R}{g}\right).
$$
Writing \(\gamma=nt\), this row can be completed to an admissible matrix
precisely when
$$
-\frac{d}{g}\delta-\frac{R}{g}nt=1
$$
has an integral solution. Since
\(\gcd(d/g,R/g)=1\), this is equivalent to
$$
\gcd\left(\frac{d}{g},\frac{Rn}{g}\right)
=
\gcd\left(\frac{d}{g},n\right)
=1.
$$
\end{proof}

\begin{remark}\label{rem:bielliptic-relative-fm}
When $Y=S$ is bielliptic and $p$ is one of the natural elliptic fibrations
$p_A$ and $p_B$, the relative moduli surface $J$ occurring in
\cref{lem:admissible-rank-zero-transform} is isomorphic to $S$; see
\cite[Section~2.5, in particular the discussion following
Theorem~2.14]{Toc26}. After choosing this identification, the relative
Fourier--Mukai equivalence becomes an autoequivalence of $D^b(S)$. We make
this choice for all relative transforms below.
\end{remark}

Recall that for the two elliptic fibrations $p_A$ and $p_B$ on \(S\), the minimal multisection degrees
and fiber degrees of
\(\mv=(R,aA_0+bB_0,s)\) are correspondingly
$$
n_A=\la_S,
\qquad
d_A=\la_Sa,
\qquad
n_B=k_S,
\qquad
d_B=k_Sb.
$$

\begin{lemma}
\label{lem:two-step-exceptional-rank-zero}
Let $S$ be a bielliptic surface. Suppose
\begin{enumerate}
    \item Either $S$ is of type 2, 4, or 6 and
    $
    \mv_0=(q,0,s)
    $
    is a primitive Mukai vector such that
    $
    |G| = \la_Sk_S\mid q,
    $

    \item Or \(S\) is of type \(4\) and
    $
    \mv_0=(2q,qB_0,s)
    $
    is a primitive Mukai vector such that \(4\mid q\).
\end{enumerate}
Then \(\mv_0\) can be sent to a rank-zero Mukai vector by a composition of
two relative Fourier--Mukai autoequivalences.
\end{lemma}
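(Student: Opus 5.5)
The plan is to use two relative Fourier--Mukai transforms in sequence. The first, along one elliptic fibration, moves $\mv_0$ to a vector whose invariants for the \emph{other} fibration satisfy the criterion of \cref{lem:admissible-rank-zero-transform}. The second, along that other fibration, then kills the rank. We cannot do it in one step, because both fiber degrees of $(q,0,s)$ vanish. Indeed $d_A=d_B=0$ gives $g=q$ and $\gcd(d/g,n)=n>1$ for both $n_A=\la_S$ and $n_B=k_S$, and in types $2,4,6$ we have $\la_S>1$. In case (2) we have $d_A=0$ and $d_B=k_Sq=4q$ with $g=\gcd(2q,4q)=2q$, so $\gcd(2,k_S)=2$ and again neither single transform works. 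This is exactly why these vectors are exceptional in \cref{lem:reduction-positive-rank-to-zero}, and why two steps are needed.

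\textbf{Step 1: action on the whole Mukai vector.} I will make explicit how a relative transform along $p_A$ (fiber class $B$, multisection degree $\la_S$) acts on all of $(r,aA_0+bB_0,s)$, not only on $(r,d_A)$. Using \cite[Theorem~5.3]{Bri98} and \cref{rem:bielliptic-relative-fm}, I expect it to act on $(r,\la_S a)$ by the admissible matrix $\begin{pmatrix}\alpha&\beta\\ \gamma&\delta\end{pmatrix}$ with $\la_S\mid\gamma$. It should act on the complementary pair $(b,s)$, suitably normalized by $k_S$, by the contragredient matrix, up to a correction term that may involve $a$. The constraint that pins this down is preservation of the Mukai pairing: $\mv^2=2ab-2rs$ must be fixed. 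The analogous formulas hold along $p_B$, with the roles of $A_0,B_0$ and of $\la_S,k_S$ swapped.

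\textbf{Step 2: choose the first transform.} I will take the first transform along $p_A$ and choose $\alpha,\beta,\gamma,\delta$ so that the image $\mv_1=(R_1,a_1A_0+b_1B_0,s_1)$ satisfies
$$
\gcd\!\left(\frac{k_Sb_1}{\gcd(R_1,k_Sb_1)},\,k_S\right)=1 .
$$
Equivalently, $\gcd(R_1,k_Sb_1)$ must contain the full $k_S$-part of $k_Sb_1$. By Step 1, $R_1$ will be a combination of $q$ and $a$-terms, and $b_1$ will be a combination involving $s$. The hypothesis $|G|=\la_Sk_S\mid q$ (resp.\ $4\mid q$ in case (2)) should give the needed divisibility of $R_1$ by the relevant prime powers of $k_S$. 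Primitivity of $\mv_0$, which here reads $\gcd(q,s)=1$ up to the $q$-coefficient in case (2), should let me choose the free parameter $\beta$ or $\delta$ so that $b_1$ carries no extra factors of $k_S$ beyond those absorbed by $R_1$. Since $k_S\in\{2,3,4\}$ here, this is a finite congruence check prime by prime, solved by a Chinese-remainder choice of the completion of the $\operatorname{SL}_2$ row. I may instead need to reverse the order and start along $p_B$, depending on which ordering the formulas make solvable; I would check both.

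\textbf{Step 3: kill the rank.} Once $\mv_1$ satisfies the displayed condition, \cref{lem:admissible-rank-zero-transform} with $n=k_S$, applied along $p_B$, gives a second relative transform sending $\mv_1$ to rank zero. By \cref{rem:bielliptic-relative-fm} both transforms are autoequivalences of $D^b(S)$. The main obstacle is Step 1: getting the exact action of the relative transform on the $B_0$-coefficient and on $s$, including the twist normalization, since the congruence bookkeeping in Step 2 depends entirely on it.
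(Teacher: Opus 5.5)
Your plan has the same architecture as the paper's proof: a first relative transform along $p_A$ that creates a nonzero $B_0$-coefficient, followed by \cref{lem:admissible-rank-zero-transform} along $p_B$ with $n_B=k_S$. Your explanation of why a single transform cannot work is also correct.

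The lemma, however, consists of nothing more than the explicit first transform and a gcd check, and you leave both as expectations. As written, nothing is proved. In addition, your one concrete guess in Step~1 is wrong in a way that matters. Since $ab-Rs=d_A\cdot(b/\la_S)-Rs$ with $d_A=\la_Sa$, the pair dual to $(R,d_A)$ is $(-s,b/\la_S)$. The normalization is therefore by $\la_S$, not by $k_S$. In type~$4$, a $k_S$-normalization would give $k_Sb_1=16\beta s$ against $R_1=\alpha q$, where $\alpha$ is odd (forced by $\alpha\delta-\beta\gamma=1$ and $2\mid\gamma$). When $8\mid q$ but $16\nmid q$, the quotient $d_B/g$ would then be even for every choice of parameters. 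Step~2 along $p_A$ would look impossible, although it is not.

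The correct computation is short. Take the first transform to be $\Phi_A^{-1}$ from \cite[Proposition~10.5]{Nue25}, which acts by $(R,aA_0+bB_0,s)\mapsto(R-\la_Sa,\,aA_0+(b-\la_Ss)B_0,\,s)$. In your notation this is $\alpha=\delta=1$, $\beta=-1$, $\gamma=0$.
\begin{itemize}
\item In case~(1) it sends $\mv_0$ to $(q,-\la_SsB_0,s)$, so $R=q$ and $d_B=-k_S\la_Ss$. Because $k_S\la_S\mid q$ and $\gcd(q,s)=1$, one gets $g=k_S\la_S$ and $d_B/g=-s$. This is prime to $k_S$ because $k_S\mid q$.
\item In case~(2) it gives $(2q,(q-2s)B_0,s)$. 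Writing $q=4Q$, one has $R=8Q$ and $d_B=8(2Q-s)$. Hence $g=8\gcd(Q,s)=8$ and $d_B/g=2Q-s$, which is odd because $s$ is.
\end{itemize}
In both cases \cref{lem:admissible-rank-zero-transform} and \cref{rem:bielliptic-relative-fm} finish the argument. No Chinese-remainder choice of parameters and no reversal of the order is needed.
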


\begin{proof}
For the first statement, primitivity of \(\mv_0=(q,0,s)\) gives
\(\gcd(q,s)=1\). Apply the inverse of the relative Fourier--Mukai
autoequivalence along \(p_A\) corresponding to the matrix
$\begin{pmatrix}
    1 & 1 \\
    0 & 1
\end{pmatrix}$
(see \cite[Proposition 10.5]{Nue25}).
Its numerical action is
$$
\Phi_A^{-1}(R,aA_0+bB_0,s)
=
(R-\la_Sa,aA_0+(b-\la_Ss)B_0,s),
$$
so
$
\Phi_A^{-1}(\mv_0)
=
(q,-\la_SsB_0,s).
$
For \(p_B\), the transformed vector has
$$
R=q,
\qquad
d_B=-k_S\la_Ss,
\qquad
n_B=k_S.
$$
Since \(k_S\la_S\mid q\) and \(\gcd(q,s)=1\),
$$
g=\gcd(R,d_B)=k_S\la_S
\qquad\text{and}\qquad
\frac{d_B}{g}=-s.
$$
Moreover, \(k_S\mid q\) implies \(\gcd(s,k_S)=1\). Hence
$$
\gcd\left(\frac{d_B}{g},n_B\right)
=
\gcd(-s,k_S)
=1.
$$
By \cref{lem:admissible-rank-zero-transform} and
\cref{rem:bielliptic-relative-fm}, a relative Fourier--Mukai
autoequivalence along \(p_B\) sends \(\Phi_A^{-1}(\mv_0)\) to rank zero.

For the second statement, \(k_S=4\) and \(\la_S=2\). Primitivity of
\(\mv_0=(2q,qB_0,s)\) gives \(\gcd(q,s)=1\). The same first transform
gives
$$
\Phi_A^{-1}(\mv_0)
=
(2q,(q-2s)B_0,s).
$$
Write \(q=4Q\). Then \(s\) is odd, and for \(p_B\) we have
$$
R=8Q,
\qquad
d_B=4(q-2s)=8(2Q-s),
\qquad
n_B=4.
$$
Since
$$
\gcd(Q,2Q-s)=\gcd(Q,s)=1,
$$
it follows that
$$
g=\gcd(R,d_B)=8
\qquad\text{and}\qquad
\frac{d_B}{g}=2Q-s.
$$
The latter integer is odd, so
$$
\gcd\left(\frac{d_B}{g},n_B\right)
=
\gcd(2Q-s,4)
=1.
$$
Another application of \cref{lem:admissible-rank-zero-transform} and
\cref{rem:bielliptic-relative-fm} completes the proof.
\end{proof}

\begin{proof}[Proof of \cref{lem:reduction-positive-rank-to-zero}]
For each type of bielliptic surface, number the reduced forms in
\cref{tab:reduced-positive-rank-vectors} from left to right. For a chosen
fibration \(p\in\{p_A,p_B\}\), let \(d_p\) be the fiber degree of \(\mv_0\),
let \(n_p\) be its minimal multisection degree, and set
$$
g_p\coloneqq\gcd(\rank(\mv_0),d_p).
$$
Thus
$$
d_A=\la_Sa,\qquad n_A=\la_S,
\qquad
d_B=k_Sb,\qquad n_B=k_S
$$
for \(\mv_0=(R,aA_0+bB_0,s)\). The following table records a fibration to
which \cref{lem:admissible-rank-zero-transform} applies and the corresponding
value of \(d_p/g_p\). A dash indicates one of the exceptional reduced forms
treated separately by \cref{lem:two-step-exceptional-rank-zero}.
\begingroup
\small
\setlength{\arraycolsep}{4pt}
$$
\begin{array}{c|c|c|c}
\text{type}&\text{indices of reduced forms in
\cref{tab:reduced-positive-rank-vectors}}&
\text{fibration }p&d_p/g_p\\
\hline
1&1,\ 2&p_A,\ p_B&0,\ 1\\
2&1,\ 2,\ 3,\ 4&-,\ p_A,\ p_B,\ p_A&-,\ 1,\ 1,\ 1\\
3&1,\ 2,\ 3&p_A,\ p_B,\ p_A&0,\ 1,\ 0\\
4&1,\ 2,\ 3,\ 4,\ 5,\ 6&
-,\ p_A,\ p_B,\ -,\ p_A,\ p_A&
-,\ 1,\ 1,\ -,\ 1,\ 1\\
5&1,\ 2&p_A,\ p_B&0,\ 1\\
6&1,\ 2,\ 3,\ 4&-,\ p_A,\ p_B,\ p_A&-,\ 1,\ 1,\ 1\\
7&1,\ 2,\ 3,\ 4,\ 5&p_A,\ p_B,\ p_A,\ p_A,\ p_A&
0,\ 1,\ 0,\ 0,\ 0
\end{array}
$$
\endgroup
Whenever \(d_p/g_p=1\), the criterion
$$
\gcd(d_p/g_p,n_p)=1
$$
of \cref{lem:admissible-rank-zero-transform} is automatic. When
\(d_p/g_p=0\), the chosen fibration has \(n_p=1\), so the criterion again
holds. Thus, by \cref{rem:bielliptic-relative-fm}, every nonexceptional
reduced form admits a single relative Fourier--Mukai autoequivalence sending
it to rank zero.

\cref{lem:two-step-exceptional-rank-zero} applies to the exceptional reduced
forms satisfying the additional divisibility conditions. Indeed,
\(|G|\) is \(4\), \(8\), and \(9\) in Types \(2\), \(4\), and
\(6\), respectively, while the remaining exceptional form in Type \(4\) is
covered when
\(4\mid q\). Thus all the cases in the statement admit a finite
composition of relative Fourier--Mukai autoequivalences sending them to rank
zero.

After choosing the identifications of the intervening relative moduli surfaces
with $S$ as in \cref{rem:bielliptic-relative-fm}, let
$\Phi\in\operatorname{Auteq}D^b(S)$ denote the resulting composition. It
preserves the Mukai pairing, and hence
$
\mw^2=\mv_0^2.
$
\end{proof}

\begin{proof}[Proof of \cref{lem:starting-vector-exceptional-congruences}]
We show that under the initial assumptions, \cref{lem:reduction-positive-rank-to-zero} can be applied and the result immediately follows from it.
We use the congruence behavior of the elementary operations appearing in
Nuer's reduction. To simplify notations, write $(R,a,b,s)$ for the Mukai vector
$(R,aA_0+bB_0,s)$. Tensoring by a line bundle of numerical class
$xA_0+yB_0$ sends
$$
(R,a,b,s)\longmapsto
(R,a+Rx,b+Ry,s+ay+bx+Rxy).
$$
The elementary relative Fourier--Mukai transforms used in the reduction
have numerical form
$$
\Phi_A^{\pm1}(R,a,b,s)
=
(R\mp\la_Sa,a,b\mp\la_Ss,s)
$$
and
$$
\Psi_B^{\pm1}(R,a,b,s)
=
(R\mp k_Sb,a\mp k_Ss,b,s).
$$
Dualizing and shifting only change signs and therefore do not affect the
congruence arguments below.

For types $1,3,5,7$, there is nothing to prove: none of the reduced rows
which occur in these types is exceptional in
\cref{lem:reduction-positive-rank-to-zero}.

Consider type $2$. Here $k_S=\la_S=2$, so the parity of the rank is
invariant. Assume first that $R\equiv2\pmod 4$ and at least one of $a,b$
is odd. Since every rank occurring in the reduction is even, the parities
of $a$ and $b$ are invariant. Therefore the endpoint cannot be
$(q,0,s_0)$, whose divisor coefficients are both even, and hence it is
nonexceptional. If $R\equiv0\pmod 4$ and the endpoint is $(q,0,s_0)$,
then parity invariance shows that $a$ and $b$ are even throughout the
reduction. Every rank change is therefore divisible by $4$, so $q\equiv R\equiv0\pmod 4$.

Consider type $6$. Here $k_S=\la_S=3$. Assume first that
$R\equiv3,6\pmod 9$ and at least one of $a,b$ is not divisible by $3$.
All ranks occurring in the reduction are divisible by $3$, so $a$ and
$b$ are invariant modulo $3$. Thus the endpoint cannot be $(q,0,s_0)$
and is nonexceptional. If $R\equiv0\pmod 9$ and the endpoint is
$(q,0,s_0)$, then $a$ and $b$ are divisible by $3$ throughout the
reduction. Every rank change is therefore divisible by $9$, and hence $q\equiv R\equiv0\pmod 9$.

It remains to treat type $4$, where $k_S=4$ and $\la_S=2$. The exceptional
reduced rows are
$$
(q,0,s_0)
\qquad\text{and}\qquad
(2q,qB_0,s_0).
$$
Assume first that $R\equiv2,6\pmod 8$ and $a$ is odd. All ranks occurring
in the reduction are even, and both line-bundle twists and the elementary
relative Fourier--Mukai transforms preserve the parity of $a$. Since both
exceptional rows have vanishing $A_0$-coefficient, the endpoint cannot be
exceptional.

Suppose next that $R\equiv4\pmod 8$. If $a$ is odd, the same parity argument
rules out both exceptional rows. If $a\equiv2\pmod 4$, every rank change is
divisible by $4$, so all ranks remain divisible by $4$. Line-bundle twists
and the elementary relative Fourier--Mukai transforms therefore preserve
$a$ modulo $4$. Since both exceptional rows have vanishing
$A_0$-coefficient, neither can occur. Finally, suppose $a\equiv0\pmod 4$
and $b$ is odd. Every rank change is divisible by $4$, and line-bundle
twists and the elementary relative Fourier--Mukai transforms preserve the
parity of $b$. The exceptional endpoints in rank congruence class $4$
modulo $8$ have even $B_0$-coefficient, namely zero or $q$ with $q$ even.
Thus the endpoint is nonexceptional in every subcase.

Finally, assume $R\equiv0\pmod 8$. Suppose first that the endpoint is
$(q,0,s_0)$. Its final divisor coefficients vanish, so parity invariance
shows that $a$ and $b$ are even throughout the reduction. All ranks are
therefore divisible by $4$, and $a$ is preserved modulo $4$. Since its
final value is zero, $4\mid a$ throughout the reduction. Together with
$2\mid b$, this shows that every rank change is divisible by $8$. Thus $q\equiv R\equiv0\pmod 8$.
Suppose instead that the endpoint is $(2q,qB_0,s_0)$. If $q$ were odd,
then the final rank would be congruent to $2$ or $6$ modulo $8$. Since
the final $A_0$-coefficient is zero, parity invariance forces $a$ to be even
throughout the reduction. Every rank change is then divisible by $4$, so
all ranks remain divisible by $4$, a contradiction. Hence $q$ is even. If
$q\equiv2\pmod 4$, then the final rank is congruent to $4$ modulo $8$, the
final $A_0$-coefficient is divisible by $4$, and the final
$B_0$-coefficient $q$ is even. The preceding congruence arguments force
$4\mid a$ and $2\mid b$ throughout the reduction, so every rank change is
divisible by $8$. This contradicts the starting congruence
$R\equiv0\pmod 8$ and the final rank congruence $2q\equiv4\pmod 8$.
Therefore $4\mid q$.

In every exceptional case, the reduced vector satisfies the additional
divisibility condition required by
\cref{lem:reduction-positive-rank-to-zero}.
\end{proof}

\begin{lemma}\label{lem:l-invariant-FM}\label{lem:l-invariant-under-autoeq}\label{lem:ell-square-invariant}
For every Fourier--Mukai autoequivalence $\Phi\in\operatorname{Auteq}D^b(S)$ and every Mukai vector $\mv$, one has $\ell(\Phi_*\mv)=\ell(\mv)$.
\end{lemma}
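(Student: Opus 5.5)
We need to show that $\ell$ is invariant under every autoequivalence. The plan is to reinterpret $\ell(\mv)$ intrinsically, via the canonical cover, and then use that autoequivalences of $D^b(S)$ lift to $X$.

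\textbf{Reinterpreting $\ell$.} By \cite[Lemma~2.3]{Nue25}, quoted in \cref{rem:yoshioka_walls}, the condition $\ell(\mv)=1$ is equivalent to primitivity of $\pi^*\mv$ together with primitivity of $\mv$. More generally, $\ell(\mv)$ should equal the divisibility of $\pi^*\mv$ in $H^*_{\mathrm{alg}}(X,\zz)$.

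To verify this, compute $\pi^*\mv$ in a basis of $\num(X)$ adapted to $A_X,B_X$. Since $\pi^*A=k_S\,(\text{fiber class})$-type relations hold, I expect
$$\pi^*A_0 = \tfrac{1}{k_S}\pi^*A \quad\text{and}\quad \pi^*B_0=\tfrac{1}{\la_S}\pi^*B.$$
Then $\pi^*(r,aA_0+bB_0,s)=(r,\ a\,\pi^*A_0+b\,\pi^*B_0,\ k_S s)$. The gcd of its coordinates in an integral basis of the relevant sublattice should be
$$\gcd\bigl(r,\ a,\ \tfrac{k_S}{\la_S}b,\ k_S s\bigr)=\ell(\mv).$$
If the algebraic lattice of $X$ (for a non-generic abelian surface $X$) is larger than the image of $\pi^*$, I would instead work in the saturation of $\pi^*H^*_{\mathrm{alg}}(S)$, or in the $G'$-invariant part $H^*_{\mathrm{alg}}(X,\zz)^{G'}$. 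Here $\ell(\mv)$ becomes the divisibility of $\pi^*\mv$ inside the lattice $\Lambda:=\pi^*H^*_{\mathrm{alg}}(S)$ with its induced integral structure.

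\textbf{Invariance.} Rather than computing on every autoequivalence, I would use generators. By \cite{Nue25}, the action of $\operatorname{Auteq}D^b(S)$ on $H^*_{\mathrm{alg}}(S,\zz)$ should be generated by the following operations, whose numerical formulas were recorded in the proof of \cref{lem:starting-vector-exceptional-congruences}:
\begin{itemize}
\item line bundle twists, $(R,a,b,s)\mapsto(R,a+Rx,b+Ry,s+ay+bx+Rxy)$;
\item the relative transforms $\Phi_A^{\pm1}$ and $\Psi_B^{\pm1}$;
\item dualizing and shifts.
\end{itemize}
Each of these can be checked directly to preserve the ideal generated by $r,\ a,\ \tfrac{k_S}{\la_S}b,\ k_Ss$, and likewise for the inverses.

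\begin{itemize}
\item \emph{Twists.} The new rank is $R$. The new $a$ is $a+Rx$, in the ideal. The new $\tfrac{k_S}{\la_S}b$-term is $\tfrac{k_S}{\la_S}b+\tfrac{k_S}{\la_S}Ry$, in the ideal. For the last coordinate,
$$k_S(s+ay+bx+Rxy)=k_Ss+k_S a y+\la_S x\cdot\tfrac{k_S}{\la_S}b+k_SRxy,$$
which also lies in the ideal.
\item \emph{$\Phi_A$.} The new rank is $R-\la_S a$. The new $\tfrac{k_S}{\la_S}b$-term is $\tfrac{k_S}{\la_S}b-k_Ss$. Both lie in the ideal.
\item \emph{$\Psi_B$.} The new rank is
$$R-k_Sb=R-\la_S\cdot\tfrac{k_S}{\la_S}b,$$
and the new $a$ is $a-k_Ss$. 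Both lie in the ideal.
\end{itemize}
Since each generator has an inverse of the same form, the ideal is preserved exactly, so $\ell$ is invariant.

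\textbf{Main obstacle.} The hard step is justifying that these operations, including the action of automorphisms of $S$ on $\num(S)$, generate the image of $\operatorname{Auteq}D^b(S)$ in $O(H^*_{\mathrm{alg}})$. As a fallback, I would use the conceptual route: every $\Phi$ lifts to a $G'$-equivariant autoequivalence of $D^b(X)$ (after twisting by a character). This lift preserves the divisibility of $\pi^*\mv$ in $\Lambda$, which gives the same conclusion.
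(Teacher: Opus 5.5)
Your main route has a gap at exactly the step you flag, and the generation claim cannot be cited from \cite{Nue25}. Nuer uses the normalized $\Phi_A^{\pm1}$, $\Psi_B^{\pm1}$, twists, duals and shifts as a \emph{reduction algorithm}, not as generators of the image of $\operatorname{Auteq}D^b(S)$.

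The generation theorem that is available is Tochitani's, used in \cref{prop:rank-zero-rank-one}. Its generators include $\operatorname{Aut}(S)$ and \emph{all} relative Fourier--Mukai transforms along $p_A$ and $p_B$. These act on (rank, fiber degree) by arbitrary admissible matrices as in \cref{lem:admissible-rank-zero-transform}, and there is no reason for them to factor numerically through your list. For instance, the $p_B$-operations on your list (normalized $\Psi_B^{\pm1}$ and twists) act on $(R,d_B)$ only through $\langle\bigl(\begin{smallmatrix}1&1\\0&1\end{smallmatrix}\bigr),\bigl(\begin{smallmatrix}1&0\\k_S&1\end{smallmatrix}\bigr)\rangle$. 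For $k_S=6$ this subgroup has infinite index in $\Gamma_0(6)$, the group of matrices with $6\mid\gamma$. So your check proves invariance only on a subgroup.

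Your ideal computation is itself correct. It would extend verbatim to a transform acting by an arbitrary admissible block matrix; you need $k_S\mid\gamma$ (resp.\ $\la_S\mid\gamma$) and $\la_S\mid k_S$. Automorphisms act trivially on $\num(S)$ because they preserve the Albanese fibration. But you would still have to prove that this block form is the actual cohomological action of every relative transform, including the chosen identification of the relative moduli space with $S$. That is more than the lemma needs.

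Your fallback is exactly the paper's proof and should be the main argument, with one correction. Divisibility of $\pi^*\mv$ inside $\Lambda=\pi^*H^*_{\mathrm{alg}}(S,\zz)$ is just the divisibility of $\mv$ itself, since $\pi^*$ is injective on this torsion-free lattice. So it detects primitivity, not $\ell$. The factors $\tfrac{k_S}{\la_S}$ and $k_S$ in $\ell$ measure precisely the failure of $\Lambda$ to be saturated; for example, $\pi^*$ multiplies the point class by $k_S$.

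The correct statement is \cite[Lemma~2.3]{Nue25}: $\ell(\mv)$ is the divisibility of $\pi^*\mv$ in $H^*_{\mathrm{alg}}(X,\zz)$. Your worry about a large $\num(X)$ is moot. $\num(X)$ is saturated in $H^2(X,\zz)$, so this divisibility equals the divisibility in $H^{\mathrm{ev}}(X,\zz)$ and does not depend on the Picard number.

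Then \cite[Theorem~2.10]{Toc26} gives a lift $\widetilde\Phi$ with $\pi^*\circ\Phi\cong\widetilde\Phi\circ\pi^*$; no character twist or equivariance bookkeeping is needed. Hence $\pi^*(\Phi_*\mv)=\widetilde\Phi_*(\pi^*\mv)$. Since $\widetilde\Phi_*$ is an automorphism of the integral lattice $H^*_{\mathrm{alg}}(X,\zz)$, it preserves divisibility, and the lemma follows.
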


\begin{proof}
By \cite[Lemma 2.3]{Nue25}, \(\ell(\mv)\) is the divisibility of
\(\pi^*\mv\) in \(H^*_{\mathrm{alg}}(X,\zz)\). Since we work over
\(\cc\), the hypotheses of the canonical-cover lifting theorem apply.
Thus, by \cite[Theorem~2.10]{Toc26}, \(\Phi\) admits a lift $\widetilde{\Phi}\in\operatorname{Auteq}D^b(X)$ such that $\pi^*\circ\Phi\cong\widetilde{\Phi}\circ\pi^*$. Passing to Mukai vectors gives $\pi^*(\Phi_*\mv)=\widetilde{\Phi}_*(\pi^*\mv)$.
The induced map
$$
\widetilde{\Phi}_*\colon H^*_{\mathrm{alg}}(X,\zz)
\longrightarrow H^*_{\mathrm{alg}}(X,\zz)
$$
is an automorphism of the integral Mukai lattice, and lattice
automorphisms preserve divisibility. Hence
$$
\operatorname{div}(\pi^*(\Phi_*\mv))
=
\operatorname{div}(\widetilde{\Phi}_*(\pi^*\mv))
=
\operatorname{div}(\pi^*\mv).
$$
Using Lemma 2.3 of Nuer's paper for \(\ell\) once more gives
\(\ell(\Phi_*\mv)=\ell(\mv)\).
\end{proof}

\cref{lem:starting-vector-exceptional-congruences,lem:reduction-positive-rank-to-zero}
show that many positive-rank Mukai vectors can be reduced to rank zero.
To use \cref{def:admissible-vector}, we must also ensure that the first
Chern class of the resulting rank-zero vector has numerical threshold at least
\(3\). This condition implies very ampleness by \cref{lem:bertini_results}
and is the hypothesis needed later for the rank-zero Betti-number
computations. We now present an explicit sufficient criterion expressed in
terms of the reduced vector \(\mv_0\). Recall that \(c_1(\mw)\) of the
resulting rank-zero vector \(\mw\) in
\cref{lem:reduction-positive-rank-to-zero} is denoted by \(D\), or
\(D(\mv_0)\) when we want to specify dependence on \(\mv_0\).

\begin{proposition}\label{prop:very-ample-rank-zero-reductions}
Let \(S\) be a bielliptic surface, and let
$
\mv_0=(R,aA_0+bB_0,s)
$
be one of the reduced positive-rank Mukai vectors in
\cref{tab:reduced-positive-rank-vectors}. Assume \(\mv_0^2>0\) and that
\(\mv_0\) satisfies the hypotheses of
\cref{lem:reduction-positive-rank-to-zero}. Then the values of
\(D(\mv_0)\) produced by the standard reductions are listed in
\cref{tab:very-ample-reduced-rows}.

In particular, if the condition in the last column of
\cref{tab:very-ample-reduced-rows} holds, then one can choose the standard
reduction so that
$
\nt(D(\mv_0))\geqslant 3.
$
Consequently, \(\mv_0\) is admissible in the sense of
\cref{def:admissible-vector}.
\end{proposition}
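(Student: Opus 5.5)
The plan is to compute everything at the level of the numerical action on $(R,a,b,s)$, using the explicit formulas already recorded in the proof of \cref{lem:starting-vector-exceptional-congruences}:
\begin{itemize}
\item tensoring by a line bundle of class $xA_0+yB_0$;
\item the elementary relative transforms $\Phi_A^{\pm1}$ along $p_A$;
\item the elementary relative transforms $\Psi_B^{\pm1}$ along $p_B$.
\end{itemize}

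For a fixed fibration, say $p_A$, the transforms $\Phi_A^{\pm1}$ together with twists by multiples of $B_0$ generate the action of the admissible matrices of \cref{lem:admissible-rank-zero-transform}. This group acts on the pair (rank, fiber degree) $=(R,\la_S a)$ through $\operatorname{SL}_2(\zz)$ with the congruence $n_A\mid\gamma$. It acts on the complementary pair $(b,s)$ through the contragredient matrix. I will first write this action in closed form for a general admissible matrix
$$
\begin{pmatrix}\alpha&\beta\\ \gamma&\delta\end{pmatrix},
$$
whose first row is forced to be $\bigl(-d_p/g_p,\,R/g_p\bigr)$. I will do the same for $p_B$, with the roles of $(a,b)$ and $(\la_S,k_S)$ exchanged. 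The resulting rank-zero vector $\mw=(0,D,\chi)$ then has one coefficient of $D$ equal to $g_p/n_p$ times a sign. This is the new fiber degree divided by the multisection degree. The other coefficient is an affine-linear function of $s$ and of the free completion parameters $(\delta,t)$. This gives the entries of \cref{tab:very-ample-reduced-rows} row by row.

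The key steps, in order, are as follows.
\begin{enumerate}
\item Go through the rows of \cref{tab:reduced-positive-rank-vectors} using the fibration choices and values of $d_p/g_p$ already fixed in the proof of \cref{lem:reduction-positive-rank-to-zero}. For each row, record $D(\mv_0)$ as a pair $(a',b')$ depending on $q$, $s$ and the completion parameter.
\item For the exceptional rows, compose the two steps of \cref{lem:two-step-exceptional-rank-zero}: first $\Phi_A^{-1}$, then the $p_B$-transform. Read off $D$ in the same way.
\item Use $\mw^2=\mv_0^2=2a'b'>0$, which shows that $a'$ and $b'$ have the same sign. After a shift or dualization, which only changes signs, I normalize to $a',b'>0$.
\item Identify the remaining freedom. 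Changing the completion $(\delta,t)\mapsto(\delta+kR/g,\,t-kd/(gn))$ is the same as post-composing with the stabilizer of rank zero. I expect this to shift the non-fixed coefficient of $D$ by multiples of a fixed integer, while preserving $a'b'=\mv_0^2/2$. The condition in the last column should then be exactly the arithmetic condition that some factorization $a'b'=\mv_0^2/2$, with $a'$ in the fixed residue class, has $\min\{a',b'\}\geqslant 3$.
\end{enumerate}

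The final assertion is then immediate. When the condition holds, the chosen reduction gives $\nt(D(\mv_0))\geqslant 3$. Composing with the autoequivalences of Nuer's reduction from \cref{lem:reduction-positive-rank-to-zero} shows that $\mv_0$, and every vector reducing to it, satisfies \cref{def:admissible-vector}.

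I expect the main obstacle to be step 4. One coefficient of $D$ is rigid, equal to $g_p/n_p$ up to sign. In rows where $g_p$ is small, for instance the $d_p/g_p=1$ rows with $g_p=q$ and small $q$, that coefficient may be less than $3$ for every choice. Here I would first try to exploit the second fibration: apply the other elementary transform before killing the rank, so that the roles of $a'$ and $b'$ are swapped. If that fails, the condition in the table has to exclude such rows, and the proof reduces to verifying that the excluded cases are exactly those.
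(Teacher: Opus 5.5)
There is a genuine gap, and it lies in the exceptional rows. Before that, note that your step~4 relies on a freedom that does not exist.

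\textbf{Step 4.} A relative transform acts on (rank, fiber degree) through $\mathrm{SL}_2(\zz)$. So a rank-zero image has fiber degree $\pm\gcd(R,d_p)$, and one coefficient of $D$ is $\pm g_p/n_p$, as you say. But in step~3 you also wrote $\mw^2=2a'b'$. Since $\mw^2=\mv_0^2$, this forces the other coefficient to be $\mv_0^2/(2a')$. Hence $D$ is determined, up to an overall sign, by $\mv_0$ and the chosen fibration, and changing the completion $(\delta,t)$ can only affect $\chi$. This rigidity is exactly how the paper fills the table. For example, it gets $D=RA_0-sB_0$ from $\mv_0^2=-2Rs$ in the rows with $a=0$ and $\la_S=1$, and $D=aA_0+(b-\la_Ss)B_0$ when $R=\la_Sa$. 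For the nonexceptional rows your computation therefore collapses to the paper's. The last column is then just the requirement that both coefficients be at least $3$. It is a sufficient condition, not an exact factorization criterion.

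\textbf{The gap: exceptional rows of types $2$ and $4$.} Your step~2 uses the single preliminary transform $\Phi_A^{-1}$ of \cref{lem:two-step-exceptional-rank-zero}. After the $p_B$-transform, the rigid $B_0$-coefficient is then $g_B/k_S=\la_S=2$. So $\nt(D)\leqslant 2$ whatever you do, and neither the table entries nor the conditions ($Q\geqslant 2$ in type $4$, $h\geqslant 2$ in type $2$) can come out. In type $6$ one has $\la_S=3$, so $h=1$ happens to suffice there.

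The missing idea is to vary the number of preliminary steps. For $h\mid Q$, the iterate $\Phi_A^{-h}$ sends $(q,0,s)$ to $(q,-\la_ShsB_0,s)$. Since $\gcd(Q,s)=1$, one gets $g_B=k_S\la_Sh$ with $d_B/g_B=-s$ prime to $k_S$. So \cref{lem:admissible-rank-zero-transform} still applies, the rigid coefficient becomes $\la_Sh$, and the Mukai square gives $D=-k_S(Q/h)sA_0+\la_ShB_0$. Likewise, in type $4$ the row $(2q,qB_0,s)$ with $q=4Q$ goes to $(2q,(q-2hs)B_0,s)$, with $g_B=8h$ and $D=-4(Q/h)sA_0+2hB_0$. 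Choosing $h$ as in the last column (the paper takes $h=Q$ in type $4$) and using $s<0$, which is forced by $\mv_0^2>0$, yields the stated conditions.

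Your fallback of swapping the roles of the two fibrations is not this construction. Your alternative, that the table must exclude these rows, is false: the table lists them with nontrivial conditions. (Minor: for $p_A$ it is twisting by multiples of $A_0$, not $B_0$, that acts on $(R,\la_Sa)$; twists by $B_0$ fix that pair.)
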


\begin{table}[p]
\centering
\scriptsize
\renewcommand{\arraystretch}{1.35}
\resizebox{\textwidth}{!}{%
\begin{tabular}{clll}
\toprule
Type & Reduced vector \(\mv_0\) & Resulting \(D(\mv_0)\) &
Sufficient condition for \(\nt(D(\mv_0))\geqslant 3\) \\
\midrule
1 & \((q,0,s)\) & \(qA_0-sB_0\) & \(q\geqslant 3,\ -s\geqslant 3\) \\
1 & \((2q,qB_0,s)\) & \(-2sA_0+qB_0\) & \(q\geqslant 3,\ -s\geqslant 2\) \\
\midrule
2 & \((q,0,s)\), \(q=4Q\) &
\(-2(Q/h)sA_0+2hB_0\), for some \(h\mid Q\) &
\(\exists h\mid Q:\ h\geqslant 2,\ (Q/h)(-s)\geqslant 2\) \\
2 & \((2q,qA_0,s)\) & \(qA_0-2sB_0\) & \(q\geqslant 3,\ -s\geqslant 2\) \\
2 & \((2q,qB_0,s)\) & \(-2sA_0+qB_0\) & \(q\geqslant 3,\ -s\geqslant 2\) \\
2 & \((2q,qA_0+qB_0,s)\) & \(qA_0+(q-2s)B_0\) &
\(q\geqslant 3,\ q-2s\geqslant 3\) \\
\midrule
3 & \((q,0,s)\) & \(qA_0-sB_0\) & \(q\geqslant 3,\ -s\geqslant 3\) \\
3 & \((4q,qB_0,s)\) & \(-4sA_0+qB_0\) & \(q\geqslant 3\) \\
3 & \((2q,qB_0,s)\) & \(2qA_0-sB_0\) & \(q\geqslant 2,\ -s\geqslant 3\) \\
\midrule
4 & \((q,0,s)\), \(q=8Q\) &
\(-4(Q/h)sA_0+2hB_0\), for some \(h\mid Q\) & \(Q\geqslant 2\) \\
4 & \((2q,qA_0,s)\) & \(qA_0-2sB_0\) & \(q\geqslant 3,\ -s\geqslant 2\) \\
4 & \((4q,qB_0,s)\) & \(-4sA_0+qB_0\) & \(q\geqslant 3\) \\
4 & \((2q,qB_0,s)\), \(q=4Q\) &
\(-4(Q/h)sA_0+2hB_0\), for some \(h\mid Q\) & \(Q\geqslant 2\) \\
4 & \((4q,2qA_0+qB_0,s)\) & \(2qA_0+(q-2s)B_0\) &
\(q\geqslant 2,\ q-2s\geqslant 3\) \\
4 & \((2q,qA_0+qB_0,s)\) & \(qA_0+(q-2s)B_0\) &
\(q\geqslant 3,\ q-2s\geqslant 3\) \\
\midrule
5 & \((q,0,s)\) & \(qA_0-sB_0\) & \(q\geqslant 3,\ -s\geqslant 3\) \\
5 & \((3q,qB_0,s)\) & \(-3sA_0+qB_0\) & \(q\geqslant 3\) \\
\midrule
6 & \((q,0,s)\), \(q=9Q\) &
\(-3(Q/h)sA_0+3hB_0\), for some \(h\mid Q\) & automatic under \(9\mid q\) \\
6 & \((3q,qA_0,s)\) & \(qA_0-3sB_0\) & \(q\geqslant 3\) \\
6 & \((3q,qB_0,s)\) & \(-3sA_0+qB_0\) & \(q\geqslant 3\) \\
6 & \((3q,qA_0+qB_0,s)\) & \(qA_0+(q-3s)B_0\) &
\(q\geqslant 3,\ q-3s\geqslant 3\) \\
\midrule
7 & \((q,0,s)\) & \(qA_0-sB_0\) & \(q\geqslant 3,\ -s\geqslant 3\) \\
7 & \((6q,qB_0,s)\) & \(-6sA_0+qB_0\) & \(q\geqslant 3\) \\
7 & \((3q,qB_0,s)\) & \(3qA_0-sB_0\) & \(-s\geqslant 3\) \\
7 & \((2q,qB_0,s)\) & \(2qA_0-sB_0\) & \(q\geqslant 2,\ -s\geqslant 3\) \\
7 & \((q,bB_0,s)\), \(\frac{1}{3}<\frac{b}{q}<\frac{1}{2}\) &
\(qA_0-sB_0\) & \(q\geqslant 3,\ -s\geqslant 3\) \\
\bottomrule
\end{tabular}%
}
\caption{Standard rank-zero reductions of reduced positive-rank Mukai
vectors and sufficient numerical-threshold conditions.\\
In the exceptional rows, the divisor
\(h\mid Q\) records the choice of the preliminary relative Fourier--Mukai
transform along \(p_A\).}
\label{tab:very-ample-reduced-rows}
\end{table}

\begin{proof}
Write
$
D=\alpha A_0+\beta B_0.
$
By \cref{def:admissible-vector}, it is enough to ensure
$
\alpha\geqslant 3,
\,
\beta\geqslant 3,
$
that is,
$
\nt(D)\geqslant 3.
$
This condition implies very ampleness by \cref{lem:bertini_results} and is
the condition needed in \cref{thm:b1-b2-moduli}.

Recall that we use the notations
$
k_S=\ord(K_S),
\,
\la_S=\frac{|G|}{k_S}.
$
The fiber degrees with respect to the two elliptic fibrations are
$
d_A=\la_Sa,
\,
d_B=k_Sb.
$
First consider the nonexceptional rows. If a row is killed by the
normalized transform along $p_A$, so that $R=\la_Sa$, then the inverse
transform has numerical action
$$
\Phi_A^{-1}(R,aA_0+bB_0,s)
=
(R-\la_Sa,aA_0+(b-\la_Ss)B_0,s).
$$
Thus the resulting first Chern class is
$
D=aA_0+(b-\la_Ss)B_0.
$
If a row is killed by the normalized transform along $p_B$, so that
$R=k_Sb$, then
$$
\Psi_B^{-1}(R,aA_0+bB_0,s)
=
(R-k_Sb,(a-k_Ss)A_0+bB_0,s),
$$
and hence
$
D=(a-k_Ss)A_0+bB_0.
$

Finally, suppose a row is killed by the $p_A$-relative transform with
matrix
$$
\begin{pmatrix}
0&1\\
-1&0
\end{pmatrix}.
$$
This case occurs only when $\la_S=1$ and $a=0$. The transform sends
$(R,d_A)=(R,0)$ to a rank-zero pair with fiber degree $R$, up to an
overall sign. After composing with a shift if necessary, we may take the
effective sign, so the $A_0$-coefficient of $D$ is $R$. Since
Fourier--Mukai transforms preserve the Mukai pairing,
$
D^2=\mv_0^2.
$
For all rows of this type, the original divisor part is a multiple of
$B_0$, and therefore
$
\mv_0^2=-2Rs.
$
On the other hand,
$
(RA_0+\beta B_0)^2=2R\beta.
$
It follows that $\beta=-s$, and hence
$
D=RA_0-sB_0.
$
These three computations give all nonexceptional rows in
\cref{tab:very-ample-reduced-rows}.

It remains to discuss the exceptional rows. First let
$
\mv_0=(q,0,s)
$
be one of the exceptional rows in type $2$, $4$, or $6$. Thus
$
q=k_S\la_SQ
$
for some positive integer $Q$. Since $\mv_0$ is primitive,
$\gcd(q,s)=1$, and in particular $\gcd(s,k_S)=1$. Let $h\mid Q$.
Applying the $h$-fold composition of the inverse normalized transform
along $p_A$ gives
$$
\Phi_A^{-h}(R,aA_0+bB_0,s)
=
(R-\la_Sha,aA_0+(b-\la_Shs)B_0,s).
$$
It sends $\mv_0$ to
$
(q,-\la_ShsB_0,s).
$
For the fibration $p_B$, this vector has
$$
R=k_S\la_SQ,
\qquad
d_B=-k_S\la_Shs.
$$
Because $h\mid Q$ and $\gcd(Q,s)=1$, one has
$$
g=\gcd(R,d_B)=k_S\la_Sh,
\qquad
\frac{d_B}{g}=-s.
$$
Since $\gcd(s,k_S)=1$,
\cref{lem:admissible-rank-zero-transform} applies and gives a
$p_B$-relative Fourier--Mukai transform to rank zero. The resulting
$B_0$-coefficient has absolute value $g/k_S=\la_Sh$. Taking the effective
sign, write
$
D=\alpha A_0+\la_ShB_0.
$
Preservation of the Mukai square gives
$$
2\alpha\la_Sh=\mv_0^2=-2k_S\la_SQs,
$$
so
$
D=-k_S(Q/h)sA_0+\la_ShB_0.
$
This gives the exceptional rows $(q,0,s)$ in types $2$, $4$, and $6$.

In type $4$ there is one more exceptional row,
$$
\mv_0=(2q,qB_0,s),
\qquad
q=4Q.
$$
Here $k_S=4$ and $\la_S=2$. Again $\gcd(q,s)=1$, so $s$ is odd. Let
$h\mid Q$. Applying $\Phi_A^{-h}$ gives
$
(2q,(q-2hs)B_0,s).
$
For $p_B$, this vector has
$$
R=2q=8Q,
\qquad
d_B=4(q-2hs)=8(2Q-hs).
$$
Since $h\mid Q$ and $\gcd(Q,s)=1$,
$$
\gcd(R,d_B)=8h,
\qquad
\frac{d_B}{8h}=2(Q/h)-s.
$$
The latter integer is odd and hence coprime to $k_S=4$, so
\cref{lem:admissible-rank-zero-transform} gives a $p_B$-relative
transform to rank zero. The resulting $B_0$-coefficient has absolute
value $8h/4=2h$. Writing
$
D=\alpha A_0+2hB_0
$
and using preservation of the Mukai square, we obtain
$$
2\alpha(2h)=\mv_0^2=-4qs=-16Qs,
$$
and therefore
$
D=-4(Q/h)sA_0+2hB_0.
$

The conditions in the last column of
\cref{tab:very-ample-reduced-rows} are obtained by requiring, for at least
one of the choices described above, both coefficients of $D(\mv_0)$ to be
at least $3$. For the two exceptional type $4$ rows and the exceptional
type $6$ row, take $h=Q$; the condition
$\mv_0^2>0$ implies $s<0$, so the bounds displayed in the table follow.
For the exceptional type $2$ row, choose a divisor $h\mid Q$ satisfying
the condition stated in the table. Thus \(\nt(D(\mv_0))\geqslant 3\), so
\(\mv_0\) is admissible by \cref{def:admissible-vector}. Finally, suppose $\mv$ satisfies
\cref{lem:starting-vector-exceptional-congruences}, and let $\mv_0$ be
its reduced vector. If $\mv_0$ satisfies the corresponding condition in
the table, composing the reduction from $\mv$ to $\mv_0$ with the
rank-zero reduction of $\mv_0$ shows that $\mv$ is admissible.
\end{proof}

\begin{remark}
The conditions in
\cref{lem:reduction-positive-rank-to-zero,tab:very-ample-reduced-rows}
are sufficient for the standard reductions used here; no necessity
assertion is intended.
\end{remark}
\FloatBarrier

\subsection{Transition from rank zero to rank one}\label{subsec:rank-one-transition}
We characterize when a rank-zero Mukai vector lies in the Fourier--Mukai
orbit of a rank-one vector.

\begin{theorem}\label{thm:rank-zero-hilbert-model}
Let $(S,H)$ be a polarized bielliptic surface, let $\mv=(0,aA_0+bB_0,\chi)$ be a primitive rank-zero Mukai vector with \(\mv^2>0\), and let \(H\) be
\(\mv\)-generic. If
$\la_S=1$ and $\ell(\mv)=1$, then there is a rank-one Mukai vector
$\mv'=(1,D,s')$ with $(\mv')^2=\mv^2$
such that
$$
M_{H,S}(\mv)\dashrightarrow M_{H',S}(\mv')
$$
is birational for a suitable $\mv^\prime$-generic polarization \(H'\). Moreover,
$$
M_{H',S}(\mv')
\cong
\Pic^D(S)\times\Hilb^{\mv^2/2}(S),
$$
and hence, noncanonically,
$$
M_{H,S}(\mv)
\dashrightarrow
\Pic^0(S)\times\Hilb^{\mv^2/2}(S).
$$

Conversely, if such a birational map is obtained by first applying a
Fourier--Mukai autoequivalence sending \(\mv\) to rank one and then crossing
Bridgeland walls, then $\la_S=1$ and $\ell(\mv)=1$.
\end{theorem}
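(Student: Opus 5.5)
The plan is to work at the level of the numerical action of the relative transforms $\Phi_A^{\pm1}$, $\Psi_B^{\pm1}$ and line-bundle twists, exactly as in the proof of \cref{lem:starting-vector-exceptional-congruences}, and then to use \cref{lem:admissible-rank-zero-transform} together with \cref{rem:bielliptic-relative-fm} to realize them as autoequivalences of $D^b(S)$.

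\emph{Forward direction.} Assume $\la_S=1$. Then $n_A=1$ for $p_A$, so by \cref{lem:admissible-rank-zero-transform} the rank of a vector can be changed freely along $p_A$. The only constraint is the gcd of rank and fiber degree $d_A=a$.

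Starting from $\mv=(0,aA_0+bB_0,\chi)$, I will first twist by line bundles and apply $\Psi_B^{\pm1}$. The goal is to arrange a vector whose $A_0$-coefficient is $\pm1$. The condition $\ell(\mv)=\gcd(a,k_Sb,k_S\chi)=1$ (rank $0$, $\la_S=1$) is exactly what should make this possible:
\begin{itemize}
\item $\Psi_B$ shifts $a$ by multiples of $k_S\chi$ while changing the rank by $k_Sb$;
\item twisting changes $a$ by multiples of the rank.
\end{itemize}
Running a Euclidean algorithm on $(a,k_Sb,k_S\chi)$ should therefore reach $a'=1$.

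Once the fiber degree along $p_A$ is $1$, the admissible matrices in \cite[Theorem~5.3]{Bri98} with $n=1$ contain a row $(\alpha,\beta)$ with $\alpha R'+\beta\cdot 1=1$. So some relative transform along $p_A$ produces rank one.

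The resulting vector is $\mv'=(1,D,s')$. Since the transforms are isometries, $(\mv')^2=\mv^2$.

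Next comes the wall-crossing step. The composite autoequivalence identifies $M_{H,S}(\mv)$ with a Bridgeland moduli space $M_{\Phi_*\sigma}(\mv')$. By \cite[Theorem~1.3]{Nue25}, as in the proof of \cref{lem:admissible-positive-rank-to-rank-zero}, this space is birational to the Gieseker space $M_{H',S}(\mv')$ for a generic $H'$.

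For rank one, every torsion-free sheaf has the form $E=M\otimes I_Z$ with $M$ a line bundle, and all such sheaves are stable. Hence
\[
M_{H',S}(\mv')\cong\Pic^D(S)\times\Hilb^n(S).
\]
The length is $n=-\chi(E)+\chi(M)=\frac{D^2}{2}-s'=\frac{(\mv')^2}{2}=\frac{\mv^2}{2}$, using $\td(S)=1$ and the Mukai pairing formula. Choosing a point of $\Pic^D(S)$ gives the noncanonical identification with $\Pic^0(S)$.

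\emph{Converse.} Suppose $\Phi_*\mv$ has rank one. By \cref{lem:l-invariant-FM},
\[
\ell(\mv)=\ell(1,\dots)=\gcd(1,\dots)=1.
\]
For $\la_S$, I will use the congruence argument in the style of \cref{lem:starting-vector-exceptional-congruences}. If $\la_S>1$ (types $2,4,6$), then $\la_S\mid k_S$. Starting from rank $0$, every rank change under twists, $\Phi_A^{\pm1}$ and $\Psi_B^{\pm1}$ is divisible by $\la_S$. The first two change the rank by multiples of $\la_S$; for $\Psi_B$ this follows from $k_S b$ and $\la_S\mid k_S$.

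More intrinsically, the pulled-back rank on $X$ and the class $\pi'^*$ pairing force $\la_S$ to divide the rank. So rank one is never reached, which gives $\la_S=1$.

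The main obstacle is making this last step rigorous for \emph{all} autoequivalences, not just the generators. I would try the following: via \cite[Theorem~2.10]{Toc26}, lift $\Phi$ to $X$, and show that the image of $\pi^*$ on Mukai vectors has ranks divisible by $k_S$ but pairings constrained so that a rank-one class on $S$ has $\pi^*\mv$ with divisibility incompatible with $\la_S>1$. Alternatively, I would note that a rank-one vector on $S$ in types $2,4,6$ would contradict the reduced-form list of \cref{tab:reduced-positive-rank-vectors}, since the orbit invariant modulo $\la_S$ of the rank is preserved.
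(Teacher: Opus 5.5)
Your forward direction is sound in outline. The genuine gap is in the converse, at the step $\la_S=1$. The hypothesis allows an arbitrary $\Phi\in\operatorname{Auteq}D^b(S)$, but your congruence check only covers composites of line-bundle twists, $\Phi_A^{\pm1}$ and $\Psi_B^{\pm1}$. As you note yourself, nothing in the proposal passes from these to all autoequivalences, and neither fallback does it:
\begin{itemize}
\item the lift to $X$ is only sketched and, as phrased, targets no contradiction (a rank-one class has $\ell=1$, which is compatible with $\la_S>1$);
\item the appeal to \cref{tab:reduced-positive-rank-vectors} is circular, since it presupposes that the rank stays divisible by $\la_S$ along the whole orbit. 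It also yields no contradiction, because $(q,0,s)$ with $q=1$ is an allowed reduced form in types $2,4,6$.
\end{itemize}
The paper closes this with Tochitani's generation theorem \cite[Main Theorem~1]{Toc26}. It says $\operatorname{Auteq}D^b(S)$ is generated by standard autoequivalences, which preserve the rank up to sign, and by arbitrary relative Fourier--Mukai transforms along $p_A$ and $p_B$. The latter send the rank to $cR+\alpha d$ with $d\in\{\la_Sa,k_Sb\}$, which is divisible by $\la_S$ \cite[Theorem~2.14]{Toc26}. You could also close the gap with a tool you already use: apply \cref{lem:l-invariant-FM} to the point class. Since $\ell(0,0,1)=k_S$, the vector $w\coloneqq\Phi^{-1}_*(0,0,1)=(r_w,a_wA_0+b_wB_0,s_w)$ has $k_S\mid r_w$, $k_S\mid a_w$ and $\la_S\mid b_w$. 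As $\la_S\mid k_S$, this gives
\[
\rank(\Phi_*\mv)=-\langle\mv,w\rangle=r_w\chi-ab_w-ba_w\equiv0\pmod{\la_S}.
\]

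The forward direction is fine in substance but overstates its intermediate goal. Reaching $A_0$-coefficient $\pm1$ is left to a Euclidean algorithm you do not carry out, and it is more than you need. It suffices to reach $\gcd(R',a')=1$, which takes one step. First twist to move $\chi$ within $\chi+\gcd(a,b)\zz$, then apply $\Psi_B$ to obtain $(-k_Sb,\,a-k_S\chi',\,b,\,\chi')$. A prime-by-prime Chinese remainder argument of the same kind as in the proof of \cref{prop:rank-zero-rank-one}, using $\gcd(a,k_Sb,k_S\chi)=1$, gives $\gcd(k_Sb,a-k_S\chi')=1$. A $p_A$-relative transform then reaches rank one, and since $n_A=\la_S=1$ the matrix completion is unconstrained. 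This is the mirror image of the paper's route. The paper twists, applies $\Phi_A^{-1}$, and finishes along $p_B$, where it must additionally check that the first-row entry $c$ is prime to $k_S$. Your wall-crossing step and the identification $M_{H',S}(\mv')\cong\Pic^D(S)\times\Hilb^{\mv^2/2}(S)$ agree with the paper.
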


\begin{proposition}\label{prop:rank-zero-rank-one}
Let \(S\) be a bielliptic surface, and let $\mv=(0,aA_0+bB_0,\chi)$ be a primitive rank-zero Mukai vector with \(\mv^2>0\). Then \(\mv\) lies
in the Fourier--Mukai orbit of a rank-one Mukai vector if
and only if $\la_S=1$ and $\ell(\mv)=1$.
Equivalently, this can happen only for bielliptic surfaces of type
\(1,3,5\), or \(7\), and in those cases the condition is $\gcd(a,k_Sb,k_S\chi)=1$.
\end{proposition}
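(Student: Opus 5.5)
The plan is to prove necessity and sufficiency separately. Necessity uses invariants of the Fourier--Mukai action, and sufficiency uses a Euclidean-type reduction built from relative transforms along $p_A$ and $p_B$. The final reformulation is bookkeeping: from \cref{tab:types} and \cref{tab:invariants}, $\la_S=|G|/k_S=1$ exactly for types $1,3,5,7$. In that case the definition of $\ell$ gives, for $\mv=(0,aA_0+bB_0,\chi)$,
$$
\ell(\mv)=\gcd(0,a,k_Sb,k_S\chi)=\gcd(a,k_Sb,k_S\chi).
$$

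\emph{Necessity.} Suppose $\Phi_*\mv=\mv'=(1,D,s')$ for some $\Phi\in\operatorname{Auteq}D^b(S)$. By \cref{lem:l-invariant-FM}, $\ell(\mv)=\ell(\mv')=\gcd(1,\dots)=1$. To force $\la_S=1$, I would look for a second invariant, namely the rank modulo $\la_S$. I would use the description of the numerical action of $\operatorname{Auteq}D^b(S)$ (through \cite{Toc26} and \cite[Section~10]{Nue25}) as generated, up to sign, by shifts, dualization, line-bundle twists, automorphisms of $S$ and the relative transforms $\Phi_A^{\pm1},\Psi_B^{\pm1}$, together with the more general $p_A$- and $p_B$-relative matrices of \cref{lem:admissible-rank-zero-transform}. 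The check on generators is as follows:
\begin{itemize}
\item twists and automorphisms preserve $R$, while shifts and duals only change its sign;
\item $\Phi_A^{\pm1}$ changes $R$ by $\la_Sa$;
\item $\Psi_B^{\pm1}$ changes $R$ by $k_Sb$, and $\la_S\mid k_S$ in every type (check the table: types $2,4,6$ have $(k_S,\la_S)=(2,2),(4,2),(3,3)$);
\item a general relative transform has new rank $\alpha R+\beta d_p$, with $d_A=\la_Sa$ and $d_B=k_Sb$ both divisible by $\la_S$. The coefficient $\alpha$ is coprime to $\la_S$, because $\alpha\delta-\beta\gamma=1$ and $n_p\mid\gamma$ with $\la_S\mid n_p$.
\end{itemize}
Hence the class of $R$ modulo $\la_S$, up to units, is invariant; in particular "$R\equiv0\pmod{\la_S}$" is invariant. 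A rank-zero vector can therefore never reach rank one unless $\la_S=1$. The main risk in this half is justifying that the listed transforms really generate the relevant action on $H^*_{\mathrm{alg}}(S,\zz)$. If that fails, I would instead lift to the canonical cover via \cite[Theorem~2.10]{Toc26} and compare the divisibility of the fiber-degree components of $\pi^*\mv$ along $X\to A/H$.

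\emph{Sufficiency.} Assume $\la_S=1$, so $p_A$ has a section and $n_A=1$. By \cref{lem:admissible-rank-zero-transform} (read in reverse, with \cref{rem:bielliptic-relative-fm}), every matrix in $\operatorname{SL}_2(\zz)$ with $\beta>0$ is realized on $(R,d_A)=(R,a)$. Starting from $(0,a)$, one such transform produces rank $\pm a$. I would then run a Euclidean algorithm:
\begin{itemize}
\item along $p_A$, with twists changing $d_A$ by multiples of $R$, replace $R$ by $\gcd(R,d_A)$;
\item along $p_B$, $\Psi_B^{\pm1}$ changes $R$ by $\pm k_Sb$ and $a$ by $\mp k_Ss$, which feeds $k_Sb$ and $k_Ss$ into the gcd.
\end{itemize}
Iterating these moves should decrease the minimal positive rank in the orbit until it equals $\gcd(R,a,k_Sb,k_Ss)=\ell$.

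Alternatively, I could take a positive-rank vector in the orbit and apply Nuer's reduction (\cref{tab:reduced-positive-rank-vectors}, types $1,3,5,7$), then show that $\ell=1$ forces the reduced form to be reachable from rank one. For example, $(q,0,s)$ with $\gcd(q,k_Ss)=1$ can be moved by a $p_B$-transform and a twist to a vector with smaller rank. The row-by-row reduction to rank exactly $\ell=1$ is where I expect the main obstacle: showing that the orbit minimum of the rank is $\ell$, and not merely a multiple of it, needs a careful descent argument in each reduced row.
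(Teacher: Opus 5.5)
Your necessity argument is fine and is essentially the paper's. The invariant $\ell$ is preserved by \cref{lem:l-invariant-FM}. Divisibility of the rank by $\la_S$ is preserved by the generators of $\operatorname{Auteq}D^b(S)$ (Tochitani's generation theorem), because $d_A=\la_Sa$ and $d_B=k_Sb$ are both divisible by $\la_S$.

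The gap is in sufficiency. The sentence ``iterating these moves should decrease the minimal positive rank \dots\ until it equals $\ell$'' carries that entire direction, and you do not prove it; you flag this yourself. Moreover, the moves in the order you describe can stall:
\begin{itemize}
\item From $(0,aA_0+bB_0,\chi)$ the $p_A$-step gives rank $\pm a$; for instance $\Phi_A^{-1}$ yields $(-a,aA_0+(b-\chi)B_0,\chi)$.
\item A $p_B$-step then produces rank $cR+\alpha d_B$, which is divisible by $\gcd(a,k_S(b-\chi))$. This gcd may exceed $1$.
\item A line-bundle twist at this positive rank changes $b$ by $-ay$, so it changes $d_B$ only by multiples of $k_Sa$. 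It therefore cannot alter that gcd.
\end{itemize}
Your general claim that the orbit minimum of the rank equals $\ell$ is also stronger than what is needed, and it is not established.

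The missing idea is to use the twist \emph{before} leaving rank zero. There, twisting by $xA_0+yB_0$ changes only $\chi$, by $xb+ya$, that is, by an arbitrary multiple of $h=\gcd(a,b)$. The hypothesis $\ell=\gcd(a,k_Sb,k_S\chi)=1$ controls the primes $p\mid a$ as follows:
\begin{itemize}
\item Every prime $p\mid a$ is prime to $k_S$.
\item If moreover $p\mid h$, then $p\nmid\chi$, so $p\nmid b-\chi'$ for every $\chi'\equiv\chi\pmod h$.
\item If $p\nmid h$, the residue of $\chi'=\chi+hm$ modulo $p$ can be chosen freely.
\end{itemize}
The Chinese remainder theorem then gives $\chi'$ with $\gcd(a,k_S(b-\chi'))=1$.

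After $\Phi_A^{-1}$, the vector $(-a,aA_0+(b-\chi')B_0,\chi')$ has coprime $p_B$-data $(R,d_B)=(-a,k_S(b-\chi'))$. Choose $cR+\alpha d_B=1$ with $\alpha>0$; this is possible since $a\neq0$ because $\mv^2=2ab>0$. Because $k_S\mid d_B$, we have $cR\equiv1\pmod{k_S}$, so $c$ is automatically prime to $k_S$. Hence $\gcd(c,k_S\alpha)=1$, and $(c,\alpha)$ completes to an admissible matrix $\begin{pmatrix}c&\alpha\\-k_S\gamma&\beta\end{pmatrix}$. The corresponding $p_B$-relative transform lands in rank one, so no general descent argument is required.
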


\begin{proof}
In the proof of this proposition, we rely on Tochitani's description of
\(\operatorname{Auteq}D^b(S)\): it is generated by standard
autoequivalences together with relative Fourier--Mukai transforms along
\(p_A\) and \(p_B\); see \cite[Main Theorem~1]{Toc26}.
Suppose first that \(\mv\) lies in the Fourier--Mukai autoequivalence orbit
of a rank-one Mukai vector. By \cref{lem:l-invariant-FM}, \(\ell\) is
invariant under Fourier--Mukai autoequivalences, and every rank-one vector
has \(\ell=1\). Hence \(\ell(\mv)=1\).

It remains to show that \(\la_S=1\). Assume, for a contradiction, that
\(\la_S>1\). By the classification this is precisely the case of types
\(2,4,6\), and in these cases \(\la_S\mid k_S\). For a Mukai vector $(R,aA_0+bB_0,s)$,
the fiber degrees with respect to \(p_A\) and \(p_B\) are
$$
d_A=\la_Sa,
\qquad
d_B=k_Sb,
$$
so both are divisible by \(\la_S\). A relative Fourier--Mukai transform
along either elliptic fibration acts on the pair \((R,d)\) by an
admissible matrix in \(\mathrm{SL}_2(\zz)\), see
\cite[Theorem~2.14]{Toc26}. In particular the new rank has the form $R'=cR+\alpha d$ for some \(c,\alpha\in\zz\). Thus if \(\la_S\mid R\), then
\(\la_S\mid R'\). Shifts change the sign of the rank, automorphisms preserve
the rank, and tensoring by a line bundle preserves the rank. Hence standard
autoequivalences preserve divisibility of the rank by \(\la_S\). By
\cite[Main Theorem~1]{Toc26}, these standard autoequivalences, together with
the relative Fourier--Mukai transforms along \(p_A\) and \(p_B\), generate
the autoequivalence group. Since \(\mv\) has rank zero, every
vector in its Fourier--Mukai orbit has rank divisible by \(\la_S\). If
\(\la_S>1\), such an orbit cannot contain a rank-one vector.

Conversely, assume that \(\la_S=1\) and \(\ell(\mv)=1\). Then $\ell(\mv)=\gcd(a,k_Sb,k_S\chi)=1$.
Since \(\mv^2>0\), both \(a\) and \(b\) are nonzero.

We first tensor by a line bundle to arrange a useful value of \(\chi\).
Tensoring \(\mv\) by a line bundle of numerical class \(xA_0+yB_0\)
sends it to
$$
(0,aA_0+bB_0,\chi+xb+ya).
$$
Let \(h=\gcd(a,b)\). Hence we may replace \(\chi\) by any integer
\(\chi'\equiv\chi\pmod h\). We claim that \(\chi'\) can be chosen so that $\gcd(a,k_S(b-\chi'))=1$.
Indeed, since \(\gcd(a,k_Sb,k_S\chi)=1\), every prime \(p\mid a\) is
prime to \(k_S\). Thus it is enough to arrange \(p\nmid b-\chi'\) for
every prime \(p\mid a\). If \(p\mid h\), then \(p\mid b\) and
\(\chi'\equiv\chi\pmod p\), while the same gcd condition gives
\(p\nmid\chi\). Hence \(p\nmid b-\chi'\). If \(p\nmid h\), then
\(\chi'=\chi+hm\) varies freely modulo \(p\), so we choose \(m\) with
\(\chi'\not\equiv b\pmod p\). The Chinese remainder theorem gives a
single choice of \(m\) working for all primes \(p\mid a\).

After this line-bundle twist, it is enough to consider $\mv'=(0,aA_0+bB_0,\chi')$ with $\gcd(a,k_S(b-\chi'))=1$.
Since \(\la_S=1\), the inverse of the normalized relative Fourier--Mukai
transform along \(p_A\) has numerical action
\cite[Proposition 10.5]{Nue25}
$$
\Phi_A^{-1}(R,aA_0+bB_0,s)
=
(R-a,\ aA_0+(b-s)B_0,\ s).
$$
Therefore
$$
\Phi_A^{-1}(\mv')
=
(-a,\ aA_0+(b-\chi')B_0,\chi').
$$
For \(p_B\), this vector has rank \(R=-a\) and fiber degree $d_B=k_S(b-\chi')$.
By construction, \(\gcd(R,d_B)=1\).

We now apply a relative Fourier--Mukai transform along \(p_B\) sending
this vector to rank one. Since \(\gcd(R,d_B)=1\), choose
\(c_0,\alpha_0\in\zz\) such that
$$
c_0R+\alpha_0d_B=1.
$$
All solutions are of the form
$$
c=c_0+td_B,
\qquad
\alpha=\alpha_0-tR,
\qquad
t\in\zz.
$$
Since \(R\neq 0\), we may choose \(t\) so that \(\alpha>0\).
As \(k_S\mid d_B\), the integer \(c\) is automatically coprime to
\(k_S\). Hence \(c\) and \(\alpha k_S\) are coprime, so we can complete
\((c,\alpha)\) to an admissible matrix
$$
\begin{pmatrix}
c&\alpha\\
-k_S\gamma&\beta
\end{pmatrix}
\in\mathrm{SL}_2(\zz).
$$
By \cite[Theorem~2.14]{Toc26}, the corresponding relative Fourier--Mukai
transform has new rank $cR+\alpha d_B=1$.
Thus \(\mv\) lies in the Fourier--Mukai orbit of a rank-one Mukai vector.
\end{proof}

\begin{proof}[Proof of \cref{thm:rank-zero-hilbert-model}]
By \cref{prop:rank-zero-rank-one}, there is a Fourier--Mukai autoequivalence
\(\Phi\) such that \(\mv'=\Phi_*(\mv)\) has rank one. Since
Fourier--Mukai autoequivalences preserve the Mukai pairing,
\((\mv')^2=\mv^2\). The equivalence
\(\Phi\) identifies the full moduli spaces in the corresponding Bridgeland
chambers, and Nuer's wall-crossing theorem
\cite[Theorem 1.3]{Nue25} gives a birational map
$$
M_{H,S}(\mv)\dashrightarrow M_{H',S}(\mv')
$$
after moving to the respective Gieseker chambers.

Write \(\mv'=(1,D,s')\), where $D=c_1(\mv')\in H^2(S,\zz)$ is the actual first Chern class; its numerical class is the one determined by the Fourier--Mukai calculations above. Every rank-one torsion-free sheaf of this Mukai
vector is \(I_Z\otimes L\), where \(L\in\Pic^D(S)\) and $\operatorname{length}(Z)=\frac{(\mv')^2}{2}=\frac{\mv^2}{2}$.
Rank-one torsion-free sheaves are Gieseker stable with respect to every
polarization, so no additional stability condition cuts out a proper
sublocus.
Therefore
$$
M_{H',S}(\mv')
\cong
\Pic^D(S)\times\Hilb^{\mv^2/2}(S).
$$
In particular, $M_{H',S}(\mv')$ is irreducible. Since $M_{H,S}(\mv)$ is
birational to $M_{H',S}(\mv')$, it is irreducible as well. Finally,
\(\Pic^D(S)\) is a torsor under \(\Pic^0(S)\), so choosing a point of
\(\Pic^D(S)\) gives the stated noncanonical identification.

For the converse, the underlying Mukai vector lies in the
Fourier--Mukai orbit of a rank-one vector, so
\cref{prop:rank-zero-rank-one} gives \(\la_S=1\) and \(\ell(\mv)=1\).
\end{proof}

\begin{remark}
Under the hypotheses of \cref{thm:rank-zero-hilbert-model}, the full moduli
space $M_{H,S}(\mv)$ is irreducible. Consequently, whenever the distinguished
rank-zero component $M^\circ_{H,S}(\mv)$ is defined as in
\cref{subsec:moduli-spaces}, in particular in the admissible rank-zero cases,
one has
$$
M^\circ_{H,S}(\mv)=M_{H,S}(\mv).
$$
Similarly, the distinguished component selected on the rank-one side is the
full moduli space $M_{H',S}(\mv')$.
\end{remark}

\section{Betti numbers and fundamental group of the moduli spaces}
\label{sec:topology-moduli}
\Cref{subsec:first-betti-fixed-det,subsec:fundamental-group-fixed-det,subsec:albanese-fibers-rank-zero,subsec:betti-rank-zero}
treat primitive rank-zero Mukai vectors
$
\mv=(0,c_1(L),\chi)
$
with $\nt(L)\geqslant 3$ and $H$ $\mv$-generic. Throughout these subsections,
$M$ and $M(L)$ have the meanings fixed in \cref{subsec:moduli-spaces}.
\Cref{sec:ext-to-pos-rank} extends the Betti-number computations to arbitrary primitive
admissible Mukai vectors, while \cref{subsec:hodge-cy-albanese-fibers} studies the Hodge structure of the Albanese fibers and constructs
quasi-\'etale Calabi--Yau covers in the rank-one-orbit case.
\subsection{The first Betti number of \texorpdfstring{$M(L)$}{M(L)}}\label{subsec:first-betti-fixed-det}

\begin{theorem}\label{thm:first-betti-fixed-det}
Let $(S,H)$ be a polarized bielliptic surface and let
$
\mv=(0,c_1(L),\chi)
$
be a primitive Mukai vector with $\nt(c_1(L)) \geqslant 3$. Assume that $H$ is $\mv$-generic. Then
$$
b_1(M(L))=2.
$$
\end{theorem}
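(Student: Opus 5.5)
Our plan is to prove the lower bound $b_1(M(L))\geqslant 2$ and the upper bound $b_1(M(L))\leqslant 2$ separately, with the support morphism $h\colon M(L)\to|L|$ as the main tool.

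\emph{Lower bound.} We construct a nonconstant morphism from $M(L)$ to an abelian variety of dimension at least one. Over the open locus $U\subset|L|$ of smooth curves, a point of $M(L)$ is a line bundle $\mathcal L$ of degree $d$ on a smooth curve $C\in U$. Its pushforward to $\Alb(C)$, composed with the map $\operatorname{Jac}(C)\to\Alb(S)=A/G$ induced by $C\hookrightarrow S\xrightarrow{p_A}A/G$, gives an Abel--Jacobi/norm type map. To avoid extending rational maps, we would rather use the action of $A$ by translations, as in \cref{lem:translation-base-change}. The stabilizer of $L$ in $A$ is the finite group $K=\ker\varphi_L$, and $K$ acts on $M(L)$. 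We would look for an $A$-equivariant morphism $M\to\Pic^{c_1(\mv)}(S)\times Y$ whose second factor restricts nontrivially to $M(L)$. A natural candidate is the map $F\mapsto\det R\Gamma$-twisted line bundles. The cleaner route is cohomological: by \cref{cor:rank-zero-terminal-lci}, $M(L)$ is normal with rational singularities. Hence $H^1(M(L),\cc)$ carries a pure Hodge structure, and $b_1=2h^{0,1}=2\dim\Alb(M(L))$. It then suffices to show that the Albanese of $M(L)$ is an elliptic curve.

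\emph{Upper bound via Lefschetz.} Let $\Delta\subset|L|$ be the discriminant and $M^{sm}=h^{-1}(|L|\setminus\Delta)$, a smooth Jacobian fibration. Since $M(L)$ is normal, $H^1(M(L),\qq)\hookrightarrow H^1(M^{sm},\qq)$, because restriction to a dense open of a normal variety is injective on $H^1$ by mixed Hodge theory (weights). The Leray spectral sequence for $M^{sm}\to|L|\setminus\Delta$ gives
\[
0\to H^1(|L|\setminus\Delta)\to H^1(M^{sm})\to H^0(|L|\setminus\Delta,R^1h_*\qq).
\]
By \cref{lem:bertini_results}, $\Delta$ is an irreducible hypersurface, so $H^1(|L|\setminus\Delta,\qq)=0$. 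The invariants of $R^1h_*\qq=H^1(C,\qq)$ under the monodromy of a Lefschetz pencil equal the image of $H^1(S,\qq)$, which has dimension $2$ by Lefschetz theory for the very ample $L$ (vanishing cycles span the orthogonal complement). Hence $b_1(M(L))\leqslant 2$.

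\emph{Lower bound via restriction to a fiber.} For a smooth $C\in|L|$, the classes coming from $H^1(S)$ should extend over all of $M(L)$. We would realize them as pullbacks of $H^1(A/G)$ along a morphism $M(L)\to\Pic^0(A/G)\cong A/G$. One candidate is $F\mapsto\det(p_{A*}F)$ suitably normalized; this is defined for every pure one-dimensional $F$, since $p_{A*}F$ is a sheaf on the curve $A/G$. Another is the pull back of the $\Pic^0(S)$ coordinate of $t^*$-twisting. We need to check that this morphism is nonconstant on a fiber $h^{-1}(C)=\operatorname{Jac}^d(C)$. There it restricts to the norm map $\operatorname{Jac}(C)\to\operatorname{Jac}(A/G)$ of the finite cover $C\to A/G$, which is surjective. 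Thus $b_1\geqslant 2$.

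The main obstacle is the monodromy invariant computation, namely identifying $H^1(C,\qq)^{\pi_1(|L|\setminus\Delta)}$ with $H^1(S,\qq)$. This needs the Lefschetz theorem on vanishing cycles together with irreducibility of $\Delta$, so that the vanishing cycles are all conjugate, which \cref{lem:bertini_results} supplies. A secondary subtlety is the injectivity $H^1(M(L))\to H^1(M^{sm})$ for a singular normal variety; we handle it using that $M(L)$ has terminal, hence rational, singularities, which makes $H^1$ pure of weight one.
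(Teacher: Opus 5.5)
Your proof is correct and follows the paper's two-bound structure. The upper bound is essentially \cref{lem:b1-upper-bound}, with two small substitutions. You get injectivity of $H^1(M(L),\qq)\to H^1(M(L)_U,\qq)$ from purity of $H^1$ of a normal projective variety, where the paper uses surjectivity of $\pi_1$ of a dense open subset. You compute the monodromy invariants by Picard--Lefschetz orthogonality to the vanishing cycles, where the paper uses the global invariant cycle theorem together with the projective-bundle structure of $\uc\to S$. Conjugacy of the vanishing cycles is not needed for that computation; irreducibility of $\Delta$ only enters through $H^1(U,\qq)=0$.

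The genuine difference is in the lower bound. The paper defines $\alpha$ only over the smooth locus as a relative norm. It then extends $\alpha$ to $M(L)$ via \cite[Lemma~2.24(2)]{Zha09}, using rational singularities, and gets surjectivity on every support fiber from $\alpha(F\otimes a^*N)=\alpha(F)+[m]N$. You instead define the map on all of $M(L)$ at once as a determinant of cohomology along $p_A$, so no extension theorem is needed. Nonconstancy on a single smooth fiber (the norm map) then gives a surjection onto an elliptic curve, hence $b_1\geqslant 2$.

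For your map to be a morphism, use $\det R(p_A\times\mathrm{id})_*\mathcal F$ for \'etale-local universal families $\mathcal F$, not the underived $p_{A*}F$. The complex is perfect because $\mathcal F$ is flat over the base with smooth fibers $S$ and $p_A\times\mathrm{id}$ is flat and proper, so its determinant commutes with base change. By contrast, $p_{A*}$ alone fails base change at supports containing fibers of $p_A$. Twisting $\mathcal F$ by a line bundle $\mathcal N$ from the base changes the determinant by $\mathrm{pr}^*\mathcal N^{\otimes m}$, with $m=L\cdot B$, so the local classifying maps glue. This is the same determinant-of-cohomology norm the paper uses later in \cref{lem:nodal-compactified-norm-fiber}.

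Your route gives a globally defined morphism by construction. The paper's route gives the $\Pic^0(E)$-equivariance and fiberwise surjectivity that are reused for the Albanese fibers. Your intermediate ``cohomological route'', reducing to $\dim\Alb(M(L))=1$, only restates the goal and can be dropped.
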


We prove the upper and lower bounds separately.
\begin{lemma}\label{lem:b1-upper-bound}
Under the assumptions of \cref{thm:first-betti-fixed-det}, one has
$$
b_1(M(L))\leqslant 2.
$$
\end{lemma}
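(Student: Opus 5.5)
The plan is to bound $H^1(M(L),\qq)$ by restricting to the preimage of a large open subset of $|L|$ under the support morphism $h\colon M(L)\to|L|$. On that open set the Leray spectral sequence together with monodromy bounds $H^1$ by the invariant part of $H^1$ of a smooth curve $C\in|L|$. The expected outcome is that this invariant part is the image of $H^1(S,\qq)$, which is $2$-dimensional by \cref{prop:euler}.

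\textbf{Step 1 (choice of open set).} By \cref{lem:bertini_results}, $|L|\cong\ps^N$, the discriminant $\Delta=\Delta_L$ is an irreducible hypersurface, and there is a dense open $\Delta^\circ\subset\Delta$ of irreducible one-nodal curves. I will set
$$
U'\coloneqq|L|\setminus(\Delta\setminus\Delta^\circ),\qquad U\coloneqq|L|\setminus\Delta\subset U'.
$$
Since $\Delta\setminus\Delta^\circ$ has codimension $\geqslant 2$ in $\ps^N$, we get $\pi_1(U')=1$ and $H^1(U',\qq)=0$. Put $V\coloneqq h^{-1}(U')$. Every curve parametrized by $U'$ is integral, so $V$ is a dense open subset of $M(L)$ by \cref{ass:comp_jacobian}. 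By \cref{cor:rank-zero-terminal-lci}, $M(L)$ is normal, and for a normal irreducible variety the restriction $H^1(M(L),\qq)\to H^1(V,\qq)$ is injective; I would cite the standard fact that $H^1$ of a normal variety injects into $H^1$ of any dense open, or prove it via mixed Hodge theory (weights of $H^1$ of a normal proper variety are pure, and the kernel lies in $W_0$). It therefore suffices to show $\dim H^1(V,\qq)\leqslant 2$.

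\textbf{Step 2 (Leray).} The fibres of $h|_V$ are compactified Jacobians of integral curves with at most one node. These are irreducible, so $h_*\qq=\qq$. The Leray spectral sequence then gives
$$
0\to H^1(U',\qq)\to H^1(V,\qq)\to H^0(U',R^1h_*\qq),
$$
and since $H^1(U',\qq)=0$, we get $\dim H^1(V)\leqslant\dim H^0(U',R^1h_*\qq)$. Restricting global sections from $U'$ to $U$ is injective, because $R^1h_*\qq$ restricted to $U$ is a local system, $U$ is dense, and the sections in question are constructible and determined generically once one uses the local invariant cycle theorem near $\Delta^\circ$. Hence
$$
H^0(U',R^1h_*\qq)\hookrightarrow H^1(J(C),\qq)^{\pi_1(U)}=H^1(C,\qq)^{\pi_1(U)}.
$$

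\textbf{Step 3 (monodromy).} The invariants $H^1(C,\qq)^{\pi_1(U)}$ are computed by the Lefschetz theory of the pencil: a general line in $|L|$ is a Lefschetz pencil, and $\pi_1$ of the complement of $\Delta$ in that line surjects onto $\pi_1(U)$ by Zariski's theorem. By Picard--Lefschetz, the invariants are the orthogonal complement of the vanishing cycles, and by Deligne's global invariant cycle theorem this complement equals the image of $H^1(S,\qq)\to H^1(C,\qq)$. That restriction map is injective since $C$ is ample (Lefschetz hyperplane theorem), so the invariant space has dimension $b_1(S)=2$. Combining the three steps gives $b_1(M(L))\leqslant 2$.

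The main obstacle is the passage from $U$ to $U'$ in Step 2. I need $H^0(U',R^1h_*\qq)$ to inject into the $\pi_1(U)$-invariants even though the fibres over $\Delta^\circ$ are singular compactified Jacobians. I will first try to handle this locally over a disc transverse to $\Delta^\circ$. The fibre there is the compactified Jacobian of a one-nodal curve, and $H^1$ of that fibre is the invariant part of $H^1$ of the nearby smooth fibre, namely the orthogonal of the vanishing cycle. This should give the required injectivity of stalks into nearby fibres. If this proves delicate, the fallback is to work directly on $h^{-1}(U)$: accept a possible contribution from $H^1(U,\qq)$, and kill it by using that $H^1(U)$ is generated by the class of the loop around the irreducible $\Delta$. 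That class dies in $H^1(M(L))$ by a weight argument, since it has weight $2$ while $H^1(M(L))$ has weight $\leqslant 1$.
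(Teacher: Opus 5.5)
Your argument can be completed, but your main route differs from the paper's and creates an obstacle that is not really there.

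\textbf{The paper's route.} The paper stays over the smooth locus $U=|L|\setminus\Delta$. It uses three ingredients:
\begin{enumerate}
\item surjectivity of $\pi_1(h^{-1}(U))\to\pi_1(M(L))$ for the normal variety $M(L)$, which is equivalent to your injectivity of $H^1$ under restriction to a dense open;
\item the Leray bound $\dim H^1(h^{-1}(U),\qq)\leqslant\dim H^1(U,\qq)+\dim H^0(U,R^1h_*\qq)$;
\item the global invariant cycle theorem for the universal curve $\uc$, which is a projective bundle over $S$, so the invariants are the image of $H^1(S,\qq)$.
\end{enumerate}

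\textbf{What you missed.} One has $H^1(U,\qq)=0$ outright. For an irreducible hypersurface $\Delta\subset\ps^N$, $H_1(\ps^N\setminus\Delta,\zz)\cong\zz/\deg\Delta$, so the meridian loop around $\Delta$ is torsion. Your claim that it generates $H^1(U)$ is false rationally, and no weight argument is needed. Your fallback, with the weight step removed, is exactly the paper's proof.

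\textbf{Cost of passing to $U'$.} Enlarging the base to $U'$ does give a simply connected base. The price is a local analysis over $\Delta^\circ$, which you only sketch. Global sections of a constructible sheaf are not automatically determined on a dense open. To get $H^0(U',R^1h_*\qq)\hookrightarrow H^0(U,R^1h_*\qq)$ you need the specialization map $H^1(\bar J(C_0),\qq)\to H^1(\Pic^d(C_t),\qq)$ to be injective for $C_0\in\Delta^\circ$. That requires:
\begin{enumerate}
\item smoothness of the total space of the relative compactified Jacobian over a disc transverse to $\Delta^\circ$;
\item the local invariant cycle theorem, so the image is the invariant subspace of the Picard--Lefschetz transvection, which has dimension $2g-1$;
\item the count $b_1(\bar J(C_0))=2g-1$, obtained from the presentation of $\bar J(C_0)$ as a $\ps^1$-bundle over $\Pic^{d'}(\widetilde C_0)$ with two sections glued by a translation.
\end{enumerate}
Together these make specialization an isomorphism onto the invariants, hence injective. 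All of this is true, but it is extra work that the paper avoids entirely.
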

\begin{proof}
Let $h:M(L)\to |L|$ be the support morphism, let $\Delta\subset |L|$ be
the discriminant, and set $U\coloneqq |L|\setminus\Delta$ and
$M(L)_U\coloneqq h^{-1}(U)$.
Since $M(L)_U\subset M(L)$ is a dense Zariski open subset, every loop in $M(L)$ can be moved away from $M(L)\setminus M(L)_U$. Hence the natural map
$$
\pi_1(M(L)_U)\longrightarrow\pi_1(M(L))
$$
is surjective. Therefore
$$
H_1(M(L)_U,\qq)\longrightarrow H_1(M(L),\qq)
$$
is also surjective, and it is enough to show that
$$
\dim_{\qq}H^1(M(L)_U,\qq)\leqslant 2.
$$

Over $U$, the support morphism
$
h_U \colon M(L)_U \longrightarrow U
$
is the relative Picard variety whose fiber over a smooth curve $D\in U$ is $\Pic^d(D)$. The Leray spectral sequence for $h_U$ gives
$$
\dim_{\qq}H^1(M(L)_U,\qq)
\leqslant
\dim_{\qq}H^1(U,\qq)
+
\dim_{\qq}H^0\bigl(U,R^1h_{U,*}\qq\bigr).
$$
Since $\nt(L)\geqslant 3$, \cref{lem:bertini_results} shows that $\Delta$ is irreducible. The standard computation of the first cohomology of the complement of an irreducible hypersurface in projective space gives $H^1(U,\qq)=0$. Let $q \colon \uc \longrightarrow |L|$ be the universal curve, and write $q_U \colon \uc_U \longrightarrow U$ for its restriction over $U$. Since $h_U$ is the family of Picard varieties of the smooth curves parametrized by $U$, the local system $R^1h_{U,*}\qq$ is dual to $R^1q_{U,*}\qq$. In particular,
$$
\dim_{\qq}H^0\bigl(U,R^1h_{U,*}\qq\bigr)
=
\dim_{\qq}H^0\bigl(U,R^1q_{U,*}\qq\bigr).
$$
By the global invariant cycle theorem applied to $q_U \colon \uc_U \to U$, for any $D \in U$ we have
$$
H^0\bigl(U,R^1q_{U,*}\qq\bigr)
=
\im\left[
H^1(\uc,\qq)
\longrightarrow
H^1(D,\qq)
\right].
$$
Because $|L|$ is base point free, the universal curve $\uc$ is a projective bundle over $S$: the fiber over a point $x \in S$ is the projective space of divisors in $|L|$ passing through $x$. Hence $H^1(\uc,\qq)\cong H^1(S,\qq)$. Since $S$ is bielliptic, $b_1(S)=2$. Therefore $\dim_{\qq}H^0\bigl(U,R^1q_{U,*}\qq\bigr)\leqslant 2$.
Combining the inequalities above gives
$$
\dim_{\qq}H^1(M(L)_U,\qq)\leqslant 2.
$$
Since $H_1(M(L)_U,\qq)\to H_1(M(L),\qq)$ is surjective, we obtain
$
b_1(M(L))\leqslant 2.
$
\end{proof}

Now we compute the lower bound for $b_1(M(L))$, but before doing this, we need the following technical lemma.

\begin{lemma}\label{lem:norm-map-albanese}\label{lem:albanese-norm-map}\label{lem:alpha-construction}
Under the assumptions of \cref{thm:first-betti-fixed-det}, let
$a\colon S\longrightarrow E\coloneqq\Alb(S)$ be the Albanese morphism and set
$m\coloneqq L\cdot B$, where $B$ is the numerical class of a fiber of $a$.
Let $h\colon M(L)\longrightarrow |L|$ be the support morphism, and let
$U\subset |L|$ be the open subset parametrizing smooth curves.
Then $m>0$, and there exists a morphism
$$
\alpha\colon M(L)\longrightarrow \Pic^0(E).
$$
For every smooth curve $D\in U$, the restriction of $\alpha$ to $h^{-1}(D)$ agrees, up to a fixed translation of the target, with the norm map
$$
\Nm_{a_D}\colon \Pic^d(D)\longrightarrow \Pic^0(E),
\qquad
a_D\coloneqq\restrict{a}{D}\colon D\longrightarrow E.
$$
Moreover, for every $N\in\Pic^0(E)$ and every $F\in M(L)$ one has
$$
\alpha(F\otimes a^*N)=\alpha(F)+[m]N.
$$
Consequently:
\begin{enumerate}
\item For every $D\in |L|$ such that $h^{-1}(D)$ is nonempty, the restriction
$$
\restrict{\alpha}{h^{-1}(D)}\colon h^{-1}(D)\longrightarrow\Pic^0(E)
$$
is surjective;
\item The restriction of $\alpha$ to every irreducible component of a nonempty support fiber is surjective;
\item If $D$ is smooth, then
$
\Nm_{a_D}\colon\Pic^0(D)\longrightarrow\Pic^0(E)
$
is surjective.
\end{enumerate}
\end{lemma}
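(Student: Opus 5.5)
The plan is to define $\alpha$ by pushing forward along the Albanese map and taking a determinant. We then read off the norm-map description and the twisting formula from standard properties of determinants.

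\textbf{Positivity of $m$.} First, $m=L\cdot B>0$. Since $\nt(L)\geqslant 3$, the bundle $L$ is ample by \cref{lem:num_div}, and $B$ is the class of a nonzero effective curve (a fiber of $a=p_A$, see \cref{rem:albanese_morphism}). In the basis of \eqref{eq:num_S} we have $B=\la_S B_0$, so $m=\la_S\cdot(\text{$A_0$-coefficient of }L)\geqslant 3\la_S$.

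\textbf{Construction of $\alpha$.} Let $T$ be a scheme and $\mathcal F$ a $T$-flat family on $S\times T$ of stable sheaves in $M(L)$. Since $a\times\mathrm{id}_T$ is flat and projective and $\mathcal F$ is perfect ($S$ is smooth), the complex $R(a\times\mathrm{id})_*\mathcal F$ is perfect on $E\times T$. It therefore has a Knudsen--Mumford determinant $\mathcal D_{\mathcal F}$. Fibrewise it has rank $\chi(\mathcal F|_{\text{fiber of }a})=c_1(\mathcal F)\cdot B=m$. By Grothendieck--Riemann--Roch on $E$ (using $\chi(\mathcal O_E)=0$), its degree along $E$ is $\chi(F)=\chi$.

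A universal sheaf need only exist étale locally, and only up to twisting by a line bundle $\Lambda$ pulled back from $T$. Such a twist changes $\mathcal D_{\mathcal F}$ by $\mathrm{pr}_T^*\Lambda^{\otimes m}$. To remove this ambiguity, fix a point $e_0\in E$ and normalize:
$$
\mathcal D'_{\mathcal F}\coloneqq\mathcal D_{\mathcal F}\otimes\mathrm{pr}_T^*\bigl(\mathcal D_{\mathcal F}|_{\{e_0\}\times T}\bigr)^{-1}.
$$
This normalized bundle is invariant under the twist and compatible with base change. Hence it descends from an étale cover of $M(L)$ on which quasi-universal families exist, and gives a morphism $M(L)\to\Pic^{\chi}(E)$. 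Composing with translation by a fixed degree-$\chi$ line bundle yields $\alpha\colon M(L)\to\Pic^0(E)$.

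\textbf{The expected main obstacle.} I expect this descent and normalization step to be the main technical point. Its purpose is to produce an honest morphism from the coarse moduli space rather than from the stack.

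\textbf{Comparison with the norm map.} Let $D\in U$ be smooth and $F=i_*\mathcal L$ with $\mathcal L\in\Pic^d(D)$. Because $a_D$ is finite, $Ra_*F=a_{D*}\mathcal L$. The classical identity $\det(a_{D*}\mathcal L)\cong\Nm_{a_D}(\mathcal L)\otimes\det(a_{D*}\mathcal O_D)$ then shows that $\alpha|_{h^{-1}(D)}$ equals $\Nm_{a_D}$ up to a translation independent of $\mathcal L$. Here $a_D$ is finite of degree $D\cdot B=m$. It is finite because an ample curve $D$ cannot contain a fiber of $a$ when $D$ is smooth and irreducible: such a fiber would be a whole component of the irreducible curve $D$.

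\textbf{The twisting formula.} For $N\in\Pic^0(E)$ the projection formula gives $Ra_*(F\otimes a^*N)=Ra_*F\otimes N$. The complex has rank $m$, so its determinant is multiplied by $N^{\otimes m}$. The normalization at $e_0$ changes only by a constant, which is trivial after normalizing in the family over $\Pic^0(E)$. This yields $\alpha(F\otimes a^*N)=\alpha(F)+[m]N$.

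Twisting by $a^*N$ preserves the following:
\begin{itemize}
\item the Mukai vector, since $a^*N$ is numerically trivial;
\item $H$-stability, since slopes are unchanged;
\item the Fitting support, hence the determinant $\mathcal O_S(\fsupp F)=L$ by \cref{prop:det_fitting}.
\end{itemize}
Since $\Pic^0(E)$ is connected, this action preserves the component $M(L)$, each fiber $h^{-1}(D)$, and each irreducible component of a fiber.

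\textbf{The consequences.} Because $m>0$, multiplication $[m]$ is an isogeny of the elliptic curve $\Pic^0(E)$, hence surjective. Consequently, for any $F$ in a fiber or in an irreducible component $Z$ of a fiber, the orbit $\{F\otimes a^*N\}$ already maps onto $\Pic^0(E)$. This proves (1) and (2).

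For (3), with $D$ smooth, note that $a_D^*N\in\Pic^0(D)$ and $\Nm_{a_D}(a_D^*N)=N^{\otimes m}$. Hence $\Nm_{a_D}(\Pic^0(D))\supseteq[m]\Pic^0(E)=\Pic^0(E)$.
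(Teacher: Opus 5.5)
Your proof is correct, but it constructs $\alpha$ differently from the paper.

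\textbf{The paper's route.} It defines $\alpha$ only on the smooth-support locus $h^{-1}(U)$, as the relative norm $\Pic^d(\uc_U/U)\to\Pic^e(E)\times U$ followed by projection. It then extends this rational map to all of $M(L)$. The extension uses that $M(L)$ has terminal, hence rational, singularities (\cref{cor:rank-zero-terminal-lci}), together with Zhang's extension lemma for rational maps to abelian varieties. The identity $\alpha(F\otimes a^*N)=\alpha(F)+[m]N$ is first proved over $U$ from the norm identity. It is then propagated to all of $\Pic^0(E)\times M(L)$ by density, using that the source is reduced and the target separated.

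\textbf{Your route.} You define $\alpha$ globally and modularly, as the rigidified determinant of $R(a\times\mathrm{id})_*$ applied to a quasi-universal family. The twisting identity then holds at every point directly, from the projection formula and the fact that $Ra_*F$ has rank $m$. No singularity input is needed to get a morphism.

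\textbf{What each buys.} The price of your route is the descent and normalization bookkeeping. It works as you describe: stable sheaves are simple, so quasi-universal families differ by twists pulled back from the base, and the rigidification at $e_0$ absorbs the resulting factor $\Lambda^{\otimes m}$. A side benefit is that your $\alpha$ is exactly the determinant-of-cohomology norm that the paper introduces later in \cref{lem:nodal-compactified-norm-fiber}. There the paper must identify it with $\alpha$ by a density argument; with your construction that identification is immediate. The deduction of (1)--(3) from the twisting identity is the same orbit argument as in the paper.

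\textbf{One small point.} Your comparison shows that $\restrict{\alpha}{h^{-1}(D)}$ is $\Nm_{a_D}$ translated by $\det(a_{D*}\mathcal O_D)$, which a priori depends on $D$. The paper's construction gives a translation independent of $D$ automatically, which is convenient when treating the norm torsor over all of $U$. To get this in your setup, use the sequence $0\to L^{-1}\to\mathcal O_S\to\mathcal O_D\to 0$. It gives $\det Ra_*\mathcal O_D\cong\det Ra_*\mathcal O_S\otimes(\det Ra_*L^{-1})^{-1}$, which does not depend on $D\in|L|$.
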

\begin{proof}
By \cref{cor:rank-zero-terminal-lci}, $M(L)$ is normal and projective.
Over $U$, the support morphism identifies $M(L)_U$ with the relative Picard variety $\Pic^d(\uc_U/U)$, where $d=\chi+g(D)-1$. The Albanese morphism induces $a_{\uc_U}\colon \uc_U\longrightarrow E\times U$.
The relative norm map gives
$$
\Nm_{a_{\uc_U}}\colon
\Pic^d(\uc_U/U)\longrightarrow \Pic^e(E)\times U
$$
for some integer $e$. Choose a base point of $\Pic^e(E)$ and use it to identify $\Pic^e(E)$ with $\Pic^0(E)$. Composing the relative norm map with the projection to the first factor defines
$$
\alpha_U\colon M(L)_U\longrightarrow \Pic^0(E).
$$
Changing the base point only translates $\alpha_U$ and has no effect on the assertions below.
By \cref{cor:rank-zero-terminal-lci}, $M(L)$ has terminal, hence rational,
singularities. The morphism $\alpha_U$ defines a rational map from $M(L)$ to
the abelian variety $\Pic^0(E)$. By \cite[Lemma~2.24(2)]{Zha09}, this rational
map extends to a regular morphism
$$
\alpha\colon M(L)\longrightarrow\Pic^0(E).
$$

For $N\in\Pic^0(E)$, set
$
T_N(F)\coloneqq F\otimes a^*N.
$
Tensoring by $a^*N$ preserves the Fitting support, Mukai vector, and
$H$-semistability. Moreover, the determinant identity
$$
\det(F\otimes Q)\simeq\det(F)\otimes Q^{\rank(F)}
$$
and $\rank(F)=0$ give $\det(F\otimes a^*N)\simeq\det(F)$. Thus a
Poincar\'e line bundle defines an algebraic $\Pic^0(E)$-action on
$M(L)$ preserving every support fiber. Since $\Pic^0(E)$ is connected, it
preserves every irreducible component.

Let $D\in U$, let $i_D\colon D\hookrightarrow S$, and write $F=i_{D,*}A$ with $A\in\Pic^d(D)$. Then
$$
F\otimes a^*N\simeq i_{D,*}(A\otimes a_D^*N).
$$
The standard norm identity gives
$$
\Nm_{a_D}(A\otimes a_D^*N)\simeq\Nm_{a_D}(A)\otimes N^{\deg(a_D)}.
$$
The degree is constant in the linear system and satisfies
$$
\deg(a_D)=D\cdot B=L\cdot B=m.
$$
Since $\nt(L)\geqslant 3$, the line bundle $L$ is ample by \cref{lem:num_div}, and hence $m>0$. After the fixed translation identifying $\Pic^e(E)$ with $\Pic^0(E)$, the norm identity becomes
$$
\alpha_U(F\otimes a^*N)=\alpha_U(F)+[m]N.
$$

Consider the two morphisms
$$
\Phi_1,\Phi_2\colon\Pic^0(E)\times M(L)\longrightarrow\Pic^0(E)
$$
defined by
$$
\Phi_1(N,F)\coloneqq\alpha(F\otimes a^*N),
\qquad
\Phi_2(N,F)\coloneqq\alpha(F)+[m]N.
$$
They agree on the dense open subset $\Pic^0(E)\times M(L)_U$. Since the source is reduced and the target is separated, they agree everywhere. Therefore
$
\alpha(F\otimes a^*N)=\alpha(F)+[m]N
$
for every $N\in\Pic^0(E)$ and every $F\in M(L)$.

Fix a nonempty support fiber $h^{-1}(D)$ and a point $F\in h^{-1}(D)$. Its $\Pic^0(E)$-orbit remains inside $h^{-1}(D)$, and its image under $\alpha$ is
$
\alpha(F)+[m]\Pic^0(E).
$
Since $m>0$, multiplication by $m$ is an isogeny of the elliptic curve $\Pic^0(E)$ and is therefore surjective. Thus $\restrict{\alpha}{h^{-1}(D)}$ is surjective. If $Z\subset h^{-1}(D)$ is an irreducible component and $F\in Z$, the connected tensor action preserves $Z$, so the same orbit argument proves that $\restrict{\alpha}{Z}$ is surjective.

For smooth $D$, the identity $\Nm_{a_D}\circ a_D^*=[m]$ proves that $\Nm_{a_D}\colon\Pic^0(D)\to\Pic^0(E)$ is surjective.
\end{proof}

\begin{lemma}\label{lem:b1-lower-bound}
Under the assumptions of \cref{thm:first-betti-fixed-det}, one has
$$
b_1(M(L))\geqslant 2.
$$
\end{lemma}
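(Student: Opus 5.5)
The plan is to use the morphism $\alpha\colon M(L)\to\Pic^0(E)$ of \cref{lem:alpha-construction} and show that pullback $\alpha^*\colon H^1(\Pic^0(E),\qq)\to H^1(M(L),\qq)$ is injective. Since $\Pic^0(E)$ is an elliptic curve, $H^1(\Pic^0(E),\qq)\cong\qq^2$, and injectivity gives $b_1(M(L))\geqslant 2$.

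First, $\alpha$ is surjective. By part (1) of \cref{lem:alpha-construction}, $\alpha$ is already surjective on any nonempty support fiber. Next choose a general complete curve $T\subset M(L)$, for instance a general complete intersection of ample divisors; such a $T$ maps onto the curve $\Pic^0(E)$ and can be taken irreducible. Take its normalization $\widetilde T$. The composite $\widetilde T\to\Pic^0(E)$ is then a finite surjective map of smooth projective curves. For such a map, pullback on $H^1(-,\qq)$ is injective, because $f_*f^*=\deg f$ by the projection formula / transfer. It follows that $\alpha^*$ is injective, since it factors through $H^1(M(L),\qq)\to H^1(\widetilde T,\qq)$.

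There is an alternative that avoids choosing $T$. Fix $F\in M(L)$ and consider the orbit map $o\colon\Pic^0(E)\to M(L)$, $N\mapsto F\otimes a^*N$. By the equivariance identity of \cref{lem:alpha-construction}, $\alpha\circ o=t_{\alpha(F)}\circ[m]$. The map $[m]^*$ acts on $H^1(\Pic^0(E),\qq)$ as multiplication by $m$, and translations act trivially on cohomology. Hence $o^*\alpha^*=m\cdot\mathrm{id}$, which is invertible over $\qq$ because $m>0$. Therefore $\alpha^*$ is injective.

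I prefer this second route: it is cleaner and uses exactly the statements already established. The only point needing care is that the orbit map is a genuine morphism. This holds because the action is algebraic, defined via a Poincaré line bundle, as noted in the proof of \cref{lem:alpha-construction}. I don't expect a serious obstacle. The main subtlety is making sure one uses singular cohomology functoriality for continuous maps, so that the normality or singularities of $M(L)$ play no role.
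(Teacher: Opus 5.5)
Your proof is correct, and your preferred orbit-map route differs from the paper's argument. The paper uses only that $\alpha\colon M(L)\to\Pic^0(E)$ is surjective. It then invokes the general fact that a surjective morphism from a projective variety onto a smooth projective curve is injective on $H^1(-,\qq)$. That fact is proved by Stein factorization: the Leray spectral sequence handles the connected-fiber part and the trace map handles the finite part. Your first route (general complete-intersection curve, normalization, transfer) is essentially a variant of that argument.

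Your second route instead uses the tensor action of $\Pic^0(E)$. The identity $\alpha\circ o=t_{\alpha(F)}\circ[m]$ gives $o^*\circ\alpha^*=m\cdot\mathrm{id}$ on $H^1(\Pic^0(E),\qq)$. Hence $\frac{1}{m}o^*$ is an explicit left inverse of $\alpha^*$. This has three advantages:
\begin{itemize}
\item it avoids Stein factorization and the Leray spectral sequence on the singular variety $M(L)$;
\item it does not need surjectivity of $\alpha$ as a separate input;
\item it exhibits $H^1(\Pic^0(E),\qq)$ as a direct summand of $H^1(M(L),\qq)$ via algebraic maps, hence compatibly with Hodge structures.
\end{itemize}
The cost is that it depends on the group action and the equivariance identity. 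The paper's argument applies to any surjection onto a curve.

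The only input you should cite explicitly is that the orbit map is a morphism landing in $M(L)$. Both facts are recorded in the proof of \cref{lem:alpha-construction}: the action is algebraic via a Poincar\'e bundle, and since $\Pic^0(E)$ is connected it preserves every irreducible component.
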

\begin{proof}
Let $a\colon S\longrightarrow E\coloneqq\Alb(S)$ be the Albanese morphism of $S$. By \cref{lem:albanese-norm-map}, there is a surjective morphism $\alpha\colon M(L)\longrightarrow \Pic^0(E)$.
A surjective morphism from a projective variety to a smooth projective curve induces an injection on first rational cohomology. Indeed, after Stein factorization one reduces to a morphism with connected fibers, where the Leray spectral sequence gives injectivity, and to a finite morphism, where injectivity follows from the trace map. Therefore
$$
H^1(\Pic^0(E),\qq)\hookrightarrow H^1(M(L),\qq).
$$
Since $\Pic^0(E)$ is an elliptic curve, $\dim_{\qq}H^1(\Pic^0(E),\qq)=2$.
Hence $b_1(M(L))\geqslant 2$.
\end{proof}

\begin{proof}[Proof of \cref{thm:first-betti-fixed-det}]
By \cref{lem:b1-upper-bound}, we have $b_1(M(L))\leqslant 2$. By \cref{lem:b1-lower-bound}, we have $b_1(M(L))\geqslant 2$. Hence $b_1(M(L))=2$.
\end{proof}

\subsection{Properties of the fundamental group of \texorpdfstring{$M(L)$}{M(L)}}\label{subsec:fundamental-group-fixed-det}

\begin{theorem}\label{thm:pi1}\label{thm:fundamental-group-fixed-det}
Let $(S,H)$ be a polarized bielliptic surface and let
$
\mv=(0,c_1(L),\chi)
$
be a primitive rank-zero Mukai vector. Let $H$ be $\mv$-generic, and assume that
$\nt(L)\geqslant 3$. Then:
\begin{enumerate}
\item There is a surjective homomorphism
$$
\pi_1(S)\twoheadrightarrow \pi_1(M(L)).
$$
\item One has
$
\rank\bigl(\pi_1(M(L))_{ab}\bigr)=2.
$
\end{enumerate}
\end{theorem}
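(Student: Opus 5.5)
Part (2) follows from part (1) together with \cref{thm:first-betti-fixed-det}: the rank of $\pi_1(M(L))_{ab}$ equals $b_1(M(L))=\dim_{\qq}H_1(M(L),\qq)$, which is $2$. So the substance is part (1). I would adapt the Leibman--Sacc\`a method, working over the open set of integral curves rather than only smooth ones.

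Let $V\subset|L|$ be the open set of integral curves with at most one node. By \cref{lem:bertini_results}, $|L|\setminus V$ consists of the generic part of the irreducible hypersurface $\Delta$ (which lies in $V$) plus a closed subset of codimension $\ge 2$. By \cref{ass:comp_jacobian}, $h^{-1}(|L|\setminus V)$ then has codimension $\ge 2$ in $M(L)$. Since $M(L)$ is normal, $\pi_1(h^{-1}(V))\to\pi_1(M(L))$ is surjective. Over $V$, $h$ is a family of compactified Jacobians of integral curves with at most one node. These are irreducible, and over a one-nodal curve the fiber has a unique singular point type compatible with vanishing-cycle monodromy. Thus it suffices to bound $\pi_1(h^{-1}(V))$.

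Run the homotopy sequence $\pi_1(h^{-1}(D))\to\pi_1(h^{-1}(V))\to\pi_1(V)\to 1$ for a smooth $D$. The plan is as follows.

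First, show that $\pi_1(V)$ dies in $\pi_1(M(L))$, in the sense that every loop in $V$ lifts to a loop in $M(L)$ that is nullhomotopic modulo fiber classes. The group $\pi_1(|L|\setminus\Delta)$ is generated by meridians around $\Delta$. Over a nodal curve the family of Jacobians acquires a section-like locus near which the meridian lifts to a loop contractible in $h^{-1}(V)$, because the local model of the compactified Jacobian near a one-nodal curve is smooth and the meridian bounds a disk there.

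Second, the image of $\pi_1(\Pic^d(D))=H_1(D,\zz)$ in $\pi_1(M(L))$ factors through the coinvariants of the monodromy. By Picard--Lefschetz, the vanishing cycles are killed: the $2$-disk over the node degenerates them. So the image is a quotient of $H_1(D,\zz)/\langle\text{vanishing cycles}\rangle$. The Lefschetz theory for the very ample system (\cref{lem:bertini_results}) identifies this quotient with the image of $H_1(D)\to H_1(S)$, and more precisely with the image of $\pi_1(D)\to\pi_1(S)$, which is surjective by Lefschetz hyperplane. This produces $\pi_1(S)\twoheadrightarrow$ (image of the fiber group), and hence, combined with the first step, $\pi_1(S)\twoheadrightarrow\pi_1(M(L))$. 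Concretely, I would construct the map via the Abel--Jacobi-type morphism $\uc\to$ (universal Jacobian twisted by a fixed point), using $\pi_1(\uc)\cong\pi_1(S)$ from the projective-bundle structure in \cref{lem:b1-upper-bound}. The induced map $\pi_1(S)=\pi_1(\uc)\to\pi_1(M(L))$ should be shown surjective by the two steps above.

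The main obstacle is that $\pi_1(S)$ is nonabelian while fiber $\pi_1$'s are abelian. The Abel--Jacobi comparison must be done carefully on the nonabelian level, with lifts of meridians chosen uniformly, and the local analysis of the compactified Jacobian over a nodal curve must be made precise.
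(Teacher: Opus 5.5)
The core of your argument is correct, and for part (1) it takes a genuinely different route from the paper. Part (2) is handled as in the paper, via $b_1(M(L))=2$ and Hurewicz; it does not actually need part (1).

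\textbf{The paper's route.} The paper never computes relations. It passes to a general pencil $\Pi\subset|L|$, whose base points give sections of $\uc_\Pi\to\Pi$. It extends the translated Abel--Jacobi map to a morphism $\uc_\Pi\to J_\Pi$ over the one-nodal fibers and compares the two homotopy sequences over $U_\Pi$. Since $\pi_1(D_t)\to H_1(D_t,\zz)$ is onto, and dense opens of normal varieties surject on $\pi_1$, this gives $\pi_1(S)\cong\pi_1(\operatorname{Bl}_ZS)\twoheadrightarrow\pi_1(J_\Pi)$. The Hamm--L\^e Zariski--Lefschetz theorem, $\pi_1(U_\Pi)\twoheadrightarrow\pi_1(U)$, then yields $\pi_1(J_\Pi)\twoheadrightarrow\pi_1(M(L))$.

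\textbf{Your route.} You instead run a Nori--Leibman argument directly on $h\colon M(L)\to|L|$.
\begin{enumerate}
\item Lifted meridians of $\Delta$ die in $\pi_1(M(L))$, so $\pi_1(M(L))$ is generated by the image of $\pi_1(\Pic^d(D))=H_1(D,\zz)$.
\item This image factors through the monodromy coinvariants. These contain every vanishing cycle, since each is non-separating on the irreducible nodal members.
\item The integral Lefschetz-pencil theorem (vanishing cycles generate $\ker(H_1(D,\zz)\to H_1(S,\zz))$, and this map is onto) identifies the quotient with $H_1(S,\zz)$.
\end{enumerate}
This needs no section, no compactified Abel--Jacobi extension and no Zariski--Lefschetz theorem. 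It also proves more than the statement: $\pi_1(M(L))$ is abelian, a quotient of $H_1(S,\zz)$, so the kernel of $\pi_1(S)\twoheadrightarrow\pi_1(M(L))$ contains the commutator subgroup. The paper's route, in exchange, is purely formal once the Abel--Jacobi morphism exists.

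\textbf{Corrections to your write-up.}
\begin{itemize}
\item The meridian step needs a precise justification. $h$ is smooth at points of $h^{-1}(D_0)$ given by line bundles on the one-nodal curve $D_0$, because the relative Picard scheme of a flat family of curves is smooth. A local section over a small disk transverse to $\Delta$ at $D_0$ then exhibits the lifted meridian as a boundary.
\item The homotopy sequence should be run over $U$, not $V$. Over $V$ the map $h$ is not a fibration, and $\pi_1(V)=1$ anyway.
\item Identifying the quotient ``more precisely with the image of $\pi_1(D)\to\pi_1(S)$'' is wrong, since the quotient is abelian.
\item The nonabelian ``main obstacle'' is illusory: the required map is just $\pi_1(S)\twoheadrightarrow H_1(S,\zz)\twoheadrightarrow\pi_1(M(L))$.
\item The Abel--Jacobi morphism on the full universal curve ``twisted by a fixed point'' does not exist, because $\uc\to|L|$ has no section. $S$ contains no rational curves, so any morphism $|L|\to S$ is constant, while $|L|$ is base point free. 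This is exactly why the paper restricts to a pencil. Your two steps already produce the surjection from the fiber inclusion alone, so that paragraph should simply be dropped.
\end{itemize}
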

In order to prove it, we establish some technical results first.
We use the following form of Leibman's setting for fundamental group arguments, following \cite[Section 3.2]{Sac19} and \cite[1.11]{Lei93}.

\begin{lemma}\label{lem:leibman-exact-sequence}
Let
$
p:E\to B
$
be a surjective morphism of smooth connected varieties, let \(U\subset B\) be a Zariski open subset with complement \(W \coloneqq B\setminus U\)
of codimension at least one. Assume that \(p\) has a section and that
$$
p_U:E_U\coloneqq p^{-1}(U)\to U
$$
is a smooth topologically locally trivial fibration with connected fiber \(F\). Then there is an exact sequence
$$
1\to R\to\pi_1(F)\to\pi_1(E)\to\pi_1(B)\to 1,
$$
where \(R\) is the subgroup generated by the lifted local monodromy relations around the components of \(W\).
\end{lemma}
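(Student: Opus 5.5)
We argue in three steps: Van Kampen to pass from $B$ to $U$, the fibration long exact sequence over $U$, and the section to get surjectivity onto $\pi_1(B)$. Fix base points $b_0\in U$, $e_0=s(b_0)$, where $s$ is the given section, and set $F=p^{-1}(b_0)$.

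\emph{Step 1: from $U$ to $B$.} Since $B$ is a smooth connected variety and $W$ is a proper Zariski closed subset, the inclusion $U\hookrightarrow B$ induces a surjection $\pi_1(U)\to\pi_1(B)$. By Van Kampen (removing a closed real codimension-two subset of a manifold), its kernel is normally generated by the meridians $\gamma_j$: small loops around the divisorial components $W_j$ of $W$, joined to $b_0$ by paths. Components of codimension $\geqslant2$ contribute nothing, since they have real codimension $\geqslant4$. The same argument applies to $E_U\subset E$ with $E$ smooth: $\pi_1(E_U)\to\pi_1(E)$ is surjective, and its kernel is normally generated by meridians around the divisorial components of $E\setminus E_U=p^{-1}(W)$.

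\emph{Step 2: the fibration over $U$.} Since $p_U$ is a locally trivial fibration with connected fiber and a section, we get a split exact sequence
$$
1\to\pi_1(F)\to\pi_1(E_U)\to\pi_1(U)\to1,
$$
with splitting $s_*$. Combining this with Step 1, $\pi_1(E)\to\pi_1(B)$ is surjective, because $p\circ s=\mathrm{id}$. Moreover, $\pi_1(F)$ surjects onto the kernel of $\pi_1(E)\to\pi_1(B)$. To see this, take $g\in\pi_1(E)$ mapping to $1$ and lift it to $\tilde g\in\pi_1(E_U)$. Its image in $\pi_1(U)$ lies in the normal closure of the $\gamma_j$. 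The element $s_*$ of that image lies in the normal closure of the loops $s_*\gamma_j$. Each $s_*\gamma_j$ is a meridian around the divisor of $E$ that contains $s(W_j)$, provided the section meets it transversally at a general point. That is the case when $p^{-1}(W_j)$ has a reduced component through $s(W_j)$; otherwise $s_*\gamma_j$ is a power of a meridian. In either case $s_*\gamma_j$ dies in $\pi_1(E)$. Hence $\tilde g\cdot s_*(p_*\tilde g)^{-1}\in\pi_1(F)$ maps to $g$.

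\emph{Step 3: the kernel $R$.} By definition, $R$ is the kernel of $\pi_1(F)\to\pi_1(E)$. It is generated by the images in $\pi_1(F)$ of meridians around the components of $p^{-1}(W)$, conjugated back into the fiber using the splitting of Step 2. These are exactly the ``lifted local monodromy relations'' of Leibman \cite[1.11]{Lei93}, as in \cite[Section~3.2]{Sac19}. Granting that identification, exactness of the displayed sequence follows.

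The main obstacle is Step 3: making precise that the kernel is generated by these lifted relations. This needs Van Kampen for $E_U\subset E$ in the presence of possibly non-reduced or multiple components of $p^{-1}(W)$, where a meridian of $W_j$ lifts only to a power of a meridian upstairs. I would handle it by localizing over a general point of each $W_j$, taking a small transversal disc $\Delta$ and analyzing $p^{-1}(\Delta)$. This follows Leibman's argument; essentially the lemma is a repackaging of \cite[1.11]{Lei93}, and I would cite it for this step.
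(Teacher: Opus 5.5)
Your proposal takes essentially the same route as the paper, whose proof is simply a citation of Leibman's diagram \cite[1.11]{Lei93} (as used in \cite[Section~3.2]{Sac19}) together with the meridian description of $\ker(\pi_1(U)\to\pi_1(B))$. Your Steps 1--2 correctly spell out surjectivity onto $\pi_1(B)$ and exactness at $\pi_1(E)$, and Step 3 defers the identification of $\ker(\pi_1(F)\to\pi_1(E))$ with $R$ to the same source.

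One simplification is available: $s_*\gamma_j$ dies in $\pi_1(E)$ because $s$ and the inclusions $U\subset B$, $E_U\subset E$ form a commutative square. Hence $s_*$ maps $\ker(\pi_1(U)\to\pi_1(B))$ into $\ker(\pi_1(E_U)\to\pi_1(E))$, and no transversality or multiplicity discussion is needed. In fact $s^*p^*W_j=W_j$ forces the component of $p^{-1}(W_j)$ through $s(W_j)$ to be reduced, so your ``power of a meridian'' case never occurs.
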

\begin{proof}
This is the exact sequence obtained from Leibman's diagram and the description of generators of \(\ker(\pi_1(U)\to\pi_1(B))\) by small loops around the components of \(W\); see \cite[Section 3.2]{Sac19} and \cite[1.11]{Lei93}.
\end{proof}

Let $|C|$ be a linear system of genus $g$ on a bielliptic surface $S$ such that $\nt(C) \geqslant 3$, and let $\Pi \subset |C|$ be a general pencil. It has
$$
C^2 - C \cdot K_S = C^2 = 2g - 2
$$
base points. The universal family $\uc_{\Pi} \subset \Pi \times S$ of curves is smooth, and the map
$
p \colon \uc_{\Pi} \longrightarrow \Pi
$
has a section denoted by $s$. Also note that
$
\pi_1(\uc_{\Pi}) \cong \pi_1(S).
$
Then $\uc_{\Pi} \to \Pi$ satisfies the assumptions of Leibman and we can consider the corresponding sequence.

\begin{lemma}\label{lem:R_C}
Let $C_t$ be a smooth fiber of $p \colon \uc_{\Pi} \to \Pi$. There is an exact sequence
$$
1 \longrightarrow R_C \longrightarrow \pi_1(C_t)
\longrightarrow \pi_1(S) \longrightarrow 1.
$$
Equivalently,
$
R_C =
\ker\bigl[\pi_1(C_t)\longrightarrow\pi_1(S)\bigr].
$
\end{lemma}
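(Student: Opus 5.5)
We apply \cref{lem:leibman-exact-sequence} to $p\colon \uc_\Pi\to\Pi$, with $E=\uc_\Pi$, $B=\Pi\simeq\ps^1$, $U\subset\Pi$ the locus of smooth members and $W=\Pi\cap\Delta_C$. First we check the hypotheses. By \cref{lem:bertini_results}, $|C|$ is very ample and base point free, and a general pencil is a Lefschetz pencil: it meets $\Delta_C$ transversely in finitely many points, whose members have a single node. Hence $\uc_\Pi$, which is the blow-up of $S$ at the $C^2=2g-2$ reduced base points, is smooth and connected. Over $U$ the map $p$ is proper and smooth, so by Ehresmann it is a locally trivial fibration with fiber the smooth connected curve $C_t$. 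Any exceptional curve of the blow-up gives a section $s$. The lemma then yields
$$1\to R\to\pi_1(C_t)\to\pi_1(\uc_\Pi)\to\pi_1(\Pi)\to 1.$$
Since $\pi_1(\ps^1)=1$, and since blowing up points does not change the fundamental group ($\pi_1(\uc_\Pi)\cong\pi_1(S)$, as already noted), we get an exact sequence $1\to R\to\pi_1(C_t)\to\pi_1(S)\to1$. We then define $R_C\coloneqq R$.

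The equivalent description $R_C=\ker[\pi_1(C_t)\to\pi_1(S)]$ is then just exactness at $\pi_1(C_t)$. We still need to check that the map appearing in the sequence is the one induced by the inclusion $C_t\hookrightarrow S$. It is the composition $C_t\hookrightarrow\uc_\Pi\to S$, where the second map is the blow-down, and this composite is the inclusion because $C_t$ avoids nothing relevant: it maps isomorphically onto its image in $S$. Base points for $\pi_1$ are taken on $s(t)\in C_t$.

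I expect the main obstacle to be surjectivity of $\pi_1(C_t)\to\pi_1(S)$, that is, applying Leibman's statement honestly. Lefschetz-type surjectivity for an ample curve is classical, but here it should come from the long exact homotopy sequence of $p_U$. That sequence gives $\pi_1(C_t)\to\pi_1(\uc_U)\to\pi_1(U)\to1$, with $\pi_1(U)$ free on small loops around the points of $W$. The section $s$ lifts these loops to $\pi_1(\uc_U)$. Each lifted loop is nullhomotopic in $\uc_\Pi$, since it bounds the disc $s(\text{disc})$. Therefore $\pi_1(\uc_\Pi)$, which is a quotient of $\pi_1(\uc_U)$ because $\uc_\Pi\setminus\uc_U$ has real codimension two, is generated by the image of $\pi_1(C_t)$. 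This gives surjectivity. As an independent sanity check, one can also invoke the Lefschetz hyperplane theorem for the smooth ample curve $C_t\subset S$.
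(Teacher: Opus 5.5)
Your proposal is correct and follows the paper's proof: apply \cref{lem:leibman-exact-sequence} to $p\colon\uc_\Pi\to\Pi$, using an exceptional curve of $\uc_\Pi\simeq\operatorname{Bl}_Z S$ as the section, and then use $\pi_1(\Pi)=1$ and $\pi_1(\uc_\Pi)\cong\pi_1(S)$. Your separate surjectivity argument (lifting the small loops by the section and using that the singular fibers have real codimension two) is sound but not actually an obstacle, since exactness at $\pi_1(\uc_\Pi)$ together with $\pi_1(\Pi)=1$ is already part of the cited lemma; also, your claim that $\pi_1(U)$ is ``free on'' the small loops should say ``generated by'' them, since they satisfy one relation.
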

\begin{proof}
	The morphism \(p:\uc_{\Pi}\to\Pi\) satisfies the assumptions of \cref{lem:leibman-exact-sequence}: it has a section given by an exceptional divisor of
	$
	\uc_{\Pi}\simeq\operatorname{Bl}_Z S,
	$
	and over the smooth locus of the pencil it is a smooth proper topologically locally trivial fibration with connected fiber. Thus
	$$
	1 \longrightarrow R_C \longrightarrow \pi_1(C_t)
	\longrightarrow \pi_1(\uc_{\Pi})
	\longrightarrow \pi_1(\Pi)
	\longrightarrow 1.
	$$
	Since \(\Pi\simeq\ps^1\), we have \(\pi_1(\Pi)=1\). Since \(\uc_{\Pi}\simeq\operatorname{Bl}_Z S\), blowing up finitely many points does not change the fundamental group, so
	$
	\pi_1(\uc_{\Pi})\cong\pi_1(S),
	$
and the asserted exact sequence follows.
\end{proof}

\begin{remark}\label{rem:canonical-cover-not-universal}
In the Enriques case treated by Saccà, the double cover by the K3 surface is the universal cover. Thus the kernel in the analogue of \cref{lem:R_C} can be described as the image of the fundamental group of the inverse image of $C_t$ in the K3 cover. This description does not carry over directly to a bielliptic surface. Indeed, the canonical cover
$$
\pi \colon X \longrightarrow S
$$
is an abelian surface, not the universal cover of $S$. The universal cover of $S$ is $\cc^2$. Consequently, the subgroup
$$
\pi_*\pi_1\bigl(\pi^{-1}(C_t)\bigr)\subset \pi_1(C_t)
$$
is not, in general, equal to
$$
\ker\bigl[\pi_1(C_t)\to\pi_1(S)\bigr].
$$
Thus, in the bielliptic case, one should not identify $R_C$ with the image of the fundamental group of the inverse image of $C_t$ under the canonical cover.
\end{remark}

For the remainder of this subsection, retain the assumptions of
\cref{thm:fundamental-group-fixed-det}. Fix a general pencil
$\Pi\subset |L|$, let
$$
p:\uc_\Pi\longrightarrow\Pi
$$
be its family of curves, and set
$$
J_\Pi\coloneqq M(L)\times_{|L|}\Pi.
$$
Let $U\subset |L|$ be the open subset parametrizing smooth curves and set
$U_\Pi\coloneqq U\cap\Pi$.
By \cref{cor:rank-zero-terminal-lci}, $M(L)$ is a normal l.c.i. variety
whose singular locus has codimension at least three. Successive applications
of Bertini's theorem to the base-point-free linear system induced by the
support morphism show that, for a general pencil $\Pi$, the variety $J_\Pi$
is l.c.i. and regular in codimension one, hence normal.

Put $g\coloneqq g(L)$ and $d\coloneqq\chi+g-1$. Since
$$
\dim |L|=g-2,
\qquad
\dim M(L)=2g-2,
$$
and $J_\Pi$ is cut out in $M(L)$ by $g-3$ general members of the
base-point-free linear system induced by the support morphism, it is pure of
dimension $g+1$. Over $U_\Pi$, the morphism
$$
(J_\Pi)_{U_\Pi}\longrightarrow U_\Pi
$$
is a relative Picard torsor. Its base and fibers are connected, so its smooth
total space is irreducible. Any other irreducible component of $J_\Pi$ would
be supported over one of the finitely many points of
$\Pi\setminus U_\Pi$. By \cref{ass:comp_jacobian}, every such support fiber
has dimension at most $g$, whereas every irreducible component of $J_\Pi$
has dimension $g+1$. Therefore
$
J_\Pi=\overline{(J_\Pi)_{U_\Pi}}
$
is irreducible.

\begin{lemma}\label{lem:pencil-curve-to-jacobian-surjection}
The compactified Abel--Jacobi morphism induces a surjection
$$
\pi_1(\uc_\Pi)\twoheadrightarrow \pi_1(J_\Pi).
$$
Consequently, using $\pi_1(\uc_\Pi)\cong \pi_1(S)$, there is a surjection
$$
\pi_1(S)\twoheadrightarrow \pi_1(J_\Pi).
$$
\end{lemma}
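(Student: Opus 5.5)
The plan is to follow Saccà's argument for Enriques surfaces, adapted to the relative compactified Jacobian $J_\Pi\to\Pi$. First I will construct the compactified Abel--Jacobi morphism. A point $(x,t)\in\uc_\Pi$ lies on the curve $C_t$. Fix a section $\sigma$ of $p$ given by a base point $z_0$ of the pencil. Send $(x,t)$ to the sheaf $\mathcal I_{x,C_t}^\vee\otimes\mathcal O_{C_t}((d-1)z_0)$, or an analogous twist of $\mathcal I_{x,C_t}$ chosen so that the Euler characteristic equals $\chi$. For integral $C_t$ this sheaf is rank one and torsion free, hence stable, so it has determinant $L$ and lies in $M(L)$. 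For a general pencil every singular member is integral with one node by \cref{lem:bertini_results}. So this gives a morphism
$$
\mathrm{AJ}\colon \uc_\Pi\longrightarrow J_\Pi
$$
over $\Pi$, and it is compatible with the sections. On a smooth fibre it is the classical Abel--Jacobi map $C_t\to\Pic^d(C_t)$.

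Next I will apply \cref{lem:leibman-exact-sequence} to both $p\colon\uc_\Pi\to\Pi$ and $q\colon J_\Pi\to\Pi$. Both total spaces need to be smooth, or at least behave well enough for Leibman's argument. For $\uc_\Pi$ this holds. For $J_\Pi$ I expect to use that the compactified Jacobian of a one-nodal integral curve in a family with smooth total space is smooth. Failing that, I will note that $J_\Pi$ is normal and that the Leibman sequence only needs the singular locus to have codimension at least two. The section of $q$ is $\mathrm{AJ}\circ\sigma$. Since $\pi_1(\Pi)=1$, this gives a commutative diagram of surjections
$$
\pi_1(C_t)\twoheadrightarrow\pi_1(\uc_\Pi),\qquad \pi_1(\Pic^d(C_t))\twoheadrightarrow\pi_1(J_\Pi),
$$
with kernels $R_C$ and $R_J$. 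The vertical maps are induced by $\mathrm{AJ}$.

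On the smooth fibre, $\pi_1(C_t)\to\pi_1(\Pic^d(C_t))=H_1(C_t,\zz)$ is abelianization, so it is surjective. Composing gives
$$
\pi_1(C_t)\twoheadrightarrow H_1(C_t,\zz)\twoheadrightarrow\pi_1(J_\Pi).
$$
By commutativity, this composite factors through $\pi_1(\uc_\Pi)$. Hence $\mathrm{AJ}_*\colon\pi_1(\uc_\Pi)\to\pi_1(J_\Pi)$ is surjective. Concretely, $R_C$ maps into $R_J$ because the local monodromy relations, namely the vanishing cycles around the nodal fibres, are sent to the corresponding vanishing-cycle relations in the Jacobian family. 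Identifying $\pi_1(\uc_\Pi)\cong\pi_1(S)$ as in \cref{lem:R_C} then gives the second assertion.

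The main obstacle is verifying the hypotheses of Leibman's lemma for $J_\Pi$: local triviality over $U_\Pi$, a section, and enough regularity of the total space near the nodal fibres. I would handle this through the local structure of compactified Jacobians of nodal curves, as Saccà does. It also has to be checked that $\mathrm{AJ}$ extends across the nodal fibres as a morphism. This reduces to flatness of the family of ideal sheaves $\mathcal I_{x,C_t}$, which holds on the smooth surface $\uc_\Pi$ because the diagonal in $\uc_\Pi\times_\Pi\uc_\Pi$ is a Cartier divisor away from the nodes. At the nodes one uses that the ideal of the node is still torsion free of rank one.
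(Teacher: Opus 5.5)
Your proposal is correct and follows essentially the paper's route. You construct the compactified Abel--Jacobi morphism $\uc_\Pi\to J_\Pi$; the paper simply cites Sacc\`a for its extension over the irreducible one-nodal fibres. You then use that on a smooth fibre it induces the abelianization $\pi_1(C_t)\twoheadrightarrow H_1(C_t,\mathbb{Z})$, and transfer this surjectivity to the total spaces.

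The only difference is that the paper does not apply Leibman's lemma to $J_\Pi$. It obtains $\pi_1((\uc_\Pi)_{U_\Pi})\twoheadrightarrow\pi_1((J_\Pi)_{U_\Pi})$ from the homotopy exact sequences of the two smooth fibrations over $U_\Pi$. It then uses normality of $J_\Pi$, established just before the lemma, together with Koll\'ar's surjectivity result for dense open subsets of normal varieties. So neither smoothness of $J_\Pi$ nor a section of $J_\Pi\to\Pi$ is needed, and the obstacle you flag is resolved exactly by your normality fallback.
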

\begin{proof}
Since $\nt(L)\geqslant 3$, \cref{lem:bertini_results} shows that $|L|$ is very ample and that the singular fibers of a general pencil are irreducible one-nodal curves. Fix one of the exceptional sections $s$ of the pencil. The exceptional divisors of
$
\uc_\Pi\cong\operatorname{Bl}_Z S
$
give such sections.
Over $U_\Pi$, the translated Abel--Jacobi map gives a morphism
$$
A_{U_\Pi}\colon (\uc_\Pi)_{U_\Pi}\longrightarrow (J_\Pi)_{U_\Pi}.
$$
On each smooth fiber $D_t$, it is given by
$$
x\longmapsto
\mathcal O_{D_t}\bigl(x+(d-1)s(t)\bigr)
\in\Pic^d(D_t).
$$
Hence it induces the abelianization map
$$
\pi_1(D_t)\twoheadrightarrow H_1(D_t,\zz)
=
\pi_1(\Pic^d(D_t)).
$$
Using the homotopy exact sequences of the two smooth fibrations over $U_\Pi$, it follows that
$$
\pi_1((\uc_\Pi)_{U_\Pi})\twoheadrightarrow
\pi_1((J_\Pi)_{U_\Pi}).
$$

The Abel--Jacobi map extends over irreducible one-nodal fibers by the standard compactified Abel--Jacobi construction, as in \cite[Lemmas 3.5--3.7 and Proposition 3.8]{Sac19}. Thus it defines a morphism
$$
A\colon \uc_\Pi\longrightarrow J_\Pi.
$$
The map induced by $A$ on fundamental groups is surjective because the previous surjection over $U_\Pi$ factors through $\pi_1(\uc_\Pi)$ and the inclusion
$
(J_\Pi)_{U_\Pi}\subset J_\Pi
$
induces a surjection on fundamental groups by
\cite[Proposition~2.10]{Kol95}. Finally,
$\uc_\Pi\cong\operatorname{Bl}_Z S$, so
$\pi_1(\uc_\Pi)\cong\pi_1(S)$.
\end{proof}

\begin{lemma}\label{lem:pencil-to-global-surjection}
The natural inclusion
$
J_\Pi\hookrightarrow M(L)
$
induces a surjection
$$
\pi_1(J_\Pi)\twoheadrightarrow \pi_1(M(L)).
$$
\end{lemma}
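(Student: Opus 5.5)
The plan is a Lefschetz-type argument for the preimage of a general line under the support morphism $h\colon M(L)\to |L|$. By \cref{cor:rank-zero-terminal-lci}, $M(L)$ is a normal projective l.c.i. variety. Since $h$ is a morphism to projective space, $h^*\mathcal O_{|L|}(1)$ defines a base-point-free linear system on $M(L)$. The subvariety $J_\Pi$ is cut out by $g-3$ general members of this system, so I would write it as a successive intersection
$$
J_\Pi = Y_{g-3}\subset Y_{g-4}\subset\cdots\subset Y_0=M(L),
$$
where $Y_{i+1}=Y_i\cap h^{-1}(H_i)$ for general hyperplanes $H_i\subset|L|$. By Bertini, each $Y_i$ is irreducible and normal. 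The same argument given above for $J_\Pi$ applies: $Y_i$ is l.c.i. and $R_1$, and it is irreducible because its generic part is a Picard torsor with connected fibers, while by \cref{ass:comp_jacobian} no extra component can sit over the discriminant. It therefore suffices to show that each inclusion $Y_{i+1}\hookrightarrow Y_i$ is surjective on $\pi_1$.

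For each step I would invoke the Lefschetz hyperplane theorem for fundamental groups in the form due to Goresky--MacPherson (stratified Morse theory), or Fulton--Lazarsfeld's connectedness theorem. If $f\colon Y\to\ps^N$ is a morphism from an irreducible variety and $\Lambda\subset\ps^N$ is a general linear subspace of codimension $c$, then $\pi_1(f^{-1}(\Lambda))\to\pi_1(Y)$ is surjective, and an isomorphism, as soon as $\dim f(Y)>c$ ($\dim f(Y)>c+1$ for the isomorphism). This is the version that makes no smoothness or ampleness assumption on $Y$; see Goresky--MacPherson, Part II, Theorem 1.1, or Fulton--Lazarsfeld, ``Connectivity and its applications'', Theorem 2.1. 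Here $f=h$ is surjective onto $|L|\simeq\ps^{g-2}$ and $\Pi$ has codimension $g-3<g-2$. Applying the theorem once with $\Lambda=\Pi$ therefore gives surjectivity directly, without the induction.

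The main obstacle is that the classical Lefschetz theorem wants $h^*\mathcal O(1)$ ample and $Y$ smooth or a local complete intersection, and $h^*\mathcal O(1)$ is not ample. I would meet this with two routes.

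First, the Goresky--MacPherson/Deligne version for quasi-finite or arbitrary maps to projective space requires only $\dim h(M(L))$ large relative to $c$. It holds in the homotopy-type sense for maps whose generic fibers are connected: Deligne's theorem (Fulton--Lazarsfeld, Theorem 2.3) states that $\pi_1(f^{-1}(\Lambda))\to\pi_1(Y)$ is surjective when $Y$ is irreducible and $\dim f(Y)>\operatorname{codim}\Lambda$.

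Second, as a fallback, I would argue directly with a Zariski-open comparison:
\begin{enumerate}
\item By \cite[Proposition~2.10]{Kol95}, $\pi_1(M(L)_U)\to\pi_1(M(L))$ is surjective, where $M(L)_U=h^{-1}(U)$.
\item Since $M(L)_U\to U$ is a locally trivial fibration with connected fiber $\Pic^d(D)$, the homotopy exact sequences for $M(L)_U\to U$ and $(J_\Pi)_{U_\Pi}\to U_\Pi$ share the same fiber.
\item By the Zariski--Lefschetz theorem for complements of hypersurfaces, $\pi_1(U_\Pi)\to\pi_1(U)$ is surjective for a general line $\Pi$.
\item A five-lemma argument then gives $\pi_1((J_\Pi)_{U_\Pi})\twoheadrightarrow\pi_1(M(L)_U)\twoheadrightarrow\pi_1(M(L))$.
\item This map factors through $\pi_1(J_\Pi)$, which yields the claim.
\end{enumerate}
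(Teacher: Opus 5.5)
Your fallback (steps 1--5) is the paper's proof. It uses Zariski--Lefschetz for $\pi_1(U_\Pi)\to\pi_1(U)$, a comparison of the homotopy sequences of the two Picard fibrations with common fiber, and \cite[Proposition~2.10]{Kol95} for $M(L)_U\subset M(L)$. Your remark that the composite already factors through $\pi_1(J_\Pi)$ makes the paper's second appeal to Kollár, for $(J_\Pi)_{U_\Pi}\subset J_\Pi$, unnecessary.

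Your primary route is genuinely different. It applies a Lefschetz theorem for preimages of general linear subspaces under a morphism to projective space once, with $f=h$, $\Lambda=\Pi$ and $\operatorname{codim}\Pi=g-3<g-2=\dim h(M(L))$. It is shorter and needs neither the fibration structure of $h$ over $U$, nor Zariski--Lefschetz on the base, nor Kollár's lemma. The price is a heavier black box, and you state it with hypotheses that are too weak.

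\textbf{The black box needs more than irreducibility.} Surjectivity on $\pi_1$ is false for an arbitrary irreducible $Y$. Glue two points of a smooth projective surface $Y'$ to get an irreducible projective surface $Y$ with $\pi_1(Y)\cong\pi_1(Y')*\zz$. A general hyperplane section misses the glued point, so its fundamental group lands in the factor $\pi_1(Y')$ and cannot surject. The theorem needs $Y$ normal; unibranch suffices, since then normalization is a homeomorphism. In the Goresky--MacPherson form, one also assumes $Y$ smooth or l.c.i., and the range of degrees depends on the dimensions of the loci where fibers jump, not only on $\dim f(Y)$.

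\textbf{In your application these hypotheses hold, but you must say so.} $M(L)$ is normal and l.c.i.\ by \cref{cor:rank-zero-terminal-lci}. Every fiber of $h$ has dimension at most $g$ by \cref{ass:comp_jacobian}, which suffices for $\pi_1$-surjectivity here even in the Goresky--MacPherson form.

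\textbf{Drop the isomorphism claim.} Your parenthetical assertion that the map is an isomorphism once $\dim f(Y)>c+1$ fails even for normal $Y$. A projective cone over an abelian surface is simply connected, while its general hyperplane section is the abelian surface. You never use this claim, so remove it.
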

\begin{proof}
Let $U=|L|\setminus\Delta$ be the open subset of smooth curves and set
$U_\Pi=U\cap\Pi$. By the Zariski--Lefschetz theorem for
quasi-projective varieties \cite[Theorem~1.1.3]{HL85}, applied
successively to general hyperplane sections, the map
$$
\pi_1(U_\Pi)\longrightarrow \pi_1(U)
$$
is surjective.

Over $U_\Pi$ and $U$, the support morphisms are smooth families of Picard varieties. The homotopy exact sequences for these smooth fibrations, together with the surjectivity of $\pi_1(U_\Pi)\to\pi_1(U)$, give a surjection
$$
\pi_1((J_\Pi)_{U_\Pi})\twoheadrightarrow \pi_1(M(L)_U).
$$
The inclusions
$$
(J_\Pi)_{U_\Pi}\subset J_\Pi,
\qquad
M(L)_U\subset M(L)
$$
induce surjections on fundamental groups by
\cite[Proposition~2.10]{Kol95}. Therefore the map
$$
\pi_1(J_\Pi)\longrightarrow \pi_1(M(L))
$$
is surjective.
\end{proof}

\begin{proof}[Proof of \cref{thm:fundamental-group-fixed-det}]
By \cref{lem:pencil-curve-to-jacobian-surjection}, there is a surjection
$$
\pi_1(S)\twoheadrightarrow \pi_1(J_\Pi),
$$
By \cref{lem:pencil-to-global-surjection}, there is a surjection
$$
\pi_1(J_\Pi)\twoheadrightarrow \pi_1(M(L)).
$$
Their composition gives
$
\pi_1(S)\twoheadrightarrow \pi_1(M(L)).
$

For the abelianization, \cref{thm:first-betti-fixed-det} gives $b_1(M(L))=2$. By the Hurewicz theorem in degree one,
$$
H_1(M(L),\zz)\cong\pi_1(M(L))_{ab}.
$$
Thus
$
\rank\bigl(\pi_1(M(L))_{ab}\bigr)
=
\dim_{\qq}H_1(M(L),\qq)
=
b_1(M(L))
=
2.
$
\end{proof}

\subsection{First two Betti numbers of Albanese fibers of \texorpdfstring{$M(L)$}{M(L)}}\label{subsec:albanese-fibers-rank-zero}
We compute the first two Betti numbers of the Albanese fibers of $M(L)$.
\Cref{cor:alpha-is-albanese-up-to-isogeny} shows that the Albanese morphism is
a locally constant analytic fibration, so it suffices to compute the
cohomology of a general fiber using the support morphism.

\begin{theorem}\label{thm:b2-albanese-fiber}
Let $(S,H)$ be a polarized bielliptic surface, and let $\mv=(0,c_1(L),\chi)$ be a primitive Mukai vector with $\nt(L)\geqslant 3$. Assume that $H$ is $\mv$-generic.
Let $F$ be an Albanese fiber of $M(L)$. Then
$$
b_1(F)=0,
\qquad
b_2(F)=3.
$$
\end{theorem}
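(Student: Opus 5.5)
The plan is to replace $F$ by its restriction to the locus of smooth curves, compute the cohomology of that open piece from the support fibration, and then pass back to the compact $F$ using mixed Hodge theory. I will first use \cref{lem:albanese-norm-map} to make $F$ concrete. Since $\alpha(F\otimes a^*N)=\alpha(F)+[m]N$ and $b_1(M(L))=2$ by \cref{thm:first-betti-fixed-det}, the Albanese morphism of $M(L)$ agrees with $\alpha$ up to an isogeny of elliptic curves; this is \cref{cor:alpha-is-albanese-up-to-isogeny}. Hence an Albanese fiber $F$ is a union of fibers of $\alpha$.

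The map
$$\mu\colon \Pic^0(E)\times \alpha^{-1}(0)\to M(L),\qquad (N,G)\mapsto G\otimes a^*N,$$
is surjective and finite étale with group $\Pic^0(E)[m]$. So $\alpha$ is a locally constant analytic fibration, and it suffices to work with a single fiber $F_0:=\alpha^{-1}(0)$. Since $M(L)$ is normal, so is $F_0$. Restricting the support morphism gives $h_F\colon F_0\to|L|$. Over $U=|L|\setminus\Delta$, the fiber over a smooth curve $D$ is the kernel-type subvariety $\Nm_{a_D}^{-1}(\mathrm{pt})\subset\Pic^d(D)$. Its identity-component tangent space is the variable part
$$V_D:=\ker\bigl(H^1(D)\to H^1(E)\bigr)^{\perp},\qquad H^1(D,\qq)=a_D^*H^1(E,\qq)\oplus V_D.$$
I will check from the connectedness of the fibers of $a$ that $\Nm_{a_D}$ has connected kernel. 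If it does not, the component group is a finite local system, and I will handle it through its monodromy orbits.

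\textbf{First Betti number.} I will copy the proof of \cref{lem:b1-upper-bound}. As there, $\pi_1(F_{0,U})\twoheadrightarrow\pi_1(F_0)$ by normality, and $H^1(U,\qq)=0$ because $\Delta$ is irreducible (\cref{lem:bertini_results}). By the Leray spectral sequence,
$$b_1(F_0)\le \dim H^0(U,\mathcal V),$$
where $\mathcal V$ is the local system with fibers $V_D$. The global invariant cycle theorem, together with $H^1(\uc)\cong H^1(S)=a^*H^1(E)$, says that the monodromy invariants of $H^1(D)$ are exactly $a_D^*H^1(E)$. Therefore $H^0(U,\mathcal V)=0$ and $b_1(F)=0$.

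\textbf{Second Betti number.} I will use the exact sequence of mixed Hodge structures
$$H^2_{h^{-1}(\Delta)\cap F_0}(F_0)\to H^2(F_0)\to W_2H^2(F_{0,U})\to 0,$$
where the right-hand map is onto the pure part because $F_0$ is compact and (rationally) smooth in codimension two, using the l.c.i.\ terminal structure from \cref{cor:rank-zero-terminal-lci}. The boundary $h_F^{-1}(\Delta)$ is irreducible, by \cref{lem:bertini_results} and the one-nodal generic member together with \cref{ass:comp_jacobian}. So the left term contributes at most one class, namely the boundary divisor.

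For $W_2H^2(F_{0,U})$ I will use the Leray terms:
\begin{itemize}
\item $E_2^{2,0}=H^2(U)$, whose pure part is spanned by the hyperplane class;
\item $E_2^{1,1}=H^1(U,\mathcal V^\vee)$;
\item $E_2^{0,2}=H^0\bigl(U,\wedge^2\mathcal V^\vee\bigr)$.
\end{itemize}
The monodromy on $V_D$ is generated by Picard--Lefschetz transvections in vanishing cycles, which lie in $V_D$, and these form one conjugacy class because $\Delta$ is irreducible. Hence the Zariski closure of the monodromy is $\mathrm{Sp}(V_D)$, and the invariants of $\wedge^2$ are spanned by the polarization, i.e.\ the theta class. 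I will show that $E_2^{1,1}$ has no weight-two part by the same big-monodromy argument. Collecting terms gives the upper bound $b_2(F)\le 3$.

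For the lower bound I will exhibit three linearly independent classes in $H^2(F_0)$: $h_F^*\mathcal O(1)$, a relative theta divisor, and the boundary divisor $h_F^{-1}(\Delta)$. I will prove independence by intersecting with test curves: a curve in a general fiber, a general line $\Pi$ lifted via a section, and a curve meeting the boundary transversely. Together these give $b_2(F)=3$.

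The main obstacle is the $b_2$ upper bound. There are two points to control. The first is the weight and surjectivity argument that passes from $F_{0,U}$ to the singular compact $F_0$; I would try the decomposition theorem for $h_F$ first, using that its fibers have the expected dimension by \cref{ass:comp_jacobian}. The second is showing that the boundary and hyperplane classes are not already accounted for by the Leray terms, so that the count is exactly three and not fewer.
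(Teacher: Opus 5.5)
Your $b_1$ argument is essentially the paper's. Using $\mu$ to get \'etale-local triviality of $\alpha$ is a reasonable elementary substitute for the locally-constant-fibration theorem. Note, though, that it is the fibers of $\alpha$ that are unions of Albanese fibers, not the reverse, so you still have to show that $F_0$ is connected.

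The $b_2$ part does not work as planned. Your weight-two bookkeeping is wrong in two places, and the two errors happen to cancel.
\begin{itemize}
\item First, $W_2H^2(U)=0$; it is not spanned by the hyperplane class. Since $\Delta\subset|L|$ is a hypersurface, $\deg(\Delta)\,h=[\Delta]$ and $\mathcal O(\Delta)|_U$ is trivial, so $h|_U=0$ rationally.
\item Second, the term $E_2^{1,1}=H^1(U,R^1f_{U,*}\cc)$ has a one-dimensional weight-two part, not zero. Big monodromy kills $H^0$ of the variable local system, not its $H^1$. You can see this on the universal curve. Write $L\equiv aA_0+bB_0$ and $\beta=aA_0-bB_0$, so that $\beta\cdot L=0$. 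The kernel of $H^2(\uc)=p^*H^2(S)\oplus\cc\,q^*h\to H^2(\uc_U)$ is spanned by the boundary class, which is proportional to $q^*h$. Hence $p^*\beta|_{\uc_U}$ is a nonzero class in $W_2H^2(\uc_U)$. It has degree zero on fibers, it cannot come from $H^2(U)$ by strictness, and $H^1(U,H^1(S,\cc)_U)=0$. So it gives a nonzero element of $W_2H^1(U,\mathcal V^\vee)$, where $\mathcal V^\vee$ is the variable summand of $R^1q_{U,*}\cc$, which is also $R^1f_{U,*}\cc$.
\item Third, your lower bound fails. $h_F^*\Delta$ is a Cartier divisor supported on the boundary, so $[h_F^{-1}(\Delta)]$ is a rational multiple of $h_F^*c_1(\mathcal O(1))$ in $H^2(F_0,\qq)$. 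No choice of test curves can separate these two classes. The genuine third class is exactly the $E^{1,1}$ class you planned to show vanishes.
\end{itemize}

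The missing idea is not to evaluate the Leray terms one by one, but to compare them term by term with those of $q_U\colon\uc_U\to U$, as in \cref{lem:prym-comparison}.
\begin{itemize}
\item The $E^{2,0}$ terms coincide.
\item The $E^{1,1}$ terms are identified via $R^1q_{U,*}\cc\cong H^1(S,\cc)_U\oplus\mathcal V^\vee$ together with $H^1(U,\cc)=0$.
\item Both $E^{0,2}$ terms are $\cc(-1)$.
\end{itemize}
Degeneration and strictness then give $\dim W_2H^2(F_{0,U})=\dim W_2H^2(\uc_U)$. The right-hand side equals $3-1=2$ by the projective-bundle structure $\uc\to S$ and the purity sequence (\cref{lem:universal-curve-cohomology-bielliptic}). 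Combined with the boundary sequence, this gives $b_2=1+2=3$ as an equality, with no separate lower bound needed.

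There is a second gap: the irreducibility of the boundary divisor of $F_0$, on which your ``at most one class'' rests. Bertini, the nodal generic member and \cref{ass:comp_jacobian} give irreducibility of $h^{-1}(\Delta)\subset M(L)$. Its intersection with an $\alpha$-fiber, however, has components permuted by $\Pic^0(E)[m]$, and there could be several of them. What you need is that the norm map on the normalization $\widetilde D_0$ of a general one-nodal member has connected kernel. Then the norm fibers in the compactified Jacobian (a $\ps^1$-bundle over $\Pic(\widetilde D_0)$ with two sections glued) are irreducible. The paper proves this in \cref{lem:nodal-compactified-norm-fiber}, using the Ein--Lazarsfeld bound $\varepsilon(L,x)>2$: this makes $\widetilde D_0\equiv\pi^*L-2E_x$ ample on the blow-up at the node, so that Lefschetz applies.
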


We first relate the Albanese morphism to the extended norm morphism.

Let $a:S\to E\coloneqq\Alb(S)$ be the Albanese morphism.

\begin{proposition}\label{cor:alpha-is-albanese-up-to-isogeny}
After translating the target by a constant, the morphism
$\alpha:M(L)\to\Pic^0(E)$ constructed in \cref{lem:alpha-construction}
factors as
$$
M(L)\xrightarrow{\operatorname{alb}_{M(L)}}\Alb(M(L))
\xrightarrow{\varphi}\Pic^0(E),
$$
where $\varphi$ is an isogeny. Moreover, the Albanese morphism of $M(L)$
is a locally constant analytic fibration with connected fibers. Consequently,
all Albanese fibers are mutually isomorphic normal irreducible projective
varieties with terminal l.c.i. singularities and torsion canonical bundle.
For every $\xi\in\Pic^0(E)$, the connected components of
$\alpha^{-1}(\xi)$ are precisely the Albanese fibers over the finitely many
points of $\varphi^{-1}(\xi)$, after accounting for the fixed translation.
\end{proposition}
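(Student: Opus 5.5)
The plan is to combine the upper bound $b_1(M(L))\leqslant 2$ with the equivariance $\alpha(F\otimes a^*N)=\alpha(F)+[m]N$ of \cref{lem:alpha-construction}, and then use the translation action to trivialize the Albanese map.

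\textbf{Step 1: the factorization through $\varphi$.} By \cref{cor:rank-zero-terminal-lci}, $M(L)$ is normal projective with rational singularities, so its Albanese morphism exists and is universal for morphisms to abelian varieties. After translating, $\alpha$ therefore factors as $\varphi\circ\operatorname{alb}_{M(L)}$ with $\varphi$ a homomorphism, and $\varphi$ is surjective because $\alpha$ is. For rational singularities, $\dim\Alb(M(L))=h^1(\mathcal O)\leqslant b_1/2=1$ by \cref{thm:first-betti-fixed-det}, using purity of $H^1$ for rational singularities. Hence $\Alb(M(L))$ is an elliptic curve and $\varphi$ is an isogeny.

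\textbf{Step 2: local triviality via the $\Pic^0(E)$-action.} The action $T_N(F)=F\otimes a^*N$ of the elliptic curve $\Pic^0(E)$ on $M(L)$ induces translations on $\Alb(M(L))$. Composing with $\varphi$ recovers translation by $[m]N$, so the induced homomorphism $\psi\colon\Pic^0(E)\to\Alb(M(L))$ satisfies $\varphi\circ\psi=[m]$. In particular $\psi$ is an isogeny. Consider the map
$$\Pic^0(E)\times F_0\to M(L),\qquad (N,x)\mapsto T_N(x),$$
where $F_0$ is a fiber of $\operatorname{alb}$. This map is equivariant over $\psi$, and it identifies $M(L)\cong(\Pic^0(E)\times F_0)/K$ with $K=\ker\psi$ finite, acting diagonally. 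This is the standard Bogomolov--Beauville-type splitting after a finite étale base change. It shows that $\operatorname{alb}$ is a locally constant analytic fibration with all fibers isomorphic to $F_0$. The one delicate point is whether $K$ acts on $F_0$ through $T_N$; this holds because $\psi(N)=0$ means $T_N$ preserves fibers.

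\textbf{Step 3: connectedness and singularity properties of fibers.} Connectedness of fibers follows from the universal property of the Albanese map: the Stein factorization passes through a finite cover of $\Alb(M(L))$, which is again an elliptic curve mapping to it. By universality, that cover must be $\Alb(M(L))$ itself. Since $\Pic^0(E)\times F_0\to M(L)$ is finite étale, the properties of $M(L)$ from \cref{cor:rank-zero-terminal-lci} pass to $\Pic^0(E)\times F_0$. They then pass to $F_0$ exactly as in \cref{lem:translation-base-change}: normality, terminal l.c.i. singularities, torsion canonical bundle (by restricting to slices), and irreducibility (a connected normal variety). Finally, $\alpha^{-1}(\xi)=\operatorname{alb}^{-1}(\varphi^{-1}(\xi))$ is a disjoint union of the connected fibers over the finite set $\varphi^{-1}(\xi)$.

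\textbf{Main obstacle.} I expect the main obstacle to be Step 1's claim $\dim\Alb=1$ for the singular $M(L)$, that is, comparing $b_1$ with $2q$. I would handle it via a resolution $\widetilde M\to M(L)$: rational singularities give $H^1(M(L),\qq)\hookrightarrow H^1(\widetilde M,\qq)$ with a pure Hodge structure, and $\Alb(M(L))=\Alb(\widetilde M)$. If that comparison is awkward, the isogeny $\psi$ together with $b_1=2$ gives the same conclusion directly.
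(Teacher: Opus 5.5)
Your proposal is correct, but it proves local constancy by a different route from the paper.

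\textbf{The paper's route.} The factorization is obtained exactly as you do it, from the universal property together with $b_1(M(L))=2$. For local constancy, the paper then invokes Wang's theorem \cite[Theorem~A]{Wan20}: $M(L)$ is klt and $-K_{M(L)}$ is nef because it is torsion, so the Albanese morphism is a locally constant analytic fibration with connected fibers. Normality and the l.c.i.\ and terminal properties of the fibers are read off from the local product structure, and torsion of $K_F$ follows from adjunction.

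\textbf{Your route.} You build the trivialization by hand from the $\Pic^0(E)$-action of \cref{lem:alpha-construction}. The equivariance gives $\varphi\circ\psi=[m]$, so $\psi$ is an isogeny. The map $(N,y)\mapsto(N,T_{-N}y)$ then identifies $\Pic^0(E)\times_{\psi,\Alb(M(L))}M(L)$ with $\Pic^0(E)\times F_0$, exactly as in \cref{lem:translation-base-change}. The properties of $F_0$ descend along the \'etale and smooth projections as they do there.

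\textbf{Comparison.} Your argument is more elementary and gives more information. The Albanese fibration is isotrivial with finite structure group $K=\ker\psi$, of order dividing $m^2$, and it splits after an isogeny. The paper's argument is shorter, and it is reused uniformly for $M$ and for the positive-rank models in \cref{lem:determinant-albanese-slices,thm:betti-numbers-admissible}, where no explicit action of this kind is set up.

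\textbf{Two points to make explicit.}
\begin{enumerate}
\item Phrase Step~2 as this scheme-theoretic fiber-product isomorphism, not as a bijective map out of a quotient. That is what makes normality, terminal l.c.i.\ singularities and torsion of the canonical bundle genuinely descend to the scheme-theoretic fiber $F_0$.
\item In Step~3, say why the Stein factorization of $\operatorname{alb}_{M(L)}$ is a finite \'etale cover. It is the quotient $(\Pic^0(E)\times\pi_0(F_0))/K$, so it is \'etale over $\Alb(M(L))$ and hence an elliptic curve. The universal property alone does not give this, since a ramified finite cover of an elliptic curve need not be elliptic.
\end{enumerate}
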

\begin{proof}
After a translation of $\Pic^0(E)$, the universal property of the Albanese
variety gives the asserted factorization. By \cref{lem:alpha-construction},
$\alpha$ is surjective. By \cref{thm:first-betti-fixed-det}, $b_1(M(L))=2$,
so $\Alb(M(L))$ is an elliptic curve. Hence the induced homomorphism
$\varphi:\Alb(M(L))\to\Pic^0(E)$ is surjective and therefore is an isogeny.

By \cref{cor:rank-zero-terminal-lci}, $M(L)$ is a normal projective variety
with terminal l.c.i. singularities and torsion canonical bundle. In
particular, it is klt and $-K_{M(L)}$ is nef. Thus
\cite[Theorem~A]{Wan20}, applied with boundary zero, shows that the Albanese
morphism of $M(L)$ is a locally constant analytic fibration with connected
fibers. Its fibers are mutually biholomorphic and, being projective, mutually
algebraically isomorphic. The local analytic product structure shows that each
fiber is normal and l.c.i. and has terminal singularities. A connected normal
variety is irreducible. Finally, since the Albanese base is smooth with trivial
canonical bundle, adjunction in the local product structure identifies the
canonical bundle of a fiber with the restriction of $K_{M(L)}$; hence it is
torsion.

Up to the fixed translation, $\alpha=\varphi\circ\operatorname{alb}_{M(L)}$.
Since $\varphi$ is finite, $\alpha^{-1}(\xi)$ is the disjoint union of the
Albanese fibers over the finite set $\varphi^{-1}(\xi)$. These fibers are
connected, so they are exactly the connected components of
$\alpha^{-1}(\xi)$.
\end{proof}

Let $U\subset |L|$ be the open subset parametrizing smooth curves, and let
$\Delta\coloneqq |L|\setminus U$ be the discriminant. Since
$\nt(L)\geqslant 3$, \cref{lem:bertini_results} shows that $L$ is very ample,
$\Delta$ is irreducible, and a general point of $\Delta$ parametrizes an
irreducible curve with one ordinary node. We denote by
$q\colon\uc\longrightarrow |L|$ the universal curve, by
$q_U\colon\uc_U\longrightarrow U$ its restriction over $U$, and set
$M(L)_U\coloneqq h^{-1}(U)$.

\begin{lemma}\label{lem:universal-curve-cohomology-bielliptic}
The universal curve $\uc$ is a projective bundle over $S$. In particular,
$$
H^1(\uc,\cc)\cong H^1(S,\cc)\cong \cc^2
$$
and
$$
H^2(\uc,\cc)\cong H^2(S,\cc)\oplus H^0(S,\cc)
\cong \cc^3.
$$
Moreover,
$
\dim W_2H^2(\uc_U,\cc)=2.
$
\end{lemma}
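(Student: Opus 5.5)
The plan is to describe $\uc$ as an incidence variety, read off $H^1$ and $H^2$ from the Leray--Hirsch theorem, and then obtain $W_2H^2(\uc_U)$ from Deligne's description of the lowest weight piece for an open subset of a smooth projective variety.

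\textbf{The projective bundle.} I would regard $\uc\subset S\times|L|$ as the incidence variety $\{(x,D)\mid x\in D\}$ and consider the first projection $\uc\to S$. Since $\nt(L)\geqslant 3$, \cref{lem:bertini_results} shows that $|L|$ is base point free. Hence for every $x\in S$ the condition $x\in D$ is one nonzero linear condition on $H^0(S,L)$. The fiber over $x$ is therefore a hyperplane $\ps^{n-1}\subset|L|=\ps^n$. Globally, $\uc=\ps(\mathcal K)$, where $\mathcal K$ is the kernel of the surjection of bundles $H^0(S,L)\otimes\mathcal O_S\to L$. In particular $\uc$ is a smooth projective variety.

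\textbf{Cohomology of $\uc$.} The Leray--Hirsch theorem, with $\xi=q^*c_1(\mathcal O_{|L|}(1))$ restricting to the hyperplane class on each fiber, gives
$$
H^*(\uc)\cong H^*(S)\otimes\langle 1,\xi,\dots,\xi^{n-1}\rangle .
$$
Hence $H^1(\uc)\cong H^1(S)\cong\cc^2$ and $H^2(\uc)\cong H^2(S)\oplus H^0(S)\xi\cong\cc^3$, using $b_1(S)=b_2(S)=2$ from \cref{prop:euler}.

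\textbf{Step 1: $W_2H^2(\uc_U)$ as an image.} Since $\uc$ is a smooth compactification of $\uc_U$, Deligne's theory gives
$$
W_2H^2(\uc_U)=\im\bigl[H^2(\uc)\to H^2(\uc_U)\bigr].
$$
It therefore suffices to show that the kernel of this restriction map is one-dimensional. The kernel is the image of $H^2_Z(\uc)$ with $Z=q^{-1}(\Delta)$. Since $Z$ has pure codimension one, this image is spanned by the fundamental classes of the irreducible components of $Z$.

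\textbf{Step 2: $Z$ is irreducible.} This is the main point. The map $q\colon \uc\to|L|$ is flat with one-dimensional fibers, and $\Delta$ is an irreducible hypersurface by \cref{lem:bertini_results}. Consequently every component of $Z$ has dimension $\dim\Delta+1$. Such a component cannot lie over a proper closed subset of $\Delta$, because those fibers are curves, so it dominates $\Delta$. By \cref{lem:bertini_results}, over a dense open subset of $\Delta$ the fibers are irreducible one-nodal curves. Hence exactly one component dominates $\Delta$, and $Z$ is irreducible.

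\textbf{Step 3: the kernel is one-dimensional.} The class of $Z$ in $H^2(\uc)$ is
$$
[Z]=q^*[\Delta]=\deg(\Delta)\,\xi .
$$
This class is nonzero, because $\xi$ restricts to the hyperplane class on the fibers $\ps^{n-1}$ of $\uc\to S$, and $n-1\geqslant 1$ since $\dim|L|$ is large. So the kernel is exactly the line $\cc\xi$, and $\dim W_2H^2(\uc_U)=3-1=2$.

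I expect the only delicate points to be the irreducibility of $Z$ and the precise citation of the statement that the kernel of $H^2(X)\to H^2(X\setminus Z)$ is spanned by the divisor classes of $Z$. The latter holds for any closed $Z$ in a smooth variety $X$, via $H^2_Z(X)\cong H^{BM}_{2\dim X-2}(Z)$, which is generated by the top-dimensional components of $Z$.
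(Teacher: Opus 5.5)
Your proposal is correct and follows essentially the same route as the paper. Both use the projective bundle formula for $\uc\to S$ and then the localization sequence, in which the kernel of $H^2(\uc)\to W_2H^2(\uc_U)$ is the line spanned by the class of the unique boundary divisor; the paper packages this step as \cref{lem:low-degree-purity-one-boundary}, whereas you argue directly via Borel--Moore homology, show $[q^{-1}(\Delta)]\in\cc\,\xi$ is nonzero, and give the dimension argument for irreducibility of $q^{-1}(\Delta)$ that the paper only asserts.
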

\begin{proof}
Since $\nt(L)\geqslant 3$, the line bundle $L$ is base point free. Hence the projection
$$
p\colon \uc\longrightarrow S
$$
is the projective bundle whose fiber over $x\in S$ is the projective space of divisors in $|L|$ passing through $x$. By the projective bundle formula,
$$
H^1(\uc,\cc)\cong H^1(S,\cc)
$$
and
$$
H^2(\uc,\cc)\cong H^2(S,\cc)\oplus H^0(S,\cc).
$$
For a bielliptic surface $b_1(S)=2$ and $b_2(S)=2$, so this gives
$$
\dim H^1(\uc,\cc)=2,
\qquad
\dim H^2(\uc,\cc)=3.
$$

Since $\Delta$ is irreducible and its general point parametrizes an irreducible one-nodal curve, the divisor
$
\uc_\Delta\coloneqq \uc\setminus \uc_U
$
has a unique irreducible divisorial component $D_{\uc}$; write
$$
\uc_\Delta=D_{\uc}\cup Z,
\qquad
\operatorname{codim}_{\uc}Z\geqslant 2.
$$
The discriminant $\Delta\subset |L|$ is an effective Cartier divisor, and
$q^*\Delta$ is an effective Cartier divisor on $\uc$ whose unique divisorial
support is $D_{\uc}$. Hence
$$
q^*\Delta=m_{\uc}D_{\uc}
$$
for some $m_{\uc}\geqslant 1$, so $D_{\uc}$ is $\qq$-Cartier. Since $\uc$ is
smooth and projective, \cref{lem:low-degree-purity-one-boundary} gives an exact
sequence
$$
0\longrightarrow \cc(-1)
\longrightarrow H^2(\uc,\cc)
\longrightarrow W_2H^2(\uc_U,\cc)
\longrightarrow 0.
$$
Therefore
$
\dim W_2H^2(\uc_U,\cc)=3-1=2.
$
\end{proof}

\begin{lemma}\label{lem:invariant-cycles-bielliptic}
Let $D\in U$. Then
$$
H^1(D,\qq)^{\pi_1(U,D)}
=
\im\left(H^1(S,\qq)\xrightarrow{i_D^*} H^1(D,\qq)\right).
$$
Equivalently, if we denote by
$$
\mathbb V_D\coloneqq\ker\left((i_D)_*\colon H_1(D,\cc)\to H_1(S,\cc)\right),
$$
then, after dualizing, there is an exact sequence of local systems
$$
0\longrightarrow H^1(S,\cc)_U
\longrightarrow R^1q_{U,*}\cc
\longrightarrow \mathbb V^\vee
\longrightarrow 0,
$$
and the invariant sections of $R^1q_{U,*}\cc$ are exactly the constant subsystem $H^1(S,\cc)_U$.
\end{lemma}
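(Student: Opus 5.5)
The argument has two parts: the equality of invariants with the image of $H^1(S,\qq)$ is the global invariant cycle theorem, and the exact sequence of local systems comes from Poincaré duality plus the injectivity of $i_D^*$ on $H^1$.

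\emph{Step 1 (invariants).} Apply Deligne's global invariant cycle theorem to the smooth projective morphism $q_U\colon\uc_U\to U$. Since $\uc$ is a smooth projective compactification of $\uc_U$, it gives
$$
H^1(D,\qq)^{\pi_1(U,D)}=\im\bigl(H^1(\uc,\qq)\to H^1(D,\qq)\bigr).
$$
By \cref{lem:universal-curve-cohomology-bielliptic}, the projective bundle $p\colon\uc\to S$ induces an isomorphism $p^*\colon H^1(S,\qq)\xrightarrow{\sim}H^1(\uc,\qq)$. Moreover, the composite $D\hookrightarrow\uc\xrightarrow{p}S$ equals $i_D$. Hence the image is $\im(i_D^*)$, which is the first claim. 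This is the same argument already used in \cref{lem:b1-upper-bound}.

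\emph{Step 2 (injectivity of $i_D^*$).} I expect this to be the main obstacle. Because $D$ is a smooth ample curve, the Lefschetz hyperplane theorem gives injectivity. Alternatively, by \cref{lem:albanese-norm-map}(3) the composite $a\circ i_D=a_D\colon D\to E$ induces a surjection $\Nm_{a_D}$ on Jacobians. Dually, $a_D^*\colon H^1(E,\qq)\to H^1(D,\qq)$ is injective. Since $a^*\colon H^1(E,\qq)\to H^1(S,\qq)$ is an isomorphism, $i_D^*$ is injective on $H^1(S,\qq)$. So the constant local system $H^1(S,\cc)_U$ embeds as a subsystem of $R^1q_{U,*}\cc$. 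It is a subsystem because $i_D^*$ is compatible with parallel transport: the classes come from the total space $\uc_U$, on which monodromy acts trivially.

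\emph{Step 3 (quotient and invariants).} Poincaré duality on $D$ identifies $H^1(D,\cc)\cong H_1(D,\cc)$, and under this identification $i_D^*$ corresponds to the dual of $(i_D)_*$. Therefore the quotient of $H^1(D,\cc)$ by $\im(i_D^*)$ is the dual of $\ker(i_D)_*=\mathbb V_D$. These spaces form the local system $\mathbb V^\vee$, which gives the displayed exact sequence.

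The invariant sections of $R^1q_{U,*}\cc$ contain the constant subsystem. By Step 1 they are no larger, since invariants over $\qq$ tensored with $\cc$ give the invariants over $\cc$. So the invariant sections are exactly $H^1(S,\cc)_U$.
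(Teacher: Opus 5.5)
Your proof is correct and follows the paper's own argument. You use the global invariant cycle theorem for $q_U$ together with $H^1(\uc,\qq)\cong H^1(S,\qq)$ from the projective bundle structure, and then dualize the Lefschetz surjection $H_1(D,\cc)\to H_1(S,\cc)$ to get the exact sequence; your alternative injectivity argument via $\Nm_{a_D}$ is also valid.

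One small correction: identifying $\operatorname{coker}(i_D^*)$ with $\mathbb V_D^\vee$ uses only universal coefficients, namely $H^1(-,\cc)=H_1(-,\cc)^\vee$ with $i_D^*=\bigl((i_D)_*\bigr)^\vee$, and not Poincar\'e duality on $D$.
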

\begin{proof}
The projection
$
p\colon \uc\longrightarrow S
$
is a projective bundle, hence
$
H^1(\uc,\qq)\cong H^1(S,\qq).
$
By the global invariant cycle theorem applied to
$
q_U\colon \uc_U\to U,
$
one has
$$
H^1(D,\qq)^{\pi_1(U,D)}
=
\im\left(H^1(\uc,\qq)\to H^1(D,\qq)\right).
$$
Using $H^1(\uc,\qq)\cong H^1(S,\qq)$, this becomes
$$
H^1(D,\qq)^{\pi_1(U,D)}
=
\im\left(H^1(S,\qq)\xrightarrow{i_D^*}H^1(D,\qq)\right).
$$
The statement about $\mathbb V$ follows by dualizing the exact sequence
$$
H_1(D,\cc)\longrightarrow H_1(S,\cc)\longrightarrow 0,
$$
where surjectivity follows from the Lefschetz hyperplane theorem for smooth ample curves on $S$.
\end{proof}

\begin{lemma}\label{lem:connected-norm-kernel}
Let $D\in U$ be a smooth curve and set $a_D\coloneqq\restrict{a}{D}$. Then
$$
(a_D)_*\colon H_1(D,\zz)\longrightarrow H_1(E,\zz)
$$
is surjective. Consequently,
$$
P_D\coloneqq\ker\left(\Nm_{a_D}\colon\Pic^0(D)\longrightarrow\Pic^0(E)\right)
$$
is a connected abelian variety of dimension $g(D)-1$.
\end{lemma}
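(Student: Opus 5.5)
To prove surjectivity of $(a_D)_*\colon H_1(D,\zz)\to H_1(E,\zz)$, I would factor it as
$$
H_1(D,\zz)\xrightarrow{(i_D)_*}H_1(S,\zz)\xrightarrow{a_*}H_1(E,\zz).
$$
The first map is surjective by the Lefschetz hyperplane theorem, since $D$ is a smooth ample curve on $S$ ($L$ is ample by \cref{lem:num_div}); this holds integrally, not only rationally. For the second map, $a=p_A\colon S\to A/G$ is a smooth fibration with connected fibres (\cref{rem:albanese_morphism}). The long exact homotopy sequence of a locally trivial fibration therefore gives $\pi_1(S)\twoheadrightarrow\pi_1(E)$, because $\pi_0$ of the fibre is trivial. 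Since $\pi_1(E)=H_1(E,\zz)$, abelianizing gives surjectivity of $a_*$ on $H_1(-,\zz)$. Composing the two maps yields the first claim.

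For connectedness of $P_D$, I would use the standard duality: $\Nm_{a_D}$ is dual to the pullback $a_D^*\colon\Pic^0(E)\to\Pic^0(D)$, up to the principal polarizations. Dually, surjectivity of $(a_D)_*$ on integral $H_1$ is equivalent to injectivity of $a_D^*\colon\Pic^0(E)\to\Pic^0(D)$. Concretely, $a_D$ does not factor through a nontrivial isogeny $E'\to E$, since any such factorization would force the image of $H_1(D,\zz)$ into a proper finite-index sublattice.

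I would then use the well-known criterion: for a finite morphism of curves $f\colon D\to E$, $\ker\Nm_f$ is connected if and only if $f^*\colon\Pic^0(E)\to\Pic^0(D)$ is injective, if and only if $f$ does not factor through a nontrivial cyclic étale cover. Alternatively, and more directly, the identification $H_1(\Pic^0(D),\zz)=H_1(D,\zz)$ identifies $\Nm_{a_D}$ on $H_1$ with $(a_D)_*$. A surjective homomorphism of abelian varieties whose map on $H_1$ is surjective has connected kernel, by the exact sequence $\pi_1(P_D)\to\pi_1(\Pic^0 D)\to\pi_1(\Pic^0 E)\to\pi_0(P_D)\to 0$ of the fibration $\Pic^0(D)\to\Pic^0(E)$. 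Hence $\pi_0(P_D)=0$.

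Finally, surjectivity of $\Nm_{a_D}$ is \cref{lem:albanese-norm-map}(3), so
$$
\dim P_D=g(D)-\dim\Pic^0(E)=g(D)-1,
$$
and a connected closed subgroup of an abelian variety is an abelian variety. The main point requiring care is the identification of the norm on $H_1$ with $(a_D)_*$, which I would justify via the Abel–Jacobi map $D\to\Pic(D)$ and the compatibility $\Nm\circ AJ_D=AJ_E\circ a_D$.
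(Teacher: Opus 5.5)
Your proposal is correct and follows essentially the same route as the paper: factor $(a_D)_*$ through $H_1(S,\zz)$ and use the integral Lefschetz theorem for the first map and the Albanese map for the second. Then identify the norm on $H_1$ with $(a_D)_*$ via Abel--Jacobi, deduce connectedness from the cokernel/component-group criterion (the paper's \cref{lem:connected-torus-kernel}), and get the dimension from surjectivity of the norm. The differences are cosmetic: you justify surjectivity of $a_*$ through the homotopy sequence of the smooth fibration $p_A$ rather than the torsion-quotient description of $H_1$ of the Albanese, and you reprove the torus-kernel criterion through the fibration sequence rather than the lattice computation.
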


\begin{proof}
By \cref{lem:bertini_results}, $L$ is very ample. Hence $D$ is a smooth hyperplane section of $S$ for the embedding defined by $|L|$. The integral Lefschetz hyperplane theorem \cite[Theorem~1.23]{VoisinHodge2} gives a surjection
$$
(i_D)_*\colon H_1(D,\zz)\longrightarrow H_1(S,\zz).
$$
The Albanese morphism induces the canonical quotient onto the Albanese lattice,
$$
a_*\colon H_1(S,\zz)\longrightarrow
H_1(S,\zz)/H_1(S,\zz)_{\mathrm{tors}}
\simeq H_1(E,\zz).
$$
Therefore $(a_D)_*=a_*\circ(i_D)_*$ is surjective.

Under the Abel--Jacobi identifications
$$
H_1(\Pic^0(D),\zz)\simeq H_1(D,\zz),
\qquad
H_1(\Pic^0(E),\zz)\simeq H_1(E,\zz),
$$
the homomorphism induced by $\Nm_{a_D}$ is $(a_D)_*$. Indeed, after choosing base points, functoriality of the Abel--Jacobi maps identifies the composite $\Nm_{a_D}\circ\operatorname{AJ}_D$ with $\operatorname{AJ}_E\circ a_D$ up to translation. Translations act trivially on homology. Thus \cref{lem:connected-torus-kernel} shows that the full kernel of $\Nm_{a_D}$ is connected. Its dimension is $g(D)-1$ because the norm map is surjective by \cref{lem:norm-map-albanese} and $\dim\Pic^0(E)=1$.
\end{proof}

\begin{lemma}\label{lem:relative-albanese-kernel}
Let $\xi\in\Pic^0(E)$ be general. Then
$\alpha^{-1}(\xi)\cap M(L)_U$ is connected. Let $F_{\mathrm{gen}}$ be the
unique connected component of $\alpha^{-1}(\xi)$ containing this open subset,
and set
$$
F_{\mathrm{gen},U}\coloneqq F_{\mathrm{gen}}\cap M(L)_U.
$$
Then $F_{\mathrm{gen}}$ is an Albanese fiber. Let
$
f_U:=h|_{F_{\mathrm{gen},U}}:F_{\mathrm{gen},U}\to U
$
and also let
$
q_U:\mathcal C_U\to U
$
be the universal smooth curve. Define the local system
$$
\mathbb V:=\ker\left(R_1q_{U,*}\mathbb C\to H_1(S,\mathbb C)_U\right),
$$
where \(H_1(S,\mathbb C)_U\) denotes the constant local system on \(U\) with fiber
\(H_1(S,\mathbb C)\). Thus, for \(D\in U\),
$$
\mathbb V_D
=
\ker\left(H_1(D,\mathbb C)\to H_1(S,\mathbb C)\right).
$$
Then \(f_U\) is a smooth proper fibration, and for \(D\in U\), the fiber \(f_U^{-1}(D)\)
is a torsor under
$$
P_D:=\ker\left(\operatorname{Pic}^0(D)\to \operatorname{Pic}^0(E)\right)
$$
and $\dim P_D=g(D)-1$. Moreover,
$$
R^1f_{U,*}\mathbb C\simeq \mathbb V^\vee,
\qquad
R^2f_{U,*}\mathbb C\simeq \bigwedge^2\mathbb V^\vee.
$$
\end{lemma}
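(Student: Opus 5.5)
The plan is to work entirely over $U$ and use the identification $M(L)_U\simeq\Pic^d(\uc_U/U)$ together with the description of $\alpha_U$ as the relative norm map from \cref{lem:alpha-construction}. For each $D\in U$ the restriction $\restrict{\alpha}{h^{-1}(D)}\colon\Pic^d(D)\to\Pic^0(E)$ is, after the fixed translation, a surjective homomorphism-torsor map. By \cref{lem:connected-norm-kernel} its kernel $P_D$ is connected of dimension $g(D)-1$. Hence each fiber $\alpha^{-1}(\xi)\cap h^{-1}(D)$ is a single $P_D$-torsor, in particular connected and smooth.

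The map $\alpha^{-1}(\xi)\cap M(L)_U\to U$ is therefore a proper smooth morphism with connected fibers over the connected base $U$. I will justify smoothness as follows. The relative norm $\Pic^d(\uc_U/U)\to\Pic^0(E)\times U$ is a smooth morphism of smooth $U$-families, being fiberwise a surjective homomorphism of abelian varieties. Its fiber over $\{\xi\}\times U$ is then smooth over $U$, and this holds for every $\xi$, not only general ones. It follows that $\alpha^{-1}(\xi)\cap M(L)_U$ is connected. It lies in a unique connected component $F_{\mathrm{gen}}$ of $\alpha^{-1}(\xi)$, which by \cref{cor:alpha-is-albanese-up-to-isogeny} is an Albanese fiber. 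I still need $F_{\mathrm{gen},U}=\alpha^{-1}(\xi)\cap M(L)_U$, i.e.\ that no other component of $\alpha^{-1}(\xi)$ meets $M(L)_U$; this is immediate since the intersection is connected and the components are disjoint open-closed pieces. Consequently $f_U$ is the smooth proper fibration above, with fibers the $P_D$-torsors.

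For the local systems, the fiber $f_U^{-1}(D)$ is a torsor under the abelian variety $P_D$, so $H^1(f_U^{-1}(D),\cc)\simeq H_1(P_D,\cc)^\vee$ and $H^2=\bigwedge^2H^1$, canonically and compatibly with monodromy. Translations act trivially on cohomology, so the torsor structure does not matter. It therefore suffices to identify $R_1$ of the family $P_D$ with $\mathbb V$. From the connected exact sequence $0\to P_D\to\Pic^0(D)\to\Pic^0(E)\to0$ of abelian varieties we get an exact sequence on $H_1$. Under the Abel--Jacobi identification (as in the proof of \cref{lem:connected-norm-kernel}) it reads
$$
0\to H_1(P_D,\zz)\to H_1(D,\zz)\xrightarrow{(a_D)_*}H_1(E,\zz)\to0.
$$
Rationally, $H_1(S,\cc)\to H_1(E,\cc)$ is an isomorphism, since $b_1(S)=b_1(E)=2$ and $a_*$ kills only torsion. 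Hence $H_1(P_D,\cc)=\ker(H_1(D,\cc)\to H_1(S,\cc))=\mathbb V_D$. These identifications are induced by morphisms of families over $U$, so they glue to an isomorphism of local systems $R_1(f_U)_*\cc\simeq\mathbb V$. Dualizing gives $R^1f_{U,*}\cc\simeq\mathbb V^\vee$ and $R^2f_{U,*}\cc\simeq\bigwedge^2\mathbb V^\vee$.

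I expect the main obstacle to be the careful globalization: checking that the relative norm map is smooth as a morphism over $U$, and that the homology identifications are compatible with monodromy. I would handle both by working with the exact sequence of smooth group schemes $0\to\mathcal P\to\Pic^0(\uc_U/U)\to\Pic^0(E)\times U\to0$ over $U$ and taking $R_1$ of each term.
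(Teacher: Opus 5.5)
Your proposal is correct and follows essentially the same route as the paper. You treat the relative norm over $U$ as a surjective homomorphism of abelian schemes whose kernel $\mathcal P$ has connected fibers by \cref{lem:connected-norm-kernel}, and each $\alpha^{-1}(\xi)\cap M(L)_U$ is a $\mathcal P$-torsor over the connected base $U$, which gives connectedness. \Cref{cor:alpha-is-albanese-up-to-isogeny} then makes $F_{\mathrm{gen}}$ an Albanese fiber, and $H_1(P_D,\cc)=\mathbb V_D$ globalizes through the homology sequence of the norm. The only cosmetic difference is that you obtain smoothness from the fibral smoothness criterion applied to the norm, whereas the paper obtains it from the kernel being an abelian scheme.
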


\begin{proof}
The relative norm is a homomorphism of abelian schemes
$$
\Nm\colon\Pic^0(\mathcal C_U/U)\longrightarrow\Pic^0(E)\times U.
$$
For every geometric point $D\in U$, its restriction to the fiber is
$$
\Nm_{a_D}\colon\Pic^0(D)\longrightarrow\Pic^0(E),
$$
which is surjective by \cref{lem:norm-map-albanese}. Hence the image of $\Nm$ is a closed subgroup scheme of $\Pic^0(E)\times U$ whose every geometric fiber is the whole fiber. Therefore $\Nm$ is surjective.

Let
$
\mathcal P\coloneqq\ker(\Nm).
$
The kernel of a surjective homomorphism of abelian schemes is a smooth proper group scheme over the base. By \cref{lem:connected-norm-kernel}, every geometric fiber of $\mathcal P$ is connected. Consequently, $\mathcal P$ is an abelian scheme over $U$ of relative dimension $g(L)-1$.

By \cref{lem:alpha-construction}, over $U$ the morphism $\alpha$ agrees, up to a fixed translation of the target, with the relative norm map on the relative degree-$d$ Picard torsor. Hence
$
\alpha^{-1}(\xi)\cap M(L)_U
$
is a torsor under $\mathcal P$ for every $\xi\in\Pic^0(E)$.

The open subset $U$ is connected because it is the complement of an
irreducible hypersurface in the projective space $|L|$. Since the fibers of
$\mathcal P\to U$ are connected, the torsor
$$
\alpha^{-1}(\xi)\cap M(L)_U\longrightarrow U
$$
is connected. Consequently, it lies in a unique connected component
$F_{\mathrm{gen}}$ of $\alpha^{-1}(\xi)$, and
$$
F_{\mathrm{gen},U}
=F_{\mathrm{gen}}\cap M(L)_U
=\alpha^{-1}(\xi)\cap M(L)_U.
$$
By \cref{cor:alpha-is-albanese-up-to-isogeny}, the connected components of a
fiber of $\alpha$ are Albanese fibers, so $F_{\mathrm{gen}}$ is an Albanese
fiber. Moreover, $F_{\mathrm{gen},U}\to U$ is a torsor under $\mathcal P$.
Its fiber over $D$ is a torsor under the full norm kernel
$$
P_D=\ker\left(\operatorname{Pic}^0(D)\to \operatorname{Pic}^0(E)\right).
$$
By \cref{lem:connected-norm-kernel}, this is a connected abelian variety of dimension $g(D)-1$. Hence $f_U$ is smooth and proper with connected fibers.

Taking first homology gives
$$
H_1(P_D,\mathbb C)
=
\ker\left(H_1(D,\mathbb C)\to H_1(E,\mathbb C)\right).
$$
Since \(E=\operatorname{Alb}(S)\), the natural map
$
H_1(S,\mathbb C)\to H_1(E,\mathbb C)
$
is an isomorphism. Hence
$$
H_1(P_D,\mathbb C)
=
\ker\left(H_1(D,\mathbb C)\to H_1(S,\mathbb C)\right)
=
\mathbb V_D.
$$
Thus
$
H^1(P_D,\mathbb C)\simeq \mathbb V_D^\vee.
$
Since \(P_D\) is an abelian variety, its cohomology algebra is the exterior algebra on
\(H^1(P_D,\mathbb C)\). Therefore
$$
H^2(P_D,\mathbb C)
\simeq
\bigwedge^2\mathbb V_D^\vee.
$$
These identifications are induced by the exact sequence of integral homology local systems associated with the homomorphism of abelian schemes $\Nm$, and hence are compatible with monodromy. Therefore they globalize to the local-system identifications
$$
R^1f_{U,*}\mathbb C\simeq \mathbb V^\vee,
\qquad
R^2f_{U,*}\mathbb C\simeq \bigwedge^2\mathbb V^\vee.
$$
\end{proof}

\begin{lemma}\label{lem:variable-monodromy-irreducible}
The local system
$$
\mathbb V_D=\ker\left(H_1(D,\cc)\to H_1(S,\cc)\right)
$$
has no nonzero invariant sections. Moreover, the monodromy representation of $\pi_1(U,D)$ on $\mathbb V_D$ is irreducible, and
$$
H^0(U,\mathbb V^\vee)=0,
\qquad
H^0\left(U,\bigwedge^2\mathbb V^\vee\right)\cong \cc.
$$
\end{lemma}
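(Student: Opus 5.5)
Three things have to be shown: $\mathbb V$ has no invariants, its monodromy is irreducible, and $H^0(U,\bigwedge^2\mathbb V^\vee)\cong\cc$. The natural route is Picard--Lefschetz theory for the Lefschetz family of curves in $|L|$, using the properties from \cref{lem:bertini_results}: $\Delta$ is irreducible and its general point is an irreducible one-nodal curve.

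\textbf{No invariants.} By \cref{lem:invariant-cycles-bielliptic}, the invariant part of $R^1q_{U,*}\cc$ is exactly the constant subsystem $H^1(S,\cc)_U$. Since $H_1(D,\cc)\to H_1(S,\cc)$ is surjective, dualizing gives
\[
0\to\mathbb V\to H_1(D,\cc)\to H_1(S,\cc)\to0 .
\]
Let $\delta\in\mathbb V_D$ be an invariant class. Pair it, via the intersection form on $H_1(D)$, with the vanishing cycles. Each Picard--Lefschetz transformation acts as $T_\delta(x)=x\pm\langle x,\delta\rangle\delta$. Invariance of $x\in\mathbb V$ therefore forces $\langle x,\delta_i\rangle=0$ for all vanishing cycles $\delta_i$.

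By Lefschetz theory the vanishing cycles span $\mathbb V_D$, since they span the kernel of the Gysin map $H_1(D)\to H_1(S)$. The intersection form restricted to $\mathbb V_D$ is nondegenerate, because $\mathbb V_D^\perp$ is the image of $H^1(S)$, which meets $\mathbb V_D$ trivially by hard Lefschetz. Together these give $x=0$. Dually, $H^0(U,\mathbb V^\vee)=0$.

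\textbf{Irreducibility.} Since $\Delta$ is irreducible, all vanishing cycles are conjugate under $\pi_1(U)$ up to sign; this is the standard consequence via a Lefschetz pencil and the Zariski--Lefschetz surjection \cite[Theorem~1.1.3]{HL85}. Let $W\subset\mathbb V_D$ be a nonzero invariant subspace and take $w\in W$. If $\langle w,\delta\rangle\neq0$ for some vanishing cycle $\delta$, then $T_\delta w-w$ is a nonzero multiple of $\delta$, so $\delta\in W$. Conjugacy then puts all vanishing cycles in $W$, hence $W=\mathbb V_D$. If instead $w$ is orthogonal to every vanishing cycle, then $w=0$ by nondegeneracy. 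This is Deligne's irreducibility argument \cite{SGA7II}.

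\textbf{The second exterior power.} The restricted intersection form gives a nonzero invariant $\omega\in\bigwedge^2\mathbb V^\vee$, so $H^0\neq0$. For the upper bound, an invariant element of $\bigwedge^2\mathbb V^\vee$ is an invariant skew form, equivalently a $\pi_1$-equivariant map $\mathbb V\to\mathbb V^\vee\cong\mathbb V$, using $\omega$ to identify $\mathbb V^\vee$ with $\mathbb V$. By Schur's lemma over $\cc$ and irreducibility, this map is a scalar. The space of invariants is therefore $\cc\cdot\omega$.

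The main obstacle is the middle step, namely the conjugacy of vanishing cycles and the fact that they span $\mathbb V_D$ when $\pi_1(S)$ is infinite. The plan is to argue through a general Lefschetz pencil $\Pi$. The vanishing cycles of $\uc_\Pi\cong\operatorname{Bl}_ZS$ span $\ker(H_1(D)\to H_1(\uc_\Pi))=\ker(H_1(D)\to H_1(S))$ by the Lefschetz theorem with rational coefficients. Conjugacy follows from irreducibility of $\Delta$, since $\pi_1(U\cap\Pi)\to\pi_1(U)$ is surjective. Working with $\cc$-coefficients avoids any torsion issues coming from $H_1(S,\zz)$.
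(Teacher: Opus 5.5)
Your proof is correct. For irreducibility and for $H^0(U,\bigwedge^2\mathbb V^\vee)\cong\cc$ it coincides with the paper's argument:
\begin{itemize}
\item the vanishing cycles span $\mathbb V_D$;
\item the form on $\mathbb V_D$ is nondegenerate by hard Lefschetz;
\item irreducibility of $\Delta$ makes the vanishing cycles a single monodromy orbit up to sign;
\item the Picard--Lefschetz transvections force a nonzero invariant subspace to contain every vanishing cycle;
\item Schur's lemma leaves only the multiples of $\omega$ as invariant skew forms.
\end{itemize}

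The genuine difference is in the vanishing of invariants. The paper argues Hodge-theoretically. \cref{lem:invariant-cycles-bielliptic}, the global invariant cycle theorem, identifies the invariants of $R^1q_{U,*}\cc$ with $H^1(S,\cc)$. Semisimplicity of polarizable variations then makes taking invariants exact on $0\to H^1(S,\cc)_U\to R^1q_{U,*}\cc\to\mathbb V^\vee\to 0$, which gives $H^0(U,\mathbb V^\vee)=0$ directly.

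You argue topologically instead. An invariant class is fixed by every $T_{\delta_i}$, so it is orthogonal to all vanishing cycles, and it vanishes by spanning and nondegeneracy. Your route is more elementary and rests the whole lemma on a single input, the Lefschetz-pencil package. In fact your opening appeal to \cref{lem:invariant-cycles-bielliptic} is never used. The paper's route does not need spanning or nondegeneracy for this step. It also exercises the same invariant-cycle and semisimplicity mechanism that the paper reuses later for the splitting $R^1q_{U,*}\cc\simeq H^1(S,\cc)_U\oplus\mathbb V^\vee$.

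You should make your ``dually'' explicit. Passing from invariants of $\mathbb V$ to invariants of $\mathbb V^\vee$ uses the equivariant isomorphism $\mathbb V\simeq\mathbb V^\vee$ induced by the nondegenerate invariant form $\omega$, which you only introduce in the last step.
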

\begin{proof}
By \cref{lem:invariant-cycles-bielliptic},
$
H^1(D,\cc)^{\pi_1(U,D)}
=
i_D^*H^1(S,\cc).
$
The exact sequence
$$
0\longrightarrow H^1(S,\cc)_U
\longrightarrow R^1q_{U,*}\cc
\longrightarrow \mathbb V^\vee
\longrightarrow 0
$$
is an exact sequence of polarizable variations of Hodge structure. By semisimplicity, taking invariant sections is exact on this sequence. Hence
$
H^0(U,\mathbb V^\vee)=0.
$

It remains to prove irreducibility of the monodromy on $\mathbb V_D$. Since $L$ is very ample, a general pencil in $|L|$ is a Lefschetz pencil. The total space of such a pencil is the blow-up of $S$ at the base points, hence has the same $H_1$ as $S$. Therefore the vanishing cycles generate precisely
$$
\ker\left(H_1(D,\cc)\to H_1(S,\cc)\right)=\mathbb V_D.
$$
The restriction of the intersection form to $i_D^*H^1(S,\cc)$ is
nondegenerate. Indeed,
$$
\int_D i_D^*\alpha\wedge i_D^*\beta
=
\int_S\alpha\wedge\beta\wedge[D],
$$
and hard Lefschetz for the ample class $[D]$ gives nondegeneracy. Hence its
symplectic orthogonal complement, identified with $\mathbb V_D$, is also
nondegenerate.
Because the discriminant is irreducible, the vanishing cycles form one monodromy orbit up to sign. Let $0\neq W\subset\mathbb V_D$ be monodromy invariant and choose $0\neq w\in W$. Since the vanishing cycles span $\mathbb V_D$ and the intersection pairing on $\mathbb V_D$ is nondegenerate, there is a vanishing cycle $\delta$ such that $(w,\delta)\neq 0$. The Picard--Lefschetz formula gives
$$
T_\delta(w)-w=\pm(w,\delta)\delta\in W,
$$
so $\delta\in W$. The single-orbit property then implies that $W$ contains every vanishing cycle and hence equals $\mathbb V_D$.

The intersection pairing induces a nondegenerate monodromy-invariant symplectic form on $\mathbb V_D$. By irreducibility and Schur's lemma, the invariant part of $\bigwedge^2\mathbb V_D^\vee$ is one-dimensional, generated by the symplectic class. Thus
	$$
	H^0\left(U,\bigwedge^2\mathbb V^\vee\right)\cong \cc.
	$$
\end{proof}

\begin{lemma}\label{lem:nodal-compactified-norm-fiber}
After possibly shrinking $\Delta^\circ$ while keeping
$
\operatorname{codim}_{|L|}(\Delta\setminus\Delta^\circ)\geqslant 2,
$
the following holds. For every $D_0\in\Delta^\circ$, let
$\nu\colon\widetilde D_0\to D_0$ be the normalization and set
$\widetilde a\coloneqq a|_{D_0}\circ\nu$. Then
$$
\ker\left(\Nm_{\widetilde a}\colon
\Pic^0(\widetilde D_0)\longrightarrow\Pic^0(E)\right)
$$
is connected, and every nonempty fiber of
$$
\restrict{\alpha}{h^{-1}(D_0)}\colon
h^{-1}(D_0)\longrightarrow\Pic^0(E)
$$
is irreducible.
\end{lemma}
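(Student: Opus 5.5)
The plan is to reduce both assertions to lattice and torus statements, as in \cref{lem:connected-norm-kernel}, and then use a dimension count on the compactified Jacobian of a one-nodal curve. We would take $\Delta^\circ$ to be the locus of irreducible curves with a single ordinary node, as in \cref{lem:bertini_results}. If necessary we would remove from it a closed subset of codimension at least one in $\Delta$, so that every $D_0\in\Delta^\circ$ is the special fibre of a Lefschetz-type degeneration. This means a general line through $D_0$ meets $\Delta$ transversally at $D_0$, so there is a vanishing cycle $\delta\in H_1(D_t,\zz)$ for nearby smooth $D_t$, and
$$
H_1(\widetilde D_0,\zz)\cong\delta^\perp/\zz\delta .
$$

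\emph{Connectedness of the kernel.} By \cref{lem:connected-torus-kernel} applied to $\Nm_{\widetilde a}$, it suffices to show that $\widetilde a_*\colon H_1(\widetilde D_0,\zz)\to H_1(E,\zz)$ is surjective. Via specialization, the composite $H_1(\widetilde D_0)\to H_1(E)$ is the map induced on $\delta^\perp$ by $(a_{D_t})_*$. This map kills $\delta$, because $\delta\in\mathbb V_{D_t}$. By \cref{lem:connected-norm-kernel}, $(a_{D_t})_*$ is surjective on all of $H_1(D_t,\zz)$. Since $\delta$ is primitive in the unimodular lattice $H_1(D_t,\zz)$, we have $H_1(D_t,\zz)=\delta^\perp+\zz\gamma$ for some $\gamma$ with $(\gamma,\delta)=1$. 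So it is enough to find $\delta'\in\mathbb V_{D_t}\cap H_1(D_t,\zz)$ with $(\delta',\delta)=1$. Then $\gamma-\delta'\in\delta^\perp$ has the same image as $\gamma$, and surjectivity follows.

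Producing such a $\delta'$ is the step I expect to be the main obstacle. I would try the following. The vanishing cycles span $\mathbb V_{D_t}$ and form a single monodromy orbit up to sign, by \cref{lem:variable-monodromy-irreducible}. Over a Lefschetz pencil, the kernel lattice is generated integrally by vanishing cycles, because the total space is $\operatorname{Bl}_ZS$. Hence the pairings of $\delta$ with the integral kernel lattice generate an ideal $n\zz$, and $n$ is the divisibility of $\delta$ in that lattice. If $n>1$, then by the Picard--Lefschetz formula all vanishing cycles would be congruent modulo $n$ up to the orbit action. I would then derive a contradiction with the primitivity of $\delta$ in the unimodular lattice $H_1(D_t,\zz)$, using the fact that $\mathbb V_{D_t}^\perp=i_D^*$-image is saturated. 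If this integral argument fails, the fallback is to discard the finitely many bad cases by shrinking $\Delta^\circ$ further, which the statement allows.

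\emph{Irreducibility of the fibres.} Let $J\coloneqq h^{-1}(D_0)$, the compactified Jacobian of the irreducible one-nodal curve $D_0$. It is irreducible of dimension $g$ and is the disjoint union of the open torsor $J^{\mathrm{lf}}$ under $\Pic^0(D_0)$ and the boundary $\{\nu_*M\}\cong\Pic^{d-1}(\widetilde D_0)$, which has dimension $g-1$. On $J^{\mathrm{lf}}$, the map $\alpha$ is, up to translation, the norm map. It factors through $\Pic^0(D_0)\to\Pic^0(\widetilde D_0)$, which is a $\cc^*$-extension. Hence each fibre of $\alpha|_{J^{\mathrm{lf}}}$ is a torsor under an extension of $\ker\Nm_{\widetilde a}$ by $\cc^*$, which is connected by the first part; being smooth, it is irreducible of dimension $g-1$. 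On the boundary, $\alpha$ restricts to a translate of $\Nm_{\widetilde a}$, which is surjective; this follows from the equivariance $\alpha(F\otimes a^*N)=\alpha(F)+[m]N$ of \cref{lem:alpha-construction}. So the boundary fibres have dimension $g-2$. Because $J$ is irreducible of dimension $g$ and $\alpha|_J$ is surjective onto a curve, every irreducible component of a fibre has dimension at least $g-1$. Therefore no component can lie in the boundary part, every component meets the irreducible open part, and each nonempty fibre is irreducible.
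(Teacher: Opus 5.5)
\textbf{Gap in the connectedness step.} The gap is the step you flag yourself. Set $K\coloneqq\ker\bigl((a_{D_t})_*\colon H_1(D_t,\zz)\to H_1(E,\zz)\bigr)$ and $(\delta,K)=n\zz$.

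The contradiction you hope to get from primitivity of $\delta$ in the unimodular lattice $H_1(D_t,\zz)$ and saturation of $K^\perp$ does not exist. The lattice $K^\perp$ is Poincar\'e dual to $a_{D_t}^*H^1(E,\zz)$, and there the intersection form is $m=L\cdot B\geqslant 3$ times a unimodular form. Hence $K$ is not unimodular, and $K^\vee/K\cong(\zz/m\zz)^2$. Primitivity and saturation only say that $\delta/n\in K^\vee$ has order $n$ in this group, i.e.\ $n\mid m$.

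Your fallback also fails. All vanishing cycles are conjugate up to sign and $K$ is monodromy invariant, so the condition $(\delta,K)=\zz$ is the same for every $D_0\in\Delta^\circ$. This condition is equivalent to surjectivity of $\widetilde a_*$. If it failed at one curve, it would fail on all of $\Delta^\circ$. That set cannot be discarded while keeping $\operatorname{codim}_{|L|}(\Delta\setminus\Delta^\circ)\geqslant 2$.

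\textbf{How to close it.} A rank count, which your congruence remark almost contains, does the job. Let $K_0\subseteq K$ be the lattice spanned by the vanishing cycles $\delta_j$ of the pencil. It has rank $2g-2$, because the $\delta_j$ span $\mathbb V_{D_t}$.

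Suppose $(\delta,K_0)=n\zz$. Monodromy preserves $K_0$, so conjugacy gives $(\delta_j,K_0)=n\zz$ for all $j$. Then each $T_{\delta_j}$ acts trivially on $K_0/nK_0$, and so $\delta_j\equiv\pm\delta\pmod{nK_0}$ for every $j$. Thus $K_0/nK_0\cong(\zz/n\zz)^{2g-2}$ would be cyclic, which forces $n=1$ since $2g-2\geqslant 2$. With this, your specialization route works, and since the condition is constant along $\Delta^\circ$ it needs no shrinking.

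The paper avoids monodromy altogether. By Ein--Lazarsfeld, $\varepsilon(L,x)>2$ outside a finite set $Z_L$. After discarding curves whose node lies in $Z_L$ (this is the only shrinking), the strict transform $\widetilde D_0\equiv\pi^*L-2E_x$ is a smooth ample curve on $\operatorname{Bl}_xS$. The Lefschetz theorem then gives $H_1(\widetilde D_0,\zz)\twoheadrightarrow H_1(\operatorname{Bl}_xS,\zz)\cong H_1(S,\zz)$ directly.

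\textbf{Irreducibility step.} Your dimension-count argument is a valid alternative to the paper's use of the $\ps^1$-bundle normalization $P\to\Pic^{d'}(\widetilde D_0)$. However, you assert without proof that $\alpha$ on the locally free locus of $h^{-1}(D_0)$ is a translate of $\Nm_{\widetilde a}\circ\nu^*$. Since $\alpha$ was defined only over smooth curves and then extended abstractly, this needs an argument. The paper supplies one: a determinant-of-cohomology relative norm over the integral locus, which agrees with $\alpha$ on a dense open set, together with the normalization sequences.
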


\begin{proof}
By \cref{lem:num_div} and $\nt(L)\geqslant 3$, one has $L^2\geqslant 18>5$
and $L\cdot C\geqslant 3>2$ for every integral curve $C\subset S$. Hence the
theorem of Ein--Lazarsfeld \cite[Main Theorem, p.~178]{EinLaz93}, applied with
$e=2$, gives a finite
subset $Z_L\subset S$ such that
$
\varepsilon(L,x)>2
$
for every $x\notin Z_L$. For a fixed $x\in S$, the hyperplanes tangent to
$S\subset\ps^N$ at $x$ form a projective space of dimension $N-3$, whereas
$\dim\Delta=N-1$. We may therefore remove from $\Delta^\circ$ the curves whose
node lies in $Z_L$ without changing the codimension condition above.

Fix $D_0\in\Delta^\circ$ and let $x$ be its node. If
$\pi\colon\widehat S\to S$ is the blow-up at $x$, with exceptional curve $E_x$,
then the strict transform of $D_0$ is $\widetilde D_0$ and
$$
\widetilde D_0\equiv\pi^*L-2E_x.
$$
Since $2<\varepsilon(L,x)$, this divisor is ample. Applying the argument of
\cref{lem:connected-norm-kernel} to the ample curve
$\widetilde D_0\subset\widehat S$, and using
$H_1(\widehat S,\zz)\simeq H_1(S,\zz)$, shows that
$$
\widetilde a_*\colon H_1(\widetilde D_0,\zz)
\longrightarrow H_1(E,\zz)
$$
is surjective. By \cref{lem:connected-torus-kernel}, the kernel of
$\Nm_{\widetilde a}$ is connected.

We now use the presentation of the compactified Jacobian of an integral
one-nodal curve. Its normalization is a $\ps^1$-bundle
$$
\rho\colon P\longrightarrow\Pic^{d'}(\widetilde D_0)
$$
for the appropriate degree $d'$, and the two distinguished sections are
identified by translation by $\mathcal O_{\widetilde D_0}(p-q)$, where $p,q$
are the two points over the node; see \cite[Theorem~2.6, Lemma~2.9 and
Remark~2.10]{Mat25}. Let $I\subset |L|$ be the integral-support locus and let
$$
a_I\colon\mathcal C_I\longrightarrow E\times I
$$
be induced by the Albanese morphism. The morphism $a_I$ is finite, and the universal curve $\mathcal C_I$ is Cohen--Macaulay. Since $E\times I$ is smooth and has the same dimension, the derived pushforwards below are perfect. For a line bundle $A$ on $\mathcal C_I$, define its relative norm by the determinant-of-cohomology formula
$$
\Nm_{a_I}(A)
\coloneqq
\det R(a_I)_*A
\otimes
\det R(a_I)_*\mathcal O_{\mathcal C_I}^{-1}.
$$
This construction is compatible with base change and induces the usual norm morphism on the relative Picard functor; it does not require $a_I$ to be flat. The resulting relative norm and $\alpha$ agree on the dense smooth-support locus. Since $I$ is reduced and $\Pic^0(E)$ is separated, they agree everywhere. Consequently,
if $n\colon P\to h^{-1}(D_0)$ is the normalization morphism, then, up to a fixed
translation of the target,
$
\alpha\circ n=\Nm_{\widetilde a}\circ\rho.
$
Indeed, let $F$ be a line bundle on $D_0$ and set $A\coloneqq\nu^*F$. Writing
$x$ for the node, the normalization sequences are
$$
0\longrightarrow F
\longrightarrow\nu_*A
\longrightarrow k_x
\longrightarrow0
$$
and
$$
0\longrightarrow\mathcal O_{D_0}
\longrightarrow\nu_*\mathcal O_{\widetilde D_0}
\longrightarrow k_x
\longrightarrow0.
$$
Applying determinant of cohomology along $a|_{D_0}$, the two factors coming
from $k_x$ cancel. Hence, up to the fixed translation of the target,
$$
\Nm_{a|_{D_0}}(F)=\Nm_{\widetilde a}(A).
$$
Thus $\alpha\circ n=\Nm_{\widetilde a}\circ\rho$ on the dense line-bundle
locus, and hence everywhere because both sides are morphisms and the target
is separated. Notice also that
$\widetilde a(p)=\widetilde a(q)$, so
$$
\Nm_{\widetilde a}\bigl(\mathcal O_{\widetilde D_0}(p-q)\bigr)
\simeq\mathcal O_E,
$$
and the boundary identification preserves every norm fiber.

Every nonempty fiber of $\Nm_{\widetilde a}$ is a torsor under its connected
kernel and is therefore irreducible. Its inverse image under $\rho$ is an
irreducible $\ps^1$-bundle, whose image under $n$ is the corresponding fiber of
$\restrict{\alpha}{h^{-1}(D_0)}$. The latter is thus irreducible.
\end{proof}

\begin{lemma}\label{lem:boundary-purity-F}
Let $F_{\mathrm{gen}}$ and $F_{\mathrm{gen},U}$ be as in
\cref{lem:relative-albanese-kernel}. Then
$$
0\longrightarrow \cc(-1)
\longrightarrow H^2(F_{\mathrm{gen}},\cc)
\longrightarrow W_2H^2(F_{\mathrm{gen},U},\cc)
\longrightarrow 0.
$$
\end{lemma}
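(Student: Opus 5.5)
The plan is to run the argument of \cref{lem:universal-curve-cohomology-bielliptic} again, now for the pair $F_{\mathrm{gen},U}\subset F_{\mathrm{gen}}$, and to finish with the purity statement \cref{lem:low-degree-purity-one-boundary}. That lemma gives exactly the displayed sequence once three facts are known about the complement $F_{\mathrm{gen}}\setminus F_{\mathrm{gen},U}$: it has a unique irreducible divisorial component, this component is $\qq$-Cartier, and $F_{\mathrm{gen}}$ is a normal projective variety with rational singularities. Here the Albanese fiber $F_{\mathrm{gen}}$ replaces the smooth universal curve $\uc$. By \cref{cor:alpha-is-albanese-up-to-isogeny}, $F_{\mathrm{gen}}$ is normal, irreducible and projective, with terminal l.c.i. singularities. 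In particular it has rational singularities, so its $H^2$ is pure of weight two and the purity lemma should still apply.

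To see the boundary, write $F_{\mathrm{gen}}\setminus F_{\mathrm{gen},U}=(h|_{F_{\mathrm{gen}}})^{-1}(\Delta)$. Since $\dim F_{\mathrm{gen}}=\dim M(L)-1$, a divisorial component must be a component of dimension $\dim F_{\mathrm{gen}}-1=\dim\Delta+g-1$.
\begin{itemize}
\item Over $\Delta\setminus\Delta^\circ$, which has codimension at least two in $|L|$ by \cref{lem:nodal-compactified-norm-fiber}, \cref{ass:comp_jacobian} bounds the fibers of $h$ by dimension $g$. Intersecting with a fiber of $\alpha$ drops this to $g-1$, because $\alpha$ is surjective on every irreducible component of every support fiber by \cref{lem:alpha-construction}. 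So this part has codimension at least two in $F_{\mathrm{gen}}$.
\item Over $\Delta^\circ$, \cref{lem:nodal-compactified-norm-fiber} shows that every nonempty fiber of $\alpha|_{h^{-1}(D_0)}$ is irreducible of dimension $g-1$.
\end{itemize}
It follows that $h^{-1}(\Delta^\circ)\cap F_{\mathrm{gen}}$ fibers over the irreducible variety $\Delta^\circ$. Its fibers are the pieces of $\alpha^{-1}(\xi)\cap h^{-1}(D_0)$ lying in $F_{\mathrm{gen}}$. I expect this to be the main obstacle. A priori $\alpha^{-1}(\xi)$ can have several connected components, one for each point of $\varphi^{-1}(\xi)$, and $\alpha^{-1}(\xi)\cap h^{-1}(D_0)$ is a single irreducible fiber. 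Since it is irreducible, it lies in a single Albanese fiber. As $D_0$ varies, the component containing it is locally constant, and it must agree with $F_{\mathrm{gen}}$ by connectedness of $\Delta^\circ$ together with specialization from $U$. A dimension count then shows that the boundary divisor $D_F$ is irreducible: it is generically a fibration over $\Delta^\circ$ with irreducible equidimensional fibers, so one checks there is a unique dominating component. A flatness or equidimensionality argument handles this.

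For the $\qq$-Cartier property, argue as in \cref{lem:universal-curve-cohomology-bielliptic}. The pullback $(h|_{F_{\mathrm{gen}}})^*\Delta$ is an effective Cartier divisor on $F_{\mathrm{gen}}$, and its only divisorial support is $D_F$. Hence it equals $m D_F$ as Weil divisors for some $m\geqslant 1$, and therefore $D_F$ is $\qq$-Cartier. Finally, \cref{lem:low-degree-purity-one-boundary} yields $0\to\cc(-1)\to H^2(F_{\mathrm{gen}},\cc)\to W_2H^2(F_{\mathrm{gen},U},\cc)\to 0$. The class of $D_F$ is nonzero because it is $\qq$-proportional to the pullback of an ample class, and this is what makes the first map injective.
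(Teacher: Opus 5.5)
Your proposal is correct and follows essentially the paper's proof. It uses the same split of the boundary over $\Delta^\circ$ and $\Delta\setminus\Delta^\circ$, the same inputs (\cref{lem:nodal-compactified-norm-fiber}, \cref{ass:comp_jacobian} and the componentwise surjectivity of $\alpha$), the same $\qq$-Cartier argument via $(h|_{F_{\mathrm{gen}}})^*\Delta=mD$, and then \cref{lem:low-degree-purity-one-boundary}. The one difference is how each nodal norm fiber is placed inside $F_{\mathrm{gen}}$: you use connectedness (the irreducible fiber meets the open-and-closed $F_{\mathrm{gen}}$, which surjects onto $|L|$), whereas the paper uses a fiber-dimension count against the irreducible $(g-1)$-dimensional norm fiber.
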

\begin{proof}
Let $\Delta^\circ\subset\Delta$ be the open subset fixed in
\cref{lem:nodal-compactified-norm-fiber}.

We first analyze the boundary over $\Delta^\circ$. By
\cref{lem:nodal-compactified-norm-fiber}, after shrinking $\Delta^\circ$ without
changing the codimension condition above, the compactified norm fiber over every
$D_0\in\Delta^\circ$ is irreducible of dimension $g(L)-1$. Let $\xi$ be the
general point of $\Pic^0(E)$ fixed in \cref{lem:relative-albanese-kernel}.
Over $U$, the space
$\alpha^{-1}(\xi)\cap M(L)_U=F_{\mathrm{gen},U}$ is irreducible by that
lemma; hence $F_{\mathrm{gen}}$ is the unique component of
$\alpha^{-1}(\xi)$ dominating $|L|$. The fiber dimension theorem shows that
$F_{\mathrm{gen}}\cap h^{-1}(D_0)$ has dimension at least $g(L)-1$. It is a closed subset of
the irreducible compactified norm fiber of that dimension, and therefore equals
the whole fiber. Since $\Delta^\circ$ is irreducible, the geometric generic
fiber of
$$
F_{\mathrm{gen}}\cap h^{-1}(\Delta^\circ)\longrightarrow\Delta^\circ
$$
is irreducible. After shrinking $\Delta^\circ$ once more, this total space is
irreducible. Let
$$
D_{\mathrm{gen}}\coloneqq
\overline{F_{\mathrm{gen}}\cap h^{-1}(\Delta^\circ)}
\subset F_{\mathrm{gen}}.
$$
Then $D_{\mathrm{gen}}$ is an irreducible divisorial component of
$F_{\mathrm{gen}}\setminus F_{\mathrm{gen},U}$.

It remains to prove that the boundary over
$W\coloneqq \Delta\setminus\Delta^\circ$ has codimension at least two in
$F_{\mathrm{gen}}$. Put $Y_W\coloneqq h^{-1}(W)\subset M(L)$.
By \cref{ass:comp_jacobian}, every support fiber has dimension at most \(g(L)\). Hence
$$
\dim Y_W\leqslant \dim W+g(L).
$$
Since
$
\operatorname{codim}_{|L|}W\geqslant 2,
$
we get
$$
\dim Y_W\leqslant \dim |L|-2+g(L).
$$
Now consider the restriction $\restrict{\alpha}{Y_W}:Y_W\longrightarrow \Pic^0(E)$.
By the tensor-equivariance statement of \cref{lem:norm-map-albanese}, the restriction of the extended norm morphism $\alpha$ to every nonempty support fiber over $W$ is surjective onto $\Pic^0(E)$. Therefore, if \(Y_W\neq\varnothing\), the image of \(Y_W\) under \(\alpha\) is one-dimensional.
For the general point $\xi$ fixed in
\cref{lem:relative-albanese-kernel}, the general-fiber dimension estimate
applied to $\restrict{\alpha}{Y_W}:Y_W\to \Pic^0(E)$ gives
$$
\dim(F_{\mathrm{gen}}\cap Y_W)
\leqslant
\dim Y_W-1
\leqslant
\dim |L|-2+g(L)-1.
$$
On the other hand, $\dim F_{\mathrm{gen}}=\dim |L|+g(L)-1$.
Therefore
$$
\operatorname{codim}_{F_{\mathrm{gen}}}
\bigl(F_{\mathrm{gen}}\cap h^{-1}(W)\bigr)
\geqslant 2.
$$
Thus
$$
F_{\mathrm{gen}}\setminus F_{\mathrm{gen},U}=D_{\mathrm{gen}}\cup Z,
$$
where $D_{\mathrm{gen}}$ is irreducible of codimension one and
$\operatorname{codim}_{F_{\mathrm{gen}}} Z\geqslant 2$.

By \cref{cor:alpha-is-albanese-up-to-isogeny}, $F_{\mathrm{gen}}$ is normal
and has terminal l.c.i. singularities.

The discriminant $\Delta\subset |L|$ is an effective Cartier divisor, and
$F_{\mathrm{gen}}$ is not contained in its inverse image. Hence
$$
\restrict{h}{F_{\mathrm{gen}}}^{*}\Delta
$$
is an effective Cartier divisor on $F_{\mathrm{gen}}$. Its unique divisorial
support is $D_{\mathrm{gen}}$, so, as Weil divisors,
$$
\restrict{h}{F_{\mathrm{gen}}}^{*}\Delta
=m_{\mathrm{gen}}D_{\mathrm{gen}}
$$
for some integer $m_{\mathrm{gen}}\geqslant 1$. In particular,
$D_{\mathrm{gen}}$ is $\qq$-Cartier; no assertion about $m_{\mathrm{gen}}$
is needed. Therefore
\cref{lem:low-degree-purity-one-boundary} applies and gives the desired exact
sequence.

\end{proof}

\begin{lemma}\label{lem:prym-comparison}
There is an equality
$$
\dim W_2H^2(F_{\mathrm{gen},U},\cc)
=
\dim W_2H^2(\uc_U,\cc).
$$
Consequently,
$
\dim W_2H^2(F_{\mathrm{gen},U},\cc)=2.
$
\end{lemma}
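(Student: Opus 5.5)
The plan is to compute both sides through the Leray spectral sequences of the two smooth projective families over the same base $U$, and to observe that their only possibly different contributions are governed by the same variation of Hodge structure $\mathbb V^\vee$.

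First I would run the Leray spectral sequence for $f_U\colon F_{\mathrm{gen},U}\to U$ and for $q_U\colon\uc_U\to U$. Both morphisms are smooth and projective by \cref{lem:relative-albanese-kernel}, so Deligne's decomposition theorem makes the spectral sequences degenerate at $E_2$. By Arapura's and Saito's results, the Leray filtration on $H^2$ of the total space is a filtration by mixed Hodge structures. Strictness of morphisms with respect to $W$ then gives
$$
\dim W_2H^2(F_{\mathrm{gen},U},\cc)=\sum_{i+j=2}\dim W_2H^i\bigl(U,R^jf_{U,*}\cc\bigr),
$$
and the analogous identity for $\uc_U$.

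Next I would compute the individual terms.
\begin{enumerate}
\item For $H^2(U)$: since $\Delta$ is an irreducible hypersurface of positive degree, the Gysin sequence for $U=|L|\setminus\Delta$ shows that $H^2(|L|)\to H^2(U)$ is zero. Hence $W_2H^2(U,\cc)=0$ in both cases. Likewise $H^1(U,\cc)=0$, as already used in \cref{lem:b1-upper-bound}.
\item For $H^0$ of the $R^2$ terms: these are pure of weight $2$. By \cref{lem:variable-monodromy-irreducible} and \cref{lem:relative-albanese-kernel}, $H^0(U,R^2f_{U,*}\cc)=H^0(U,\bigwedge^2\mathbb V^\vee)\cong\cc$. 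For the curves, $R^2q_{U,*}\cc=\cc_U$, which also gives $\cc$.
\item For the $H^1(U,R^1)$ terms: on the Prym side $R^1f_{U,*}\cc\simeq\mathbb V^\vee$ directly. On the curve side I would take the long exact cohomology sequence of
$$
0\to H^1(S,\cc)_U\to R^1q_{U,*}\cc\to\mathbb V^\vee\to0
$$
from \cref{lem:invariant-cycles-bielliptic}. The inputs are $H^1(U,H^1(S,\cc)_U)=H^1(U,\cc)\otimes H^1(S,\cc)=0$, $H^0(U,\mathbb V^\vee)=0$ by \cref{lem:variable-monodromy-irreducible}, and $H^2(U,H^1(S)_U)=H^2(U,\cc)\otimes H^1(S,\cc)$. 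Since $W_2H^2(U)=0$, the relevant weight pieces agree, and I get $W_2H^1(U,R^1q_{U,*}\cc)\cong W_2H^1(U,\mathbb V^\vee)$.
\end{enumerate}
Summing, both sides equal $1+\dim W_2H^1(U,\mathbb V^\vee)$, which proves the equality. \cref{lem:universal-curve-cohomology-bielliptic} then gives the value $2$, and as a byproduct $\dim W_2H^1(U,\mathbb V^\vee)=1$.

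The main obstacle is the last identification in item 3, namely that the two occurrences of $\mathbb V^\vee$ carry the same variation of Hodge structure and not merely the same local system, since the mixed Hodge structure on $H^1(U,-)$ depends on the variation. I would deduce this from \cref{lem:relative-albanese-kernel}: the isomorphism $H^1(P_D)\simeq\mathbb V_D^\vee$ is induced by the morphism of abelian schemes $\mathcal P\hookrightarrow\Pic^0(\uc_U/U)$ together with the Abel--Jacobi identification. These maps are morphisms of variations of Hodge structure, at worst up to isogeny, which does not affect weights over $\cc$. I would also check that the exact sequence above is a sequence of variations of Hodge structure, so that the long exact sequence in step 3 is one of mixed Hodge structures. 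This is what makes the weight comparison legitimate.
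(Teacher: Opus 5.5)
Your proposal is correct and follows the paper's proof. Both arguments use the $E_2$-degenerate Leray spectral sequences for $f_U$ and $q_U$, strictness of the weight filtration to compute $W_2H^2$ from the three terms with $p+q=2$, and a term-by-term comparison based on $H^1(U,\cc)=0$, $H^0(U,\bigwedge^2\mathbb V^\vee)\cong\cc$ and $R^2q_{U,*}\cc\cong\cc_U$.

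The differences are minor. You use the long exact sequence where the paper uses the semisimple splitting $R^1q_{U,*}\cc\cong H^1(S,\cc)_U\oplus\mathbb V^\vee$. Your extra observation $W_2H^2(U,\cc)=0$ is correct but not needed, since the two $(2,0)$-terms coincide anyway. Your explicit check that the two copies of $\mathbb V^\vee$ agree as variations of Hodge structure fills in a point the paper only asserts.
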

\begin{proof}
Over $U$, the morphism
$
f_U\colon F_{\mathrm{gen},U}\to U
$
is a smooth proper family whose fiber over $D\in U$ is a torsor under
$$
P_D=\ker\left(\Pic^0(D)\to\Pic^0(E)\right).
$$
Therefore
$
R^1f_{U,*}\cc\cong \mathbb V^\vee,
\,
R^2f_{U,*}\cc\cong \bigwedge^2\mathbb V^\vee.
$

For the universal curve
$$
q_U\colon \uc_U\to U,
$$
the invariant-cycle lemma and semisimplicity of polarizable variations of Hodge
structure give a splitting
$$
R^1q_{U,*}\cc
\cong H^1(S,\cc)_U\oplus \mathbb V^\vee.
$$
The Leray spectral sequences for the smooth proper morphisms $f_U$ and $q_U$
degenerate at $E_2$ by Deligne's degeneration theorem for smooth proper maps
\cite[Theorem~4.1.1(i)]{Del71}. We compare their three terms in total degree two. The $(2,0)$-terms
are both $H^2(U,\cc)$. Since $\Delta$ is irreducible, $H^1(U,\cc)=0$, and hence
$$
H^1\left(U,H^1(S,\cc)_U\right)=0.
$$
Thus the splitting above identifies the $(1,1)$-terms:
$$
H^1(U,R^1q_{U,*}\cc)
\cong
H^1(U,\mathbb V^\vee)
=
H^1(U,R^1f_{U,*}\cc).
$$
For the $(0,2)$-terms, one has
$
R^2q_{U,*}\cc\cong \cc_U
$
because the fibers of $q_U$ are smooth curves. Thus
$H^0(U,R^2q_{U,*}\cc)\cong \cc$. On the other hand, by
\cref{lem:variable-monodromy-irreducible},
$$
H^0\left(U,R^2f_{U,*}\cc\right)
=
H^0\left(U,\bigwedge^2\mathbb V^\vee\right)
\cong \cc.
$$
These one-dimensional spaces are pure of weight two. More generally, for
each of the two Leray spectral sequences,
$$
E_2^{p,q}=H^p(U,R^qf_{U,*}\cc)
$$
has weights at least $p+q$, and similarly for $q_U$. Since the spectral
sequences degenerate at $E_2$ in the category of mixed Hodge structures,
strictness of the weight filtration computes $W_2H^2$ term by term from the
three terms with $p+q=2$. The comparisons above are compatible with mixed
Hodge structures and identify their weight-two parts. Hence
$$
\dim W_2H^2(F_{\mathrm{gen},U},\cc)
=
\dim W_2H^2(\uc_U,\cc).
$$
By \cref{lem:universal-curve-cohomology-bielliptic},
$
\dim W_2H^2(\uc_U,\cc)=2
$
and therefore
$
\dim W_2H^2(F_{\mathrm{gen},U},\cc)=2.
$
\end{proof}

\begin{proof}[Proof of \cref{thm:b2-albanese-fiber}]
Since $\Delta$ is irreducible, $H^1(U,\cc)=0$. Moreover,
\cref{lem:relative-albanese-kernel,lem:variable-monodromy-irreducible} give
$$
R^1f_{U,*}\cc\simeq\mathbb V^\vee,
\qquad
H^0(U,\mathbb V^\vee)=0.
$$
The Leray spectral sequence therefore gives
$$
H^1(F_{\mathrm{gen},U},\cc)=0.
$$
By \cref{lem:low-degree-purity-one-boundary}, restriction induces an
isomorphism
$$
H^1(F_{\mathrm{gen}},\cc)
\simeq H^1(F_{\mathrm{gen},U},\cc).
$$
Thus
$
H^1(F_{\mathrm{gen}},\cc)=0.
$

For the second Betti number, \cref{lem:boundary-purity-F} gives an exact sequence
$$
0\longrightarrow \cc(-1)
\longrightarrow H^2(F_{\mathrm{gen}},\cc)
\longrightarrow W_2H^2(F_{\mathrm{gen},U},\cc)
\longrightarrow 0.
$$
Therefore
$$
b_2(F_{\mathrm{gen}})
=1+\dim W_2H^2(F_{\mathrm{gen},U},\cc).
$$
By \cref{lem:prym-comparison},
$\dim W_2H^2(F_{\mathrm{gen},U},\cc)=2$. Consequently,
$$
b_1(F_{\mathrm{gen}})=0,
\qquad
b_2(F_{\mathrm{gen}})=3.
$$
By \cref{cor:alpha-is-albanese-up-to-isogeny}, every Albanese fiber of
$M(L)$ is isomorphic to $F_{\mathrm{gen}}$. Hence the same equalities hold
for the Albanese fiber $F$ appearing in the statement.
\end{proof}

\subsection{First two Betti numbers of \texorpdfstring{$M$ and $M(L)$}{M and M(L)}}\label{subsec:betti-rank-zero}
This subsection completes the rank-zero calculation. We first compute the
Betti numbers of $M$ and then recover those of $M(L)$ from its description as
an elliptic slice of the Albanese fibration of $M$.
\begin{theorem}\label{thm:b1-b2-moduli}
Let $(S,H)$ be a polarized bielliptic surface and let $\mv=(0,c_1(L),\chi)$ be a primitive Mukai vector with $\nt(L)\geqslant 3$. Assume that $H$ is $\mv$-generic.
Then
$$
b_1(M(L))=2,
\qquad
b_2(M(L))=4,
$$
and
$$
b_1(M)=4,
\qquad
b_2(M)=9.
$$
\end{theorem}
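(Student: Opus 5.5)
The plan is to compute $H^2(M(L),\cc)$ by the same boundary-purity-plus-Leray method used for the Albanese fiber in \cref{thm:b2-albanese-fiber}, and then to pass from $M(L)$ to $M$ through the finite étale cover of \cref{lem:translation-base-change}. The equality $b_1(M(L))=2$ is already \cref{thm:first-betti-fixed-det}.

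\emph{Step 1: $b_2(M(L))$.} First I would check that the boundary $M(L)\setminus M(L)_U$ has a unique divisorial component. It is the closure of $h^{-1}(\Delta^\circ)$, which is irreducible because $\Delta$ is irreducible (\cref{lem:bertini_results}) and nodal compactified Jacobians are irreducible. The preimage of $\Delta\setminus\Delta^\circ$ has codimension $\geqslant 2$ by \cref{ass:comp_jacobian}. This divisor is $\qq$-Cartier, since it is supported on $h^*\Delta$, and $M(L)$ is terminal l.c.i.\ (\cref{cor:rank-zero-terminal-lci}). Hence \cref{lem:low-degree-purity-one-boundary} gives
$$
b_2(M(L))=1+\dim W_2H^2(M(L)_U,\cc).
$$
Next I would run the Leray spectral sequence for $h_U\colon M(L)_U\to U$, which degenerates by Deligne. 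Its fibers are torsors under $\Pic^0(D)$, so $R^1h_{U,*}\cc\cong R^1q_{U,*}\cc\cong H^1(S,\cc)_U\oplus\mathbb V^\vee$, using \cref{lem:invariant-cycles-bielliptic} and semisimplicity. The weight-two contributions are the following.
\begin{itemize}
\item The $(2,0)$ term is the weight-two part of $H^2(U)$. This is exactly the term shared with $\uc_U$.
\item For the $(1,1)$ term, $H^1(U,H^1(S)_U)=0$ since $H^1(U)=0$, so this term is $H^1(U,\mathbb V^\vee)$, again the same as for $\uc_U$.
\item For the $(0,2)$ term, $\bigwedge^2R^1h_{U,*}\cc=\bigwedge^2H^1(S)\oplus\bigl(H^1(S)\otimes\mathbb V^\vee\bigr)\oplus\bigwedge^2\mathbb V^\vee$. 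Its invariants have dimension $1+0+1=2$, by \cref{lem:variable-monodromy-irreducible}: $\mathbb V^\vee$ has no invariants and is irreducible, so $(H^1(S)\otimes\mathbb V^\vee)^{\pi_1}=0$. For $\uc_U$ this term has dimension $1$.
\end{itemize}
Following the strictness argument of \cref{lem:prym-comparison}, this gives $\dim W_2H^2(M(L)_U)=\dim W_2H^2(\uc_U)+1=3$, hence $b_2(M(L))=4$. For the $(0,2)$ term one must check that the extra class $\bigwedge^2H^1(S)$ has weight two and survives; it does, since it is pulled back from the surjection $\alpha$ onto $\Pic^0(E)$. As a sanity check, $F$ has invariant part $1$ in the $(0,2)$ slot and $b_2(F)=3$, which is consistent.

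\emph{Step 2: $M$.} By \cref{lem:translation-base-change}, $A\times M(L)\cong A\times_P M$, and this maps finite étale onto $M$ with Galois group $K=\ker\rho_L$. The group $K$ acts by translation on $A$ and by $t_a^*$ on $M(L)$. Therefore
$$
H^*(M,\qq)=\bigl(H^*(A)\otimes H^*(M(L))\bigr)^K.
$$
The main obstacle is to show that $K$ acts trivially on $H^1(M(L))$ and $H^2(M(L))$. I would argue this via the descriptions just obtained. Every class above comes from $H^*(S)$, from $H^*(|L|)$, from the classes of boundary divisors, or from the local systems $\mathbb V$ and $H^1(S)_U$ attached to the universal curve. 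The translations $t_a$ act trivially on $H^*(S)$ because $A$ is connected. They preserve $|L|$ and $\Delta$ and act on $U$ compatibly with a homotopy to the identity through the family $t_{a'}$. So each Leray piece, and the boundary class, is $K$-fixed. On the Leray pieces the argument is cleanest by checking compatibility of $K$ with the isomorphisms used. If it gets stuck, I would instead identify $H^1(M(L))=\alpha^*H^1(\Pic^0(E))$ and use that $t_a$ commutes with $\alpha$ up to translation.

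Granting the trivial action, the Künneth formula gives
$$
b_1(M)=b_1(A)+b_1(M(L))=2+2=4,
$$
$$
b_2(M)=b_2(M(L))+b_1(A)\,b_1(M(L))+b_2(A)=4+4+1=9.
$$
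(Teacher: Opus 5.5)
Your Step~1 is correct, and it reverses the paper's order of argument.
\begin{itemize}
\item You compute $b_2(M(L))$ directly, by running the argument of \cref{lem:open-b2-full-M} over $U\subset|L|$. This gives one extra weight-two class $\bigwedge^2H^1(S)$ in the $(0,2)$-term on top of $\dim W_2H^2(\uc_U,\cc)=2$.
\item The paper instead first computes $b_1(M)=4$ and $b_2(M)=9$ over the relative linear system $\mathbb B\to P$, whose universal curve $\mathfrak C$ is a projective bundle over $S\times P$.
\item Only afterwards does the paper get $b_2(M(L))=4$ from $b_2(F)=3$, via \cref{lem:determinant-albanese-slices,lem:betti-elliptic-albanese-slice}. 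There the triviality of the monodromy on $H^2(F)$ is forced by the count $9=6+3$, so no finite group action ever has to be controlled by hand.
\end{itemize}

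In your route, that control is exactly the input of Step~2, and the justification you give does not work. For $a'\notin K$, the translation $t_{a'}$ moves $|L|$ to a different linear system, so it preserves neither $U$ nor $M(L)$. The path $a'\mapsto t_{a'}^*$ from the identity to $t_{-k}^*$ therefore consists of maps $M(L)\to M$, not self-maps of $M(L)$. It only shows that $K$ fixes the image of the restriction $H^2(M,\cc)\to H^2(M(L),\cc)$. Surjectivity of that restriction is equivalent to what you are trying to prove, so the argument is circular.

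Likewise, triviality of $t_k^*$ on $H^*(S)$ says nothing by itself about the pieces built from $\mathbb V$. An example is the $(1,1)$-term $W_2H^1(U,\mathbb V^\vee)$, on which $k$ acts through a nontrivial automorphism of $U$ together with a lift to the local system. Your fallback for $H^1$ is fine: $H^1(M(L))=\alpha^*H^1(\Pic^0(E))$, and $\alpha\circ t_{-k}^*$ differs from $\alpha$ by a translation of $\Pic^0(E)$.

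The gap can be closed by actually carrying out the compatibility check. Put $\sigma\coloneqq t_{-k}^*$ on $M(L)$. It covers the projective-linear automorphism $\bar\sigma\colon D\mapsto t_k(D)$ of $|L|$, which preserves $\Delta$. Check each piece of the filtration of $H^2(M(L),\cc)$:
\begin{itemize}
\item $\sigma$ fixes the unique divisorial component of $h^{-1}(\Delta)$, and hence the line $\cc(-1)\subset H^2(M(L),\cc)$.
\item The $(2,0)$-piece $W_2H^2(U,\cc)$ vanishes, since it is the image of $H^2(|L|)$, which is spanned by a multiple of $[\Delta]$.
\item On the $(0,2)$-piece, $\sigma$ fixes the two canonical flat sections, namely the theta class and $\bigwedge^2 i_D^*H^1(S)$.
\item By naturality of \cref{lem:A1}, the $(1,1)$-piece is identified $\sigma$-equivariantly with the corresponding Leray graded piece of $W_2H^2(\uc_U,\cc)$. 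That is a subquotient of $H^2(\uc,\cc)=p^*H^2(S,\cc)\oplus\cc\,\xi$, with $\xi$ the pulled-back hyperplane class of $|L|$, on which $\bar\sigma\times t_k$ acts trivially.
\end{itemize}
So $\sigma^*$ is unipotent on $H^2(M(L),\cc)$, and since $\sigma$ has finite order it is the identity.

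With this added, your transfer formula and K\"unneth count ($2+2=4$ and $4+2\cdot 2+1=9$) are valid. As written, Step~2 assumes its key input rather than proving it.
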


We first record two consequences of the Albanese-fibration structure that will
also be used in the next subsection.

\begin{lemma}\label{lem:determinant-albanese-slices}
Let $M$ be a normal irreducible projective variety with terminal singularities
and torsion canonical bundle, and assume that
$
b_1(M)=4.
$
Let
$$
a_M:M\longrightarrow A\coloneqq\Alb(M)
$$
be its Albanese morphism, and let
$
d:M\longrightarrow P
$
be a surjective morphism to an elliptic curve which, after choosing origins
and translating the target, factors as
$
d=q\circ a_M
$
for a surjective homomorphism $q:A\to P$. For $L\in P$, let $C$ be a
connected component of $q^{-1}(L)$ and set
$
N_C\coloneqq a_M^{-1}(C).
$
Then:
\begin{enumerate}
    \item $A$ is an abelian surface and $C$ is an elliptic curve;
    \item the varieties $N_C$, as $C$ runs through the connected components
    of $q^{-1}(L)$, are precisely the irreducible components of $d^{-1}(L)$;
    \item the restriction
    $$
    a_M|_{N_C}:N_C\longrightarrow C
    $$
    is a locally constant analytic fibration whose fibers are Albanese fibers
    of $M$.
\end{enumerate}
If, in addition, $b_1(N_C)=2$, then, after identifying $C$ with
$\Alb(N_C)$, the morphism $a_M|_{N_C}$ is the Albanese morphism of $N_C$.
\end{lemma}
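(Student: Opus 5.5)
The plan is to derive everything from the Wang structure theorem, \cite[Theorem~A]{Wan20}, in the form already used in \cref{cor:alpha-is-albanese-up-to-isogeny}. Since $M$ is normal with terminal (hence klt) singularities and torsion canonical bundle, $-K_M$ is nef. Therefore $a_M$ is surjective and is a locally constant analytic fibration with connected fibers. In particular there is a finite étale cover $\widetilde A\to A$ with $M\times_A\widetilde A\cong\widetilde A\times F$ analytically, where $F$ is a fiber, and the fibers of $a_M$ are exactly the Albanese fibers of $M$.

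For (1), I will use $b_1(M)=4$ to get $\dim A=2$. The homomorphism $q$ is surjective onto the elliptic curve $P$, so its kernel is a one-dimensional algebraic subgroup. Hence $q^{-1}(L)$ is a finite disjoint union of translates of the identity component $\ker(q)^0$, each an elliptic curve. This handles (1).

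For (2), the starting point is $d^{-1}(L)=a_M^{-1}(q^{-1}(L))=\bigsqcup_C a_M^{-1}(C)$, a disjoint union of closed and open pieces. It then suffices to show each $N_C$ is irreducible. Restricting the local product structure over $C$ shows that $N_C\to C$ is locally analytically $C\times F$. Since $F$ is normal and irreducible by \cref{cor:alpha-is-albanese-up-to-isogeny}-type reasoning (the local product structure transfers normality), $N_C$ is normal. It is also connected, having connected base and connected fibers, so it is irreducible. This gives (2), and (3) is immediate from the same restriction. I expect the most delicate point here to be the normality of $F$ and $N_C$. I would deduce it from normality of $M$ through the smooth-locally product structure: a normal product $\widetilde C\times F$ forces $F$ normal, and then $C\times F$ is normal.

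For the final assertion, suppose $b_1(N_C)=2$. By the universal property, $a_M|_{N_C}$ factors as $N_C\to\Alb(N_C)\xrightarrow{\psi}C$, with $\psi$ a surjective homomorphism (after translation) between elliptic curves, hence an isogeny. It remains to show $\psi$ is an isomorphism. Since $a_M|_{N_C}$ has connected fibers, I would pass to the Stein factorization. Take $F'$ to be a fiber of $\operatorname{alb}_{N_C}$; it maps onto a point. Then a fiber of $a_M|_{N_C}$ is the disjoint union of the fibers of $\operatorname{alb}_{N_C}$ over $\psi^{-1}(c)$. Connectedness forces $\deg\psi=1$. The main obstacle is ensuring that $\operatorname{alb}_{N_C}$ is surjective so that these fibers are nonempty; this follows because its image generates $\Alb(N_C)$ and maps onto $C$, hence is the whole one-dimensional $\Alb(N_C)$.
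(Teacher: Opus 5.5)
Your proposal is correct and follows essentially the same route as the paper. Both use Wang's local product structure for $a_M$, the decomposition $d^{-1}(L)=\bigsqcup_C N_C$ into open and closed pieces that are each normal and connected and hence irreducible, and the factorization of $a_M|_{N_C}$ through an isogeny $\Alb(N_C)\to C$ whose degree is forced to be $1$ by connectedness of the fibers of $a_M$.

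The differences are minor. In the last step you use only surjectivity of $\operatorname{alb}_{N_C}$ rather than Wang's theorem for $N_C$. You should still say why $b_1(M)=4$ and $b_1(N_C)=2$ give $\dim A=2$ and $\dim\Alb(N_C)=1$. The paper uses rational singularities; for $N_C$, the inequality $2\dim\Alb(N_C)\le b_1(N_C)$ together with the surjection onto $C$ also suffices. Finally, your finite \'etale trivializing cover does not follow from Wang's theorem alone, since it needs finite monodromy (as in Kawamata's theorem for $K\equiv 0$), though you never actually use it.
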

\begin{proof}
Since terminal singularities are rational, birational invariance of first
cohomology for varieties with rational singularities gives
$$
\dim\Alb(M)=\frac{1}{2}b_1(M)=2.
$$
See \cite[Theorem~5.22]{KollarMori} and
\cite[Lemma~2.1]{BakkerLehnTorelli}. Thus $A$ is an abelian surface. Since
$d=q\circ a_M$ is surjective, so is $q$. Therefore every connected component
of $q^{-1}(L)$ is a translate of the elliptic curve $\ker(q)^0$.

The variety $M$ is klt and $-K_M$ is nef, so \cite[Theorem~A]{Wan20} shows
that $a_M$ is a locally constant analytic fibration with connected fibers.
Thus each $N_C\to C$ has connected base and fiber and is normal by the local
product structure; hence it is irreducible. Moreover,
$$
d^{-1}(L)
=
a_M^{-1}\bigl(q^{-1}(L)\bigr)
=
\coprod_{C'\in\pi_0(q^{-1}(L))}N_{C'}.
$$
Thus the $N_C$ are exactly the irreducible components of $d^{-1}(L)$.

Suppose that $b_1(N_C)=2$. The local product structure gives terminal
singularities, while the triviality of the normal bundle of $C$ in $A$ and
adjunction show that $K_{N_C}$ is torsion. Hence $\Alb(N_C)$ is an elliptic
curve, and the universal property gives a factorization
$$
N_C\xrightarrow{\operatorname{alb}_{N_C}}\Alb(N_C)
\xrightarrow{\varphi}C.
$$
The homomorphism $\varphi$ is an isogeny. Both Albanese fibrations have
connected fibers by \cite[Theorem~A]{Wan20}; therefore $\deg\varphi=1$, for
otherwise a fiber of $a_M|_{N_C}$ would be a disjoint union of several
Albanese fibers of $N_C$. Thus $\varphi$ is an isomorphism.
\end{proof}

\begin{lemma}\label{lem:betti-elliptic-albanese-slice}
Let
$
a:M\longrightarrow A
$
be a locally constant analytic fibration over an abelian surface, with
connected fiber $F$. Assume that
$$
b_1(F)=0,
\qquad
b_2(F)=3,
\qquad
b_2(M)=9.
$$
Let $C\subset A$ be an elliptic curve and set
$
N\coloneqq a^{-1}(C).
$
Then
$$
b_1(N)=2,
\qquad
b_2(N)=4.
$$
\end{lemma}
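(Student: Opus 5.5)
The plan is to compare two Leray spectral sequences with rational coefficients: the one for $a\colon M\to A$ and the one for its restriction $a|_N\colon N\to C$. Since $a$ is a locally constant analytic fibration, it is in particular a topologically locally trivial fiber bundle with fiber $F$. Hence the higher direct images $R^q a_*\qq$ are local systems on $A$ with stalk $H^q(F,\qq)$, and the monodromy factors through $\pi_1(A)$. The same holds for $N=a^{-1}(C)\to C$, whose direct images are the restrictions $(R^q a_*\qq)|_C$. From the hypotheses,
$$
R^0a_*\qq=\qq_A,
\qquad
R^1a_*\qq=0,
\qquad
\mathbb W\coloneqq R^2a_*\qq
$$
is a local system of rank $3$.

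\emph{Step 1: the monodromy on $H^2(F,\qq)$ is trivial.} In the Leray spectral sequence for $a$, the terms of total degree two are
$$
E_2^{2,0}=H^2(A,\qq)\cong\qq^6,
\qquad
E_2^{1,1}=0,
\qquad
E_2^{0,2}=H^0(A,\mathbb W)=H^2(F,\qq)^{\pi_1(A)}.
$$
Since $E_2^{p,1}=0$ for all $p$, no differential hits $E^{2,0}$, so it survives; it can only be enlarged by the surviving part of $E^{0,2}$. Therefore
$$
9=b_2(M)\leqslant 6+\dim H^2(F,\qq)^{\pi_1(A)}\leqslant 6+3.
$$
Equality forces $H^2(F,\qq)^{\pi_1(A)}=H^2(F,\qq)$, so $\mathbb W$ is the constant local system $\qq^3_A$. (As a by-product, the differential $d_3\colon E_3^{0,2}\to E_3^{3,0}$ vanishes, although we will not need this.) I expect this to be the only substantive point: it is where the hypothesis $b_2(M)=9$ enters. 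The one thing to be careful about is that the inequality uses only the shape of the $E_2$ page together with $R^1a_*\qq=0$, and no degeneration statement.

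\emph{Step 2: the Leray spectral sequence for $N\to C$.} The restriction $a|_N\colon N\to C$ is again a locally constant fibration with fiber $F$. Its direct images are $\qq_C$, $0$, and $\mathbb W|_C\cong\qq_C^3$ in degrees $0,1,2$; the last is constant by Step 1. Because $C$ is a curve, $E_2^{p,q}=0$ for $p\geqslant 3$.

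In degree one we have $E_2^{0,1}=0$, and $E_2^{1,0}=H^1(C,\qq)$ receives no differential. Hence $H^1(N,\qq)\cong H^1(C,\qq)$, and so $b_1(N)=2$.

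In degree two the terms are
$$
E_2^{2,0}=H^2(C,\qq)\cong\qq,
\qquad
E_2^{1,1}=0,
\qquad
E_2^{0,2}=H^0(C,\qq^3_C)\cong\qq^3.
$$
The possible differentials out of $E^{0,2}$ land in $E_2^{2,1}=0$ and $E_3^{3,0}=0$. The only differential that could hit $E^{2,0}$ comes from $E^{0,1}=0$. So everything survives, and $b_2(N)=1+3=4$.

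The remaining checks are routine. First, the restriction of a locally constant fibration to the submanifold $C\subset A$ is still a fiber bundle with the same fiber. Second, restricting the constant system $\mathbb W$ to $C$ gives a constant system. No Hodge-theoretic input or normality of $N$ is needed for the Betti number count.
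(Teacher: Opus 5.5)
Your proposal is correct and is essentially the paper's own proof. The paper also uses the Leray spectral sequence of $a$ together with $H^1(F)=0$ to obtain $0\to H^2(A)\to H^2(M)\to H^2(F)^{\pi_1(A)}$, and forces trivial monodromy from the count $6+3=9$. It then reads off $H^1(N)\cong H^1(C)$ and $0\to H^2(C)\to H^2(N)\to H^2(F)\to 0$ from the Leray spectral sequence of $N\to C$. Your write-up only differs in tracking the differentials explicitly, including the point that Step~1 needs just the shape of the $E_2$ page; the paper leaves this implicit.
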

\begin{proof}
Since $H^1(F,\cc)=0$, the Leray spectral sequence gives
$$
0\longrightarrow H^2(A,\cc)
\longrightarrow H^2(M,\cc)
\longrightarrow H^2(F,\cc)^{\pi_1(A)}.
$$
The dimensions $6$, $9$, and $3$ force
$$
H^2(F,\cc)^{\pi_1(A)}=H^2(F,\cc),
$$
so the monodromy on $H^2(F,\cc)$ is trivial.

Restricting to $C$, the Leray spectral sequence for $N\to C$ gives
$$
H^1(N,\cc)\simeq H^1(C,\cc)
$$
and an exact sequence
$$
0\longrightarrow H^2(C,\cc)
\longrightarrow H^2(N,\cc)
\longrightarrow H^2(F,\cc)
\longrightarrow 0.
$$
Therefore $b_1(N)=2$ and $b_2(N)=1+3=4$.
\end{proof}

Let $P\coloneqq\Pic^{c_1(\mv)}(S)$. This is a torsor under $\Pic^0(S)$, hence an elliptic curve. Choose a normalized Poincar\'e bundle $\mathcal Q$ on $S\times P$, and let
$$
p_P\colon S\times P\longrightarrow P
$$
be the projection. Since $h^0(S,N)$ is constant for $N\in P$, cohomology and base change show that
$$
\mathcal E\coloneqq p_{P,*}\mathcal Q
$$
is locally free and has fiber $H^0(S,N)$ over $N\in P$. Consequently, the relative complete linear system is
$$
\mathbb B
\coloneqq
\{(N,D)\mid N\in P,\ D\in |N|\}
\cong
\mathbb P_P(\mathcal E^\vee).
$$
Since $\nt(L)\geqslant 3$, every $N\in P$ is very ample. Thus the universal evaluation morphism on $S\times P$ is surjective and gives an exact sequence
$$
0\longrightarrow\mathcal K_0
\longrightarrow p_P^*\mathcal E
\longrightarrow\mathcal Q
\longrightarrow 0.
$$
The universal curve is therefore
$\mathfrak C\cong\mathbb P_{S\times P}(\mathcal K_0^\vee)$. In particular,
$\mathbb B\to P$ and $\mathfrak C\to S\times P$ are projective bundles. Their fibers have dimensions $g-2$ and $g-3$, respectively, where $g=g(L)$.

Let $U_M\subset\mathbb B$ be the open subset parametrizing smooth curves, let $m_U\colon M_U\to U_M$ be the smooth-support part of the determinant-unfixed component, and let $r_U\colon \mathfrak C_U\to U_M$ be the universal smooth curve.

\begin{lemma}\label{lem:relative-discriminant-irreducible}
Let $P\coloneqq\Pic^{c_1(\mv)}(S)$ and let $\mathbb B\coloneqq\{(N,D)\mid N\in P,\ D\in |N|\}$ be the relative complete linear system over \(P\). Assume that every \(N\in P\) is very ample. Let $\Delta_{\mathbb B}\subset\mathbb B$ be the relative discriminant parametrizing singular divisors. Then \(\Delta_{\mathbb B}\) is an irreducible hypersurface, and a general point of \(\Delta_{\mathbb B}\) parametrizes an irreducible curve with one ordinary node.
\end{lemma}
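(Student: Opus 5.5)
The plan is to reduce everything to the fiberwise statement of \cref{lem:bertini_results} via the projection $\mathbb B\to P$. The relative discriminant $\Delta_{\mathbb B}\to P$ has fiber over $N\in P$ equal to the discriminant $\Delta_N\subset |N|$. Every $N\in P$ has the same numerical class as $L$, so $\nt(N)\geqslant 3$. By \cref{lem:bertini_results}, each $\Delta_N$ is therefore an irreducible hypersurface in $|N|\simeq\ps^{g-2}$, and a dense open subset of it parametrizes irreducible one-nodal curves.

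For irreducibility, I would not argue fiber by fiber but use the relative conormal construction, as in the proof of \cref{lem:bertini_results}. Consider the incidence variety
$$
\mathcal I\coloneqq\{(x,N,D)\mid x\in S,\ N\in P,\ D\in|N|,\ D \text{ singular at } x\}\subset S\times\mathbb B.
$$
Over $S\times P$, its fiber over $(x,N)$ consists of the sections of $N$ vanishing to order two at $x$. Since $N$ is very ample, and in fact $1$-very ample by \cref{prop:k_very_ample_condition}, evaluation on the first-order neighbourhood of $x$ is surjective onto a rank-$3$ space. So these fibers are linear subspaces of constant codimension $3$ in $|N|$. Cohomology and base change then shows that $\mathcal I\to S\times P$ is a projective bundle, concretely the projectivization of the kernel of $p_P^*\mathcal E\to$ the bundle of first principal parts of $\mathcal Q$. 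Hence $\mathcal I$ is irreducible of dimension $3+(g-2-3)=g-2$. Its image in $\mathbb B$ is $\Delta_{\mathbb B}$, which is therefore irreducible. Because the image surjects onto $P$ with fibers the hypersurfaces $\Delta_N$, it has dimension $(g-3)+1=\dim\mathbb B-1$, so $\Delta_{\mathbb B}$ is a hypersurface.

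For the nodal statement, let $\Delta_{\mathbb B}^\circ$ be the locus of irreducible curves with exactly one ordinary node. It is open in $\Delta_{\mathbb B}$: having only ordinary double points and a single singular point are open conditions in flat families of curves, and irreducibility is open within the locus of reduced curves with a single node. It is also nonempty, since the fiber over any $N$ meets it by \cref{lem:bertini_results}. Since $\Delta_{\mathbb B}$ is irreducible, $\Delta_{\mathbb B}^\circ$ is dense, which gives the claim about a general point.

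The main obstacle is the openness claims. Irreducibility is not open in general, so I would phrase it as follows: the locus of reducible one-nodal curves is the image of the finitely many proper families $\{D_1+D_2\}$ indexed by numerical splittings, which is closed. Alternatively, it is empty by the intersection argument already given in \cref{lem:bertini_results}, since that argument is purely numerical and applies to every $N$. I would use this second route, which makes $\Delta_{\mathbb B}^\circ$ equal to the one-nodal locus, and that locus is open.
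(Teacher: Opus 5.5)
Your proposal is correct and matches the paper's proof. In both, the incidence variety $\mathcal I$ is the projectivization of the kernel of the surjective relative first-jet evaluation over $S\times P$, so $\mathcal I$ is irreducible, and $\Delta_{\mathbb B}$ is its image. The deviations are minor: you get the hypersurface property from the fiberwise statement of \cref{lem:bertini_results} rather than from $\dim\mathcal I=\dim\mathbb B-1$ together with generic injectivity of $\mathcal I\to\Delta_{\mathbb B}$. Your openness argument for the one-nodal locus, combined with the numerical exclusion of reducible one-nodal curves (valid because every $N\in P$ is numerically equivalent to $L$), makes explicit the passage from ``general in each $|N|$'' to ``general in $\Delta_{\mathbb B}$'' that the paper leaves implicit.
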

\begin{proof}
Consider the incidence variety
$$
\mathcal I
=
\{(N,D,x)\in \mathbb B\times S\mid x\in\operatorname{Sing}(D)\}.
$$
Equivalently, \(\mathcal I\) parametrizes triples \((N,D,x)\) such that the section defining \(D\) vanishes at \(x\) to order at least two.

The relative first-jet evaluation morphism on $S\times P$ is
$$
p_P^*\mathcal E
\longrightarrow
J^1_{p_P}(\mathcal Q).
$$
Since $N$ is very ample, the morphism defined by $|N|$ is a closed immersion.
It therefore separates values and all first-order tangent directions, so
$$
H^0(S,N)\longrightarrow
H^0(S,N\otimes\mathcal O_S/\mathfrak m_x^2)
$$
is surjective for every $x\in S$. Hence the relative first-jet evaluation
morphism is surjective. Its target has rank $3$, with fiber
$
H^0(S,N\otimes\mathcal O_S/\mathfrak m_x^2)
$
over $(x,N)$. Hence its kernel $\mathcal K_1$ is locally free, and
$
\mathcal I\cong\mathbb P_{S\times P}(\mathcal K_1^\vee).
$
Thus $\mathcal I$ is a projective bundle over the irreducible variety $S\times P$, and is therefore irreducible.

The image of \(\mathcal I\) under the projection
$
\mathcal I\to \mathbb B
$
is precisely the discriminant $\Delta_{\mathbb B}$. Hence
$\Delta_{\mathbb B}$ is irreducible.

Finally, the same Lefschetz-pencil argument used in
\cref{lem:bertini_results} shows that a general singular divisor in a very
ample system has exactly one ordinary double point and is otherwise smooth.
Hence a general point of $\Delta_{\mathbb B}$ parametrizes an irreducible
one-nodal curve. Since $\dim\mathcal I=\dim\mathbb B-1$ and the projection is
generically one-to-one over this locus, $\Delta_{\mathbb B}$ is a
hypersurface.
\end{proof}

\begin{lemma}\label{lem:universal-curve-full-base}
Let
$
\mathfrak C\to\mathbb B
$
be the universal curve over the determinant-unfixed relative linear system.
Then $\mathfrak C$ is a projective bundle over $S\times P$. Consequently,
$$
\dim H^2(\mathfrak C,\cc)=8 \text{\quad and \quad } \dim W_2H^2(\mathfrak C_U,\cc)=7.
$$
\end{lemma}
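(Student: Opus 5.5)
The plan mirrors \cref{lem:universal-curve-cohomology-bielliptic}, with $S$ replaced by $S\times P$ and $|L|$ by $\mathbb B$.

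\textbf{Step 1: the projective bundle structure.} The construction preceding \cref{lem:relative-discriminant-irreducible} already exhibits $\mathfrak C\cong\mathbb P_{S\times P}(\mathcal K_0^\vee)$. This is a projective bundle of relative dimension $g-3$ over $S\times P$, and $g-3\geqslant 1$ because $g-1=L^2/2\geqslant 9$. The projective bundle formula then gives
$$
H^2(\mathfrak C,\cc)\cong H^2(S\times P,\cc)\oplus H^0(S\times P,\cc).
$$
By Künneth, with $b_0(S)=1$, $b_1(S)=2$, $b_2(S)=2$ and $P$ an elliptic curve,
$$
b_2(S\times P)=b_2(S)+b_1(S)b_1(P)+b_0(S)b_2(P)=2+4+1=7.
$$
Hence $\dim H^2(\mathfrak C,\cc)=8$.

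\textbf{Step 2: the boundary divisor.} The complement $\mathfrak C\setminus\mathfrak C_U$ is the preimage of the relative discriminant $\Delta_{\mathbb B}$. By \cref{lem:relative-discriminant-irreducible}, $\Delta_{\mathbb B}$ is an irreducible hypersurface whose general point is an irreducible one-nodal curve. The preimage of the one-nodal locus is therefore the total space of a family of irreducible curves over an irreducible base, so after shrinking it is irreducible of codimension one. Let $D_{\mathfrak C}$ denote its closure.

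I need the part of the preimage lying over the non-generic locus of $\Delta_{\mathbb B}$ to have codimension at least two in $\mathfrak C$. The fibers of $\mathfrak C\to\mathbb B$ are curves, so a closed subset of $\mathbb B$ of codimension at least two pulls back to codimension at least two. Consequently $\mathfrak C\setminus\mathfrak C_U=D_{\mathfrak C}\cup Z$ with $\operatorname{codim}Z\geqslant 2$.

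As in \cref{lem:universal-curve-cohomology-bielliptic}, the pullback of the Cartier divisor $\Delta_{\mathbb B}$ equals $m D_{\mathfrak C}$ for some $m\geqslant 1$, so $D_{\mathfrak C}$ is $\qq$-Cartier.

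\textbf{Step 3: purity.} Since $\mathfrak C$ is smooth and projective, \cref{lem:low-degree-purity-one-boundary} gives the exact sequence
$$
0\to\cc(-1)\to H^2(\mathfrak C,\cc)\to W_2H^2(\mathfrak C_U,\cc)\to 0.
$$
This yields $\dim W_2H^2(\mathfrak C_U,\cc)=8-1=7$.

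\textbf{Main obstacle.} The delicate point is injectivity of $\cc(-1)\to H^2(\mathfrak C,\cc)$, i.e.\ that the class of $D_{\mathfrak C}$ is nonzero. This holds because $[D_{\mathfrak C}]$ is a positive rational multiple of the pullback of the class of $\Delta_{\mathbb B}$. That class has positive degree on a line in a fiber of $\mathbb B\to P$, so its pullback to $\mathfrak C$ is nonzero. I expect this injectivity to be packaged inside \cref{lem:low-degree-purity-one-boundary}, provided its hypotheses (a single irreducible $\qq$-Cartier divisorial boundary component plus codimension at least two) are verified as in Step 2.
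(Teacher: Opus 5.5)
Your proposal is correct and follows the paper's proof essentially step for step. You identify $\mathfrak C\cong\mathbb P_{S\times P}(\mathcal K_0^\vee)$ and get $b_2=7+1=8$ from the projective bundle formula and K\"unneth. You then split the boundary as $D_{\mathfrak C}\cup Z$, using irreducibility of $\Delta_{\mathbb B}$ and the one-dimensionality of the fibers of $\mathfrak C\to\mathbb B$, note that the pullback of $\Delta_{\mathbb B}$ is $mD_{\mathfrak C}$, and conclude with \cref{lem:low-degree-purity-one-boundary}. The nonvanishing of $[D_{\mathfrak C}]$ that you flag as the main obstacle is already proved inside that lemma, by the positivity of the degree of a Cartier multiple of $D_{\mathfrak C}$ against an ample class, so no separate argument is needed.
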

\begin{proof}
The construction above identifies
$\mathfrak C\cong\mathbb P_{S\times P}(\mathcal K_0^\vee)$. By the projective
bundle formula,
$$
H^2(\mathfrak C,\cc)
\cong
H^2(S\times P,\cc)\oplus H^0(S\times P,\cc).
$$
Since
$$
b_1(S)=2,\qquad b_2(S)=2,\qquad b_1(P)=2,\qquad b_2(P)=1,
$$
the Künneth formula gives
$$
b_2(S\times P)=b_2(S)+b_1(S)b_1(P)+b_2(P)=2+4+1=7.
$$
Therefore
$
\dim H^2(\mathfrak C,\cc)=7+1=8.
$

By \cref{lem:relative-discriminant-irreducible}, the discriminant in
\(\mathbb B\) is irreducible and its general point parametrizes an integral
one-nodal curve. Let $\Delta_{\mathbb B}^\circ$ be a dense open subset
parametrizing such curves. Hence
$
\mathfrak C\setminus\mathfrak C_U
$
has one irreducible divisorial component $D_{\mathfrak C}$, and the remaining
boundary has codimension at least two. Indeed, if
$$
W\coloneqq\Delta_{\mathbb B}\setminus\Delta_{\mathbb B}^\circ,
$$
then $\operatorname{codim}_{\mathbb B}W\geqslant 2$. Since the universal-curve fibers have dimension one,
$$
\dim\bigl(\mathfrak C\times_{\mathbb B}W\bigr)
\leqslant\dim W+1
\leqslant\dim\mathbb B-1
=\dim\mathfrak C-2.
$$
Thus
$$
\mathfrak C\setminus\mathfrak C_U=D_{\mathfrak C}\cup Z,
\qquad
\operatorname{codim}_{\mathfrak C}Z\geqslant 2.
$$
Write $r\colon\mathfrak C\to\mathbb B$ for the universal curve. Since
$\mathbb B$ is smooth, the hypersurface
$\Delta_{\mathbb B}\subset\mathbb B$ is an effective Cartier divisor. Its
pullback has unique divisorial support $D_{\mathfrak C}$, and hence
$$
r^*\Delta_{\mathbb B}=m_{\mathfrak C}D_{\mathfrak C}
$$
for some $m_{\mathfrak C}\geqslant 1$. Therefore $D_{\mathfrak C}$ is
$\qq$-Cartier. Since $\mathfrak C$ is smooth and projective,
\cref{lem:low-degree-purity-one-boundary} gives
$$
0\longrightarrow\cc(-1)
\longrightarrow H^2(\mathfrak C,\cc)
\longrightarrow W_2H^2(\mathfrak C_U,\cc)
\longrightarrow 0.
$$
Thus
$
\dim W_2H^2(\mathfrak C_U,\cc)=8-1=7.
$
\end{proof}

\begin{lemma}\label{lem:open-b2-full-M}
One has
$
\dim W_2H^2(M_U,\cc)=8.
$
\end{lemma}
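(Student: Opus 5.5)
We mirror the proof of \cref{lem:prym-comparison}, now over the relative base $U_M\subset\mathbb B$ and comparing $m_U\colon M_U\to U_M$ with $r_U\colon\mathfrak C_U\to U_M$. Over $U_M$, the map $m_U$ is the relative degree-$d$ Picard torsor of the smooth curves. Hence it is smooth and proper with abelian-variety-torsor fibers, and
$$
R^1m_{U,*}\cc\simeq\bigl(R^1r_{U,*}\cc\bigr)^\vee\simeq R^1r_{U,*}\cc,
\qquad
R^2m_{U,*}\cc\simeq\textstyle\bigwedge^2R^1r_{U,*}\cc,
$$
using the polarization to identify the dual. The Leray spectral sequences of both maps degenerate at $E_2$ compatibly with mixed Hodge structures by \cite[Theorem~4.1.1(i)]{Del71}. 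By strictness, $W_2H^2$ is computed term by term from the three $E_2$-terms of total degree two. We compute each term for both maps and compare.

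\emph{Step 1: local systems on $U_M$.} The analogue of \cref{lem:invariant-cycles-bielliptic} should hold over $U_M$: the universal curve $\mathfrak C$ is a projective bundle over $S\times P$, so $H^1(\mathfrak C)\cong H^1(S)\oplus H^1(P)$. The global invariant cycle theorem then shows that the invariants of $R^1r_{U,*}\cc$ are exactly the image of $H^1(S)$. Semisimplicity gives a splitting $R^1r_{U,*}\cc\cong H^1(S,\cc)_{U_M}\oplus\mathbb V^\vee$ with $H^0(U_M,\mathbb V^\vee)=0$. Restricting to a fiber $|L|$ of $\mathbb B\to P$, where \cref{lem:variable-monodromy-irreducible} applies, $\mathbb V$ is irreducible with a unique invariant symplectic form. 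Thus
$$
H^0\Bigl(U_M,\textstyle\bigwedge^2R^1r_{U,*}\cc\Bigr)
=\textstyle\bigwedge^2H^1(S)\oplus\cc
$$
has dimension $2$. The cross term $H^1(S)\otimes\mathbb V^\vee$ contributes no invariants. Note that $\bigwedge^2H^1(S)$ is one-dimensional.

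\emph{Step 2: the $(p,q)$-terms.} The $(2,0)$-terms agree: both equal $H^2(U_M,\cc)$. The $(1,1)$-terms are $H^1(U_M,R^1)$ for both maps, since $R^1m_{U,*}\cc$ and $R^1r_{U,*}\cc$ are isomorphic local systems, so they agree as well. Their weight-two parts need not be computed separately, because they cancel in the comparison. The $(0,2)$-terms differ. For $r_U$ the term is $H^0(R^2r_{U,*}\cc)=\cc$. For $m_U$ it is the space from Step~1, of dimension $2$, which is pure of weight two since it comes from $H^2$ of a smooth compact fiber. The extra class is $\bigwedge^2H^1(S)$; it is realized globally by pulling back a class from $H^2(P\text{-like Albanese})$, or more concretely by the norm map $M\to\Pic^0(E)\times P$ composed with a cup product. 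This shows it survives to $E_\infty$, which degeneration guarantees in any case. We conclude
$$
\dim W_2H^2(M_U,\cc)=\dim W_2H^2(\mathfrak C_U,\cc)+1=7+1=8,
$$
using \cref{lem:universal-curve-full-base}.

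\emph{Main obstacle.} The delicate step is Step~1 over the larger base $U_M$, specifically the vanishing $H^0(U_M,H^1(S)\otimes\mathbb V^\vee)=0$ together with the exact dimension count of invariants in $\bigwedge^2$. The latter needs the monodromy of $\pi_1(U_M)$ to contain that of a single fiber $U\subset|L|$. I would first establish this via the inclusion $U\hookrightarrow U_M$ and semisimplicity. A second concern is whether the $(1,1)$-terms, which are identified only abstractly, match as mixed Hodge structures. I would handle this by noting that the isomorphism $R^1m\simeq R^1r$ is induced by the Abel--Jacobi correspondence, so it is an isomorphism of variations of Hodge structure up to a Tate twist, which here is trivial because the relevant weight is $1$.
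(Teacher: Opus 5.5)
Your proposal is correct and is essentially the paper's proof. Both arguments compare the $E_2$-degenerate Leray spectral sequences of $m_U$ and $r_U$ and note that the $(2,0)$- and $(1,1)$-terms coincide. Both take the single extra weight-two class from $H^0(U_M,R^2m_{U,*}\cc)\cong\bigwedge^2H^1(S,\cc)\oplus\cc$, where the $\bigwedge^2\mathbb V^\vee$-invariants are bounded by restricting to the fixed-determinant slice $U$ and realized by the fiberwise symplectic class. This gives $\dim W_2H^2(M_U,\cc)=7+1=8$.

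One small correction: take $R^1m_{U,*}\cc\simeq R^1r_{U,*}\cc$ directly from \cref{lem:A1}, i.e.\ pullback along the Abel--Jacobi map, rather than going through the polarization. The dual $(R^1r_{U,*}\cc)^\vee$ is isomorphic to $R^1r_{U,*}\cc(1)$, and that Tate twist shifts weights by two, so it is not ``trivial'' for weight reasons. Your appeal to Abel--Jacobi already gives the correct untwisted isomorphism of variations of Hodge structure, so the conclusion is unaffected.
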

\begin{proof}
Over $U_M$, the determinant-unfixed moduli space is the relative Picard variety
of the universal smooth curve. By \cref{lem:A1},
$$
R^1m_{U,*}\cc
\cong
R^1r_{U,*}\cc.
$$
The fixed part of this local system is still $H^1(S,\cc)$. Indeed, the total
universal curve $\mathfrak C$ has additional $H^1(P,\cc)$-classes, but these
restrict trivially to a curve fiber because a curve fiber lies over a single
point of $P$. Thus
$$
R^1m_{U,*}\cc
\cong
H^1(S,\cc)_{U_M}\oplus\mathbb V^\vee.
$$
Here $\mathbb V$ denotes the variable summand over $U_M$.
Since the fibers are abelian varieties,
$$
R^2m_{U,*}\cc
\cong
\bigwedge^2R^1m_{U,*}\cc.
$$

We compare the total-degree-two terms of the Leray spectral sequences for
$m_U$ and $r_U$. Their $(2,0)$-terms are both $H^2(U_M,\cc)$. Their
$(1,1)$-terms are canonically isomorphic by the first displayed isomorphism:
$$
H^1(U_M,R^1m_{U,*}\cc)
\cong
H^1(U_M,R^1r_{U,*}\cc).
$$
In particular, although $H^1(U_M,\cc)$ is nonzero, the contribution
$$
H^1(U_M,\cc)\otimes H^1(S,\cc)
$$
of the constant subsystem occurs in both spectral sequences and produces no
difference between them.

It remains to compare the $(0,2)$-terms. Restriction to the fixed-determinant
slice $U\subset U_M$ gives the monodromy representation considered in
\cref{lem:variable-monodromy-irreducible}. Hence
$
H^0(U_M,\mathbb V^\vee)=0
$
and
$
\dim H^0\left(U_M,\bigwedge^2\mathbb V^\vee\right)\leqslant 1.
$
The fiberwise symplectic form gives a nonzero global section, so
$$
H^0\left(U_M,\bigwedge^2\mathbb V^\vee\right)\cong\cc.
$$
Using the first vanishing to eliminate the mixed summand, taking invariant
sections gives
$$
H^0(U_M,R^2m_{U,*}\cc)
\cong
\bigwedge^2H^1(S,\cc)
\oplus
H^0\left(U_M,\bigwedge^2\mathbb V^\vee\right)
\cong\cc^2.
$$
On the other hand, the fibers of $r_U$ are smooth curves, so
$$
R^2r_{U,*}\cc\cong\cc_{U_M}
$$
and
$$
H^0(U_M,R^2r_{U,*}\cc)\cong\cc.
$$

The Leray spectral sequences degenerate at $E_2$ by Deligne's degeneration
theorem \cite[Theorem~4.1.1(i)]{Del71}. Their terms
$$
E_2^{p,q}=H^p(U_M,R^qf_*\cc)
$$
have weights at least $p+q$, where $f$ denotes $m_U$ or $r_U$. Hence
degeneration and strictness of the weight filtration compute $W_2H^2$ term by
term from the three terms with $p+q=2$. All the identifications above are
compatible with mixed Hodge structures. The $(2,0)$- and $(1,1)$-terms have
the same weight-two parts, while the $(0,2)$-term for $m_U$ has one additional
pure weight-two summand, namely $\bigwedge^2H^1(S,\cc)$. Therefore
$$
\dim W_2H^2(M_U,\cc)
=
\dim W_2H^2(\mathfrak C_U,\cc)+1.
$$
By \cref{lem:universal-curve-full-base},
$
\dim W_2H^2(\mathfrak C_U,\cc)=7.
$
Therefore
$
\dim W_2H^2(M_U,\cc)=8.
$
\end{proof}

\begin{lemma}\label{lem:boundary-purity-full-M}
Let $M_U\subset M$ be the smooth-support locus of $M$. Then
$$
0\longrightarrow \cc(-1)
\longrightarrow H^2(M,\cc)
\longrightarrow W_2H^2(M_U,\cc)
\longrightarrow 0.
$$
\end{lemma}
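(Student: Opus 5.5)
The plan is to copy the proof of \cref{lem:boundary-purity-F}, now for $M$ over the relative linear system $\mathbb B$, and then invoke \cref{lem:low-degree-purity-one-boundary}. That lemma needs three inputs: $M$ is normal with mild singularities, the complement $M\setminus M_U$ has a unique divisorial component, and this component is $\qq$-Cartier while everything else in the boundary has codimension at least two. Normality together with terminal l.c.i.\ singularities comes from \cref{cor:rank-zero-terminal-lci}.

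\emph{Boundary decomposition.} I would use the support morphism $h\colon M\to\mathbb B$, with $M_U=h^{-1}(U_M)$, and choose a dense open subset $\Delta_{\mathbb B}^\circ\subset\Delta_{\mathbb B}$ parametrizing integral one-nodal curves. By \cref{lem:relative-discriminant-irreducible}, $\Delta_{\mathbb B}$ is an irreducible hypersurface, so I may take $W:=\Delta_{\mathbb B}\setminus\Delta_{\mathbb B}^\circ$ of codimension at least two in $\mathbb B$.
\begin{itemize}
\item Over $\Delta_{\mathbb B}^\circ$, the fibers of $h$ are compactified Jacobians of integral one-nodal curves. These are irreducible of dimension $g$ by \cref{ass:comp_jacobian}, or via the $\ps^1$-bundle description used in \cref{lem:nodal-compactified-norm-fiber}.
\item Since the base $\Delta_{\mathbb B}^\circ$ is irreducible and the fibers are irreducible of constant dimension, $h^{-1}(\Delta_{\mathbb B}^\circ)$ is irreducible after possibly shrinking $\Delta_{\mathbb B}^\circ$. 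Its closure $D_M$ has dimension $\dim\mathbb B-1+g=\dim M-1$, so it is a divisor.
\item For $W$, the fiber bound $\dim h^{-1}(x)\leqslant g$ of \cref{ass:comp_jacobian} gives $\dim h^{-1}(W)\leqslant\dim\mathbb B-2+g=\dim M-2$. That lemma is stated for fixed determinant, so I would apply it fiberwise over $P$ and note that $\mathbb B\to P$ and $M\to P$ are compatible.
\end{itemize}
This gives $M\setminus M_U=D_M\cup Z$ with $\operatorname{codim}_M Z\geqslant2$.

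\emph{$\qq$-Cartier property.} Since $\mathbb B$ is a projective bundle over $P$, it is smooth, so $\Delta_{\mathbb B}$ is Cartier. The Cartier divisor $h^*\Delta_{\mathbb B}$ has unique divisorial support $D_M$. Hence $h^*\Delta_{\mathbb B}=mD_M$ for some $m\geqslant1$, and $D_M$ is $\qq$-Cartier. \Cref{lem:low-degree-purity-one-boundary} then yields the displayed exact sequence, with the $\cc(-1)$ term generated by the class of $D_M$.

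\emph{Main obstacle.} The delicate step is the irreducibility of $h^{-1}(\Delta_{\mathbb B}^\circ)$ in the determinant-unfixed setting. Two points need care:
\begin{itemize}
\item The fibers must be irreducible as fibers of the distinguished component $M$, not merely as compactified Jacobians. I would handle this as in \cref{lem:boundary-purity-F}: $M$ dominates $\mathbb B$, so upper semicontinuity gives $\dim(M\cap h^{-1}(x))\geqslant g$. Since the whole fiber over $x$ is the irreducible $g$-dimensional compactified Jacobian, $M$ contains it entirely.
\item Monodromy of the fiber components is not an issue here, because each fiber is irreducible.
\end{itemize}
Alternatively, I would reduce to the fixed-determinant case. \Cref{lem:translation-base-change} gives an étale cover $A\times_P M\cong A\times M(L)$, and the corresponding statement for $M(L)$ follows exactly as for $F_{\mathrm{gen}}$ in \cref{lem:boundary-purity-F}, without the $\alpha$-slicing.
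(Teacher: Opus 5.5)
Your proposal is correct and follows essentially the paper's route: irreducibility of $\Delta_{\mathbb B}$ and of the compactified Jacobians of integral one-nodal curves, the fiber bound of \cref{ass:comp_jacobian} to put the rest of the boundary in codimension two, $\qq$-Cartierness of $D_M$ from $h^*\Delta_{\mathbb B}=mD_M$, and then \cref{cor:rank-zero-terminal-lci,lem:low-degree-purity-one-boundary}. One small correction: \cref{ass:comp_jacobian} gives only the dimension of the fibers over integral curves, so their irreducibility should come from Rego's theorem (as in the paper) or from the $\ps^1$-bundle description you mention, not from that lemma.
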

\begin{proof}
Let $m\colon M\to\mathbb B$ be the support morphism. By
\cref{lem:relative-discriminant-irreducible}, the discriminant
$\Delta_{\mathbb B}\subset\mathbb B$ is irreducible and its general member is
integral and one-nodal. By Rego's theorem \cite[Theorem~A]{Reg80}, the
compactified Jacobian of the generic curve in $\Delta_{\mathbb B}$ is
irreducible. Hence the support fiber
over the generic point of $\Delta_{\mathbb B}$ has a unique irreducible
component. Together with the dimension estimate of
\cref{ass:comp_jacobian}, this gives
$$
M\setminus M_U=D_M\cup Z
$$
where $D_M$ is irreducible and
$\operatorname{codim}_MZ\geqslant 2$.

The unique divisorial support of $m^*\Delta_{\mathbb B}$ is $D_M$, so
$
m^*\Delta_{\mathbb B}=m_MD_M
$
for some $m_M\geqslant 1$, and hence $D_M$ is $\qq$-Cartier. Now
\cref{cor:rank-zero-terminal-lci,lem:low-degree-purity-one-boundary} give the
asserted sequence.
\end{proof}

\begin{proof}[Proof of \cref{thm:b1-b2-moduli}]
We first compute the Betti numbers of $M$. For $b_1$, the base $U_M$ is the
complement of the irreducible relative discriminant divisor
$
\Delta_{\mathbb B}\subset\mathbb B.
$
Since \(\mathbb B\) is smooth and \(\Delta_{\mathbb B}\) is an irreducible divisor, the localization sequence gives
$$
H^1(\mathbb B,\cc)\longrightarrow H^1(U_M,\cc)
\longrightarrow H^0(\Delta_{\mathbb B},\cc)(-1)
\longrightarrow H^2(\mathbb B,\cc).
$$
The last map sends \(1\) to the cohomology class \([\Delta_{\mathbb B}]\). This class is nonzero because \(\Delta_{\mathbb B}\) is a nonzero effective Cartier divisor on the projective variety \(\mathbb B\). Hence
$$
H^1(\mathbb B,\cc)\xrightarrow{\sim} H^1(U_M,\cc).
$$
Since \(\mathbb B\to P\) is a projective bundle, the projective bundle formula gives $H^1(\mathbb B,\cc)\cong H^1(P,\cc)$. Therefore $\dim H^1(U_M,\cc)=2$.
The invariant part of $H^1(D,\cc)$ for the fiber
curve is the image of $H^1(S,\cc)$, also of dimension $2$. 
By \cref{lem:boundary-purity-full-M,lem:low-degree-purity-one-boundary},
restriction induces
$$
H^1(M,\cc)\xrightarrow{\sim}H^1(M_U,\cc).
$$
The Leray spectral sequence for $M_U\to U_M$ degenerates at $E_2$ by
\cite[Theorem~4.1.1(i)]{Del71}. Consequently,
$$
\dim H^1(M_U,\cc)
=
\dim H^1(U_M,\cc)
+
\dim H^0(U_M,R^1m_{U,*}\cc)
=2+2=4.
$$
Thus $b_1(M)=4$.

For $b_2$, \cref{lem:boundary-purity-full-M} gives
$$
0\longrightarrow\cc(-1)
\longrightarrow H^2(M,\cc)
\longrightarrow W_2H^2(M_U,\cc)
\longrightarrow0.
$$
Therefore $b_2(M)=1+\dim W_2H^2(M_U,\cc)$. By
\cref{lem:open-b2-full-M}, $\dim W_2H^2(M_U,\cc)=8$, and hence $b_2(M)=1+8=9$.

It remains to compute the Betti numbers of $M(L)$. Let
$$
a_M:M\longrightarrow A\coloneqq\Alb(M)
$$
be the Albanese morphism and let
$$
d\coloneqq\det^\circ:M\longrightarrow
P\coloneqq\Pic^{c_1(\mv)}(S).
$$
The morphism $d$ is surjective by \cref{lem:translation-base-change} and,
after choosing origins and translating the target, factors as
$$
d=q\circ a_M
$$
for a surjective homomorphism $q:A\to P$.

By \cref{lem:determinant-albanese-slices}, $M(L)$ is the inverse image under
$a_M$ of a connected component
$
C\subset q^{-1}(L),
$
and $C$ is an elliptic curve. By \cref{thm:first-betti-fixed-det},
$
b_1(M(L))=2.
$
Therefore the final assertion of \cref{lem:determinant-albanese-slices}
identifies
$$
M(L)\longrightarrow C
$$
with the Albanese morphism of $M(L)$. Its fibers are consequently both
Albanese fibers of $M(L)$ and Albanese fibers of $M$.

By \cref{thm:b2-albanese-fiber}, such a fiber $F$ satisfies
$$
b_1(F)=0,
\qquad
b_2(F)=3.
$$
Since $b_2(M)=9$, \cref{lem:betti-elliptic-albanese-slice} applied to
$
M(L)=a_M^{-1}(C)
$
gives
$
b_2(M(L))=4.
$
It also recovers $b_1(M(L))=2$.
\end{proof}

\subsection{Extension from rank zero to admissible Mukai vectors}\label{sec:ext-to-pos-rank}\label{subsec:extension-admissible}
We now extend the rank-zero computations to arbitrary primitive
admissible Mukai vectors using birational reduction, minimal-model
theory, and invariance of low-degree cohomology under isomorphisms in
codimension one when the singular locus has sufficiently high
codimension.

Recall the notations introduced in \cref{subsec:moduli-spaces} and \cref{subsec:admissible}. For a
primitive admissible Mukai vector $\mv$ and a $\mv$-generic polarization
$H$, set
$$
M:=M^\circ_{H,S}(\mv),
\qquad
\det^\circ:=\det|_M.
$$
For $L\in\operatorname{Pic}^{c_1(\mv)}(S)$, set
$
M(L):=M_{H,S}(\mv,L)
$
if $\rank(\mv)=0$. If $\rank(\mv)>0$, fix an
irreducible component of $(\det^\circ)^{-1}(L)$ and, by abuse of
notation, denote it also by $M(L)$. Finally, let $F$ denote an Albanese fiber
of $M(L)$.

\begin{theorem}\label{thm:betti-numbers-admissible}
Let $(S,H)$ be a polarized bielliptic surface, let $\mv$ be a primitive
admissible Mukai vector, and let
$
L\in\operatorname{Pic}^{c_1(\mv)}(S).
$
Assume that $H$ is $\mv$-generic. There is an elliptic curve
$
C_L\subset\operatorname{Alb}(M)
$
such that
$
M(L)=a_M^{-1}(C_L).
$
Moreover, the restriction
$$
a_M|_{M(L)}:M(L)\longrightarrow C_L
$$
is the Albanese morphism of $M(L)$, and its fibers are Albanese fibers
of $M$. The first two Betti numbers are
$$
\begin{array}{c|cc}
 & b_1 & b_2\\
\hline
F & 0 & 3\\
M(L) & 2 & 4\\
M & 4 & 9.
\end{array}
$$
\end{theorem}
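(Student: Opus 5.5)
The plan is to separate the rank-zero case, which is essentially already done, from the positive-rank case, which will be reduced to it by a birational comparison.

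\emph{Rank zero.} Here \cref{thm:b1-b2-moduli,thm:b2-albanese-fiber} give the whole table. The structural statement comes from the proof of \cref{thm:b1-b2-moduli}. There \cref{lem:determinant-albanese-slices} writes $M(L)=a_M^{-1}(C_L)$ for a connected component $C_L$ of $q^{-1}(L)$, and the final assertion of that lemma identifies $a_M|_{M(L)}$ with the Albanese morphism of $M(L)$.

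\emph{Positive rank: setting up the comparison.} Let $\mw$, $H'$ and the birational map $\psi\colon M\dashrightarrow M':=M^\circ_{H',S}(\mw)$ be as in \cref{lem:admissible-positive-rank-to-rank-zero}.
\begin{enumerate}
\item \cref{lem:l-invariant-FM} and isometry give $\mw^2=\mv^2\geqslant 18\geqslant 3k_S$. So \cite[Theorem~1.2]{Nue25} applies on both sides: $M$ and $M'$ are normal, with terminal l.c.i. singularities and torsion canonical bundle. In particular both are klt with $K\equiv 0$, i.e.\ minimal models.
\item Two birational minimal models are isomorphic in codimension one, by the standard argument via negativity, \cite[Theorem~3.52]{KollarMori}. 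So $\psi$ restricts to an isomorphism $M\setminus Z\cong M'\setminus Z'$ with $\operatorname{codim}Z,\operatorname{codim}Z'\geqslant 2$.
\end{enumerate}

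\emph{Transferring $b_1$ and $b_2$.} Terminal l.c.i. singularities have singular locus of codimension at least $3$, so both spaces are smooth outside codimension $3$. I would invoke the purity statement collected in \cref{app:cohomological-tools} (the same tool behind \cref{lem:low-degree-purity-one-boundary}): for a normal variety with rational singularities, deleting a closed subset of codimension $\geqslant 2$ does not change $H^1$ and $W_2H^2$. One also has $H^2=W_2H^2$, since $H^2$ is pure by rationality of singularities. This gives $b_1(M)=4$ and $b_2(M)=9$.

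\emph{Main obstacle.} The codimension-two part of this step is the main obstacle. For the complement of a codimension $\geqslant 2$ locus $Z$, $H^2$ can only change through classes supported on $Z$, and these vanish when $\operatorname{codim}\geqslant 2$ on a rationally smooth enough variety. I would first try to argue on a resolution $\widetilde M$. There, classes of exceptional divisors lying over $Z$ appear in $H^2(\widetilde M)$ for both models. Identifying $H^2(M)$ with the classes orthogonal to these via the decomposition theorem should then give $H^2(M)\cong H^2(M\setminus Z)$. If that is too delicate, the fallback is to use the local l.c.i. depth estimate $\operatorname{depth}\geqslant 3$ along $Z$ together with local cohomology vanishing $\mathcal H^i_Z(\qq)=0$ for $i\leqslant 3$.

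\emph{Albanese structure and the remaining rows.} The Albanese varieties of $M$ and $M'$ agree, since $\psi$ is an isomorphism in codimension one and both have rational singularities. The determinant map $\det^\circ$ on $M$ is surjective onto the elliptic curve $P=\Pic^{c_1(\mv)}(S)$, and it factors as $q\circ a_M$ with $q\colon\Alb(M)\to P$ surjective. This holds because $b_1(M)=4$ and the Fourier--Mukai transform intertwines $\det$ with a determinant-type map on the rank-zero side up to isogeny. \cref{lem:determinant-albanese-slices} then shows that the chosen component $M(L)$ equals $a_M^{-1}(C_L)$, with $a_M$ a locally constant fibration by \cite[Theorem~A]{Wan20}.

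Albanese fibers of $M$ and $M'$ are isomorphic in codimension one, because the codimension-one isomorphism is compatible with the Albanese maps. The same purity argument then transfers $b_1(F)=0$ and $b_2(F)=3$ from \cref{thm:b2-albanese-fiber}. Finally, \cref{lem:betti-elliptic-albanese-slice} gives $b_1(M(L))=2$ and $b_2(M(L))=4$. Since $b_1(M(L))=2$, the last clause of \cref{lem:determinant-albanese-slices} identifies $a_M|_{M(L)}$ with the Albanese morphism of $M(L)$, whose fibers are Albanese fibers of $M$.
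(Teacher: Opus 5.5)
There is a genuine gap, and it sits exactly at the step you flag as the main obstacle. Your overall architecture matches the paper's: reduce to rank zero, show the birational map is an isomorphism in codimension one, transfer $b_1,b_2$ to $M$ and to its Albanese fibers, then slice by the determinant. But the purity principle you invoke is false as stated. For a normal variety with rational singularities, even terminal l.c.i.\ ones, deleting a closed subset of codimension $\geqslant 2$ can enlarge $W_2H^2$. Moreover, two terminal l.c.i.\ $K$-trivial varieties that are isomorphic in codimension one can have different $b_2$.

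Here is a concrete counterexample. Take a nodal quintic threefold $X$ containing a plane, and its small resolution $\widehat X$ obtained by blowing up the plane. Both are terminal l.c.i.\ with trivial canonical class, and they are isomorphic in codimension one. Yet $b_2(X)=1$ while $b_2(\widehat X)=2$. The extra class lives on $X\setminus\operatorname{Sing}X$ and is pure of weight two.

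The failure is local, at singular points of codimension three. For a threefold node $p$ one has $H^3_p(X,\qq)\cong H^2(S^2\times S^3,\qq)\neq 0$. Consequently:
\begin{itemize}
\item knowing only $\operatorname{codim}\operatorname{Sing}\geqslant 3$, which is all terminality gives you, is not enough;
\item your fallback ``$\mathcal H^i_Z(\qq)=0$ for $i\leqslant 3$'' fails at points of $Z\cap\operatorname{Sing}$, and coherent depth does not control it;
\item the resolution/orthogonal-complement idea does not produce a birational invariant either.
\end{itemize}

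The missing input is Nuer's singular-locus bound \cite[Proposition~9.1]{Nue25}: $\operatorname{codim}\operatorname{Sing}>\frac{k_S-1}{k_S}\mv^2-1\geqslant 8$. It holds on both sides, since $\mv^2=\mw^2\geqslant 18$. Combined with perversity of $\cc_X[\dim X]$ for l.c.i.\ $X$, it gives $H^i_{\operatorname{Sing}X}(X,\cc)=0$ for $i\leqslant 3$, hence $H^2(X)\cong H^2(X^{\mathrm{reg}})$. Only then does smooth purity across the codimension-two indeterminacy locus apply. This is \cref{lem:h2-isomorphism-codimension-one}, which carries precisely this codimension-four hypothesis. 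For the Albanese fibers you need the same bound on $F$. The paper gets it from the local product structure of the Albanese fibration, which gives $\operatorname{codim}_F\operatorname{Sing}F=\operatorname{codim}_X\operatorname{Sing}X$. The paper also uses the bound to obtain local factoriality (via parafactoriality) and then Kawamata's flop decomposition.

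Your negativity-lemma route to ``isomorphic in codimension one'' is fine on its own. Both canonical classes are $\qq$-Cartier and numerically trivial and the singularities are terminal, so $\qq$-factoriality is not needed for that step. Your $H^1$ transfer is also fine, for example via rational singularities and a common resolution.

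One smaller point: do not justify surjectivity of $\det^\circ$ in positive rank by an unproved intertwining with the rank-zero side. Instead use the $\Pic^0(S)$-action by tensor product: $\det(E\otimes Q)\simeq\det(E)\otimes Q^{\otimes r}$, and $[r]$ is surjective on $\Pic^0(S)$. The factorization through $a_M$ is just the universal property of the Albanese.
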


\begin{proof}
Set $X:=M$. If $\rank(\mv)>0$, then the convention following
\cref{lem:admissible-positive-rank-to-rank-zero} gives a primitive rank-zero Mukai vector
$$
\mw=(0,D,\chi),
\qquad
\nt(D)\geq 3,
$$
and a $\mw$-generic polarization $H'$ such that
$$
X=M^\circ_{H,S}(\mv)
\dashrightarrow
Y:=M^\circ_{H',S}(\mw)
$$
is birational. If $\mv$ already has rank zero, take $\mw=\mv$, $H'=H$,
$Y=X$, and the identity birational map.

Write numerically
$
D\equiv aA_0+bB_0.
$
Since $\nt(D)\geqslant 3$, one has $a,b\geqslant 3$, and therefore
$$
\mv^2=\mw^2=D^2=2ab\geqslant 18\geqslant 3k_S.
$$
By \cite[Theorem~1.2 and Proposition~9.2]{Nue25}, $X$ and $Y$ are normal
projective varieties with terminal l.c.i. singularities and torsion Cartier
canonical divisor.

We next verify the codimension condition needed for
\cref{lem:h2-isomorphism-codimension-one}. The singular-locus estimate of
\cite[Proposition~9.1]{Nue25} gives
$$
\operatorname{codim}_X\operatorname{Sing}(X)
>
\frac{k_S-1}{k_S}\mv^2-1,
$$
and the analogous estimate for $Y$. Since $\mv^2\geqslant 18$ and
$k_S\in\{2,3,4,6\}$, the right-hand side is at least $8$. In particular,
$$
\operatorname{codim}_X\operatorname{Sing}(X)\geqslant 4,
\qquad
\operatorname{codim}_Y\operatorname{Sing}(Y)\geqslant 4.
$$

These varieties are locally factorial. Indeed, every point of codimension at
most three is regular, while at a singular point the local ring is a
complete-intersection local ring of dimension at least four. Grothendieck's
parafactoriality theorem \cite[Expos\'e~XI, Corollaire~3.14]{SGA2} therefore
applies. Thus $X$ and $Y$ are $\qq$-factorial.

Their canonical divisors are torsion and hence nef. Consequently, $X$ and $Y$
are $\qq$-factorial terminal minimal models. By \cite[Theorem~1]{Kaw08}, the
birational map $X\dashrightarrow Y$ is a composition of flops and, in
particular, is an isomorphism in codimension one. Hence
\cref{lem:h2-isomorphism-codimension-one} gives
$$
H^2(X,\cc)\simeq H^2(Y,\cc).
$$
Terminal singularities are rational; see
\cite[Theorem~5.22]{KollarMori}. Let $Z$ be a projective resolution of the
normalization of the graph of the birational map $X\dashrightarrow Y$, with
proper birational morphisms
$$
p:Z\longrightarrow X,
\qquad
q:Z\longrightarrow Y.
$$
Applying \cite[Lemma~2.1]{BakkerLehnTorelli} to $p$ and $q$ gives
$$
H^1(X,\zz)\simeq H^1(Z,\zz)\simeq H^1(Y,\zz).
$$
Since $\mw$ is rank zero and $\nt(D)\geqslant 3$,
\cref{thm:b1-b2-moduli} gives
$$
b_1(Y)=4,
\qquad
b_2(Y)=9.
$$
Hence
$
b_1(X)=4,
\,
b_2(X)=9.
$

We next determine the Albanese fibers of the rank-zero model $Y$.
Choose
$
\Lambda\in\operatorname{Pic}^{D}(S)
$
and set
$$
Y(\Lambda):=M_{H',S}(\mw,\Lambda).
$$
The restricted determinant morphism of $Y$ is surjective by
\cref{lem:translation-base-change}. Since
$$
b_1(Y)=4
\qquad\text{and}\qquad
b_1(Y(\Lambda))=2
$$
by \cref{thm:first-betti-fixed-det},
\cref{lem:determinant-albanese-slices} shows that the Albanese morphism of
$Y(\Lambda)$ is the restriction of the Albanese morphism of $Y$, and
that their fibers coincide. \Cref{thm:b2-albanese-fiber} therefore gives
$$
b_1(F_Y)=0,
\qquad
b_2(F_Y)=3
$$
for every Albanese fiber $F_Y$ of $Y$.

It remains to transfer this result from $Y$ to $X$. Since $X$ and $Y$ have
rational singularities, a common resolution identifies their Picard varieties
and hence their Albanese varieties. We therefore identify
$$
\Alb(X)\simeq\Alb(Y)\eqqcolon A
$$
so that, after a translation, the two Albanese morphisms agree on the common
domain of the birational map.

Write $a_X:X\to A$ for the Albanese morphism of $X$, and use the same symbol
$a_Y$ for the Albanese morphism of $Y$ after the identification above.

Choose common open subsets $U_X\subset X$ and $U_Y\subset Y$ on which the
birational map is an isomorphism and whose complements have codimension at
least two. For a general $t\in A$, put
$$
F_{X,t}\coloneqq a_X^{-1}(t),
\qquad
F_{Y,t}\coloneqq a_Y^{-1}(t).
$$
A general-fiber dimension estimate shows that
$$
F_{X,t}\setminus U_X,
\qquad
F_{Y,t}\setminus U_Y
$$
have codimension at least two in the respective fibers. Hence the induced
birational map
$$
F_{X,t}\dashrightarrow F_{Y,t}
$$
is an isomorphism in codimension one.

The Albanese morphisms of $X$ and $Y$ are locally constant analytic
fibrations by \cite[Theorem~A]{Wan20}. Their local product structures show
that their fibers are normal projective varieties with terminal l.c.i.
singularities and that
$$
\operatorname{codim}_{F_{X,t}}\operatorname{Sing}(F_{X,t})
=
\operatorname{codim}_X\operatorname{Sing}(X)
\geqslant 4,
$$
with the analogous equality for $Y$.
\Cref{lem:h2-isomorphism-codimension-one} therefore gives
$$
H^2(F_{X,t},\cc)\simeq H^2(F_{Y,t},\cc).
$$
Since the fibers have terminal, hence rational, singularities,
let $Z_t$ be a projective resolution of the normalization of the graph of
$F_{X,t}\dashrightarrow F_{Y,t}$. Applying
\cite[Lemma~2.1]{BakkerLehnTorelli} to the two proper birational morphisms
from $Z_t$ gives
$$
H^1(F_{X,t},\zz)\simeq H^1(Z_t,\zz)\simeq H^1(F_{Y,t},\zz).
$$
Thus
$$
b_1(F_{X,t})=0,
\qquad
b_2(F_{X,t})=3.
$$
Finally, all fibers of each Albanese morphism are mutually isomorphic by
\cite[Theorem~A]{Wan20}. The same equalities therefore hold for every
Albanese fiber $F$ of $X$.

It remains to compute the Betti numbers of the determinant components of
$X=M$. We first verify that
$$
d_X\coloneqq\det^\circ:X\longrightarrow
P_X\coloneqq\Pic^{c_1(\mv)}(S)
$$
is surjective. If $\rank(\mv)=0$, this is
\cref{lem:translation-base-change}. Suppose that
$$
r\coloneqq\rank(\mv)>0.
$$
The elliptic curve $\Pic^0(S)$ acts on the moduli space by tensor product.
Tensoring by a numerically trivial line bundle preserves the Mukai vector and
Gieseker stability. Since $\Pic^0(S)$ is connected, this action preserves the
chosen irreducible component $X$. For $Q\in\Pic^0(S)$ and $E\in X$, one has
$$
\det(E\otimes Q)\simeq\det(E)\otimes Q^{\otimes r}.
$$
Multiplication by $r$ on the elliptic curve $\Pic^0(S)$ is surjective. Hence
the determinants of the sheaves $E\otimes Q$ run through all of $P_X$, proving
that $d_X$ is surjective.

After choosing origins and translating the target, the determinant morphism
factors as
$
d_X=q_X\circ a_X
$
for a surjective homomorphism
$$
q_X:\Alb(X)\longrightarrow P_X.
$$
Let $L$ and $M(L)$ be as in the statement. By \cref{lem:determinant-albanese-slices}, there is a
connected component
$
C_L\subset q_X^{-1}(L)
$
such that
$$
M(L)=a_X^{-1}(C_L),
$$
and $C_L$ is an elliptic curve. The morphism
$$
M(L)\longrightarrow C_L
$$
is the restriction of the locally constant Albanese fibration of $X$,
with fiber $F$.

We have already proved that
$$
b_2(X)=9,
\qquad
b_1(F)=0,
\qquad
b_2(F)=3.
$$
Therefore \cref{lem:betti-elliptic-albanese-slice} gives
$$
b_1(M(L))=2,
\qquad
b_2(M(L))=4.
$$
The final assertion of \cref{lem:determinant-albanese-slices} then shows that
$$
M(L)\longrightarrow C_L
$$
is the Albanese morphism of $M(L)$. This proves all the assertions.
\end{proof}

\begin{remark}
When $\rank(\mv)>0$, the conclusions of \cref{thm:betti-numbers-admissible} hold
for every irreducible component of
$
(\det^\circ)^{-1}(L).
$
Indeed, the final part of the proof applies separately to each such
component. Thus, if $N$ is any irreducible component of this fiber,
then
$
N=a_M^{-1}(C)
$
for an elliptic curve $C\subset\operatorname{Alb}(M)$, the restriction
$
a_M|_N:N\longrightarrow C
$
is the Albanese morphism of $N$, and
$$
b_1(N)=2,
\qquad
b_2(N)=4.
$$
\end{remark}

\begin{remark}
In the cases covered by \cref{thm:rank-zero-hilbert-model}, the birational model for the moduli space is
$
\Pic^0(S)\times\Hilb^{\mv^2/2}(S)
$
and its first two Betti numbers are computed independently in
Appendix~\ref{app:betti-hilbert}.
\end{remark}

\subsection{Hodge structure and Calabi--Yau covers of Albanese fibers}
\label{subsec:hodge-cy-albanese-fibers}

We refine the second-cohomology calculation of
\cref{thm:betti-numbers-admissible} and, in the rank-one-orbit case, relate the
Albanese fibers to Calabi--Yau varieties. We retain the notation of
\cref{sec:ext-to-pos-rank}. Thus $F$ is an Albanese fiber of
$M_{H,S}(\mv,L)$ and, by \cref{thm:betti-numbers-admissible}, it is also an
Albanese fiber of $M^\circ_{H,S}(\mv)$.

For a normal variety $X$, we write
$
\Omega_X^{[p]}:=\left(\Omega_X^p\right)^{**}
$
for the sheaf of reflexive differential $p$-forms. Equivalently, its
sections are holomorphic $p$-forms on the smooth locus $X_{\mathrm{reg}}$
extended reflexively across the singular locus; see
\cite[Notation~2.19]{GGK19}.

\begin{theorem}\label{prop:albanese-fiber-hodge-type}
Let $(S,H)$ be a polarized bielliptic surface and let $\mv$ be a primitive
admissible Mukai vector. Assume that $H$ is $\mv$-generic. Then $H^2(F,\cc)$
is a pure Hodge structure of weight two and type $(1,1)$. Equivalently,
$$
H^2(F,\cc)=H^{1,1}(F).
$$
In particular,
$
H^0\left(F,\Omega_F^{[2]}\right)=0.
$
\end{theorem}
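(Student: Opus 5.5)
The plan is to prove the statement first for rank-zero vectors, using the Hodge-theoretic information already implicit in the proof of \cref{thm:b2-albanese-fiber}, and then to transfer it to arbitrary admissible $\mv$.

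\textbf{Rank zero.} Assume $\mv=(0,c_1(L),\chi)$ with $\nt(L)\geqslant 3$, and take $F=F_{\mathrm{gen}}$. By \cref{cor:alpha-is-albanese-up-to-isogeny} all Albanese fibers are isomorphic, so this loses nothing. \cref{lem:boundary-purity-F} gives an exact sequence of mixed Hodge structures
$$
0\to\cc(-1)\to H^2(F,\cc)\to W_2H^2(F_{\mathrm{gen},U},\cc)\to 0.
$$
The left term is of type $(1,1)$, so it suffices to show that $W_2H^2(F_{\mathrm{gen},U},\cc)$ is pure of type $(1,1)$. In particular $H^2(F)$ is then pure of weight two. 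An extension of type-$(1,1)$ structures has type $(1,1)$, since Hodge numbers are additive.

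\textbf{The open part.} I will run the term-by-term comparison of \cref{lem:prym-comparison} again, now keeping track of Hodge types. The Leray spectral sequences for $f_U$ and $q_U$ degenerate in the category of mixed Hodge structures. Hence $W_2H^2$ is filtered with graded pieces equal to the weight-two parts of the $(2,0)$, $(1,1)$ and $(0,2)$ terms, and these identifications respect Hodge structures.
\begin{itemize}
\item The $(2,0)$ terms are equal, namely $H^2(U,\cc)$.
\item The $(1,1)$ terms are identified via $R^1f_{U,*}\cc\simeq\mathbb V^\vee$ and $H^1(U,H^1(S)_U)=0$.
\item For $q_U$, the $(0,2)$ term is the fiber fundamental class, which is of type $(1,1)$.
\item For $f_U$, the $(0,2)$ term is the invariant symplectic class in $\bigwedge^2\mathbb V^\vee$ from \cref{lem:variable-monodromy-irreducible}. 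It is the fiberwise polarization class of the abelian torsors $P_D$, hence also a $(1,1)$ class.
\end{itemize}
It follows that the weight-two parts of the $(2,0)$ and $(1,1)$ terms for $f_U$ coincide, as Hodge structures, with those for $q_U$. By \cref{lem:universal-curve-cohomology-bielliptic}, $W_2H^2(\uc_U,\cc)$ is a quotient of $H^2(\uc,\cc)\cong H^2(S,\cc)\oplus H^0(S,\cc)(-1)$. This is entirely of type $(1,1)$, because $p_g(S)=0$ by \cref{prop:euler}. Hence every graded piece is of type $(1,1)$. This concludes the rank-zero case.

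\textbf{General admissible $\mv$.} In the proof of \cref{thm:betti-numbers-admissible}, the Albanese fibers $F_{X,t}$ and $F_{Y,t}$ are related by a birational map that is an isomorphism in codimension one. Their singular loci have codimension at least $4$. The isomorphism $H^2(F_{X,t})\simeq H^2(F_{Y,t})$ of \cref{lem:h2-isomorphism-codimension-one} arises from restriction to a common big open subset on which the map is an isomorphism. I will check that this restriction is injective and is a morphism of mixed Hodge structures, so the isomorphism respects Hodge structures. Type $(1,1)$ then transfers from $F_Y$ to $F_X$.

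\textbf{Reflexive forms.} The fiber $F$ has terminal, hence klt, singularities. Let $\widetilde F\to F$ be a resolution. The extension theorem for reflexive forms on klt spaces (Greb--Kebekus--Kov\'acs--Peternell) gives $H^0(F,\Omega_F^{[2]})\simeq H^0(\widetilde F,\Omega^2_{\widetilde F})$. Since $F$ has rational singularities, $H^2(F)\to H^2(\widetilde F)$ is injective and $\operatorname{Gr}_F^2$ of the two spaces agree, the latter being $H^{2,0}(\widetilde F)$. So $H^0(F,\Omega_F^{[2]})\cong H^{2,0}(F)=0$.

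\textbf{Main obstacle.} The delicate step is keeping honest track of Hodge types through the Leray comparison, especially identifying the $(1,1)$ terms of $f_U$ and $q_U$ as mixed Hodge structures and not merely in dimension. I would argue that both arise from the same variation $\mathbb V^\vee$, and that the splitting $R^1q_{U,*}\cc\cong H^1(S,\cc)_U\oplus\mathbb V^\vee$ is one of variations of Hodge structure, by semisimplicity.
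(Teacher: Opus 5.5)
Your proposal is correct and follows essentially the same route as the paper. The rank-zero case tracks Hodge types through the Leray comparison of $f_U$ with $q_U$, uses $p_g(S)=0$ to make $W_2H^2(\uc_U,\cc)$ of type $(1,1)$, and closes with the boundary sequence of \cref{lem:boundary-purity-F}; general admissible $\mv$ is then handled via the codimension-one isomorphism of \cref{lem:h2-isomorphism-codimension-one}. The only difference is the final step: you obtain $H^0(F,\Omega_F^{[2]})\cong H^{2,0}(F)$ from the Greb--Kebekus--Kov\'acs--Peternell extension theorem together with rational singularities, whereas the paper cites Schwald's theorem for klt varieties, which packages the same facts.
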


\begin{proof}
We first consider the rank-zero case. By
\cref{cor:alpha-is-albanese-up-to-isogeny}, all Albanese fibers are mutually
isomorphic, so it is enough to work with the fiber $F_{\mathrm{gen}}$ used in
\cref{lem:relative-albanese-kernel}. We retain the notation
$$
f_U:F_{\mathrm{gen},U}\longrightarrow U,
\qquad
q_U:\uc_U\longrightarrow U,
$$
and let $\mathbb V$ be the variable homology local system, with fiber
$$
\mathbb V_D
=
\ker\left(H_1(D,\cc)\longrightarrow H_1(S,\cc)\right)
$$
over $D\in U$, as in \cref{lem:invariant-cycles-bielliptic}.

For a smooth proper algebraic morphism $g:Z\to U$, the Leray spectral
sequence
$$
E_2^{p,q}(g)
=
H^p\left(U,R^qg_*\cc\right)
\Longrightarrow
H^{p+q}(Z,\cc)
$$
is a spectral sequence of mixed Hodge structures and degenerates at $E_2$.
If $\mathbb W$ is a polarizable variation of Hodge structure of weight $q$,
then $H^p(U,\mathbb W)$ has weights at least $p+q$; see
\cite[Theorem~3.7]{PetersSaito12}. Consequently, if $L^\bullet$ denotes the
Leray filtration, then
$$
\operatorname{Gr}_L^pW_2H^2(Z,\cc)
\simeq
W_2H^p\left(U,R^{2-p}g_*\cc\right).
$$

For $q_U$, the three associated graded pieces are
$$
W_2H^2(U,\cc),
\qquad
W_2H^1(U,\mathbb V^{\vee}),
\qquad
\cc(-1).
$$
Indeed,
$$
R^1q_{U,*}\cc
\simeq
H^1(S,\cc)_U\oplus\mathbb V^{\vee},
$$
and $H^1(U,\cc)=0$, so the constant summand contributes nothing.
For $f_U$, \cref{lem:relative-albanese-kernel} gives
$$
R^1f_{U,*}\cc\simeq\mathbb V^{\vee},
\qquad
R^2f_{U,*}\cc\simeq\bigwedge^2\mathbb V^{\vee}.
$$
By \cref{lem:variable-monodromy-irreducible}, the invariant part of the
second local system is the line generated by the fiberwise polarization
class. This class is flat and has Hodge type $(1,1)$ on every fiber. Hence,
as a Hodge structure,
$$
H^0\left(U,R^2f_{U,*}\cc\right)\simeq\cc(-1).
$$
It follows that the associated graded Hodge structures of
$$
W_2H^2(F_{\mathrm{gen},U},\cc)
\qquad\text{and}\qquad
W_2H^2(\uc_U,\cc)
$$
are isomorphic term by term with respect to their Leray filtrations.

By \cref{lem:universal-curve-cohomology-bielliptic}, the universal curve
$\uc$ is a projective bundle over $S$. Since a bielliptic surface has no
holomorphic two-forms, the projective bundle formula shows that
$H^2(\uc,\cc)$ is of type $(1,1)$. The exact sequence
$$
0\longrightarrow\cc(-1)
\longrightarrow H^2(\uc,\cc)
\longrightarrow W_2H^2(\uc_U,\cc)
\longrightarrow 0
$$
from the proof of \cref{lem:universal-curve-cohomology-bielliptic} therefore
shows that $W_2H^2(\uc_U,\cc)$ is of type $(1,1)$. Its Leray filtration is a
filtration by mixed Hodge substructures, so each of the three graded pieces
above is of type $(1,1)$. The corresponding associated graded pieces for
$
W_2H^2(F_{\mathrm{gen},U},\cc)
$
are therefore of type $(1,1)$. Since its Leray filtration is a filtration by
mixed Hodge substructures, $W_2H^2(F_{\mathrm{gen},U},\cc)$ is itself of
type $(1,1)$. Finally, \cref{lem:boundary-purity-F} gives an exact
sequence of mixed Hodge structures
$$
0\longrightarrow\cc(-1)
\longrightarrow H^2(F_{\mathrm{gen}},\cc)
\longrightarrow W_2H^2(F_{\mathrm{gen},U},\cc)
\longrightarrow 0.
$$
Both the subobject and the quotient are pure Hodge structures of weight two
and type $(1,1)$. Therefore $H^2(F_{\mathrm{gen}},\cc)$ is also pure of
weight two and of type $(1,1)$. The same holds for every rank-zero Albanese
fiber.

For an arbitrary admissible Mukai vector, the proof of
\cref{thm:betti-numbers-admissible} produces, for general Albanese fibers, a
birational map to a rank-zero Albanese fiber which is an isomorphism in
codimension one. The fibers are l.c.i., and their singular loci have
codimension at least four. The isomorphism of second cohomology groups supplied
by \cref{lem:h2-isomorphism-codimension-one} is an isomorphism of mixed Hodge
structures: its proof constructs it entirely from restriction morphisms to a
common smooth open subset. The rank-zero result therefore transfers to the
general fiber. Since all fibers of each Albanese morphism are mutually
isomorphic, it holds for every $F$.

The variety $F$ is projective and terminal, hence klt. By
\cite[Theorem~1]{SchwaldKLT}, $H^2(F,\cc)$ is pure and
$$
H^{2,0}(F)
\simeq
H^0\left(F,\Omega_F^{[2]}\right).
$$
The final assertion follows.
\end{proof}

We use the terminology of \cite{GGK19}. A finite surjective morphism between
normal varieties is called \textit{quasi-\'etale} if it is \'etale in codimension one.
A normal projective variety $X$ of dimension at least two is called a
Calabi--Yau variety if it has canonical singularities,
$
\omega_X\simeq\mathcal O_X,
$
and, for every finite quasi-\'etale cover $Y\to X$,
$$
H^0\left(Y,\Omega_Y^{[p]}\right)=0
\qquad
\text{for }0<p<\dim X.
$$
See \cite[Definitions~1.3 and~2.15]{GGK19}.

For a normal variety $X$, its reflexive tangent sheaf is
$\mathcal T_X:=\mathcal H om_{\mathcal O_X}(\Omega_X^1,\mathcal O_X)$. A
reflexive sheaf $\mathcal E$ on a normal projective variety $X$ of
dimension $n$ is called \textit{strongly stable} if, for every finite morphism
$f:Y\to X$ which is \'etale in codimension one and every choice of ample
divisors $H_1,\ldots,H_{n-1}$ on $Y$, the reflexive pullback
$$
f^{[*]}\mathcal E
:=
\left(f^*\mathcal E\right)^{**}
$$
is slope-stable with respect to $(H_1,\ldots,H_{n-1})$; see
\cite[Definition~7.2]{GKP16}.

\begin{theorem}\label{thm:albanese-fiber-cy-cover}
Let $(S,H)$ be a polarized bielliptic surface and let $\mv$ be a primitive
admissible Mukai vector. Assume that $H$ is $\mv$-generic and that
$
\la_S=\ell(\mv)=1.
$
Then there exists a finite quasi-\'etale cover
$$
\gamma:\widetilde F\longrightarrow F
$$
such that $\widetilde F$ is a Calabi--Yau variety in the sense of
\cite[Definition~1.3]{GGK19}. In particular, the reflexive tangent sheaf
$\mathcal T_F$ is strongly stable.
\end{theorem}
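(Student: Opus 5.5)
The plan is to use the birational Hilbert-scheme model from \cref{thm:rank-zero-hilbert-model} and the construction of Oguiso--Schr\"oer \cite{OS11}. These realize Calabi--Yau varieties as quotients of Albanese fibers of $\Hilb^n$ of a bielliptic surface, or more precisely of its canonical cover.

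\textbf{Step 1: the Hilbert-scheme model.} Since $\la_S=\ell(\mv)=1$, \cref{prop:rank-zero-rank-one} and \cref{lem:admissible-positive-rank-to-rank-zero} (together with \cref{lem:l-invariant-FM}) place $\mv$ in the Fourier--Mukai orbit of a rank-one vector. Hence $M^\circ_{H,S}(\mv)$ is birational to
$$
Y\coloneqq\Pic^0(S)\times\Hilb^{n}(S),\qquad n=\mv^2/2.
$$
Both varieties are $\qq$-factorial terminal minimal models with torsion canonical class. For $M$ this was established in the proof of \cref{thm:betti-numbers-admissible}; $Y$ is smooth with $K_Y$ torsion. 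By \cite{Kaw08} the birational map is therefore a composition of flops, hence an isomorphism in codimension one.

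\textbf{Step 2: comparing Albanese fibers.} Identify the Albanese varieties via a common resolution. Then repeat the argument in the proof of \cref{thm:betti-numbers-admissible}: a general-fiber dimension count shows that the induced map $F\dashrightarrow F_Y$ is an isomorphism in codimension one. Here $F_Y$ is an Albanese fiber of $Y$. Concretely, $F_Y$ is the fiber $K$ of the sum map $\Hilb^n(S)\to\Alb(S)=E$, possibly up to a finite étale cover coming from the $\Pic^0(S)$-factor and the isogeny between Albanese varieties.

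\textbf{Step 3: the cover of $F_Y$.} Oguiso--Schr\"oer \cite{OS11} show that $K$ admits a finite étale cover $\widetilde K$, built from the canonical cover $X=A\times B$ (which is available since $\la_S=1$). This $\widetilde K$ is a strict Calabi--Yau manifold: $\omega_{\widetilde K}\simeq\mathcal O$, it is simply connected, and $H^0(\Omega^p)=0$ for $0<p<\dim$. Since $\widetilde K$ is smooth and simply connected, it is Calabi--Yau in the sense of \cite[Definition~1.3]{GGK19}, because every quasi-étale cover of a smooth simply connected variety is trivial.

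\textbf{Step 4: transferring the cover to $F$ (main obstacle).} Choose big open subsets $F^\circ\subset F$ and $F_Y^\circ\subset F_Y$ that are identified by the birational map and have complements of codimension at least two. Restrict $\widetilde K\to F_Y$ to $F_Y^\circ$, transport it to $F^\circ$, and take the normalization of $F$ in its function field to obtain $\gamma\colon\widetilde F\to F$. By purity of the branch locus this is quasi-étale. The morphism $\widetilde F\dashrightarrow\widetilde K$ is again an isomorphism in codimension one between varieties with terminal (hence canonical) singularities and $K$ torsion, i.e.\ numerically trivial. Two points must then be checked on $\widetilde F$.

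First, $\omega_{\widetilde F}$ is trivial. A trivializing section of $\omega_{\widetilde K}$ gives one on a big open subset, and by reflexivity and normality this yields $\omega_{\widetilde F}\simeq\mathcal O$.

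Second, reflexive forms vanish on all quasi-étale covers. Any quasi-étale cover $Y'\to\widetilde F$ corresponds, again via big open subsets and normalization, to a quasi-étale cover of $\widetilde K$; these are étale and hence trivial since $\widetilde K$ is simply connected. Reflexive forms on $\widetilde F$ are determined on a big open set, so $H^0(\widetilde F,\Omega^{[p]})\cong H^0(\widetilde K,\Omega^p)=0$.

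The delicate part is proving that $\pi_1$ of the regular locus, or at least its profinite completion, is compatible under flops. I would handle this using that $\pi_1(F_{\mathrm{reg}})$ is unchanged by removing codimension-two sets, so the two varieties share a common big open set with the same fundamental group.

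\textbf{Step 5: strong stability.} Strong stability of $\mathcal T_F$ then follows from \cite[Theorem~1.4/Corollary]{GGK19} or \cite[Section~7]{GKP16}: the tangent sheaf of a Calabi--Yau variety is strongly stable, and strong stability descends along quasi-étale covers since the definition involves all quasi-étale covers.
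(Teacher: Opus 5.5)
Your proposal is correct and follows essentially the same route as the paper: Kawamata's flop theorem for the birational map to $\Pic^0(S)\times\Hilb^n(S)$, a codimension-one comparison of general Albanese fibers with $K_n(S)$, normalization of $F$ in the function field of the Oguiso--Schr\"oer universal cover $Z_n\to K_n(S)$, and verification of the Calabi--Yau conditions on a common big open subset via reflexivity and purity of the branch locus. The one step you should spell out is Step~5, because ``the definition involves all quasi-\'etale covers'' does not by itself handle a cover $F'\to F$ that does not factor through $\widetilde F$. As in the paper, take a component of the normalization of $\widetilde F\times_F F'$; it must be $\widetilde F$ itself, since you showed $\widetilde F$ has no nontrivial quasi-\'etale covers, so $\widetilde F$ dominates $F'$, and any destabilizing subsheaf of $\mathcal T_{F'}$ would pull back (slopes scale by the degree) to destabilize $\mathcal T_{\widetilde F}$.
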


\begin{proof}
Set $n\coloneqq\mv^2/2$. Let
$
\mw=(0,D,\chi)
$
be the rank-zero reduction used in the proof of
\cref{thm:betti-numbers-admissible}. By
\cref{lem:l-invariant-under-autoeq},
$
\ell(\mw)=\ell(\mv)=1.
$
Since $\la_S=1$, \cref{thm:rank-zero-hilbert-model} and the remark following
it give a birational map
$$
M^\circ_{H',S}(\mw)
\dashrightarrow
\Pic^0(S)\times\Hilb^n(S).
$$

The varieties on both sides are projective $\qq$-factorial terminal minimal
models with torsion canonical divisor. Hence the birational map is an
isomorphism in codimension one by \cite[Theorem~1]{Kaw08}. Common resolutions
identify the Picard varieties, and hence the Albanese varieties, of all the
birational models involved. After translations of the targets, their
Albanese morphisms agree on the common domains of the birational maps. For a
general point of the Albanese variety, the complements of these common
domains meet the corresponding fibers in codimension at least two. Thus the
induced birational maps between the general Albanese fibers are isomorphisms
in codimension one.

The Albanese fiber of
$$
\Pic^0(S)\times\Hilb^n(S)
$$
is isomorphic to an Albanese fiber $K_n(S)$ of $\Hilb^n(S)$. Since all the
Albanese morphisms under consideration are locally constant analytic
fibrations, every fiber is isomorphic to a general fiber. Combining this with
the codimension-one comparison in the proof of
\cref{thm:betti-numbers-admissible} gives a birational map
$$
F\dashrightarrow K_n(S)
$$
which is an isomorphism in codimension one.

By \cite[Theorem~3.5 and its proof]{OS11}, the fundamental group of $K_n(S)$
is finite and its universal cover
$$
\tau:Z_n\longrightarrow K_n(S)
$$
is a smooth simply connected Calabi--Yau manifold of dimension $2n-1$.
Choose open subsets
$$
U\subset F,
\qquad
V\subset K_n(S)
$$
whose complements have codimension at least two and for which $U\simeq V$.
Since $K_n(S)$ is smooth, removing a subset of complex codimension at least
two does not change its fundamental group. Therefore
$$
\pi_1(V)\simeq\pi_1(K_n(S)),
$$
and $\tau^{-1}(V)$ is connected.

Via the identification $U\simeq V$, we identify the function fields of $F$
and $K_n(S)$. Let $\widetilde F$ be the normalization of $F$ in the resulting
finite extension $\cc(Z_n)/\cc(F)$. This gives a finite morphism
$$
\gamma:\widetilde F\longrightarrow F
$$
whose restriction over $U$ is identified with
$\tau^{-1}(V)\longrightarrow V$. It is therefore \'etale over $U$, and hence
quasi-\'etale. Because finite
morphisms preserve codimension, the complement of $\tau^{-1}(V)$ has
codimension at least two in both $Z_n$ and $\widetilde F$. Thus these
varieties contain an isomorphic common big open subset.

Since $\gamma$ is quasi-\'etale and $F$ is terminal, the variety
$\widetilde F$ is klt by \cite[Proposition~5.20]{KollarMori}. Its reflexive
canonical sheaf agrees with $\omega_{Z_n}$ on the common big open subset.
Since
$
\omega_{Z_n}\simeq\mathcal O_{Z_n},
$
and rank-one reflexive sheaves are determined in codimension one, one has
$$
\omega_{\widetilde F}\simeq\mathcal O_{\widetilde F}.
$$
Thus $K_{\widetilde F}$ is Cartier. A klt variety with Cartier canonical
divisor has canonical singularities. Hence $\widetilde F$ has canonical
singularities and trivial canonical sheaf. Since reflexive forms are
determined in codimension one, for every $p$ one has
$$
H^0\left(\widetilde F,\Omega_{\widetilde F}^{[p]}\right)
\simeq
H^0\left(Z_n,\Omega_{Z_n}^p\right).
$$
Consequently,
$
H^0\left(\widetilde F,\Omega_{\widetilde F}^{[p]}\right)=0
$
for $0<p<\dim\widetilde F$.

It remains to verify this condition after every finite quasi-\'etale cover.
Let $T\to\widetilde F$ be a connected finite quasi-\'etale cover. A connected
normal variety remains connected after removing a subset of codimension at
least two. Thus, on the common big open subset with $Z_n$, after removing a
further subset of codimension at least two, the morphism induces a connected
finite \'etale cover. Normalizing $Z_n$ in $\cc(T)$ gives a finite morphism
which is \'etale in codimension one. Since $Z_n$ is smooth, purity of the
branch locus \cite[Tag~0BMB]{Stacks} shows that this morphism is \'etale
everywhere. Since $Z_n$ is simply connected, the cover is trivial. Hence
$\cc(T)=\cc(\widetilde F)$, and the finite birational morphism
$T\to\widetilde F$ is an isomorphism.

Thus $\widetilde F$ is a Calabi--Yau variety in the sense of
\cite[Definition~1.3]{GGK19}. By \cite[Corollaries~4.8 and~7.4 and
Proposition~12.10]{GGK19}, the tangent sheaf $\mathcal T_{\widetilde F}$ is
stable with respect to every ample polarization.

Let $f:F'\to F$ be a connected finite quasi-\'etale cover, and let $W$ be a
connected component of the normalization of
$$
\widetilde F\times_FF'.
$$
Then $W\to\widetilde F$ is a connected finite quasi-\'etale cover. By the
preceding paragraph, $\widetilde F$ has no nontrivial connected finite
quasi-\'etale covers, so $W\simeq\widetilde F$. In particular, there is a
finite quasi-\'etale morphism
$$
\gamma':\widetilde F\longrightarrow F'.
$$

Suppose that $\mathcal T_{F'}$ were unstable with respect to an ample
polarization $A$ on $F'$. Pulling a saturated destabilizing subsheaf back to
$\widetilde F$ and taking its reflexive hull would destabilize
$$
\mathcal T_{\widetilde F}
\simeq
\left((\gamma')^*\mathcal T_{F'}\right)^{**}
$$
with respect to $(\gamma')^*A$, because slopes are multiplied by the degree
of the finite morphism. This is impossible. Hence $\mathcal T_{F'}$ is stable
for every connected finite quasi-\'etale cover $F'\to F$ and every ample
polarization. Thus $\mathcal T_F$ is strongly stable.
\end{proof}

\appendix
\newpage
\section{Cohomological tools}\label[appendix]{app:cohomological-tools}
\thispagestyle{plain}
\begin{lemma}\label{lem:A1}
\cite[Lemma 4.10]{Sac19} Let $q \colon \uc \longrightarrow B$ be a family of smooth curves, and let $h \colon \bar{J}^d_{\uc} \longrightarrow B$ be the degree $d$ relative compactified Jacobian. For every $i$, there is a natural morphism
$$
R^ih_*\qq_{\bar{J}^d_{\uc}}
\longrightarrow
R^iq_*\qq_{\uc},
$$
which is an isomorphism for $i = 1$. Moreover, the same holds for the higher direct images of the structure sheaves.
\end{lemma}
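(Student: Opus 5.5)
Since the fibers of $q$ are smooth, $h\colon\bar J^d_{\uc}\to B$ is the relative Picard torsor $\Pic^d(\uc/B)\to B$. It is a smooth proper morphism whose fibers are torsors under the abelian varieties $\Pic^0(C_b)$. The plan is to build the morphism $R^ih_*\qq\to R^iq_*\qq$ from relative Abel--Jacobi maps. Then the isomorphism in degree one reduces to the classical fact that the Abel--Jacobi map of a smooth curve induces an isomorphism on $H^1$.

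\emph{Construction.} Locally on $B$, in the analytic or \'etale topology, choose a section $s$ of $q$. This is possible because $q$ is smooth. Define
$$
\operatorname{AJ}_s\colon \uc\longrightarrow \bar J^d_{\uc},
\qquad
x\longmapsto \mathcal O_{C_b}\bigl(x+(d-1)s(b)\bigr),
$$
and take the pullback morphism $\operatorname{AJ}_s^*\colon R^ih_*\qq\to R^iq_*\qq$. If $s'$ is another local section, then $\operatorname{AJ}_{s'}=t\circ\operatorname{AJ}_s$, where $t$ is fiberwise translation by the class of $\mathcal O_{C_b}\bigl((d-1)(s'(b)-s(b))\bigr)\in\Pic^0(C_b)$. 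Translation by an element of a connected group is homotopic to the identity, so it acts trivially on the cohomology of the fibers. Hence it acts trivially on $R^ih_*\qq$, and the local pullbacks coincide on overlaps. They therefore glue to a natural global morphism, independent of all choices. I expect this gluing step to be the only subtle point, because in general no global section exists. It is exactly here that one uses that translations act trivially on cohomology, and also that the fiberwise homotopy from $t$ to the identity can be chosen to vary continuously, along a path to $0$ in the connected fibers of $\Pic^0(\uc/B)$.

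\emph{Isomorphism for $i=1$.} Both sheaves are local systems, since $h$ and $q$ are smooth and proper (Ehresmann), so it suffices to check the morphism on stalks. On a fiber, $\operatorname{AJ}\colon C\to\Pic^d(C)$ is, up to translation, the Albanese map of $C$. Hence it induces an isomorphism $H_1(C,\zz)\xrightarrow{\sim}H_1(\Pic^d(C),\zz)$, and dually an isomorphism on $H^1(-,\qq)$. For the structure sheaves, the same construction gives $R^ih_*\mathcal O\to R^iq_*\mathcal O$. The fiber dimensions $h^i(\Pic^d(C_b),\mathcal O)$ and $h^i(C_b,\mathcal O)$ are constant in $b$, so cohomology and base change makes both sheaves locally free with the expected fibers. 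On a fiber, $H^1(\Pic^d(C),\mathcal O)\cong T_0\Pic^0(C)\cong H^1(C,\mathcal O_C)$, and $\operatorname{AJ}^*$ realizes this identification, for instance as the $(0,1)$-part of the Hodge-theoretic isomorphism on $H^1(-,\cc)$ established above. A morphism of locally free sheaves that is an isomorphism on every fiber is an isomorphism, which completes the argument.
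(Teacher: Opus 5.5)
Your argument is correct. The paper gives no proof of this lemma; it simply imports the statement from \cite[Lemma~4.10]{Sac19}. Your proposal therefore supplies a self-contained argument, and it follows the standard route. For smooth fibers, $h$ is a torsor under the abelian scheme $\Pic^0(\uc/B)$. Locally defined relative Abel--Jacobi maps differ by translations by sections of $\Pic^0(\uc/B)$; these act trivially on fiber cohomology, so the local pullbacks glue. On each fiber the Abel--Jacobi map is the Albanese map of the curve, hence an isomorphism on $H^1$.

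Each of these steps uses smoothness of the fibers in an essential way. So your proof establishes exactly the statement as formulated, and exactly what \cref{lem:open-b2-full-M} uses over the smooth-support locus. It would not by itself extend to compactified Jacobians of singular curves: there one needs a compactified Abel--Jacobi map, and the fiberwise $H^1$ comparison is no longer the Albanese property. That extra generality is what relying on a citation could buy, and the paper does not need it.

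There are two small points.
\begin{itemize}
\item In the topological gluing you do not need the homotopy from $t$ to the identity to vary continuously in $b$. A morphism of sheaves is determined on stalks, and by proper base change the stalk of $t^*$ at $b$ is $t_b^*$ on $H^i(h^{-1}(b),\qq)$, which is the identity.
\item For the structure sheaves, homotopy invariance says nothing, so ``the same construction'' needs one more line to glue.
  \begin{itemize}
  \item Translations act trivially on $H^i(\Pic^d(C_b),\mathcal O)\cong\bigwedge^iH^1(\Pic^d(C_b),\mathcal O)$. Hence $t^*-\mathrm{id}$ is a morphism of locally free sheaves, commuting with base change, that vanishes on every fiber, so it is zero over a reduced base.
  \item Alternatively, $t^*$ preserves the Hodge filtration and is the identity on $R^ih_*\cc$. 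Hence it is the identity on $R^ih_*\mathcal O\cong(R^ih_*\cc\otimes\mathcal O_B)/F^1$.
  \end{itemize}
\end{itemize}
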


\begin{lemma}\label{lem:low-degree-purity-one-boundary}
Let $X$ be a projective complex variety with terminal l.c.i. singularities.
Let $U\subset X$ be a Zariski open subset such that $X\setminus U=D\cup Z$,
where $D$ is an irreducible $\qq$-Cartier divisor and
$\operatorname{codim}_X Z\geqslant 2$.
Then
$$
H^1_{X\setminus U}(X,\cc)=0,
\qquad
H^2_{X\setminus U}(X,\cc)\simeq\cc(-1),
$$
and the natural map
$$
H^2_{X\setminus U}(X,\cc)\longrightarrow H^2(X,\cc)
$$
is injective. Consequently, restriction induces an isomorphism
$$
H^1(X,\cc)\simeq H^1(U,\cc),
$$
and the mixed-Hodge localization sequence gives an exact sequence
$$
0\longrightarrow \cc(-1)
\longrightarrow H^2(X,\cc)
\longrightarrow W_2H^2(U,\cc)
\longrightarrow 0.
$$
\end{lemma}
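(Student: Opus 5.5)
The plan is to pass to a log resolution and use the Gysin/Thom isomorphism on the smooth part of the boundary, controlling the singular and codimension-two parts by depth estimates for local cohomology. First, the triple $(X, X\setminus U, U)$ gives the long exact sequence
$$
\cdots\to H^i_{X\setminus U}(X,\cc)\to H^i(X,\cc)\to H^i(U,\cc)\to H^{i+1}_{X\setminus U}(X,\cc)\to\cdots,
$$
which is a sequence of mixed Hodge structures. Everything then reduces to three claims: $H^1_{X\setminus U}=0$, $H^2_{X\setminus U}\simeq\cc(-1)$, and injectivity of $H^2_{X\setminus U}(X)\to H^2(X)$. Granting them, $H^1(X)\to H^1(U)$ is injective, and its cokernel injects into $H^2_{X\setminus U}(X)$. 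That map is injective, so the connecting map is zero, and $H^1(X)\simeq H^1(U)$. In degree two, $H^2(X)$ is pure of weight two, since $X$ is projective with rational singularities, and $H^3_{X\setminus U}(X)$ has weights $\geqslant 3$ by the same local analysis. Hence the image of $H^2(X)\to H^2(U)$ is exactly $W_2H^2(U)$, which gives the short exact sequence.

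For the local cohomology I will use excision together with a stratification. Let $\Sigma = Z\cup \operatorname{Sing}(X)\cup \operatorname{Sing}(D_{\mathrm{red}})$; this has codimension at least $2$ in $X$. For a closed subset $T$ of real codimension $2c$ in a variety whose local topology is controlled, $H^i_T(X,\cc)=0$ for $i<2c$. For the l.c.i. ambient space $X$ I will invoke the rectified homotopical depth of l.c.i. varieties (Hamm--Lê), which gives depth $\geqslant 2\dim X$, hence $H^i_T(X)=0$ for $i<2\operatorname{codim}T$. Applied to $T=\Sigma$, this gives $H^i_\Sigma(X)=0$ for $i\leqslant 3$. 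The sequence of the pair $(X\setminus U,\Sigma)$ then yields $H^i_{X\setminus U}(X)\simeq H^i_{D^\circ}(X\setminus\Sigma)$ for $i\leqslant 2$, with an injection for $i=3$, where $D^\circ = D\setminus\Sigma$ is a smooth irreducible divisor in the smooth variety $X\setminus\Sigma$. The Thom isomorphism gives $H^i_{D^\circ}(X\setminus\Sigma)\simeq H^{i-2}(D^\circ)(-1)$. This equals $0$ for $i=1$ and $\cc(-1)$ for $i=2$ because $D^\circ$ is connected, and it has weights $\geqslant 3$ for $i=3$, since $H^1(D^\circ)$ has weights $\geqslant1$. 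This settles the first two claims and the weight bound used above.

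The remaining step, which I expect to be the main obstacle, is the injectivity of $\cc(-1)=H^2_{X\setminus U}(X)\to H^2(X)$. The image of the generator is the fundamental (cycle) class of $D$. Since $D$ is $\qq$-Cartier, some multiple $mD$ is Cartier, so its class is $c_1(\mathcal O_X(mD))$. I intend to show this class is nonzero by intersecting with an ample class $A$ and computing $(mD)\cdot A^{\dim X-1}>0$, which holds because $D$ is a nonzero effective divisor on a projective variety. The delicate point is checking that the local-cohomology generator really maps to $\frac1m c_1(\mathcal O_X(mD))$, compatibly with the Thom class on $D^\circ$; I will verify this on $X\setminus\Sigma$, where both restrict to the Thom class of $D^\circ$, and then use that restriction $H^2(X)\to H^2(X\setminus\Sigma)$ is injective by the same depth vanishing.
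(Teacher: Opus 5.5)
Your strategy matches the paper's up to the last step: localization sequence, excision of a codimension-two set $\Sigma$, the Thom isomorphism on the smooth connected divisor $D^\circ$, and nonvanishing of $[D]$ via its degree against an ample class. The depth input you use, however, is false.

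\textbf{The depth bound.} Hamm--L\^e give rectified homotopical depth $\geqslant\dim X$ (not $2\dim X$) for an l.c.i.\ variety. Equivalently, $\cc_X[\dim X]$ is perverse. For a closed subset $T$ this only yields $H^i_T(X,\cc)=0$ for $i<\operatorname{codim}_XT$. The bound $i<2\operatorname{codim}T$ is a statement about the smooth locus (or rational homology manifolds).

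Terminality only gives $\operatorname{codim}\operatorname{Sing}(X)\geqslant3$. So you get $H^i_\Sigma(X,\cc)=0$ for $i\leqslant2$, but not for $i=3$. At a threefold ordinary double point $v$ the link is $S^2\times S^3$, and $H^3_v(X,\cc)\cong H^2(\text{link})\cong\cc(-1)$.

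\textbf{What survives.} The vanishing in degrees $\leqslant2$ still gives $H^1_{X\setminus U}(X)=0$ and an injection $H^2_{X\setminus U}(X)\hookrightarrow\cc(-1)$. Since $[D]\neq0$ lies in $\ker(H^2(X)\to H^2(U))=\operatorname{Im}H^2_{X\setminus U}(X)$, you get $H^2_{X\setminus U}(X)\cong\cc(-1)$ and injectivity at once. This is the paper's shortcut, and it makes your Thom-class compatibility check unnecessary.

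\textbf{What breaks.} The injection $H^3_{X\setminus U}(X)\hookrightarrow H^1(D^\circ)(-1)$ fails, and with it the weight bound $\geqslant3$. That bound was your only input for $\operatorname{Im}(H^2(X)\to H^2(U))=W_2H^2(U)$.

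\textbf{The last step cannot be repaired as stated: the final exact sequence is false.} Take $X=\{x_0x_1=x_2x_3\}\subset\ps^4$, the projective cone over a smooth quadric surface $Q_0$ with vertex $v$. This is a terminal hypersurface singularity. Set $D=X\cap\{x_4=0\}\cong Q_0$ and $Z=\{v\}$.
\begin{itemize}
\item $X\setminus D$ is the affine cone, which is contractible. Hence $H^2(X)\cong H^0(Q_0)(-1)=\cc\cdot[D]$, and $H^2(X)\to H^2(U)$ is zero.
\item $U$ is a $\cc^*$-bundle over $Q_0$ with Euler class $\pm(1,1)$. The Gysin sequence gives $H^2(U)\cong H^2(Q_0)/\cc(1,1)\cong\cc(-1)$, which is pure of weight two and nonzero.
\end{itemize}
So the lemma's sequence would read $1=1+1$. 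The connecting map sends $H^2(U)$ isomorphically onto $H^3_v(X)\cong\cc(-1)$, which has weight $2$.

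\textbf{The paper's proof has the same hole.} It obtains $\operatorname{Im}=W_2$ by citing Deligne's result, which requires a smooth compactification.

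\textbf{The repair.} Add the hypothesis $\operatorname{codim}_X\operatorname{Sing}(X)\geqslant4$. It holds in all the paper's applications: $\uc$ and $\mathfrak C$ are smooth, and $M$ and $F_{\mathrm{gen}}$ have singular loci of codimension at least four by the estimate used in the proof of \cref{thm:betti-numbers-admissible}.
\begin{itemize}
\item Perversity then kills $H^3_{\operatorname{Sing}(X)}(X)$.
\item $\Sigma\cap X_{\mathrm{reg}}$ has codimension $\geqslant2$ in a smooth variety, so it contributes nothing in degrees $\leqslant3$.
\item Hence $H^3_\Sigma(X)=0$, your degree-three injection holds, and your weight argument becomes a correct and more explicit proof of the last step than the paper's citation.
\end{itemize}
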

\begin{proof}
Let $Y\coloneqq X\setminus U=D\cup Z$.
We first isolate the contribution of the divisorial part of the boundary.
Since \(X\) has terminal singularities, it is smooth in codimension two; equivalently,
$$
\operatorname{codim}_X\operatorname{Sing}(X)\geqslant 3
$$
by \cite[Corollary 5.18]{KollarMori}. Moreover, the sheaf $\cc_X[\dim X]$ is perverse because \(X\) is l.c.i.; see \cite[Theorem 5.1.20]{DimcaSheaves}. By the support-vanishing property for perverse sheaves, if \(T\subset X\) is a closed subset of codimension at least \(c\), then $\cc_X[\dim X]$ has no cohomology with supports on \(T\) in the corresponding low degrees. In the present situation, since
$$
\operatorname{codim}_X\operatorname{Sing}(X)\geqslant 3,
$$
this gives
$$
H^i_{\operatorname{Sing}(X)}(X,\cc)=0
\qquad\text{for }i<3.
$$
In particular,
$$
H^1_{\operatorname{Sing}(X)}(X,\cc)=
H^2_{\operatorname{Sing}(X)}(X,\cc)=0.
$$
Set $X^\circ\coloneqq X_{\mathrm{reg}}$.
On $X^\circ$, Thom--Gysin purity for the divisorial part of the boundary
and purity for subsets of codimension at least two give
$$
H^1_{Y\cap X^\circ}(X^\circ,\cc)=0.
$$
The restriction sequence for cohomology with supports therefore gives
$$
H^1_Y(X,\cc)=0.
$$
The restriction sequence for supports gives an injection
$$
H^2_Y(X,\cc)\hookrightarrow H^2_{Y\cap X^\circ}(X^\circ,\cc).
$$

We now compute the target. The variety $X^\circ$ is smooth. Put
$D^\circ\coloneqq D\cap X^\circ$. Since $D$ is irreducible, there exists a
dense smooth open subset $D_{\mathrm{sm}}\subset D^\circ$. Let
$T\subset X^\circ$ be the union of $Z\cap X^\circ$ and
$D^\circ\setminus D_{\mathrm{sm}}$. Then $T$ has codimension at least two in
$X^\circ$. By excision and the usual purity theorem for cohomology with
supports on a smooth variety, closed subsets of codimension at least two do
not contribute to $H^2$ with supports. Therefore
$$
H^2_{Y\cap X^\circ}(X^\circ,\cc)
\simeq
H^2_{D_{\mathrm{sm}}}(X^\circ\setminus T,\cc).
$$
By the Thom--Gysin purity isomorphism for a smooth divisor in a smooth complex variety,
$$
H^2_{D_{\mathrm{sm}}}(X^\circ\setminus T,\cc)
\simeq
H^0(D_{\mathrm{sm}},\cc)(-1).
$$
Since $D$ is irreducible, the open subset $D_{\mathrm{sm}}$ is connected. Hence
$$
H^0(D_{\mathrm{sm}},\cc)\simeq \cc,
$$
and therefore
$$
H^2_{Y\cap X^\circ}(X^\circ,\cc)\simeq \cc(-1).
$$
Since \(H^2_Y(X,\cc)\) injects into this one-dimensional vector space, we have
$$
\dim H^2_Y(X,\cc)\leqslant 1.
$$

On the other hand, choose $r\geqslant 1$ such that $rD$ is Cartier. Its
cohomology class
$$
[D]\coloneqq \frac{1}{r}c_1(\mathcal O_X(rD))\in H^2(X,\cc)
$$
is nonzero. Indeed, if $[D]=0$, then for an ample divisor $H$ on $X$ one
would have
$$
D\cdot H^{\dim X-1}=0,
$$
contradicting the positivity of the degree of the nonzero effective Cartier
divisor $rD$ against $H^{\dim X-1}$. Since the canonical section of
$\mathcal O_X(rD)$ is nowhere vanishing on
$$
U=X\setminus Y,
$$
the class $[D]$ lies in the kernel of the restriction map
$$
H^2(X,\cc)\longrightarrow H^2(U,\cc).
$$
By exactness of the localization sequence
$$
H^2_Y(X,\cc)\longrightarrow H^2(X,\cc)\longrightarrow H^2(U,\cc),
$$
this nonzero class comes from $H^2_Y(X,\cc)$. Hence
$H^2_Y(X,\cc)\neq 0$. Together with the inequality above, this gives
$$
H^2_Y(X,\cc)\simeq \cc(-1),
$$
and the image of
$$
H^2_Y(X,\cc)\longrightarrow H^2(X,\cc)
$$
is the one-dimensional subspace generated by \([D]\).

Thus the localization sequence gives an exact sequence
$$
0\longrightarrow \cc(-1)
\longrightarrow H^2(X,\cc)
\longrightarrow \operatorname{Im}\bigl(H^2(X,\cc)\to H^2(U,\cc)\bigr)
\longrightarrow 0.
$$
Finally, functoriality and the weight bounds for proper varieties imply that
$$
\operatorname{Im}\bigl(H^2(X,\cc)\longrightarrow H^2(U,\cc)\bigr)
=
W_2H^2(U,\cc);
$$
see \cite[Corollaire~3.2.17]{Del71} and
\cite[Th\'eor\`eme~8.2.4 and Proposition~8.2.5]{Del74}. Therefore
$$
0\longrightarrow \cc(-1)
\longrightarrow H^2(X,\cc)
\longrightarrow W_2H^2(U,\cc)
\longrightarrow 0.
$$
All maps in the displayed sequences are morphisms of mixed Hodge structures,
so the final sequence is exact in the category of mixed Hodge structures.
\end{proof}

\begin{lemma}\label{lem:connected-torus-kernel}
Let
$
\varphi\colon A\longrightarrow B
$
be a surjective homomorphism of complex tori. Then there is a natural isomorphism
$$
\pi_0(\ker\varphi)
\simeq
\operatorname{coker}\left(H_1(A,\zz)\longrightarrow H_1(B,\zz)\right).
$$
In particular, $\ker\varphi$ is connected if and only if the induced map on integral first homology is surjective.
\end{lemma}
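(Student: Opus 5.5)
Write $\varphi\colon A=V/\Lambda\to B=V'/\Lambda'$, and let $d\varphi\colon V\to V'$ be the induced $\cc$-linear map, which is surjective because $\varphi$ is. Under the identifications $H_1(A,\zz)\simeq\Lambda$ and $H_1(B,\zz)\simeq\Lambda'$, the map $\varphi_*$ on first homology is the restriction $d\varphi|_\Lambda\colon\Lambda\to\Lambda'$.

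Set $K\coloneqq\ker\varphi$. It is a closed subgroup of $A$, hence a compact complex Lie group; its identity component $K^0$ is a subtorus and $\pi_0(K)=K/K^0$ is finite. Explicitly, $K=d\varphi^{-1}(\Lambda')/\Lambda$. Put $W\coloneqq\ker d\varphi$. Then $K^0$ is the image of $W$, that is $K^0=(W+\Lambda)/\Lambda$; the group $W+\Lambda$ is closed in $V$ because $K^0$ is compact and $W$ is its Lie algebra.

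The isomorphism is then constructed directly. Define
\[
\pi_0(K)=d\varphi^{-1}(\Lambda')/(W+\Lambda)\longrightarrow \Lambda'/d\varphi(\Lambda),\qquad x\longmapsto d\varphi(x).
\]
This map is well defined, since $d\varphi(W+\Lambda)=d\varphi(\Lambda)$. It is surjective because $d\varphi$ is surjective, so every $\lambda'\in\Lambda'$ has a preimage, which automatically lies in $d\varphi^{-1}(\Lambda')$. It is injective because $d\varphi(x)=d\varphi(\lambda)$ with $\lambda\in\Lambda$ forces $x-\lambda\in W$, so $x\in W+\Lambda$.

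The right-hand side is exactly $\operatorname{coker}\bigl(H_1(A,\zz)\to H_1(B,\zz)\bigr)$, and the construction involves no choices beyond the canonical uniformizations, so the isomorphism is natural. The final equivalence follows at once, since $\ker\varphi$ is connected exactly when $\pi_0(\ker\varphi)$ is trivial, that is, when this cokernel vanishes.

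The only step needing some care is the identification of the identity component, namely that $(W+\Lambda)/\Lambda$ is exactly $K^0$. It is connected as the image of the vector space $W$. It is also open in $K$, because $K$ and $(W+\Lambda)/\Lambda$ have the same Lie algebra $W$ and so the latter contains a neighbourhood of the identity in $K$. Everything else is routine lattice bookkeeping.
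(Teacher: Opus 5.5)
Your proof is correct and follows essentially the same route as the paper. Both arguments lift to the universal covers and write $\ker\varphi=d\varphi^{-1}(\Lambda')/\Lambda$. Both identify the identity component with the image of $\ker d\varphi$. Both then let $d\varphi$ induce the isomorphism $\pi_0(\ker\varphi)\simeq\Lambda'/d\varphi(\Lambda)$. You simply spell out the well-definedness, bijectivity, and identity-component checks that the paper leaves implicit.
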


\begin{proof}
Write $A=V_A/\Lambda_A$ and $B=V_B/\Lambda_B$, where
$
\Lambda_A=H_1(A,\zz),
\qquad
\Lambda_B=H_1(B,\zz),
$
and let $\widetilde\varphi\colon V_A\to V_B$ be the complex-linear lift of $\varphi$. Since $\varphi$ is surjective, so is $\widetilde\varphi$. Moreover,
$
\ker\varphi=\widetilde\varphi^{-1}(\Lambda_B)/\Lambda_A,
$
while its identity component is
$
(\ker\widetilde\varphi)/(\ker\widetilde\varphi\cap\Lambda_A).
$
The homomorphism induced by $\widetilde\varphi$ identifies the quotient of $\ker\varphi$ by its identity component with
$
\Lambda_B/\widetilde\varphi(\Lambda_A).
$
Since $\widetilde\varphi|_{\Lambda_A}$ is the map induced by $\varphi$ on integral first homology, the assertion follows.
\end{proof}

\begin{lemma}\label{lem:h2-isomorphism-codimension-one}
Let $X$ and $Y$ be irreducible complex algebraic varieties which are local
complete intersections. Assume that
$
\operatorname{codim}_X\operatorname{Sing}(X)\geqslant 4$
and
$
\operatorname{codim}_Y\operatorname{Sing}(Y)\geqslant 4.
$
Suppose that there is a birational map
$
\varphi:X\dashrightarrow Y
$
which is an isomorphism in codimension one. Then $\varphi$ induces an
isomorphism of mixed Hodge structures
$$
H^2(X,\cc)\simeq H^2(Y,\cc).
$$
In particular, $b_2(X)=b_2(Y)$.
\end{lemma}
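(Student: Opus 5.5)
The plan is to pass to a common big open subset and show that neither variety's $H^2$ sees the removed locus. Choose Zariski open subsets $U_X\subset X$ and $U_Y\subset Y$ with $\varphi|_{U_X}\colon U_X\xrightarrow{\sim}U_Y$ and $\operatorname{codim}_X(X\setminus U_X)\geqslant 2$, $\operatorname{codim}_Y(Y\setminus U_Y)\geqslant 2$. We will show that restriction $H^2(X,\cc)\to H^2(U_X,\cc)$ is an isomorphism, and likewise for $Y$. The desired isomorphism is then the composite
$$
H^2(X,\cc)\xrightarrow{\sim}H^2(U_X,\cc)\simeq H^2(U_Y,\cc)\xleftarrow{\sim}H^2(Y,\cc).
$$
Every arrow here is a restriction or pullback along an algebraic morphism, so the composite is an isomorphism of mixed Hodge structures by functoriality of Deligne's theory.

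By the long exact sequence for cohomology with supports, it suffices to prove that $H^i_T(X,\cc)=0$ for $i\leqslant 3$, where $T\coloneqq X\setminus U_X$. We split $T$ into $T\cap X_{\mathrm{reg}}$ and $T\cap\operatorname{Sing}(X)$.

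On the singular part, $X$ is l.c.i., so $\cc_X[\dim X]$ is perverse, as already used in the proof of \cref{lem:low-degree-purity-one-boundary}. The support condition for perverse sheaves, dualized via Verdier duality, gives
$$
H^i_{\Sigma}(X,\cc)=0\qquad\text{for } i<\operatorname{codim}_X\Sigma
$$
for any closed $\Sigma$. With $\Sigma=\operatorname{Sing}(X)$ and $\operatorname{codim}\geqslant 4$, this yields vanishing for $i\leqslant 3$.

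On the regular part, $X_{\mathrm{reg}}$ is smooth, and $T\cap X_{\mathrm{reg}}$ has complex codimension at least $2$, hence real codimension at least $4$. Standard purity (stratify and use Thom isomorphisms) gives $H^i_{T\cap X_{\mathrm{reg}}}(X_{\mathrm{reg}},\cc)=0$ for $i\leqslant 3$.

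To glue the two parts, use the excision/long exact sequence
$$
\cdots\to H^i_{T\cap\Sigma}(X)\to H^i_T(X)\to H^i_{T\setminus\Sigma}(X\setminus\Sigma)\to\cdots
$$
with $\Sigma=\operatorname{Sing}(X)$. Since $T\cap\Sigma\subset\Sigma$ has codimension $\geqslant 4$, the same perverse estimate kills $H^i_{T\cap\Sigma}$ for $i\leqslant 3$, and therefore $H^i_T(X,\cc)=0$ for $i\leqslant 3$. In particular both $H^2_T$ and $H^3_T$ vanish, so restriction on $H^2$ is bijective. The same argument applies verbatim to $Y$.

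The main obstacle is stating the perverse support estimate precisely: the needed vanishing of $H^i_\Sigma$ in degrees below $\operatorname{codim}\Sigma$ for the constant sheaf of an l.c.i. variety. This requires the rectified homotopical depth of an l.c.i. variety being $\dim X$ (Hamm–Lê, or Dimca's Theorem 5.1.20), combined with the support condition $\mathcal H^{j}(i_\Sigma^!\,\cc_X[n])=0$ for $j<-\dim\Sigma$. Passing from this stalk vanishing to vanishing of global cohomology with supports then uses the hypercohomology spectral sequence. The margin of $4$ is what makes $H^3_T$, and not only $H^2_T$, vanish, which is exactly what surjectivity of restriction requires.
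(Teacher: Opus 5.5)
Your proposal is correct and follows essentially the same route as the paper: perverse support vanishing for the constant sheaf of an l.c.i.\ variety kills cohomology with supports on $\operatorname{Sing}(X)$ in degrees $<4$, and smooth purity kills it on the regular part of the complement of the big open subset. The only difference is bookkeeping. The paper restricts in two stages, $X\supset X^{\mathrm{reg}}\supset U_X\cap X^{\mathrm{reg}}$, whereas you restrict once to $U_X$ and split the supports with the nested-supports exact sequence, which makes explicit the degree-$3$ vanishing needed for surjectivity.
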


\begin{proof}
Let $X^{\mathrm{reg}}$ and $Y^{\mathrm{reg}}$ denote the regular loci. The
support-vanishing argument used in the proof of
\cref{lem:low-degree-purity-one-boundary} gives
$$
H^2(X,\cc)\xrightarrow{\ \sim\ }
H^2(X^{\mathrm{reg}},\cc),
\qquad
H^2(Y,\cc)\xrightarrow{\ \sim\ }
H^2(Y^{\mathrm{reg}},\cc),
$$
because the singular loci have codimension at least four.

Because $\varphi$ is an isomorphism in codimension one, there are open
subsets $U_X\subset X$ and $U_Y\subset Y$ whose complements have codimension
at least two and for which $\varphi$ restricts to an isomorphism
$U_X\simeq U_Y$. Intersecting with the regular loci and using the fact that an
isomorphism preserves regularity, we obtain a common smooth open subset $U$
such that
$$
\operatorname{codim}_{X^{\mathrm{reg}}}
(X^{\mathrm{reg}}\setminus U)\geqslant 2,
\qquad
\operatorname{codim}_{Y^{\mathrm{reg}}}
(Y^{\mathrm{reg}}\setminus U)\geqslant 2.
$$

Purity for cohomology with supports on a smooth variety gives
$$
H^i_{X^{\mathrm{reg}}\setminus U}(X^{\mathrm{reg}},\cc)=0
\qquad\text{for }i<4.
$$
Hence restriction induces
$$
H^2(X^{\mathrm{reg}},\cc)
\xrightarrow{\ \sim\ }H^2(U,\cc).
$$
The same argument gives
$$
H^2(Y^{\mathrm{reg}},\cc)
\xrightarrow{\ \sim\ }H^2(U,\cc).
$$
Combining these isomorphisms yields
$$
H^2(X,\cc)\simeq H^2(Y,\cc),
$$
as required. All the restriction morphisms used above are morphisms of mixed
Hodge structures. Since they are isomorphisms on the underlying vector
spaces, they are isomorphisms of mixed Hodge structures.
\end{proof}

\section{Betti numbers of \texorpdfstring{$\Pic^0(S)\times\Hilb^n(S)$}{Pic0(S) x Hilb n(S)}}\label[appendix]{app:betti-hilbert}

Let \(S\) be a bielliptic surface. Then $b_1(S)=2$ and $b_2(S)=2$, and \(\Pic^0(S)\) is an elliptic curve. Hence $(b_0,b_1,b_2)(\Pic^0(S))=(1,2,1)$.

\begin{theorem}\label{thm:betti-pic-hilb}
Let $X_n\coloneqq\Pic^0(S)\times\Hilb^n(S)$.
Then, for every \(n\geqslant 0\),
$$
(b_0,b_1,b_2)(X_n)=
\begin{cases}
(1,2,1),& n=0,\\
(1,4,7),& n=1,\\
(1,4,9),& n\geqslant 2.
\end{cases}
$$
\end{theorem}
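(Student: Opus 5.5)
The plan is to combine the Künneth formula with the standard description of the low-degree cohomology of Hilbert schemes of points on a smooth projective surface. Since $\Pic^0(S)$ has $(b_0,b_1,b_2)=(1,2,1)$, Künneth gives
$$
b_1(X_n)=b_1(\Hilb^n(S))+2\,b_0(\Hilb^n(S)),
\qquad
b_2(X_n)=b_2(\Hilb^n(S))+2\,b_1(\Hilb^n(S))+b_0(\Hilb^n(S)).
$$
So the task reduces to computing $(b_0,b_1,b_2)$ of $\Hilb^n(S)$ for each $n$. The case $n=0$ is immediate: $\Hilb^0(S)$ is a point, giving $(1,2,1)$. The case $n=1$ is also immediate: $\Hilb^1(S)=S$ with $(b_0,b_1,b_2)=(1,2,2)$, so $b_1=2+2=4$ and $b_2=2+4+1=7$.

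For $n\geqslant 2$, I would use G\"ottsche's formula for the Betti numbers of $\Hilb^n(S)$, or equivalently the low-degree description obtained from the Hilbert--Chow morphism $\Hilb^n(S)\to S^{(n)}$. I would argue as follows.
\begin{enumerate}
\item The Hilbert--Chow morphism is a resolution of singularities of $S^{(n)}$, and $S^{(n)}$ has quotient, hence rational, singularities. Therefore $H^1(\Hilb^n(S),\qq)\cong H^1(S^{(n)},\qq)=H^1(S^n,\qq)^{\mathfrak S_n}\cong H^1(S,\qq)$, so $b_1(\Hilb^n(S))=2$.
\item For $H^2$ I would use the known decomposition
$$
H^2(\Hilb^n(S),\qq)\cong H^2(S^{(n)},\qq)\oplus\qq[E],
$$
where $E$ is the exceptional divisor. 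This holds because the exceptional locus over the big diagonal stratum is irreducible, and the deeper strata have codimension at least two in $S^{(n)}$.
\item The invariant part $H^2(S^n,\qq)^{\mathfrak S_n}$ equals $H^2(S,\qq)\oplus\bigwedge^2H^1(S,\qq)$; the second summand appears precisely when $n\geqslant 2$. Its dimension is $2+1=3$, which gives $b_2(\Hilb^n(S))=4$.
\end{enumerate}

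Alternatively, I would read these values off G\"ottsche's generating function
$$
\sum_n\sum_i b_i(\Hilb^n(S))\,z^{i-2n}t^n=\prod_{k\geqslant1}\prod_i\bigl(1-(-1)^iz^{2k-2+i-2}t^k\bigr)^{-(-1)^ib_i(S)},
$$
expanding to the coefficient of $z^{-2n+2}$ as a sanity check. Plugging $(1,2,4)$ into the Künneth formulas then gives $b_1=2+2=4$ and $b_2=4+4+1=9$.

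The main obstacle is justifying the $H^2$ decomposition, namely that exactly one new class appears in addition to the symmetric-product classes. Rather than reprove this, I would cite G\"ottsche's formula, or G\"ottsche--Soergel's decomposition theorem result, and only carry out the bookkeeping. The point to check carefully is that the $\bigwedge^2H^1$ contribution is present only when $n\geqslant2$; this is the step that separates $n=1$ from $n\geqslant 2$.
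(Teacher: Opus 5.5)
Your proof is correct. The K\"unneth bookkeeping and the values $(1,0,0)$, $(1,2,2)$, $(1,2,4)$ for $\Hilb^n(S)$ agree with the paper's, but you reach the Hilbert-scheme Betti numbers by a different route.

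The paper does no geometry here. It expands G\"ottsche's generating function for $P_t(\Hilb^n(S))$ to order $t^2$ and reads the three cases off the coefficients. You instead go through the Hilbert--Chow morphism. You get $b_1$ from the rational singularities of $S^{(n)}$. You get $b_2$ from the G\"ottsche--Soergel decomposition $H^2(\Hilb^n(S))\cong H^2(S^{(n)})\oplus\qq[E]$, combined with Macdonald's $H^2(S^{(n)})\cong H^2(S)\oplus\bigwedge^2H^1(S)$ for $n\geqslant 2$.

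Your route needs two structural citations instead of one formula, but it explains the answer:
\begin{itemize}
\item for $n\geqslant 2$ one has $b_2=2+1+1$;
\item the jump from $n=1$ comes from \emph{both} $\bigwedge^2H^1(S)$ and $[E]$, not only from the former as your last sentence suggests;
\item one sees immediately that $H^2(\Hilb^n(S))$ is of type $(1,1)$, which the paper's Betti-number expansion does not show.
\end{itemize}

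In step 2, the relevant codimension fact is that the preimages of the deeper strata have codimension at least two in $\Hilb^n(S)$. The phrase ``codimension at least two in $S^{(n)}$'' does not separate the deeper strata from the big diagonal: the big diagonal itself has codimension two in $S^{(n)}$, while the deeper strata have codimension at least four there.

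You should also fix the generating function you quote. With the normalization $z^{i-2n}$ on the left, the $k$-th factor must be $\bigl(1-(-1)^iz^{i-2}t^k\bigr)^{-(-1)^ib_i(S)}$; the exponent $2k-2+i$ belongs to the unnormalized version. As written, the $k=2$ factor is shifted four degrees up instead of two. Your ``sanity check'' would then give $b_2(\Hilb^2(S))=3$, contradicting your main computation.
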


\begin{proof}
By G\"ottsche's formula,
$$
\sum_{n\geqslant 0}P_t(\Hilb^n(S))q^n
=
\prod_{m\geqslant 1}\prod_{j=0}^4
\left(1-(-t)^{2m-2+j}q^m\right)^{(-1)^{j+1}b_j(S)}.
$$
Expanding to order \(t^2\), using \(b_1(S)=2\) and \(b_2(S)=2\), gives
$$
(b_0,b_1,b_2)(\Hilb^n(S))=
\begin{cases}
(1,0,0),& n=0,\\
(1,2,2),& n=1,\\
(1,2,4),& n\geqslant 2.
\end{cases}
$$
Since \(\Pic^0(S)\) is an elliptic curve, $(b_0,b_1,b_2)(\Pic^0(S))=(1,2,1)$.
The K\"unneth formula gives $b_1(X_n)=b_1(\Pic^0(S))+b_1(\Hilb^n(S))$ and
$$
b_2(X_n)
=
b_2(\Pic^0(S))
+b_1(\Pic^0(S))b_1(\Hilb^n(S))
+b_2(\Hilb^n(S)).
$$
Substituting the values above gives the three cases.
\end{proof}

\phantomsection

\par\bigskip
\begingroup
\footnotesize

\noindent\textsc{Faculty of Mathematics, Technion - Israel Institute of Technology, Haifa, Israel}\par

\noindent
\textit{E-mail address: arenaki.csr@gmail.com}
\href{mailto:arenaki.csr@gmail.com}{\nolinkurl{}}\par

\noindent
\textit{URL:}
\url{https://sites.google.com/view/aleksei-piskunov}\par

\endgroup

\end{document}